\newtheorem{theorem}{Theorem}[section]
\newtheorem{corollary}[theorem]{Corollary}
\newtheorem{lemma}[theorem]{Lemma}
\newtheorem{example}[theorem]{Example}
\newtheorem{remark}[theorem]{Remark}
\newtheorem{proposition}[theorem]{Proposition}
\renewcommand{\Im}{{\rm Im}}
\renewcommand{\Re}{{\rm Re}}
\renewcommand{\t}{\tilde}
\renewcommand{\b}{{\rm b}}
\numberwithin{equation}{section}
\newcommand{\on}[2]{\genfrac{}{}{0pt}{1}{#1}{#2}}
\newcommand{\lb}{\left\{}
\newcommand{\rb}{\right\}}
\newcommand{\lan}{\left\langle}
\newcommand{\ran}{\right\rangle}
\newcommand{\mb}[1]{\boldsymbol{#1}}
\title{Estimates for the Szeg\H{o} Projection on \\ Uniformly Finite-Type Subdomains of $\mathbb{C}^2$}
\author{Aaron Peterson}
\date{}
\begin{document}
\maketitle

\begin{abstract}
We prove precise growth and cancellation estimates for the Szeg\H{o} kernel of an unbounded model domain $\Omega\subset\mathbb{C}^2$ under the assumption that ${\rm b}\Omega$ satisfies a uniform finite-type hypothesis. Such domains have smooth boundaries which are not algebraic varieties, and therefore admit no global homogeneities that allow one to use compactness arguments in order to obtain results. As an application of our estimates, we prove that the Szeg\H{o} projection $\mathbb{S}$ of $\Omega$ is exactly regular on the non-isotropic Sobolev spaces $NL_k^p({\rm b}\Omega)$ for $1<p<{+\infty}$ and $k=0,1,\ldots$, and also that $\mathbb{S}:\Gamma_\alpha (E)\rightarrow \Gamma_\alpha({\rm b}\Omega)$, for $E\Subset {\rm b}\Omega$ and $0<\alpha<+\infty$, with a bound that depends only on ${\rm diam}(E)$, where $\Gamma_\alpha$ are the non-isotropic H\"older spaces.
\end{abstract}

%%%%%%%%%%%%%%%%%%%%%%%%%%%%%%%%%
%
% Body of the article
% Please insert here the TeX source of your paper
% (except the bibliography)
%
%%%%%%%%%%%%%%%%%%%%%%%%%%%%%%%%%

\section{Introduction}

Let $\Omega\subset \mathbb{C}^2$ be a pseudoconvex domain with smooth boundary ${\rm b}\Omega$, and give ${\rm b}\Omega$ an appropriate measure $dm_{{\rm b}\Omega}$.
The purpose of this paper is to study the Szeg\H{o} projection $\mathbb{S}:L^2({\rm b}\Omega)\to \mathcal{H}^2(\Omega)$ when $\Omega$ belongs to a class of unbounded finite-type model domains for which ${\rm b}\Omega$ is not an algebraic variety. 
Our primary motivation is to discern how the Szeg\H{o} projection behaves in unbounded domains which lack homogeneity.
We prove two types of results.
First we show that the Szeg\H{o} kernel of such domains is smooth off of the diagonal and satisfies scale-invariant differential and cancellation estimates.
Second, we establish the exact regularity of $\mathbb{S}$ on the non-isotropic $L^p$-Sobolev spaces and non-isotropic H\"older spaces associated to ${\rm b}\Omega$.

In this paper, we restrict our attention to domains of the form $$\Omega=\{ \mb{z}=(z,z_2)\in\mathbb{C}^2\ :\ \Im(z_2)>P(z)\},$$ where $P:\mathbb{C}\to\mathbb{R}$ is a smooth, subharmonic, non-harmonic function such that $h:=\Delta P$ satisfies, for some constants $C_1,C_2$: 
\begin{align*}
	{\rm (H1)}  &\quad  \mbox{There exists}\ m\in\mathbb{N}\ \mbox{such that}\ 0<C_1 \leq \sup_{|\nu|=1} \displaystyle\sum_{j=2}^m \Big| \nabla_\nu^{j-2}h(z)\Big|\ \mbox{for all}\ z\in\mathbb{C}.\\
	{\rm (H2)}  &\quad  \|h\|_{C^k(\mathbb{C})} <+\infty\ \mbox{for}\ k=0,1,2,\ldots.\\
	{\rm (H3)}  &\quad   \sup_{z\in \mathbb{C},\ r\in [1,+\infty)} \Big| \int_{1\leq |\eta|\leq r} \frac{h(z+\eta)}{\eta^2} dm(\eta)\Big|\leq C_2.
\end{align*}
In this case, we say that ${\rm b}\Omega$ satisfies a \emph{uniform finite-type} (UFT) hypotheses of order $m$.
Several concrete examples of UFT domains are discussed in Section \ref{sec:examples}.
%Here $\nabla_\eta=a\frac{\partial}{\partial x} + b\frac{\partial}{\partial y}$ is the (real) directional derivative parallel to $\eta=a+ib$.
To avoid degeneracy issues introduced at infinity by the unboundedness of the domain, we take $dm_{{\rm b}\Omega}=dm(z,\Re(z_2))$ to be the Lebesgue measure that $${\rm b}\Omega=\{ (z,t+iP(z))\in \mathbb{C}^2\ :\ (z,t)\in\mathbb{C}\times\mathbb{R}\}$$ receives under its identification with $\mathbb{C}\times\mathbb{R}$.
The precise definition of the Szeg\H{o} projection $\mathbb{S}$---which maps $L^2({\rm b}\Omega)$ onto the closed subspace $\mathcal{H}^2(\Omega)$ of $L^2$-boundary values of holomorphic functions in $\Omega$---is recalled in Appendix \ref{sec:sz}.

Our results are expressed in terms of a non-isotropic geometry on ${\rm b}\Omega$ closely related to the tangential Cauchy-Riemann vector fields $$Z=\partial_{z}+2i P_z(z)\partial_{z_2},\ \bar{Z}=\partial_{\bar{z}}-2i P_{\bar{z}}(z)\partial_{\bar{z}_2},$$ which we now describe.
%After identifying ${\rm b}\Omega$ with $\mathbb{C}\times\mathbb{R}$, $Z$ and $\bar{Z}$ become
%$$Z=\partial_{z}+iP_{z}(z)\partial_t=X-iY,\quad Z=\partial_{\bar{z}}-iP_{\bar{z}}(z)\partial_t=X+iY.$$
Writing $X=Z+\bar{Z}$ and $Y=-i(Z-\bar{Z})$, condition (H1) quantitatively expresses that the real vector fields $X$ and $Y$ satisfy H\"ormander's finite-type condition of order $m$ uniformly over ${\rm b}\Omega$, which ensures that any two points $\mb{z},\mb{w}\in{\rm b}\Omega$ can be connected by a piecewise $C^1$ path which is almost everywhere tangent to $X$ or $Y$.
If we equip $\b \Omega$ with a metric such that $\lan X, Y\ran =0$ and $\|X\|=\|Y\|=1$ at each point, then the infimal length $d(\mb{z},\mb{w})$ of such a path is called the Carnot-Carath\'eodory distance between $\mb{z}$ and $\mb{w}$.
One can understand the balls $B_d(\mb{z},\delta)=\{\mb{w}\in \b\Omega\ :\ d(\mb{z},\mb{w})<\delta\}$ as `twisted' ellipsoids of radius $\delta$ in the directions of $X$ and $Y$, and radius $\Lambda(\mb{z},\delta)$ in the direction of $T=\partial_{z_2}+\partial_{\bar{z}_2}$.
For small $\delta$, $\Lambda(\mb{z},\delta)$ is essentially a polynomial in $\delta$ with coefficients that depend on $z$, while for large $\delta$ we have $\Lambda(\mb{z},\delta)\approx \delta^2$ uniformly in $\mb{z}$.
The volume of $B_d(\mb{z},\delta)$ satisfies
\begin{equation}
\label{eq:ballvolume}|B_d(\mb{z},\delta)|\approx \delta^2\Lambda(\mb{z},\delta).
\end{equation}
In Section \ref{sec:geom} we give a more detailed discussion of $d$, but additional background can be found in \cite{NagelSteinWainger1985} for the case $\delta\lesssim 1$ and \cite{Peterson2014} for the case $\delta\gtrsim 1$.

Before stating our results, we give two small pieces of notation.
We denote by $[\mathbb{H}]$ the Schwartz kernel of an operator $\mathbb{H}$ which maps test functions into the space of distributions.
Also, if $\alpha$ is a multi-index then we write $Z_{\mb{\eta}}^\alpha=Z_{1}\cdots Z_{|\alpha|}$, where each $Z_i\in \{Z,\bar{Z}\}$ and acts in the $\mb{\eta}$-variables.

Our first result gives precise size and cancellation estimates on the Szeg\H{o} kernel and its (non-isotropic) derivatives. 

\begin{theorem}[Growth/Cancellation Estimates]\label{thm:szego-decomp}
	The Szeg\H{o} projection $\mathbb{S}$ can be written as the sum of two operators $\mathbb{S}=\mathbb{N}+\mathbb{F}$ such that, for all multi-indices $\alpha,\beta$ and all $N,M\geq 0$, the following hold.
	\begin{enumerate}
		\item[(a)] For $0\leq K+K'<|\alpha|+|\beta|+4$ there exist multi-indices $\gamma,\gamma'$ of length $K,K'$ and an operator $\mathbb{N}_{K,K'}$ such that
		\begin{itemize}
			\item[(i)] $\mathbb{N}=Z^\gamma\mathbb{N}_{K,K'}Z^{\gamma'}$, and
			\item[(ii)] For some $C=C(\alpha,\beta,N,M,K,K',h),$
		\end{itemize}
		\begin{align}\label{eq:szego-near-growth} |T^N&Z_{\mb{z}}^\alpha (Z^{\beta}_{\mb{w}})^{\ast}[\mathbb{N}_{K,K'}](\mb{z},\mb{w})|\\
		 &\leq C \frac{d(\mb{z},\mb{w})^{K+K'-|\alpha|-|\beta|}\Lambda(\mb{z},d(\mb{z},\mb{w}))^{-N}}{|B_d(\mb{z},d(\mb{z},\mb{w}))|}(1+\Lambda(\mb{z},d(\mb{z},\mb{w})))^{-M}.\nonumber\end{align}
		\item[(b)] For $0\leq K+K'<\min(|\alpha|,2)+\min(|\beta|,2)+4$ there exist multi-indices $\gamma,\gamma'$ of length $K,K'$ and an operator $\mathbb{F}_{K,K'}$ such that
		\begin{itemize}
			\item[(i)] $\mathbb{F}=Z^\gamma\mathbb{F}_{K,K'}Z^{\gamma'}$,
			\item[(ii)] For some $C=C(\alpha,\beta,N,K,K',h),$
		\end{itemize}
		\begin{align}\label{eq:szego-far-growth} |T^N &Z_{\mb{z}}^\alpha (Z_{\mb{w}}^{\beta})^\ast[\mathbb{F}_{K,K'}](\mb{z},\mb{w})|\\
		& \leq C \min\Bigg(1,\frac{d(\mb{z},\mb{w})^{K+K'-\min(|\alpha|,2)-\min(|\beta|,2)}\Lambda(\mb{z},d(\mb{z},\mb{w}))^{-N}}{|B_d(\mb{z},d(\mb{z},\mb{w}))|}\Bigg).\nonumber\end{align}
		\item[(c)] The conclusions of (a) and (b) also hold for $\mathbb{N}^\ast$ and $\mathbb{F}^\ast$, respectively, with the same qualifications.
	\end{enumerate}
\end{theorem}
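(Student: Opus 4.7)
The plan is to reduce $\mathbb{S}$ to a one-parameter family of weighted Bergman projections via a partial Fourier transform in $t=\Re(z_2)$, and then split the parameter integral at $\tau=1$. Because $P$ is independent of $t$, one has $f\in\mathcal{H}^2(\Omega)$ exactly when $\hat f(z,\tau)$ vanishes for $\tau<0$ and, for each $\tau>0$, the function $z\mapsto e^{2\pi\tau P(z)}\hat f(z,\tau)$ lies in the weighted Fock-type space $\mathcal{H}_\tau=\{g\text{ entire}:\int|g|^2e^{-4\pi\tau P}\,dm<+\infty\}$. Writing $B_\tau(z,w)$ for the reproducing kernel of $\mathcal{H}_\tau$, inverting the Fourier transform yields
\[
[\mathbb{S}](\mb{z},\mb{w})=\int_0^\infty B_\tau(z,w)\,e^{2\pi i\tau(z_2-\bar w_2)}\,d\tau.
\]
I fix a smooth cut-off $\chi$ supported in $[1/2,+\infty)$ and equal to $1$ on $[1,+\infty)$, define $[\mathbb{N}]$ by inserting $\chi(\tau)$ into the integrand, and set $[\mathbb{F}]=[\mathbb{S}]-[\mathbb{N}]$.

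The substantive step for $\mathbb{N}$ is to establish uniform pointwise and derivative estimates on $B_\tau$ for $\tau\geq 1/2$. For $z\in\mathbb{C}$ define the critical radius $\rho(\tau,z)$ by $\tau\Lambda^\ast(z,\rho(\tau,z))\sim 1$, where $\Lambda^\ast$ is the planar analogue of $\Lambda$ built from $h=\Delta P$; hypotheses (H1)-(H2) give uniform quantitative control of $P$ at every scale, allowing a Christ/NSW-type argument to produce
\[
\bigl|\partial_z^a\partial_{\bar w}^b B_\tau(z,w)\bigr|\lesssim\rho(\tau,z)^{-(2+|a|+|b|)}\exp\bigl(-c\,d^\ast(z,w)/\rho(\tau,z)\bigr)
\]
uniformly in $z,w,\tau$, where $d^\ast$ is the planar Carnot-Carath\'eodory distance. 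Substituting this into the $\tau$-integral for $\mathbb{N}$ and integrating by parts in $\tau$ (each integration trading a power of $\tau$ against a factor of $(z_2-\bar w_2)^{-1}$) gives \eqref{eq:szego-near-growth}; the extra $(1+\Lambda)^{-M}$ decay is automatic on the support of $\chi$, where $\tau\Lambda(\mb{z},d(\mb{z},\mb{w}))\gtrsim\Lambda(\mb{z},d(\mb{z},\mb{w}))$, since each oscillatory integration by parts then gains an independent power of $(1+\Lambda)^{-1}$.

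For $\mathbb{F}$ the relevant range is $\tau\in(0,1)$, where hypothesis (H3) becomes essential: it ensures each $\mathcal{H}_\tau$ is non-trivial and controls the large-scale behavior of $B_\tau$. The natural length scale at low frequency is $\delta\sim\tau^{-1/2}$, at which $\Lambda(\mb{z},\delta)\sim\delta^2$ uniformly and the boundary geometry is essentially Heisenberg; only the first two $\bar\partial$-derivatives of $P$ contribute non-trivially to $B_\tau$, which explains the cap $\min(|\alpha|,2)+\min(|\beta|,2)+4$ on cancellation in (b). Combining the uniform bound $|B_\tau(z,w)|\lesssim\tau$ with oscillatory integration by parts in $\tau$ yields \eqref{eq:szego-far-growth}; the $\min(1,\cdot)$ merely reflects that an integral over a bounded $\tau$-range is automatically $O(1)$.

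The factorization $\mathbb{N}=Z^\gamma\mathbb{N}_{K,K'}Z^{\gamma'}$ (and its analogue for $\mathbb{F}$) comes from the identity $Z_{\mb{z}}[e^{2\pi i\tau z_2}]=-4\pi\tau P_z(z)\,e^{2\pi i\tau z_2}$ together with its conjugate and $\mb{w}$-analogues: these let me relate the action of $Z,\bar Z$ on the $\tau$-integrand to weighted $\partial,\bar\partial$ derivatives of $B_\tau$, which are then moved off the kernel by integration by parts in $z,w$ and absorbed into the $Z^\gamma,Z^{\gamma'}$ factors. Part (c) is immediate from $\mathbb{S}=\mathbb{S}^\ast$ and the Hermitian symmetry $B_\tau(z,w)=\overline{B_\tau(w,z)}$, which together give $\mathbb{N}=\mathbb{N}^\ast$ and $\mathbb{F}=\mathbb{F}^\ast$. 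The main obstacle throughout is the uniformity of the $B_\tau$ estimates in the absence of any global dilation symmetry of $\b\Omega$: because $P$ is not algebraic, no compactness or rescaling argument is available, and the bounds must be derived directly from (H1)-(H3), typically by patching via a covering with non-isotropic balls at the critical radius.
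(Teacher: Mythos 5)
Your overall architecture (partial Fourier transform in $\Re(z_2)$, reduction to a one-parameter family of weighted projections, splitting at $\tau\approx 1$, self-adjointness for part (c)) is the same as the paper's, but there is a genuine gap at the step you rely on for all of the decay in $d(\mb{z},\mb{w})$: the integration by parts in $\tau$. You propose to trade each $\tau$-derivative of the fiber kernel against a factor of $(z_2-\bar w_2)^{-1}$. This fails for two intertwined reasons. First, the quantity that must be gained is not $|z_2-\bar w_2|$ (nor $|\Re(z_2)-\Re(w_2)|$) but the \emph{twisted} difference $|\Re(z_2)-\Re(w_2)-T_\kappa(z,w)|$, since $d(\mb{z},\mb{w})\approx |z-w|+\mu(\mb{w},|\Re(z_2)-\Re(w_2)-T(z,w)|)$; in the regime $|z-w|\ll d(\mb{z},\mb{w})$ the exponential factor $e^{-c\,d^\ast(z,w)/\rho}$ is useless and all of the decay in $\Lambda(\mb{z},d(\mb{z},\mb{w}))$ must come from oscillation against the twisted phase. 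Second, the plain $\tau$-derivative of $B_\tau(z,w)$ is \emph{not} $O(\tau^{-1})$ times its size: already for $P(z)=|z|^2$ one has $[\mathbb{S}_\tau](z,w)=c\,\tau\,e^{2\pi\tau(2z\bar w-|z|^2-|w|^2)}$, and $\partial_\tau$ brings down $4\pi i\,\Im(z\bar w)$ — exactly the twist — which is unbounded relative to the kernel. This is why the paper first normalizes at the base point $\mb{w}$ by a biholomorphism (replacing $P$ by $P^{\mb{w},2}$ or $P^{\mb{w},\kappa}$, Lemmas \ref{lem:normalization-bihol} and \ref{lem:fn-substitution}), which converts the phase into $e^{2\pi i\tau(\Re(z_2)-\Re(w_2)-T_\kappa(z,w))}$ and simultaneously makes the $\tau$-derivatives of the conjugated kernel satisfy $|\partial_\tau^M|\lesssim\tau^{-M}\times(\mbox{size})$ (Theorem \ref{thm:chr-der-szegoderivatives}); equivalently one must use the twisted derivative $\partial_\tau-2\pi i\tilde T(z,w)$. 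Without this your argument cannot produce the factors $\Lambda(\mb{z},d(\mb{z},\mb{w}))^{-N}(1+\Lambda)^{-M}$ in (\ref{eq:szego-near-growth}) or the second branch of the minimum in (\ref{eq:szego-far-growth}).

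A second, smaller but still real gap is that the operators $\mathbb{N}_{K,K'}$ and $\mathbb{F}_{K,K'}$ in parts (a)-(i) and (b)-(i) are never actually constructed: saying that $Z,\bar Z$ derivatives can be "moved off the kernel and absorbed" does not produce antiderivative operators with the stated kernel bounds. The paper builds them explicitly from Christ's relative fundamental solution operators, taking the fiber kernels $[\mathbb{R}_\tau^K\mathbb{S}_\tau(\mathbb{R}_\tau^\ast)^{K'}]$ and using $\bar D_\tau\mathbb{R}_\tau=I$, $\mathbb{R}_\tau^\ast D_\tau=I$; estimating the covariant derivatives $W_\tau^\alpha\mathbb{R}_\tau^K\mathbb{S}_\tau(\mathbb{R}_\tau^\ast)^{K'}W_\tau^\beta$ (and their $\tau$-derivatives) uniformly is where the ranges $K+K'<|\alpha|+|\beta|+4$ versus $K+K'<\min(|\alpha|,2)+\min(|\beta|,2)+4$ actually arise, via the bounds $\tau|\nabla^kP^{\mb{w},2}|\lesssim\sigma_\tau^{-\min(k,2)}$ for $\tau\lesssim1$ and $\tau|\nabla^kP^{\mb{w},\kappa}|\lesssim\sigma_\tau^{-k}$ for $\tau\gtrsim1$. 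Your heuristic for the cap in (b) points in the right direction, but the derivative estimates you assert for $\partial_z^a\partial_{\bar w}^bB_\tau$ are not contained in Christ's theorem and are precisely the content that has to be proved; as stated, uniform bounds of that shape for all orders at small $\tau$ are false, which is what the cap encodes.
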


The cancellation properties of the Szeg\H{o} projection are captured by parts (a)-(i) and (b)-(i), which allow us to view $\mathbb{S}$ as a derivative.
Theorem \ref{thm:szego-decomp} implies that $\mathbb{S}$ can be analyzed via the Calder\'on-Zygmund paradigm (see \cite{Street2014}). Indeed, with some additional work we obtain the following classical cancellation and growth estimates for the Szeg\H{o} kernel.

\begin{corollary}[Classical Growth Estimates]\label{cor:szego-growth}
	Fix $\mb{z},\mb{w}\in {\rm b}\Omega$.
	For multi-indices $\alpha$ and $\beta$ with $|\alpha|,|\beta|\leq 2$, and for $N\geq 0$,
	\begin{equation}\label{eq:szego-growth}
		|T^N Z_{\mb{z}}^\alpha (Z_{\mb{w}}^\beta)^\ast [\mathbb{S}](\mb{z},\mb{w})|\leq C\frac{d(\mb{z},\mb{w})^{-|\alpha|-|\beta|} \Lambda(\mb{z},d(\mb{z},\mb{w}))^{-N}}{|B_d(\mb{z},d(\mb{z},\mb{w}))|},\end{equation}
	where $C=C(N,\alpha,\beta,h)$ is independent of $\mb{z}$ and $\mb{w}$. 
	The restriction $|\alpha|,|\beta|\leq 2$  is sharp in the sense that the above estimate may fail to hold if either $|\alpha|\geq 3$ or $|\beta|\geq 3$.
\end{corollary}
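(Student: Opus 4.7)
My plan is to deduce the corollary immediately from Theorem \ref{thm:szego-decomp} by specializing to $K=K'=0$, and then to establish sharpness by an explicit computation on a UFT model domain.

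First I would split $\mathbb{S}=\mathbb{N}+\mathbb{F}$ and bound each piece separately. Taking $K=K'=0$ in part (a)---the constraint $K+K'<|\alpha|+|\beta|+4$ is automatic---gives $\mathbb{N}_{0,0}=\mathbb{N}$ and, after simply dropping the harmless extra factor $(1+\Lambda)^{-M}$, the bound
$$|T^N Z_{\mb{z}}^\alpha (Z_{\mb{w}}^\beta)^\ast[\mathbb{N}](\mb{z},\mb{w})|\leq C\frac{d(\mb{z},\mb{w})^{-|\alpha|-|\beta|}\Lambda(\mb{z},d(\mb{z},\mb{w}))^{-N}}{|B_d(\mb{z},d(\mb{z},\mb{w}))|}.$$
Taking $K=K'=0$ in part (b), the assumption $|\alpha|,|\beta|\leq 2$ makes $\min(|\alpha|,2)=|\alpha|$ and $\min(|\beta|,2)=|\beta|$, so the ``far'' piece satisfies the identical upper bound (the $\min(1,\cdot)$ is dominated by its second argument). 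Adding the two estimates yields \eqref{eq:szego-growth}.

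For sharpness, the obstruction sits in the $\min(|\alpha|,2)$ appearing in part (b): the operator $\mathbb{F}$ absorbs at most two tangential derivatives per variable, so one cannot expect a pointwise bound $d^{-3-|\beta|}/|B_d|$ to survive after applying $Z_{\mb{z}}^3$. To convert this into a genuine failure for $\mathbb{S}$, I would fix a UFT model drawn from Section \ref{sec:examples} for which the Fourier representation underlying the construction of $\mathbb{F}$ can be computed (or estimated from below) explicitly, and then evaluate $Z_{\mb{z}}^3(Z_{\mb{w}}^{\beta})^\ast[\mathbb{S}](\mb{z},\mb{w})$ along a sequence with $d(\mb{z},\mb{w})\to\infty$. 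The goal is a lower bound of order $\Lambda(\mb{z},d(\mb{z},\mb{w}))^{-N}/(d(\mb{z},\mb{w})^{\,2+|\beta|}|B_d(\mb{z},d(\mb{z},\mb{w}))|)$, which exceeds $d(\mb{z},\mb{w})^{-3-|\beta|}/|B_d(\mb{z},d(\mb{z},\mb{w}))|$ for large $d(\mb{z},\mb{w})$ and therefore rules out the $|\alpha|=3$ version of \eqref{eq:szego-growth}.

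The first step is mechanical bookkeeping. The principal obstacle is the sharpness assertion: lower bounds on a singular integral kernel are substantially more delicate than upper bounds, and one must both identify a model where $\mathbb{F}$ admits a tractable representation and perform the asymptotic analysis with enough precision to preclude cancellation. I expect the homogeneous Heisenberg case $P(z)=|z|^2$, for which $\mathbb{F}$ essentially vanishes, \emph{not} to furnish a counterexample, so the choice of nonhomogeneous UFT model is the critical ingredient.
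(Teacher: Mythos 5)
Your derivation of the estimate \eqref{eq:szego-growth} from Theorem \ref{thm:szego-decomp} with $K=K'=0$ is correct and is exactly how the paper handles it: for $|\alpha|,|\beta|\leq 2$ the minima in part (b) coincide with $|\alpha|,|\beta|$, the factor $(1+\Lambda)^{-M}$ in part (a) is dropped, and the two bounds are added. So the first half needs no further comment.

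The sharpness half, however, is only a plan, and the plan omits precisely the ideas that make the argument work. You correctly predict that the Heisenberg case cannot serve and that one must produce a lower bound along points with $d(\mb{z},\mb{w})\to\infty$, but you never exhibit a model or a mechanism that precludes cancellation, and you yourself flag this as the principal obstacle. The paper's proof (Section \ref{sec:szego-growth}) rests on three concrete ingredients that are absent from your proposal: (i) the choice of a tube domain $\Im(z_2)>b(\Re(z))$ with $b''\approx 1$ and $b''(x)=e^{x-n}$ near every integer $n$, so that $b^{(k+1)}(n)=1$ for all $k$, together with Nagel's explicit formula \eqref{szegoformula} for $[\mathbb{S}]$ on tube domains (Example \ref{ex:examples-tube}); (ii) the commutator identities $[\bar{Z},Z]=\tfrac{i}{2}b''(\Re(z))(\partial_{z_2}+\partial_{\bar z_2})$ and $[\bar{Z},b^{(j)}(\Re(z))(\partial_{z_2}+\partial_{\bar z_2})]=\tfrac12 b^{(j+1)}(\Re(z))(\partial_{z_2}+\partial_{\bar z_2})$, which together with $\bar{Z}[\mathbb{S}]\equiv 0$ collapse $\bar{Z}^kZ[\mathbb{S}]$ to $\tfrac{i}{2^k}b^{(k+1)}(\Re(z))\,(\partial_{z_2}+\partial_{\bar z_2})[\mathbb{S}]$, so the problematic higher derivatives reduce to a single $T$-derivative times a factor bounded below; and (iii) evaluation at the symmetric points $\mb{z}_n=(n,ib(n))$, $\mb{z}_{-n}$, where $z+\bar w=0$ makes the double integral in \eqref{szegoformula} manifestly positive, so the lower bound $\gtrsim n^{-6}\approx d(\mb{z}_n,\mb{z}_{-n})^{-2}/|B_d(\mb{z}_n,d(\mb{z}_n,\mb{z}_{-n}))|$ follows from Laplace-type estimates for the convex phase (the $|L(\tau,\eta)|\approx\tau^{-1/2}$ and $-\phi_{\tau,\eta}(\theta_0)\approx\eta^2/\tau$ computations) with no cancellation to rule out. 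Since the claimed bound for $|\alpha|\geq 3$, $N=\beta=0$ would be $d^{-|\alpha|}/|B_d|=o(d^{-2}/|B_d|)$ as $n\to\infty$, this contradicts \eqref{eq:szego-growth}. Without specifying such a model and the positivity/reduction mechanism, your outline does not yet constitute a proof of sharpness.
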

\bigskip

\begin{corollary}[Classical Cancellation Estimates]\label{cor:szego-cancellation}
	Let $|\alpha|\leq 2$, and $N\geq 0$. 
	If $\phi$ is a smooth function with support in $B_d(\mb{z},\delta)$, then there is $C=C(N,\alpha,h)$ and $M=M(N,\alpha)$ such that
	\begin{align*}\|T^N &Z^\alpha\mathbb{S}[\phi]\|_{L^\infty (B_d(\mb{z},\delta))} \\
	&\leq C\Lambda(\mb{z},\delta)^{-N}\delta^{-|\alpha|}(\|\phi\|_{L^\infty(B_d(\mb{z},\delta))} + \|(\Lambda(\mb{z},\delta)T)^{M}\phi\|_{L^\infty(B_d(\mb{z},\delta))}).\end{align*}
\end{corollary}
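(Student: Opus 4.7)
I would prove Corollary \ref{cor:szego-cancellation} by combining Theorem \ref{thm:szego-decomp} with standard non-isotropic Calder\'on--Zygmund techniques and exploiting the translation invariance of $\Omega$ in the $t=\Re z_2$ direction. Since $P=P(z)$, the operators $T$, $Z$, $\bar Z$ all commute with $t$-translations, and $T$ commutes with $\mathbb{S}$; hence $T^N Z^\alpha \mathbb{S}[\phi] = Z^\alpha \mathbb{S}[T^N\phi]$. A Landau--Kolmogorov interpolation along $t$-slices gives $\|T^N\phi\|_{L^\infty} \le C\Lambda(\mb z,\delta)^{-N}(\|\phi\|_{L^\infty}+\|(\Lambda(\mb z,\delta)T)^M\phi\|_{L^\infty})$ for $N\le M$, and the analogous bound on $\|(\Lambda T)^{M'}T^N\phi\|_{L^\infty}$ with $M=M'+N$, which reduces the corollary to the case $N=0$: bounding $\|Z^\alpha \mathbb{S}[\psi]\|_{L^\infty(B_d(\mb{z},\delta))}$ by $C\delta^{-|\alpha|}(\|\psi\|_{L^\infty}+\|(\Lambda(\mb z,\delta)T)^{M'}\psi\|_{L^\infty})$ for smooth $\psi$ supported in $B_d(\mb{z},\delta)$.

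For this reduced inequality I would split $\mathbb{S}=\mathbb{N}+\mathbb{F}$ from Theorem \ref{thm:szego-decomp} and estimate each piece. For $\mathbb{F}$, the min-type bound of part (b) with $|\tilde\alpha|\le 2$ and $K=K'=0$ gives $|Z^\alpha_{\mb z_0}[\mathbb{F}](\mb z_0,\mb w)|\lesssim \min(1,\delta^{-|\alpha|}/|B_d(\mb z_0,d(\mb z_0,\mb w))|)$, whose pointwise integral against $\psi$ yields the desired bound; the tail where $d(\mb z_0,\mb w)\gtrsim 1$ is absorbed into $\|(\Lambda(\mb z,\delta) T)^{M'}\psi\|_{L^\infty}$ via the $1/|B_d|$ decay matched against the $t$-regularity of $\psi$. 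For $\mathbb{N}$, the factorization $\mathbb{N}=Z^\gamma\mathbb{N}_{K,K'}Z^{\gamma'}$ of part (a) with $K+K'$ chosen as large as allowed enables the rewriting $Z^\alpha\mathbb{N}[\psi]=Z^{\alpha+\gamma}\mathbb{N}_{K,K'}[Z^{\gamma'}\psi]$; the differentiated kernel $Z^{\alpha+\gamma}_{\mb z_0}[\mathbb{N}_{K,K'}](\mb z_0,\mb w)$ is then locally integrable by (a)(ii), and a dyadic decomposition of $B_d(\mb z_0,C\delta)$ into annuli $\{2^{-j-1}\delta\le d(\mb z_0,\mb w)<2^{-j}\delta\}$ summed against this integrable estimate produces the desired pointwise bound.

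The main obstacle is the classical Calder\'on--Zygmund pointwise cancellation issue: one needs an $L^\infty$ bound on $Z^\alpha\mathbb{S}\psi$ without picking up $Z$-derivatives of $\psi$, even though a generic smooth $\psi$ of unit $L^\infty$-norm may have arbitrarily large $Z$-derivatives. The factorization in Theorem \ref{thm:szego-decomp} supplies the necessary cancellation structurally, but translating the post-integration-by-parts expression -- which superficially carries $Z^{\gamma'}\psi$ -- back into a bound involving only $\|\psi\|_{L^\infty}$ and $\|(\Lambda T)^{M'}\psi\|_{L^\infty}$ requires invoking the NIS-operator calculus of \cite{Street2014}, in which the composition $\mathbb{N}_{K,K'}\circ Z^{\gamma'}$ is itself recognized as an NIS operator of appropriate order and the $Z^{\gamma'}$ derivative is absorbed into the structural constants of the estimate.
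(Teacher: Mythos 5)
There is a genuine gap at the core of your argument: you never exhibit a mechanism that tames the non-integrable diagonal singularity of $[Z^\alpha\mathbb{S}]$ using only $\|\phi\|_{L^\infty}$ and $T$-derivatives of $\phi$. The factorization $\mathbb{N}=Z^\gamma\mathbb{N}_{K,K'}Z^{\gamma'}$ of Theorem \ref{thm:szego-decomp} only trades $Z$-derivatives between kernel and test function: writing $Z^\alpha\mathbb{N}[\psi]=Z^{\alpha}Z^{\gamma}\mathbb{N}_{K,K'}[Z^{\gamma'}\psi]$ leaves $Z^{\gamma'}\psi$, which is not controlled by $\|\psi\|_\infty+\|(\Lambda T)^{M}\psi\|_\infty$, while integrating $Z^{\gamma'}$ back onto the kernel restores, by (a)-(ii) applied with multi-indices of lengths $|\alpha|+K$ and $K'$, the bound $d(\mb{z}_0,\mb{w})^{-|\alpha|}/|B_d(\mb{z}_0,d(\mb{z}_0,\mb{w}))|$, which is not locally integrable (each dyadic annulus contributes $\approx 1$ even for $|\alpha|=0$, since the annulus has measure $\approx|B_d|$). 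Your appeal to the calculus of \cite{Street2014} to ``absorb'' $Z^{\gamma'}$ is unsubstantiated and essentially circular as used: the bump-function cancellation estimate of Corollary \ref{cor:szego-cancellation} is precisely the kind of condition that calculus requires (or establishes through equivalences whose hypotheses would have to be verified here, which amounts to redoing the proof); it cannot be invoked as a black box, and indeed the paper states that Corollary \ref{cor:szego-cancellation} needs ``additional work'' beyond Theorem \ref{thm:szego-decomp}. The treatment of $\mathbb{F}$ has the same defect: part (b) gives $\min(1,d(\mb{z}_0,\mb{w})^{-\min(|\alpha|,2)}/|B_d|)$, not $\min(1,\delta^{-|\alpha|}/|B_d|)$, and integrating this in absolute value over the support of $\psi$ yields a bound of order $1$ rather than $\delta^{-|\alpha|}$ when $\delta$ is large and $|\alpha|\geq 1$; ``matching the decay against the $t$-regularity of $\psi$'' is not an argument, because no representation of the kernel as a high $T$-derivative is produced, so there is nothing to integrate by parts against.

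That missing representation is exactly what the paper's proof supplies. Lemma \ref{lem:decompofszego} (adapted from Nagel--Stein, and whose proof requires the relation $[\mathbb{B}]=2i\partial_{\bar w_2}[\mathbb{S}]$ of Proposition \ref{prop:szegobergman}) writes, with $\delta=\Lambda(\mb{0},\delta_0)$, $[T^NZ^\alpha\mathbb{S}](\mb{0},\mb{w})=F^{(M)}(\mb{0},\mb{w})+(\delta T)^MG^{(M)}(\mb{0},\mb{w})$, where $F^{(M)}$ and $G^{(M)}$ obey the regularized bound $[d(\mb{0},\mb{w})+\mu(\mb{0},\delta)]^{-2-|\alpha|}[\Lambda(\mb{0},d(\mb{0},\mb{w}))+\delta]^{-1-N}$, i.e.\ bounds with no singularity on the diagonal and with the scale $\delta_0$ built in. One then integrates by parts only in $T$ (harmless, since derivatives landing on $\phi$ are measured by $\|(\Lambda(\mb{0},\delta_0)T)^M\phi\|_\infty$), and a crude size estimate over $B_d(\mb{0},\delta_0)$, of volume $\approx\delta_0^2\Lambda(\mb{0},\delta_0)$, produces exactly the factor $\delta_0^{-|\alpha|}\Lambda(\mb{0},\delta_0)^{-N}$. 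Your preliminary reduction to $N=0$ via commutation of $T$ with $Z$, $\bar Z$, $\mathbb{S}$ and Landau--Kolmogorov on $t$-lines is legitimate but inessential, and it does not touch this core difficulty; without a substitute for Lemma \ref{lem:decompofszego}, or some other device converting the diagonal singularity into pure $T$-derivatives, the proposal does not close.
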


Although the estimates in Corollary \ref{cor:szego-growth} follow immediately from those in Theorem \ref{thm:szego-decomp}, sharpness will follow from examining the situation on tube domains, where an explicit formula for $[\mathbb{S}]$ is available; see Example \ref{ex:examples-tube} and Section \ref{sec:szego-growth} for the details.

We use the estimates from Theorem \ref{thm:szego-decomp} to prove that the Szeg\H{o} projection exactly preserves non-isotropic Sobolev and H\"older regularity in the following sense.

\begin{theorem}\label{thm:szego-mapping}
	$\mathbb{S}$ has the following mapping properties.
	\begin{itemize}
		\item[(a)] $\mathbb{S}:NL_k^p({\rm b}\Omega)\rightarrow NL_k^p({\rm b}\Omega)$ for $1<p<{+\infty}$, $k=0,1,\ldots$, where $NL^p_k({\rm b}\Omega)$ are the non-isotropic Sobolev spaces on ${\rm b}\Omega$ associated to the vector fields $Z$ and $\bar{Z}$.
		\item[(b)] For every Carnot-Carath\'eodory ball $E=B_d(\mb{z}_0,\delta_0)\subset{\rm b}\Omega$, $\mathbb{S}:\Gamma_\alpha(E) \rightarrow \Gamma_\alpha ({\rm b}\Omega)$ for $0<\alpha<+\infty$, where $\Gamma_\alpha(U)$ are the non-isotropic H\"older spaces of functions supported on $U\subset{\rm b}\Omega$ associated to the vector fields $Z$ and $\bar{Z}$. Here, the operator norm depends only on $\alpha$, $\delta_0$, and the constants in (H1), (H2), and (H3).
	\end{itemize}
\end{theorem}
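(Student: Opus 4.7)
The overall strategy is to present the relevant derivatives of $\mathbb{S}$ as Calder\'on-Zygmund operators on the space of homogeneous type $(\b\Omega,d,dm_{\b\Omega})$---the volume doubling follows from \eqref{eq:ballvolume}---and then to invoke the non-isotropic CZ machinery of Street \cite{Street2014}. Throughout, the decomposition $\mathbb{S}=\mathbb{N}+\mathbb{F}$ of Theorem \ref{thm:szego-decomp} and the factorizations (a)(i), (b)(i) are the basic tool: they let me trade outer $Z$-derivatives applied to $\mathbb{S}u$ for inner $Z$-derivatives applied to $u$.

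For part (a), I would first treat $k=0$: estimates \eqref{eq:szego-near-growth} and \eqref{eq:szego-far-growth} with $K=K'=N=M=0$ and $|\alpha|,|\beta|\in\{0,1\}$ yield standard CZ size and smoothness bounds on $[\mathbb{S}]$, which together with the automatic $L^2$-boundedness of the projection give $L^p$-boundedness for $1<p<\infty$. For $k\geq 1$ and $|\alpha|\leq k$, I would use (a)(i) and (b)(i) with $K=0$ and $K'=|\alpha|$ to write
\[
Z^\alpha \mathbb{S} u = Z^\alpha \mathbb{N}_{0,|\alpha|}(Z^{\gamma'}u) + Z^\alpha \mathbb{F}_{0,|\alpha|}(Z^{\tilde\gamma'}u),
\]
with $|\gamma'|=|\tilde\gamma'|=|\alpha|\leq k$, so $Z^{\gamma'}u,Z^{\tilde\gamma'}u\in L^p$. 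The kernels of $Z^\alpha \mathbb{N}_{0,|\alpha|}$ and $Z^\alpha \mathbb{F}_{0,|\alpha|}$ then satisfy CZ size and smoothness bounds by another application of (a)(ii) and (b)(ii)---once one shifts extra $Z$- or $\bar Z$-derivatives onto either variable---so $L^p$-boundedness follows provided the $L^2$-boundedness of these composite operators is known. For $k$ exceeding the range in which the one-sided factorization is admissible for $\mathbb{F}$, I would split derivatives more symmetrically by taking $K\approx K'\approx k/2$ in (b)(i), exploiting $[Z,\bar Z]\propto T$ and the $T^N$-estimates in the theorem to commute as needed.

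For part (b), I would again split $\mathbb{S}[\phi]=\mathbb{N}[\phi]+\mathbb{F}[\phi]$. The operator $\mathbb{N}$ is a standard Calder\'on-Zygmund operator on $(\b\Omega,d,dm)$ and so preserves $\Gamma_\alpha(\b\Omega)$ by the known mapping theory of such operators on non-isotropic H\"older spaces. The operator $\mathbb{F}[\phi]$ is handled using Corollary \ref{cor:szego-cancellation} and its higher-derivative variants: the $\min(1,\cdot)$ bound in \eqref{eq:szego-far-growth} ensures $[\mathbb{F}]$ is globally bounded and decays away from the diagonal, so that with $\phi$ supported in $E$ the function $\mathbb{F}[\phi]$ belongs to $\Gamma_\alpha(\b\Omega)$ with norm controlled in terms of $\|\phi\|_{\Gamma_\alpha(E)}$, $|E|$, and the UFT constants---hence in terms of $\delta_0={\rm diam}(E)$ alone.

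The main obstacle is the $L^2$-boundedness of the composite operators $Z^\alpha \mathbb{N}_{0,|\alpha|}$ and $Z^\alpha \mathbb{F}_{0,|\alpha|}$ for $k\geq 1$, which does not follow mechanically from the $L^2$-boundedness of the projection $\mathbb{S}$. This will require either a $T(1)$-type argument adapted to the CC geometry on $\b\Omega$, or a direct estimate extracted from the construction behind Theorem \ref{thm:szego-decomp}. A secondary difficulty in part (b) is tracking the explicit $\delta_0$-dependence of the H\"older operator norm: the contribution of $\mathbb{F}$ concentrates on a neighborhood of $E$ of scale comparable to $\delta_0$, outside of which the rapid decay of the kernel takes over, and care is needed to verify that no worse dependence creeps into the final bound.
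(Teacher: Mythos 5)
Your outline reproduces the paper's reduction (use the factorizations in Theorem \ref{thm:szego-decomp} to trade outer derivatives of $\mathbb{S}$ for inner derivatives on the data), but it stops exactly where the paper's proof actually lives. The statement you defer as ``the main obstacle''---that an operator whose kernel satisfies the $\mathbb{N}$-type estimates \eqref{eq:szego-near-growth} is bounded on $L^p(\b\Omega)$---is the content of the proof of part (a), not a technicality: $W^\alpha\mathbb{N}_{0,|\alpha|}$ (and $\mathbb{F}_{0,0}$) is not an orthogonal projection, so no $L^2$ bound comes for free, and without it the Calder\'on--Zygmund machinery you invoke has nothing to run on. The paper supplies this via the David--Journ\'e--Semmes $T(1)$ theorem: restricted boundedness is checked on normalized bump functions built from the smooth quasi-distance $d^\ast$ of Lemma \ref{lem:metric-smooth}, with the near-diagonal contribution controlled by writing $\mathbb{N}_{0,0}=\mathbb{N}_{0,1}Z$, and $\mathbb{N}[1],\mathbb{N}^\ast[1]\in BMO(\b\Omega)$ is verified by pairing against $H^1$ atoms and integrating by parts through the same factorization. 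Since your proposal only names a ``$T(1)$-type argument or a direct estimate'' without executing either, the core of part (a) is missing. Incidentally, your proposed symmetric splitting $K\approx K'\approx k/2$ for $\mathbb{F}$ (with commutators producing $T$) is unnecessary and would need separate justification: for $|\alpha|\geq 1$ the kernel of $W^\alpha\mathbb{F}$ is already uniformly in $L^1\cap L^\infty$ in each variable by \eqref{eq:szego-far-growth}, so Minkowski/Schur gives $L^p$-boundedness directly, with no derivatives moved onto $u$ at all; only the underived piece $\mathbb{F}_{0,0}$ needs the $T(1)$ argument.

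Part (b) has the same kind of gap. The assertion that $\mathbb{N}$ ``preserves $\Gamma_\alpha$ by the known mapping theory'' of Calder\'on--Zygmund operators is not available off the shelf: H\"older boundedness of a singular integral requires a cancellation hypothesis (a $T(1)$-type condition or an explicit integration-by-parts mechanism), and verifying it here is precisely the work. The paper reduces to non-integer $0<\alpha<1$ by interpolation, fixes a bump $\eta$ on $B_d(\mb{z}_0,5\delta_0)$, and estimates $F(\mb{z})=\mathbb{T}[f-f(\mb{z})\eta](\mb{z})+f(\mb{z})\mathbb{T}[\eta](\mb{z})$ by hand, using $\mathbb{T}=\mathbb{T}_{0,1}\circ Z$ to integrate by parts and tracking the $\delta_0$-dependence to get $\|F\|_{\Gamma_\alpha}\lesssim(1+\delta_0^\alpha)\|f\|_{\Gamma_\alpha}$; both $\mathbb{N}$- and $\mathbb{F}$-type pieces are treated through this single scheme, since after factorization everything satisfies $\mathbb{N}$-type bounds. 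Your plan to handle $\mathbb{F}$ by Corollary \ref{cor:szego-cancellation} also cannot produce the H\"older seminorm: that corollary gives sup-norm cancellation estimates for $|\alpha|\leq 2$ on a ball, not the difference bounds $|F(\mb{z})-F(\mb{\zeta})|\lesssim d(\mb{z},\mb{\zeta})^\alpha$ needed for membership in $\Gamma_\alpha(\b\Omega)$, nor the $\Gamma_1$ decomposition needed at integer exponents. So in both parts the architecture is right, but the decisive estimates are asserted rather than proved.
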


The spaces $NL_k^p({\rm b}\Omega)$ and $\Gamma_\alpha(U)$ are defined in detail in Section \ref{sec:notation}.

The regularity of $\mathbb{S}$ and $[\mathbb{S}]$ on domains with smooth, finite-type boundary has been extensively studied, and is well-understood in situations where the domain is bounded and the geometry is well-behaved.
When $\Omega\Subset \mathbb{C}^2$ is a smoothly bounded weakly pseudoconvex domain of finite-type, Nagel, Rosay, Stein, and Wainger \cite{NagelRosaySteinWainger1989} showed that $[\mathbb{S}](\mb{z},\mb{w})$ is smooth on $$(\bar{\Omega}\times{\rm  b}\Omega)\backslash \lb (\mb{z},\mb{w})\in \b\Omega\times\b\Omega \ :\ \mb{z}=\mb{w}\rb$$ and satisfies estimates similar to those in Corollaries \ref{cor:szego-growth} and \ref{cor:szego-cancellation}, and they obtained results analogous to Theorem \ref{thm:szego-mapping}.
When $\Omega\Subset \mathbb{C}^n$ ($n\geq 3$), similar results were proved by Kor\'anyi and V\'agi \cite{KoranyiVagi1969} on the unit ball, Stein \cite{Stein1972} on strongly pseudoconvex domains, Fefferman, Kohn, and Machedon \cite{FeffermanKohnMachedon1990} on diagonalizable domains, McNeal and Stein \cite{McNealStein1997} on convex domains, and Koenig \cite{Koenig2002} when the Levi form has pointwise-comparable eigenvalues. This culminated in the work of Charpentier and Dupain \cite{CharpentierDupain2014} for geometrically separated domains, which contains all of the previously mentioned cases.

When $\Omega=\{\mb{z}\in \mathbb{C}^n\ :\ \Im(z_n)>P(z_1,\ldots,z_{n-1})\}$ is an unbounded model domain, our knowledge is essentially restricted to cases where we either have an explicit formula for $[\mathbb{S}]$, or where $P$ is a polynomial. 
When $n=2$, explicit formulas for the Szeg\H{o} kernel were obtained by Greiner and Stein \cite{GreinerStein1978} when $P(z)=|z|^{2k}$, by Nagel \cite{Nagel1986} when $P(z)=b(\Re(z))$ is a convex function, and by Haslinger \cite{Haslinger1994} for $P(z)=|z|^a$, $a\geq 2$.
When $n=3$, similar formulas were obtained by Francsics and Hanges \cite{FrancsicsHanges1995}.
Several authors have leveraged these formulas to answer various questions related to the Szeg\H{o} projection and kernel; for examples, see \cite{Kang1989,ChristGeller1992,Haslinger1995,Kamimoto2001,HalfpapNagelWainger2010,GilliamHalfpap2014}.

In the special case where $P$ is a subharmonic, nonharmonic polynomial, full estimates of the type given in Corollary \ref{cor:szego-growth}, Corollary \ref{cor:szego-cancellation}, and Theorem \ref{thm:szego-mapping} were proved for $n=2$ in \cite{NagelRosaySteinWainger1988,NagelRosaySteinWainger1989}, while limited results for special examples in the case $n\geq 3$ are also known \cite{FollandStein1974,Machedon1988,Benguria2017}.
The critical fact in this case is that the class of polynomial model domains in $\mathbb{C}^2$ is highly amenable to study because it is homogeneous, in the sense that there is a large family of affine non-isotropic dilations of $\mathbb{C}^2$ that preserve it.
One is therefore able to effectively `normalize' a polynomial domain by translating any large- or small-scale data to unit scale.
The class of such domains (for a fixed degree $m$) is parametrized by a compact set, which plays a big role in the analysis.

Once one breaks this homogeneity, though, many of the standard techniques fall apart.
As observed by McNeal in \cite{McNeal1994}, except in certain special situations these scaling arguments do not suitably generalize to domains in higher dimensions, even for polynomial domains. For large classes of smooth finite-type pseudoconvex subdomains of $\mathbb{C}^n$ for $n\geq 3$, the standard scaling techniques destroy either the smoothness of the boundary or the finite-type assumption.
This has been a major obstruction to the study of the Szeg\H{o} projection (and the $\bar{\partial}_b$-problem in general) in higher dimensions.

UFT domains (for $m>2$) furnish a situation where the standard scaling arguments fail, and our main task in this work is to develop techniques for studying the Szeg\H{o} kernel in the absence of homogeneity.
We accomplish this by extending an idea of Raich \cite{Raich2006b}, who explored the link between non-isotropic smoothing operators on polynomial model domains in $\mathbb{C}^2$ and one-parameter families of operators on $\mathbb{C}$ which satisfy uniform estimates.
The parameter here comes from taking the partial Fourier transform in the $\Re(z_2)$-variable, and can be thought of as decoupling the operator $\bar{Z}$ into a family of operators on $\mathbb{C}$ which capture a single scale of the $\Re(z_2)$-variable.
In our case, we tie $\mathbb{S}$ to a one-parameter family of weighted $\bar{\partial}$ operators on $\mathbb{C}$. 
We then build off of the work of Christ \cite{Christ1991a} to estimate each operator in the resulting family.
These estimates are then pieced together with an inverse partial Fourier transform.

This work is an extension of my Ph.D. thesis at the University of Wisconsin - Madison, and was partially supported by NSF Grant No. 1147523 - RTG: Analysis and Applications. 
It is a great pleasure to thank my advisor, Alexander Nagel, for his support and guidance throughout this project.
I would also like to thank the referee, whose suggestions greatly improved the paper.

\subsection{Examples}
\label{sec:examples}

Before diving into the argument, let's pause to explore a few basic classes of UFT domains. 

\begin{example}{\rm 
		Perhaps the most basic example of a UFT domain is the upper half-space
		$$U^1 = \{ \mb{z}\in\mathbb{C}^2\ :\ \Im(z_2)>|z|^2\},$$
		the boundary of which is the one-dimensional Heisenberg group $\mathbb{H}^1$. Here $P(z)=|z|^2$ and $h(z)\equiv 4$. In Remark \ref{rem:kernels-heisenberg} we explain how the fact that $h(z)$ is constant allows us to replace estimate (\ref{eq:szego-far-growth}) with
		$$|T^N Z_{\mb{z}}^\alpha (Z_{\mb{w}}^{\beta})^{\ast}[\mathbb{F}_{K,K'}](\mb{z},\mb{w})|\leq C \min\Bigg(1,\frac{d(\mb{z},\mb{w})^{K+K'-|\alpha|-|\beta|}\Lambda(\mb{z},d(\mb{z},\mb{w}))^{-N}}{|B_d(\mb{z},d(\mb{z},\mb{w}))|}\Bigg),$$
		and relax the restriction on $K+K'$ in part (b) of Theorem \ref{thm:szego-decomp} to $0\leq K+K'<|\alpha|+|\beta|+2N+4$.  This in turn replaces inequality (\ref{eq:szego-growth}) with $$|T^N Z_{\mb{z}}^\alpha (Z_{\mb{w}}^\beta)^\ast [\mathbb{S}](\mb{z},\mb{w})|\leq C\frac{d(\mb{z},\mb{w})^{-|\alpha|-|\beta|} \Lambda(\mb{z},d(\mb{z},\mb{w}))^{-N}}{|B_d(\mb{z},d(\mb{z},\mb{w}))|},\qquad N,|\alpha|,|\beta|\geq 0,$$
		which are the known size estimates for $\mathbb{S}$ on $\mathbb{H}^1$; see for example \cite{NagelRosaySteinWainger1988}. Similarly, the result in Corollary \ref{cor:szego-cancellation} holds for all $|\alpha|\geq 0$.}\end{example}

Up to adding a degree 2 harmonic polynomial and scaling, $P(z)=|z|^2$ is the only subharmonic, non-harmonic polynomial that satisfies hypothesis (H2) (and therefore yields a UFT domain). 
To see the richness of the class UFT domains we therefore need to consider general subharmonic functions $P(z)$ for which $h(z)$ satisfies (H1)-(H3).
We reduce our search for such functions $P(z)$ to a search for $h(z)=\Delta P(z)$ by noting that for a given function $h(z)$ satisfying (H2), (H3) is equivalent to the existence of a `nice' class of subharmonic potentials $P(z)$ for $h(z)$.

\begin{proposition}\label{prop:potentials}{ Let $h:\mathbb{C}\to \mathbb{R}$ be a non-negative, smooth function with\newline $\|h\|_{C^k(\mathbb{C})}<+\infty$ for $k=0,1,2,\ldots.$ Then the following are equivalent:
		\begin{itemize}
			\item[(a)] $\displaystyle\sup_{\zeta\in\mathbb{C}}\ \sup_{r\in [1,+\infty)} \Big| \int_{1\leq |\eta|\leq r} \frac{h(\zeta+\eta)}{\eta^2} dm(\eta)\Big| = A_0<+\infty$.
			\item[(b)] There exist constants $A_1,A_2,\ldots$ such that for every fixed $\zeta\in \mathbb{C}$ there exists $P:\mathbb{C}\to \mathbb{R}$, with $P(0)=0$, such that
			\subitem(i) $\Delta P(z)=h(\zeta+z)$ for all $z\in \mathbb{C}$,
			\subitem(ii) $|\nabla P(z)| \leq A_1|z|$,
			\subitem(iii) $\|\nabla^k P\|_\infty \leq A_k$ for $k=2,3,\ldots.$
		\end{itemize}
	}\end{proposition}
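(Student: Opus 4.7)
The plan is to prove the equivalence by handling the two implications separately; the bulk of the work lies in (a) $\Rightarrow$ (b). For (b) $\Rightarrow$ (a), fix $\zeta$ and let $P$ be the potential given by the hypothesis. Writing $\Delta = 4\partial_\eta \partial_{\bar\eta}$ and using that $\eta^{-2}$ is holomorphic on the annulus $A_r = \{1\le|\eta|\le r\}$, we have the divergence-form identity
\begin{equation*}
\frac{h(\zeta+\eta)}{\eta^2} \;=\; \frac{\Delta P(\eta)}{\eta^2} \;=\; 4\,\partial_{\bar\eta}\!\left(\frac{\partial_\eta P(\eta)}{\eta^2}\right).
\end{equation*}
Stokes' theorem then converts the integral in (a) into two circle integrals of $\partial_\eta P/\eta^2$ over $|\eta|=1$ and $|\eta|=r$, and the bound $|\partial_\eta P(\eta)| = \tfrac12|\nabla P(\eta)| \le \tfrac{A_1}{2}|\eta|$ from (b)(ii) controls each boundary integral by a universal multiple of $A_1$, uniformly in $r$ and $\zeta$.

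For (a) $\Rightarrow$ (b) I would construct the potential explicitly via a regularized logarithmic kernel. Writing $h_\zeta(w) = h(\zeta+w)$, set
\begin{equation*}
P_0(z) \;=\; \frac{1}{2\pi}\int_{\mathbb{C}} \Big( \log|z-w| - \log|w| + \Re(z/w)\,\chi_{|w|\ge 1}(w) \Big)\, h_\zeta(w)\, dm(w),
\end{equation*}
and then $P = P_0 - \Re(az)$ with $a := 2\partial_z P_0(0)$. Each correction term is harmonic in $z$, so $\Delta P = h_\zeta$; and by design $P(0)=0$ and $\nabla P(0)=0$. Taylor expanding $\log|z-w| = \log|w| - \Re(z/w) - \tfrac12 \Re((z/w)^2) + O(|z/w|^3)$ for $|w|\gg|z|$ shows the integrand is $O(|z|^2/|w|^2)$ at infinity, so $P_0$ converges absolutely and (b)(i) holds.

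The main obstacle is then establishing (b)(iii), from which (b)(ii) follows by the fundamental theorem of calculus since $\nabla P(0) = 0$. The identity $\partial_z \partial_{\bar z} P = h_\zeta/4$ together with (H2) controls every mixed derivative $\partial_z^j \partial_{\bar z}^{k-j} P$ with both $j,k-j \ge 1$. The delicate case is the purely holomorphic derivative; differentiating under the integral in the principal-value sense gives
\begin{equation*}
\partial_z^2 P(z) \;=\; -\frac{1}{4\pi}\,\mathrm{p.v.}\!\int_{\mathbb{C}} \frac{h_\zeta(w)}{(z-w)^2}\, dm(w),
\end{equation*}
which after the substitution $w = z+\eta$ is precisely the Beurling-type integral in (a) centered at $\zeta+z$. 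Splitting at $|\eta|=1$, the inner piece uses $h_\zeta(z+\eta) = h_\zeta(z) + O(|\eta|)$ together with the cancellation $\int_{\epsilon < |\eta|<1} \eta^{-2}\, dm = 0$ to reduce to a bound controlled by $\|\nabla h\|_\infty$, while the outer piece is controlled directly by $A_0$ from (a). For $k \ge 3$ the kernel $(z-w)^{-k}$ is integrable at infinity, and higher-order Taylor expansion near the diagonal dispatches the near part without further appeal to (a). The core difficulty, and the entire reason condition (a) is imposed, is to upgrade the classical BMO estimate for the Beurling transform of a bounded function to the uniform $L^\infty$ bound needed for $\partial_z^2 P$.
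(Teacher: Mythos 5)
Your proof of (b) $\Rightarrow$ (a) is correct and takes a genuinely different route from the paper's. The paper never argues directly with the arbitrary potential $P$ furnished by (b): it constructs the explicit potential $\tilde{P}$ from the kernels $K_1,K_2$, shows $|\nabla \tilde{P}(z)|\lesssim |z|\log(|z|+2)$, runs a Liouville-type argument to conclude that $P-\tilde{P}$ is a quadratic harmonic polynomial so that $\tilde{P}$ itself inherits the linear gradient bound, and only then reads off (a) from the explicit formula for $\partial_z\tilde{P}$ by isolating the term $z\int_{1\leq|\eta|\leq |z|/3} h(\eta)\eta^{-2}dm(\eta)$. Your route --- writing $h(\zeta+\eta)\eta^{-2}=4\partial_{\bar\eta}\big(\eta^{-2}\partial_\eta P(\eta)\big)$ on the annulus and applying the complex Green formula, so that the two circle integrals are each $O(A_1)$ because $|\partial_\eta P(\eta)|\lesssim A_1|\eta|$ --- works with the hypothesized $P$ directly (which is smooth by elliptic regularity, so Green's theorem applies), is uniform in $r$ and $\zeta$, and is shorter than the paper's argument.

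The direction (a) $\Rightarrow$ (b), however, has a genuine gap at the first step: the integral defining $P_0$ does not converge. After subtracting only $\log|w|-\Re(z/w)$ in the far field, the integrand there is $-\tfrac12\Re\big((z/w)^2\big)h_\zeta(w)+O\big(|z|^3|w|^{-3}\big)$, and $|w|^{-2}$ is not integrable at infinity in the plane, so absolute convergence fails whenever $h$ does not decay (already for $h\equiv 1$); the claim ``$O(|z|^2/|w|^2)$, hence absolutely convergent'' is exactly where this breaks. You also cannot fall back on an improper limit over expanding disks: the problematic piece is $-\tfrac12\Re\big(z^2\int_{1\leq|w|\leq r} h_\zeta(w)w^{-2}dm(w)\big)$, and hypothesis (a) gives only uniform boundedness of these truncated integrals, not existence of the limit as $r\to+\infty$. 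This is precisely why the paper's far-field kernel $K_2$ subtracts the Taylor expansion of $\log|z-\eta|$ through second order: the extra harmonic term $\tfrac12\Re\big((z/\eta)^2\big)$ yields $|K_2(z,\eta)|\leq \tfrac23|z/\eta|^3$, hence absolute convergence, while its $z$-derivative produces the $z/\eta^2$ term that (a) is designed to control. Once the kernel is corrected, the rest of your plan (reduce to the pure derivatives $\partial_z^k P$ via $\partial_z\partial_{\bar z}P=h_\zeta/4$ and (H2); for $k=2$ split near/far, using the vanishing of $\int \eta^{-2}dm$ over annuli and $\|h\|_{C^1}$ near the singularity and (a) for the far part; easy decay for $k\geq 3$; deduce (ii) from (iii) after normalizing $\nabla P(0)=0$) is sound and matches the paper's mechanism, with one caveat: the clean principal-value formula for $\partial_z^2P$ is not available, since the far parts of $-\int h_\zeta(w)(z-w)^{-2}dm(w)$ and $\int_{|\eta|\geq 1}h_\zeta(\eta)\eta^{-2}dm(\eta)$ are only conditionally defined; the estimate must be phrased, as in the paper, in terms of truncated annular integrals bounded uniformly in the truncation by $A_0$.
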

	
We prove of Proposition \ref{prop:potentials} in Section \ref{sec:normalization-potentials}.
As an immediate application, we identify two classes of functions $h(z)$ that satisfy (H3).
	
\begin{proposition}\label{prop:potentialsexamples}Let $h:\mathbb{C}\to [0,+\infty)$ satisfy (H2). Then $h$ satisfies (H3) if either
	\begin{itemize}
		\item[(a)] $h(z)=h(\Re(z))$ for all $z\in \mathbb{C}$ (i.e. if $\Omega$ is a tube domain), or
		\item[(b)] There exist constants $A\geq 0$, $B>0$, and $C>0$ so that, uniformly in $z\in \mathbb{C}$ and $r\geq 1$, $$\int_{|\eta|\leq r} |h(\eta+z)-A|dm(\eta)\leq Br^{2-C}.$$
	\end{itemize}
\end{proposition}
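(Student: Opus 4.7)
The plan is to verify (H3) directly in each case, leveraging a different source of cancellation in the kernel $1/\eta^2$.

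For part (a), since $h(\eta+z)$ depends only on $\Re\eta$, I would apply Fubini and integrate first in $y=\Im\eta$ with $x=\Re\eta$ held fixed. The antiderivative of $(x+iy)^{-2}$ in $y$ is $i/(x+iy)$. Evaluating over the range in $y$ cut out by $\{1\leq|\eta|\leq r\}$---a single symmetric interval if $1\leq|x|\leq r$, two symmetric intervals if $|x|<1$---gives an inner integral equal to $2\sqrt{r^2-x^2}/r^2 - 2\sqrt{1-x^2}\,\chi_{[-1,1]}(x)$. Therefore
\begin{align*}
\int_{1\leq|\eta|\leq r}\frac{h(\Re z+\Re\eta)}{\eta^2}\,dm(\eta)&=\frac{2}{r^2}\int_{-r}^{r}h(\Re z+x)\sqrt{r^2-x^2}\,dx\\
&\qquad-2\int_{-1}^{1}h(\Re z+x)\sqrt{1-x^2}\,dx,
\end{align*}
and each term is bounded in absolute value by $\pi\|h\|_\infty$ via the semicircle identity $\int_{-R}^{R}\sqrt{R^2-x^2}\,dx=\pi R^2/2$, giving a uniform bound of $2\pi\|h\|_\infty$.

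For part (b), I would write $h(\eta+z)=A+(h(\eta+z)-A)$ and split the integral. The constant part vanishes by angular cancellation:
\[\int_{1\leq|\eta|\leq r}\frac{A}{\eta^2}\,dm(\eta)=A\int_1^r\frac{d\rho}{\rho}\int_0^{2\pi}e^{-2i\theta}\,d\theta=0.\]
For the oscillatory part, a dyadic annular decomposition yields
\[\int_{1\leq|\eta|\leq r}\frac{|h(\eta+z)-A|}{|\eta|^2}\,dm(\eta)\leq\sum_{k=0}^{\lceil\log_2 r\rceil}2^{-2k}\int_{|\eta|\leq 2^{k+1}}|h(\eta+z)-A|\,dm(\eta),\]
and applying the hypothesis with $r$ replaced by $2^{k+1}$ bounds this by $B\,2^{2-C}\sum_{k=0}^{\infty}2^{-kC}$, which is finite since $C>0$.

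The essential observation in (a) is that the $y$-integral for fixed $x$ telescopes in a way that kills the apparent growth of $1/\eta^2$ and leaves only harmless one-dimensional integrals; the slight bookkeeping nuisance is the case split between $|x|<1$ and $|x|\geq 1$. For (b) the main point is that the natural angular cancellation of $1/\eta^2$ reduces matters to the $L^1$ oscillation bound supplied by the hypothesis. Neither argument requires regularity of $h$ beyond pointwise boundedness, and I do not anticipate a serious obstruction in carrying either out.
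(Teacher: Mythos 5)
Your proof is correct. For part (b) you follow essentially the same route as the paper: the angular cancellation $\int_{1\leq|\eta|\leq r}\eta^{-2}\,dm(\eta)=0$ to subtract the constant $A$, a dyadic annular decomposition, and the hypothesis applied at radii $2^{k+1}$, producing a geometric series in $2^{-kC}$; the two arguments differ only in bookkeeping constants. For part (a), however, you take a genuinely different and more elementary path. The paper never touches the kernel integral directly in this case: it exhibits the explicit potential $P(x+iy)=\int_0^x\int_0^\rho h(s+\Re(w))\,ds\,d\rho$, checks that it satisfies the conditions in part (b) of Proposition \ref{prop:potentials}, and then invokes the equivalence $(b)\Rightarrow(a)$ of that proposition. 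You instead compute the integral outright: Fubini (legitimate, since $|\eta|^{-2}\leq 1$ on the annulus and $h$ is bounded) plus the $y$-antiderivative $i/(x+iy)$ of $(x+iy)^{-2}$, and your evaluation of the inner integral as $2\sqrt{r^2-x^2}/r^2-2\sqrt{1-x^2}\,\chi_{[-1,1]}(x)$ checks out in both the cases $|x|\geq 1$ and $|x|<1$, giving the clean bound $2\pi\|h\|_{C^0}$ via the semicircle identity. What your route buys is self-containedness and an explicit constant depending only on $\|h\|_{C^0}$, with no appeal to Proposition \ref{prop:potentials}; what the paper's route buys is economy of means (it reuses the equivalence it has already proved) and, as a by-product, the concrete well-behaved potential for tube domains, which is the object of independent interest in the normalization arguments later in the paper.
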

\begin{proof}
	To show that (a) implies (H3), we merely note that if $\zeta\in \mathbb{C}$, then 
	$$P(z)=P(x+iy) := \int_0^x \int_0^r h(s+\Re(\zeta))dsdr$$ satisfies (b) of Proposition \ref{prop:potentials}, and therefore $h$ satisfies (H3).
		
	On the other hand, if condition (b) holds, then note that for $K$ such that $2^{K-1}\leq r\leq 2^K$,
	\begin{align*}
		\Big| \int_{1\leq |\eta|\leq r} \frac{h(\eta+z)}{\eta^2}dm(\eta)\Big| & = \Big| \int_{1\leq |\eta|\leq r} \frac{h(\eta+z)-A}{\eta^2}dm(\eta)\Big| \\
		& \leq  \int_{1\leq |\eta|\leq r} \frac{|h(\eta+z)-A|}{|\eta|^2}dm(\eta) \\
		& \leq  \sum_{k=1}^K \int_{2^{k-1}\leq |\eta|\leq 2^k} \frac{|h(\eta+z)-A|}{|\eta|^2}dm(\eta) \\
		& \leq  \sum_{k=1}^K 2^{-2(k-1)}\int_{2^{k-1}\leq |\eta|\leq 2^k} |h(\eta+z)-A|dm(\eta) \\
		& \leq   4\sum_{k=1}^K 2^{-2k}\int_{|\eta|\leq 2^k} |h(\eta+z)-A|dm(\eta) \\
		& \leq  4\sum_{k=1}^K 2^{-2k} B(2^k)^{2-C} \\
		& \leq  4B \sum_{k=1}^K 2^{-kC},
	\end{align*}
	which is bounded by a constant that depends only on $B$ and $C$ (and not on $r$ or $z$). In the first line above we used the fact that for $r\geq 1$,
	$$\int_{1\leq |\eta|\leq r} \frac{1}{\eta^2}dm(\eta) = \int_1^r \int_0^{2\pi} \frac{1}{s}e^{-2i\theta}d\theta ds = 0.$$
	This shows that (H3) holds, and we are done.
\end{proof}
	
\begin{remark}{\rm 
			The interesting case here is $C\leq 2$, because $C>2$ implies that $h$ is constant.
	}\end{remark}
		
\begin{example}{\rm Condition (b) in Proposition \ref{prop:potentialsexamples} is a quantitative way of saying that $h$ is well approximated on large scales by a constant. This holds, for example, when $h(z)=\chi(\Delta Q(z))$, where $Q$ is a subharmonic, non-harmonic polynomial and $\chi$ is a smooth, non-decreasing function with $\chi(t)\equiv t$ for $t\leq 1$ and $\chi(t)\equiv \frac{3}{2}$ for $t\geq 2$. Because subharmonic, non-harmonic polynomials $Q(z)$ satisfy (H1), such functions $\chi(\Delta Q(z))$ give rise to UFT domains.
	}\end{example}

\begin{example}\label{ex:examples-tube}{\rm 
	The Szeg\H{o} kernel for tube domains of the form $$\Omega=\{ \mb{z}\in\mathbb{C}^2\ :\ \Im(z_2)>b(\Re(z)),\quad b:\mathbb{R}\to \mathbb{R}\ \mbox{convex}\}$$ are particularly amenable to study due to the translation invariance of $\Omega$ in the $\Im(z)$-direction.
	This invariance was exploited by Nagel in \cite{Nagel1986}, who showed that for such $\Omega$ the Szeg\H{o} kernel has the form
	\begin{equation}\label{szegoformula}
		[\mathbb{S}](\mb{z},\mb{w}) = \frac{1}{4\pi^2} \displaystyle\int_0^{+\infty} \displaystyle\int_\mathbb{R} \displaystyle\frac{e^{i\tau(z_2-\bar{w}_2)+\eta(z+\bar{w})}}{\displaystyle\int_\mathbb{R} e^{2[\eta \theta - \tau b(\theta)]}d\theta}d\eta d\tau.
	\end{equation}
	The explicit nature of this formula has facilitated the study of the Szeg\H{o} kernel on tube domains.
	For a discussion on the history of this formula, see \cite{HalfpapNagelWainger2010}.
						
	Formula (\ref{szegoformula}) allows us to exhibit the sharpness claims in Corollary \ref{cor:szego-growth} by explicitly studying $[\mathbb{S}]$ for one particular (and rather nicely behaved) convex function $b(x)$ that satisfies (H1) and (H2). Indeed, if $b:\mathbb{R}\to [0,+\infty)$ is chosen so that
	\begin{itemize}
		\item $b(0)=b'(0)=0$, 
		\item $b''(x) = e^{x-n}$ in a neighborhood of $x=n$, for all $n\in \mathbb{Z}$, and
		\item $0<a\leq b''(x)\leq A<+\infty$ for some constants $a,A$ uniformly in $x\in \mathbb{R}$,
	\end{itemize}
	then  for $k\geq 0$ there exists $C=C(k)>0$ so that if $\mb{z}_n=(n,ib(n))$, then
	$$|\bar{Z}^k Z[\mathbb{S}](\mb{z}_n,\mb{z}_{-n})| \geq C\frac{d(\mb{z}_n,\mb{z}_{-n})^{-2}}{|B_d(\mb{z}_n,d(\mb{z}_n,\mb{z}_{-n}))|},\quad n\in \mathbb{Z}.$$
	Details are given in Section \ref{sec:szego-growth}.
	}\end{example}

\begin{remark}{\rm 
		There exist functions $h$ that satisfy (H1) and (H2), but for which (H3) does not hold, and therefore the conclusions of Proposition \ref{prop:potentials} do not hold.
		
		For one such example, consider the smooth function $h(z)$ defined by
		$$h(re^{i\theta})=1+\chi(r)f(\theta),$$ 
		where $f:[-\pi,\pi]\rightarrow [0,1]$ is smooth and supported in $[-\frac{1}{100},\frac{1}{100}]$ with $f(0)=1$, and $\chi$ is a smooth, non-decreasing function with $\chi(r)\equiv 0$ if $r\leq 1$ and $\chi(r)\equiv 1$ for $r\geq 2$.
		For large $|z|$ the arguments in the proof of Proposition \ref{prop:potentials} show that, for a particular subharmonic $\tilde{P}$ with $\Delta \tilde{P}=h$, $|\nabla{\tilde{P}}|\approx |z|\log|z|$. 
		It follows that the estimates in part (b) of that proposition fail to hold for every subharmonic $P$ with $\Delta P=h$.
	}\end{remark}

\subsection{Definitions and Notation}

\label{sec:notation}

As in the introduction, let $\Omega=\{ \mb{z}\in\mathbb{C}^2\ :\ \Im(z_2)>P(z)\}$, where $P:\mathbb{C}\to \mathbb{R}$ is smooth, subharmonic, and non-harmonic.
The space of tangential antiholomorphic vector fields $T^{0,1}({\rm b}\Omega)$ on ${\rm b}\Omega$ is spanned by $\bar{Z}_{{\rm b}\Omega}=\partial_{\bar{z}}-2i P_{\bar{z}}(z)\partial_{\bar{z}_2}$, while the space of tangential holomorphic vector fields $T^{1,0}({\rm b}\Omega)$ on ${\rm b}\Omega$ is spanned by $Z_{{\rm b}\Omega}=\partial_z + 2i P_z(z)\partial_{z_2}.$
When no confusion can arise, we will omit the subscript ${\rm b}\Omega$.

We identify $(z,t+iP(z))\in {\rm b}\Omega$ with $(z,t)\in \mathbb{C}\times \mathbb{R}$ via the diffeomorphism $\Pi:{\rm b}\Omega\to \mathbb{C}\times\mathbb{R}$ given by $\Pi(z,z_2)=(z,\Re(z_2))$.
Under this identification, $\bar{Z}$ and $Z$ become, respectively, $$\bar{Z}=\partial_{\bar{z}}-iP_{\bar{z}}(z)\partial_t\quad \mbox{and}\quad Z=\partial_z + i P_z(z)\partial_t.$$
Give $\Omega$ the standard Lebesgue measure $dm_\Omega=dm(z,z_2)$ that it receives as a subset of $\mathbb{C}^2$, and
${\rm b}\Omega$ the Lebesgue measure $dm_{{\rm b}\Omega}=dm(z,\Re(z_2))=\Pi^\ast dm(z,t)$ that it receives from its identification with $\mathbb{C}\times\mathbb{R}.$ 
%This is not the same as the surface measure $d\sigma$ that it receives as a hypersurface in $\mathbb{C}^2$, and will facilitate many of our computations.
As above, we will omit the subscript when no confusion can arise.

Letting $\mathcal{O}(\Omega)$ denote the space of holomorphic functions on $\Omega$, we define the Hardy Space
$$\mathcal{H}^2(\Omega)=\{ F\in \mathcal{O}(\Omega)\ :\ \|F\|^2_{\mathcal{H}^2(\Omega)} = \sup_{\epsilon>0} \int_{\mathbb{C}\times\mathbb{R}} |F_\epsilon (z,t)|^2 dm(z,t) < +\infty\},$$
where $F_\epsilon(z,t):= F(z,t+iP(z)+i\epsilon)$.
We can identify $\mathcal{H}^2(\Omega)$ with the (closed) subspace of $L^2({\rm b}\Omega)$ defined by $$B(\b\Omega)=\{ f\in L^2({\rm b}\Omega)\ :\ \bar{Z}f\equiv 0\ \mbox{as distributions}\},$$
the $L^2(\b\Omega)-$nullspace of $\bar{Z}$, and therefore view $$\mathbb{S}:L^2({\rm b}\Omega)\to B(\b\Omega)\cong \mathcal{H}^2(\Omega)$$ as the orthogonal projection of $L^2({\rm b}\Omega)$ onto the null-space of $\bar{Z}_{{\rm b}\Omega}$; see Appendix \ref{sec:sz} for more details.

For a function $f:\mathbb{C}\to \mathbb{C}$, the symbol $\nabla f$ will denote a generic first-order partial derivative of $f$, while $\nabla^k f$ denotes a generic $k$-th order partial derivative of $f$.
For $|\eta|=1$ we write $\nabla_\eta f$ to denote the derivative of $f$ in the direction of $\eta$.

For $1<p<+\infty$, we say that $f\in NL^p_k({\rm b}\Omega)$ if 
$$\displaystyle \|f\|_{NL^p_k({\rm b}\Omega)}:=\sum_{0\leq|\alpha|\leq k} \|W^\alpha f\|_{L^p({\rm b}\Omega)} <+\infty,$$
where $W^\alpha$ is an $|\alpha|$-order mixed derivative in the vector fields $Z$ and $\bar{Z}$.

For $U\subset {\rm b}\Omega$, the non-isotropic H\"older space $\Gamma_\alpha(U)$ associated to $U$ is defined as follows.
For $0<\alpha<1$ and $k=0,1,2,\ldots$,
\begin{align*}
	\|f\|_{\Gamma_{\alpha+k}(U)}  :=   \inf \{ A\ :\ \mbox{for every}\ \mb{z},\mb{w}\in U\ &\mbox{and}\ |\beta|\leq k,\ \|W^\beta f\|_{L^\infty(U)}\leq A\ \mbox{and}\ \\
	 & |W^\beta f(\mb{z})-W^\beta f(\mb{w})|\leq A d(\mb{z},\mb{w})^\alpha\}.\end{align*}
Now, say that $f\in \Gamma_1(U)$ if 
\begin{equation}\label{eq:gamma1}
	\mbox{we can write}\ \ f=\displaystyle\sum_{k=0}^{+\infty} f_k\quad \mbox{with}\quad \|W^\beta f_k\|_{L^\infty(U)} \leq A 2^{-k}2^{|\beta|k}\quad \mbox{for}\quad |\beta|\leq 2.\end{equation}
We define 
\[\|f\|_{\Gamma_1(U)}:=\inf \{ A\ :\ (\mbox{\ref{eq:gamma1}})\ \mbox{holds}\},\]
and for integer $\alpha>1$ we define $\|f\|_{\Gamma_\alpha(U)}:= \displaystyle\sum_{|\beta|<\alpha} \|W^\beta f\|_{\Gamma_1(U)}$.

Throughout the paper, we will write $A\lesssim B$ to mean that there is a constant $0<C<+\infty$, independent of all relevant parameters, such that $A\leq CB$.
Similarly, write $A\gtrsim B$ when $B\lesssim A$, and $A\approx B$ if $A\lesssim B$ and $B\lesssim A$.

\section{Outline of the Argument}
\label{sec:outline}

In this section we describe the techniques used to prove Theorem \ref{thm:szego-decomp}, and outline the structure of the paper.

The first step in our argument is to exploit the $\Re(z_2)$-translation invariance of $\bar{Z}$ by taking the partial Fourier transform in the $\Re(z_2)$-variable (see for example \cite{Nagel1986}).
For Schwartz functions $f$ on $\mathbb{C}\times \mathbb{R}$, this is defined via
$$\hat{f}(z,\tau)=\mathcal{F}[f](z,\tau)=\int_{\mathbb{R}} e^{-2\pi i \tau t}f(z,t)dt.$$
This allows us to formally\footnote{The Szeg\H{o} projection $\mathbb{S}: L^2({\rm b}\Omega)\to B(\b\Omega)$ can be written as $\mathbb{S}=\displaystyle\lim_{\epsilon\to 0^+} \mathbb{S}^\epsilon$ in the sense of tempered distributions on ${\rm b}\Omega\times {\rm b}\Omega$, where the operators $\mathbb{S}^\epsilon:L^2({\rm b}\Omega)\to B(\b\Omega)$ are defined by $\mathbb{S}^\epsilon [f] = (\mathbb{S}[f])^\epsilon$ as in Appendix \ref{sec:sz}. 
	The Cauchy integral formula and Proposition \ref{prop:kernels-hnw24} in Appendix \ref{sec:sz} imply that the $\mathbb{S}^\epsilon$ have $C^\infty$ Schwartz kernels. 
	Indeed, we have
	$$[\mathbb{S}^\epsilon](\mb{z},\mb{w}) = \int_0^{+\infty} e^{-2\pi \epsilon\tau} e^{2\pi i (\Re(z_2)-\Re(\bar{w}_2))\tau}[\hat{\mathbb{S}}](z,w,\tau)d\tau.$$
	We will prove Theorem \ref{thm:szego-decomp} for $\mathbb{S}^\epsilon$, although all constants that appear in our estimates are independent of $\epsilon>0$. 
	The structure of our argument will allow us to obtain the results for $\mathbb{S}$ by taking $\epsilon\to 0$. For the ease of notation, however, we will omit the $\epsilon$ from all computations.} write 
$$[\mathbb{S}](\mb{z},\mb{w}) = \int_0^{+\infty} e^{2\pi i\tau (\Re(z_2)-\Re(\bar{w}_2))} [\hat{\mathbb{S}}](z,w,\tau)d\tau,$$
where $\hat{\mathbb{S}}= \mathcal{F}\circ(\Pi^{-1})^\ast\circ \mathbb{S}\circ \Pi^\ast \circ \mathcal{F}^{-1}$.
We also have
$$\hat{\bar{Z}}=\partial_{\bar{z}}+2\pi \tau P_{\bar{z}}=:\bar{D}_\tau,\quad \hat{Z}=\partial_{z}-2\pi \tau P_z =:-D_\tau.$$
We think of $\bar{D}_\tau=e^{-2\pi \tau P}\circ \bar{\partial}\circ e^{2\pi \tau P}$ as a weighted $\bar{\partial}$ operator acting on (a dense subspace of) $L^2(\mathbb{C})$, and $D_\tau=\bar{D}_\tau^\ast$ as its adjoint.
As before, we write $W_\tau^\alpha=W_{\tau,1}\cdots W_{\tau,|\alpha|}$, where $W_{\tau,i}\in \{ \bar{D}_\tau,D_\tau\}$.
Writing $\mathbb{S}_\tau:L^2(\mathbb{C})\to L^2(\mathbb{C})$ for the orthogonal projection onto the space of $L^2(\mathbb{C})$ functions annihilated by $\bar{D}_\tau$ in the sense of distributions, we are able to say (Proposition \ref{prop:link}) that
$$[\hat{\mathbb{S}}](z,w,\tau)=[\mathbb{S}_\tau](z,w),\quad \mbox{a.e.}\ (z,w,\tau)\in \mathbb{C}\times\mathbb{C}\times\mathbb{R}.$$

The next step is to analyze the operator $\mathbb{S}_\tau$ for fixed $\tau>0$. 
Here we utilize the work of Christ \cite{Christ1991a}, who studied the operators $\mathbb{G}_\tau=(\bar{D}_\tau D_\tau)^{-1}$, $\mathbb{R}_\tau=D_\tau \circ \mathbb{G}_\tau,$ $\mathbb{R}^\ast_\tau = \mathbb{G}_\tau\circ \bar{D}_\tau$, and $\mathbb{S}_\tau=I-D_\tau\circ \mathbb{G}_\tau \circ \bar{D}_\tau$ on $L^2(\mathbb{C})$ (for fixed $\tau$), and proved pointwise bounds on their Schwartz kernels in terms of a smooth function $\sigma_\tau(z)$ and a metric $\rho_\tau(z,w)$ on $\mathbb{C}$ that are intimately connected to the Carnot-Carath\'eodory metric $d$. To simplify our argument, we will replace $\rho_\tau(z,w)$ with a quasimetric $\t\rho_\tau(z,w)$ that is easier to work with; see Section \ref{sec:kernels} for the details.

In Section \ref{sec:fn} we formally define $\mathbb{F}_{K,K'}$ and $\mathbb{N}_{K,K'}$ via 
$$[\mathbb{F}_{K,K'}](\mb{z},\mb{w}) = \int_0^{+\infty} e^{2\pi i \tau (\Re(z_2)-\Re(\bar{w}_2))}\chi(\tau)[\mathbb{R}_\tau^K \mathbb{S}_\tau (\mathbb{R}_\tau^\ast)^{K'}](z,w)d\tau$$
and
$$[\mathbb{N}_{K,K'}](\mb{z},\mb{w}) = \int_0^{+\infty} e^{2\pi i \tau (\Re(z_2)-\Re(\bar{w}_2))}(1-\chi(\tau))[\mathbb{R}_\tau^K \mathbb{S}_\tau (\mathbb{R}_\tau^\ast)^{K'}](z,w)d\tau,$$
where $\chi:[0,+\infty)\to [0,1]$ is a non-increasing smooth function with $\chi(\tau)\equiv 1$ for $\tau\leq 1$ and $\chi(\tau)\equiv 0$ for $\tau\geq 2$.
These operators are densely defined in $L^2({\rm b}\Omega)$
and satisfy $\mathbb{S}=\mathbb{F}_{0,0}+\mathbb{N}_{0,0}$ and
$$\bar{Z}^K\mathbb{F}_{K,K'} Z^{K'}=\mathbb{F}_{0,0},\quad \bar{Z}^K \mathbb{N}_{K,K'} Z^{K'}=\mathbb{N}_{0,0}.$$

Theorem \ref{thm:szego-decomp} therefore requires us to prove pointwise bounds on the Schwartz kernels of operators of the form 
\begin{equation}\label{eq:genops}
	W_\tau^\alpha \mathbb{R}_\tau^K \mathbb{S}_\tau (\mathbb{R}_\tau^\ast)^{K'} W_\tau^\beta.
\end{equation}
To take advantage of the oscillatory term $e^{2\pi i (\Re(z_2)-\Re(\bar{w}_2))\tau}$ in the integrals defining $\mathbb{N}_{K,K'}$ and $\mathbb{F}_{K,K'}$, we will want to integrate by parts in $\tau$. The heart of our argument, expressed by Theorem \ref{thm:chr-der-szegoderivatives}, shows that
\begin{align*}
|T_\tau^M [W_\tau^\alpha &\mathbb{R}_\tau^K \mathbb{S}_\tau (\mathbb{R}_\tau^\ast)^{K'} W_\tau^\beta](z,w)|\\
&\lesssim \begin{cases} \tau^{-M}\sigma_\tau(w)^{-2-|\alpha|-|\beta|}e^{-\epsilon \t\rho_\tau(z,w)}\quad & \mbox{if}\ \tau\gtrsim 1,\\ 
\tau^{-M}\sigma_\tau(w)^{-2-\min(|\alpha|,2)-\min(|\beta|,2)}e^{-\epsilon \t\rho_\tau(z,w)}\quad & \mbox{if}\ \tau\lesssim 1.\end{cases}\end{align*}
Here $$T_\tau=e^{2\pi i \tau \tilde{T}(z,w)}\circ \partial_\tau \circ e^{-2\pi i \tau \tilde{T}(z,w)}=\partial_\tau -2\pi i \tilde{T}(z,w),$$
where $\displaystyle \tilde{T}(z,w)$ is related to the `twist' $T(z,w)$ in the Carnot-Carath\'eodory geometry described in Section \ref{sec:geom}, and will be chosen based on the size of $\tau$.
Standard integral estimation techniques then allow us to establish the estimates in Theorem \ref{thm:szego-decomp}.

The rest of the paper is structured as follows.
In Section \ref{sec:normalization} we use (H2) and (H3) to prove Proposition \ref{prop:potentials} and construct various biholomorphic changes of variables to simplify our computations.
After recalling and developing the necessary facts about the Carnot-Carath\'eodory metric $d(\mb{z},\mb{w})$ in Section \ref{sec:geom} and defining $\mathbb{F}_{K,K'}$ and $\mathbb{N}_{K,K'}$ in Section \ref{sec:fn}, 
in Section \ref{sec:kernels} we show how Christ's bounds are related to  (H1), (H2), the Carnot-Carath\'eodory metric on ${\rm b}\Omega$, and $\tau$.
We then use an algebraic argument to obtain pointwise bounds on the Schwartz kernels of the operators appearing in (\ref{eq:genops}).
The proof of Theorem \ref{thm:szego-decomp} is given in Section \ref{sec:main}, and the proof of Theorem \ref{thm:szego-mapping} is in Section \ref{sec:mapping}.
In Section \ref{sec:szego-growth} we prove the sharpness claim from Corollary \ref{cor:szego-growth}.
The paper concludes with the proof of  Corollary \ref{cor:szego-cancellation} in Section \ref{sec:szego-cancellation}.
There are two appendices, each containing technical results used in the argument: Appendix \ref{sec:sz} contains a discussion of the technicalities surrounding the definition and properties of $\mathbb{S}$ (building off of the discussion in \cite{HalfpapNagelWainger2010}), and contains the proof of a well-known formula relating the Szeg\H{o} and Bergman kernels for unbounded model domains which, to the author's knowledge, has not yet appeared in the literature.
Appendix \ref{sec:link} is devoted to the proof of several technical results from Section \ref{sec:fn}.

\section{Normalization}
\label{sec:normalization}

In this section we explore (H3) \emph{vis-\`a-vis} its connection to the existence of a class of biholomorphic changes of variables that normalize ${\rm b}\Omega$ near a point $\mb{w}\in {\rm b}\Omega$.
We begin with a proof of Proposition \ref{prop:potentials} in Section \ref{sec:normalization-potentials}. In Section \ref{sec:normalization-biholomorphic} we produce a family of biholomorphisms $\Phi:\mathbb{C}^2\to\mathbb{C}^2$ which `isomorphically' preserves the class of UFT domains in the sense that if $\Omega$ is a UFT domain, then $\tilde{\Omega}=\Phi(\Omega)$ is also a UFT domain that satisfies (H1)-(H3) with the same constants as does $\Omega$, and such that $\Phi$ preserves all of the relevant CR structure and integration measures involved in the problem. 

Now let $\Pi:{\rm b}\Omega \to \mathbb{C}\times \mathbb{R}$ be the diffeomorphism $(z,t)=\Pi(\mb{z})=(z,\Re(z_2))$, and suppose that $\mathbb{H}:L^2({\rm b}\Omega)\to L^2({\rm b}\Omega)$ is a bounded $\Re(z_2)$-translation invariant operator. Then the operator $$(\Pi^{-1})^\ast \circ \mathbb{H}\circ \Pi^\ast$$ is a bounded $t$-translation invariant operator on $L^2(\mathbb{C}\times\mathbb{R})$.

If we define $\mathcal{F}:L^2(\mathbb{C}\times\mathbb{R})\to L^2(\mathbb{C}\times\mathbb{R})$ to be the partial Fourier transform
$$\mathcal{F}[f](z,\tau) = \int_\mathbb{R} e^{-2\pi i \tau t} f(z,t)dt,$$ 
then the operator $\hat{\mathbb{H}}:=\mathcal{F}\circ(\Pi^{-1})^\ast \circ \mathbb{H}\circ \Pi^\ast\circ \mathcal{F}^{-1}:L^2(\mathbb{C}\times\mathbb{R})\to L^2(\mathbb{C}\times\mathbb{R})$ is given on functions by formal integration against a Schwartz kernel $[\hat{\mathbb{H}}](z,w,\tau)$ as $$\hat{\mathbb{H}}[f](z,\tau) = \int_{\mathbb{C}} [\hat{\mathbb{H}}](z,w,\tau)f(w,\tau)dm(w),\quad\ \mbox{for a.e.}\ \tau;$$
see \cite{SteinWeiss1972}.

We construct the biholomorphisms $\Phi$ so that $\mathbb{H}^{{\rm b}\tilde{\Omega}}= (\Phi^{-1})^\ast \circ \mathbb{H}\circ \Phi^\ast$ defines a bounded $\Re(\tilde{z}_2)$-translation invariant operator on $L^2({\rm b}\tilde{\Omega})$, and so we may ask how the kernels $[\hat{\mathbb{H}}^{{\rm b}\tilde\Omega}]$ and $[\hat{\mathbb{H}}]$ are related as functions on $\mathbb{C}\times\mathbb{C}\times\mathbb{R}$. This is explored in Section \ref{sec:normalization-substitution}.

\subsection{Proof of Proposition \ref{prop:potentials}}\label{sec:normalization-potentials}

		We first show that $(a)\Rightarrow (b)$. It suffices to prove (b) for $\zeta=0$, since we can then get the full result by applying (b) to $z\mapsto h(\zeta+z)$.

		To begin, we follow Section 4.6 of \cite{BerensteinGay1991} and define, for $z,\eta\in\mathbb{C}$,
		\begin{align*}
			K_1(z,\eta) & =  \frac{1}{2\pi} \Big[ \log|z-\eta|-\log|\eta|+\Re\Big(\frac{z}{\eta}\Big)\Big], \\
			K_2(z,\eta) & =  \frac{1}{2\pi} \Big[ \log|z-\eta|-\log|\eta|+\Re\Big(\frac{z}{\eta}\Big)+\frac{1}{2}\Re\Big(\Big(\frac{z}{\eta}\Big)^2\Big)\Big].
		\end{align*}
		Because $|K_2(z,\eta)|\leq \displaystyle\frac{2}{3}\Big|\frac{z}{\eta}\Big|^3$ and $h\in L^\infty(\mathbb{C})$, the integrals
		\begin{equation}\label{eq:normalization-ptilde} \tilde{P}(z)=\int_{|\eta|\leq 1} K_1(z,\eta)h(\eta)dm(\eta) + \int_{1\leq |\eta|} K_2(z,\eta)h(\eta)dm(\eta)\end{equation}
		converge for all $z\in \mathbb{C}$, and a localization argument implies that $\Delta \tilde{P}(z)=h(z)$ for all $z$, establishing (i); see \cite{BerensteinGay1991} for the details.
		
		Because $\tilde{P}(z)$ is real-valued, to prove (ii) it suffices to estimate 
		\begin{equation}\label{eq:firstpartial}
			4\pi \frac{\partial \tilde{P}}{\partial z}(z) = \int_{|\eta|\leq 1} \Big(\frac{1}{z-\eta}+\frac{1}{\eta}\Big) h(\eta)dm(\eta) + \int_{|\eta|\geq 1} \Big( \frac{1}{z-\eta}+\frac{1}{\eta} + \frac{z}{\eta^2}\Big) h(\eta)dm(\eta).
		\end{equation}
		We consider two cases.
		\smallskip
		
		\textbf{Case 1: $|z|\leq 4$}.
		
		In this case we write $h_1(\eta)=\chi(|\eta|)h(\eta)$ and $h_2(\eta)=h(\eta)-h_1(\eta)$, where
		$\chi:[0,+\infty)\to [0,1]$ is a smooth, non-increasing cut-off function with $\chi(t)\equiv 1$ for $t\leq 5$ and $\chi(t)\equiv 0$ for $t\geq 6$. Then
		\begin{align*}
			 & 4\pi \frac{\partial \tilde{P}}{\partial z}(z) \\
			 &=\  \int_\mathbb{C} \Big( \frac{1}{z-\eta} + \frac{1}{\eta}\Big) h_1(\eta)dm(\eta) + \int_{1\leq |\eta|} \frac{z}{\eta^2}h_1(\eta)dm(\eta) + \int_\mathbb{C} \frac{z^2}{(z-\eta)\eta^2}h_2(\eta)dm(\eta) \\
			  &=\ \int_\mathbb{C} \frac{h_1(\eta)-h_1(z+\eta)}{\eta} dm(\eta) + \int_{1\leq |\eta|} \frac{z}{\eta^2}h_1(\eta)dm(\eta) + \int_\mathbb{C} \frac{z^2}{(z-\eta)\eta^2}h_2(\eta)dm(\eta) \\
			  &=\ I_1 + I_2 + I_3.
		\end{align*}
		We immediately have
		$$|I_1|\leq |z|\|h_1\|_{C^1} \int_{|\eta|\leq 10} \frac{1}{|\eta|}dm(\eta)\lesssim \|h\|_{C^1}|z|$$
		and $$|I_2|\lesssim \|h\|_{C^0} |z|.$$
		When $|\eta|\geq 5$ and $|z|\leq 4$ we have $|z-\eta|\approx |\eta|$, so that
		$$|I_3|\lesssim |z|^2\|h\|_{C^0} \int_{5\leq |\eta|<+\infty} \frac{1}{|\eta|^3}dm(\eta) \lesssim \|h\|_{C^0}|z|^2\lesssim \|h\|_{C^0}|z|.$$
		This proves (ii) in Case 1.
		\medskip
		
		\textbf{Case 2: $|z|\geq 4$}.
		
		In this case we write
		\begin{align*}
			& 4\pi \frac{\partial \tilde{P}}{\partial z}(z) \\
			& =\  \int_{|\eta|\leq \frac{|z|}{3}} \frac{z}{(z-\eta)\eta} h(\eta) dm(\eta) + \int_{1\leq |\eta|\leq \frac{|z|}{3}} \frac{z}{\eta^2}h(\eta)dm(\eta) \\
			& \ \quad  + \int_{|z-\eta|\leq \frac{|z|}{3}} \frac{z^2}{(z-\eta)\eta^2} h(\eta)dm(\eta) + \int_{\frac{|z|}{3}\leq \min(|\eta|,|z-\eta|)} \frac{z^2}{(z-\eta)\eta^2}h(\eta)dm(\eta) \\
			& =\ I_1 + I_2 + I_3 + I_4.
		\end{align*}
		Condition (a) immediately implies that $|I_2|\leq A_0|z|$. 
		We estimate the other integrals as in Case 1:
		$$|I_1|\lesssim \|h\|_{C^0} \int_{1\leq |\eta|\leq \frac{|z|}{3}} \frac{1}{|\eta|} dm(\eta) \lesssim \|h\|_{C^0}|z|,$$
		$$|I_3|\lesssim \|h\|_{C^0} \int_{|z-\eta|\leq \frac{|z|}{3}} \frac{1}{|z-\eta|}dm(\eta)\lesssim \|h\|_{C^0}|z|,$$
		and
		$$|I_4|\lesssim \|h\|_{C^0} |z|^2 \int_{\frac{|z|}{3}\leq |\eta|} \frac{1}{|\eta|^3}dm(\eta)\lesssim \|h\|_{C^0}|z|,$$
		which completes the proof of (ii).
		
		We turn now to the proof of (iii), which is similar to (but more involved than) that of (ii).
		The assumption that $\|h\|_{C^k}<+\infty$, together with the fact that $\tilde{P}$ is real-valued, implies that we need only show that $\displaystyle 4\pi \frac{\partial^k \tilde{P}}{\partial z^k}(z)$ is bounded for $k\geq 2$.
		\smallskip
		
		\textbf{Case 1: $|z|\leq 4$}. 
		
		Split $h=h_1+h_2$ as in Case 1 of the proof of (ii). Then we first write
		\begin{align*}
		4\pi \frac{\partial \tilde{P}}{\partial z}(z) & =  \int_\mathbb{C} \frac{h_1(\eta)-h_1(z+\eta)}{\eta} dm(\eta) + \int_{1\leq |\eta|} \frac{z}{\eta^2}h_1(\eta)dm(\eta) \\
		& \qquad+ \int_\mathbb{C} \Big(\frac{1}{z-\eta} + \frac{1}{\eta} + \frac{z}{\eta^2}\Big)h_2(\eta)dm(\eta).\end{align*}
		When $k=2$ we have
		\begin{align*}
		4\pi \frac{\partial^2 \tilde{P}}{\partial z^2}(z)& = \int_\mathbb{C} \frac{-\frac{\partial h_1}{\partial \eta}(z+\eta)}{\eta} dm(\eta) + \int_{1\leq |\eta|} \frac{h_1(\eta)}{\eta^2}dm(\eta) \\
		&  \qquad + \int_\mathbb{C} \Big(\frac{1}{\eta^2}-\frac{1}{(z-\eta)^2}\Big)h_2(\eta)dm(\eta)\\
			& = I_1+I_2+I_3,
		\end{align*}
		which is estimated (using the fact that $|z|\leq 4$ and $h_1(\eta)\equiv 0$ for $|\eta|\geq 6$) as
		$$|I_1|\lesssim \|h\|_{C^1},\quad |I_2|\lesssim \|h\|_{C^0},$$
		and
		$$|I_3|\lesssim |z|\|h\|_{C^0} \int_{5\leq |\eta|} \frac{1}{|\eta|^3}dm(\eta) \lesssim |z|\|h\|_{C^0}\lesssim \|h\|_{C^0}.$$
		For $k\geq 3$, we have
		\begin{align*}4\pi \frac{\partial^k \tilde{P}}{\partial z^k}(z)& = \int_\mathbb{C} \frac{-\frac{\partial^{k-1} h_1}{\partial \eta^{k-1}}(z+\eta)}{\eta} dm(\eta) + \int_\mathbb{C} \frac{(-1)^{k-1}(k-1)!}{(z-\eta)^k}h_2(\eta)dm(\eta)\\
			& = I_1+I_2,
		\end{align*}
		which is similarly estimated as
		$$|I_1|\lesssim \|h\|_{C^{k-1}}\quad \mbox{and}\quad |I_2|\lesssim \|h\|_{C^0}.$$
		
		\textbf{Case 2: $|z|\geq 4$}. 
		
		In this case, fix a point $z_0$ with $|z-z_0|\leq \frac{1}{4}$, and let $h=h_1+h_2$, where $h_1(\eta)=\chi(|z_0-\eta|)h(\eta)$ and $h_2=h-h_1$, where now $\chi:[0,{+\infty})\to [0,1]$ is a smooth, non-increasing cutoff function with $\chi(t)=1$ if $t\leq \frac{1}{3}$ and $\chi(t)=0$ if $t\geq \frac{1}{2}$. 
		We write
		\begin{align*}
			& 4\pi \frac{\partial \tilde{P}}{\partial z}(z)\\
			 & =\ \int_{|\eta|\leq 1} \Big(\frac{1}{z-\eta}+\frac{1}{\eta}\Big) h(\eta)dm(\eta) + \int_{\mathbb{C}} \Big(\frac{1}{z-\eta} + \frac{1}{\eta} + \frac{z}{\eta^2}\Big) h_1(\eta)dm(\eta) \\
			&  \qquad  +\int_{1\leq |\eta|} \Big( \frac{1}{z-\eta} + \frac{1}{\eta} + \frac{z}{\eta^2}\Big) h_2(\eta)dm(\eta)\\
			& =\ \int_\mathbb{C} \frac{-h_1(\eta+z)}{\eta}dm(\eta) + \int_\mathbb{C} \Big(\frac{1}{\eta}+\frac{z}{\eta^2}\Big)h_1(\eta)dm(\eta) \\
			& \qquad + \int_{|\eta|\leq 1} \Big(\frac{1}{z-\eta}+\frac{1}{\eta}\Big)h(\eta)dm(\eta)+ \int_{1\leq|\eta|} \Big(\frac{1}{z-\eta}+\frac{1}{\eta}+\frac{z}{\eta^2}\Big)h_2(\eta)dm(\eta),
		\end{align*}
		which yields
		\begin{align}
			4\pi \frac{\partial^2 \tilde{P}}{\partial z^2}(z) & = \int_\mathbb{C} \frac{-\frac{\partial h_1}{\partial \eta}(z+\eta)}{\eta}dm(\eta)+ \int_\mathbb{C} \frac{h_1(\eta)}{\eta^2}dm(\eta) \nonumber  \\
			& \quad + \int_{|\eta|\leq 1} \frac{-h(\eta)}{(z-\eta)^2}dm(\eta) + \int_{1\leq |\eta|} \Big( \frac{1}{\eta^2}-\frac{1}{(z-\eta)^2}\Big)h_2(\eta)dm(\eta).\label{eq:seconddircase2}
		\end{align}
		Noting that $h_1(\eta)=h(\eta)$ and $h_2(\eta)=0$ for $|z-\eta|\leq \frac{1}{12}$,  and $h_2(\eta)=h(\eta)$ and $h_1(\eta)=0$ for $|z-\eta|\geq \frac{3}{4}$, we have
		\begin{align*}
			  \int_{1\leq |\eta|} \Big(\frac{1}{\eta^2}- &\frac{1}{(z-\eta)^2}\Big) h_2(\eta)dm(\eta) \\
			 & = \int_{1\leq |\eta|\leq \frac{|z|}{3}} \frac{h(\eta)}{\eta^2}dm(\eta) + \int_{1\leq |\eta|\leq \frac{|z|}{3}} \frac{-h(\eta)}{(z-\eta)^2}dm(\eta)  \\
			&   \qquad + \int_{\frac{1}{12}\leq |z-\eta|\leq 1} \Big(\frac{1}{\eta^2}-\frac{1}{(z-\eta)^2}\Big)h_2(\eta)dm(\eta) \\
			&   \qquad +\int_{1\leq |z-\eta|\leq \frac{|z|}{3}} \frac{h(\eta)}{\eta^2}dm(\eta) + \int_{1\leq |z-\eta|\leq \frac{|z|}{3}} \frac{-h(\eta)}{(z-\eta)^2}dm(\eta)\\
			&   \qquad +\int_{\frac{|z|}{3}\leq \min(|\eta|,|z-\eta|)} \Big(\frac{1}{\eta^2}-\frac{1}{(z-\eta)^2}\Big)h(\eta)dm(\eta)
		\end{align*}
		and
		$$\int_{|\eta|\leq 1} \frac{-h(\eta)}{(z-\eta)^2}dm(\eta) = \int_{|\eta|\leq \frac{|z|}{3}} \frac{-h(\eta)}{(z-\eta)^2}dm(\eta) + \int_{1\leq|\eta|\leq \frac{|z|}{3}} \frac{h(\eta)}{(z-\eta)^2}dm(\eta).$$
		We may therefore rewrite (\ref{eq:seconddircase2}) as
		\begin{align*}
			 & 4\pi \frac{\partial^2 \tilde{P}}{\partial z^2}(z) \\
			  & =\  
			\int_\mathbb{C} \frac{-\frac{\partial h_1}{\partial \eta}(z+\eta)}{\eta}dm(\eta) 
			+ \int_\mathbb{C} \frac{h_1(\eta)}{\eta^2}dm(\eta)  
			+\int_{|\eta|\leq \frac{|z|}{3}}\frac{-h(\eta)}{(z-\eta)^2}dm(\eta) \\
			& \quad \qquad+ \int_{1\leq|\eta|\leq \frac{|z|}{3}} \frac{h(\eta)}{\eta^2}dm(\eta) 
			+ \int_{\frac{1}{12}\leq |z-\eta|\leq 1} \Big(\frac{1}{\eta^2} - \frac{1}{(z-\eta)^2}\Big)h_2(\eta)dm(\eta)
			\\
			& \quad \qquad + \int_{1\leq |\eta-z|\leq \frac{|z|}{3}} \frac{h(\eta)}{\eta^2} dm(\eta)+ \int_{1\leq |z-\eta|\leq \frac{|z|}{3}} \frac{-h(\eta)}{(z-\eta)^2}dm(\eta)
			\\
			& \quad \qquad + \int_{\frac{|z|}{3}\leq\min(|\eta|,|z-\eta|)} \Big(\frac{1}{\eta^2}-\frac{1}{(z-\eta)^2}\Big)h(\eta)dm(\eta) \\
			& =\ I_1 + I_2 + I_3 + I_4 + I_5 + I_6+ I_7 +I_8.
		\end{align*}
		As before, we have
		$$|I_1|\lesssim \frac{\|h\|_{C^1}}{|z|}\lesssim \|h\|_{C^1},\quad |I_2|+|I_3|\lesssim \frac{\|h\|_{C^0}}{|z|^2}\lesssim \|h\|_{C^0},\quad |I_5|+|I_6|\lesssim \|h\|_{C^0},$$
		while 
		$$|I_8|\lesssim |z|\|h\|_{C^0} \int_{\frac{|z|}{3}\leq |\eta|} \frac{1}{|\eta|^3}dm(\eta) \lesssim \|h\|_{C^0}$$
		and, by (a),
		$$|I_4|+|I_7|\leq 2A_0.$$
		This completes the proof of Case 2 when $k=2$.
		For Case 2 when $k\geq 3$, we differentiate (\ref{eq:seconddircase2}) to obtain
		\begin{align*}
			4\pi \frac{\partial^k \tilde{P}}{\partial z^k}(z) & =\ \int_\mathbb{C} \frac{-\frac{\partial^{k-1} h_1}{\partial \eta^{k-1}}(z+\eta)}{\eta}dm(\eta) + \int_{|\eta|\leq 1} \frac{(-1)^{k-1}(k-1)!h(\eta)}{(z-\eta)^k}dm(\eta)\nonumber\\
			& \quad\qquad\qquad + \int_{1\leq |\eta|} \frac{(-1)^{k-1}(k-1)!h_2(\eta)}{(z-\eta)^k}dm(\eta)\nonumber\\
			& =\ \int_\mathbb{C} \frac{-\frac{\partial^{k-1} h_1}{\partial \eta^{k-1}}(z+\eta)}{\eta}dm(\eta)+\int_{\frac{1}{12}\leq |z-\eta|} \frac{(-1)^{k-1}(k-1)!h_2(\eta)}{(z-\eta)^k}dm(\eta),\nonumber
		\end{align*}
		which is estimated to be
		$$\Big| 4\pi \frac{\partial^k \tilde{P}}{\partial z^k}(z)\Big| \lesssim \|h\|_{C^{k-1}}.$$
		This completes the proof that $(a)\Rightarrow (b)$.
		
		It remains to show that if (b) holds, then the bound in (a) holds.
		By translation, it suffices to show that the existence of such a function $P$ for $\zeta=0$ implies that
		\begin{equation}\label{eq:reducedbounda}
			\sup_{r\in [1,+\infty)}\Big| \int_{1\leq |\eta|\leq r} \frac{h(\eta)}{\eta^2} dm(\eta)\Big| \leq A_0<+\infty
		\end{equation}
		for some constant $A_0$ that only depends on the constants from (b).
		Let $\tilde{P}$ be the function constructed in (\ref{eq:normalization-ptilde}).
		\medskip
		
		\textbf{Claim 1: If $\tilde{P}$ satisfies the conclusion (ii) of (b), then (a) holds.}
		\medskip
		
		To see this, note that for $|z|\geq 3$ we have
		\begin{align*}
			4\pi \frac{\partial \tilde{P}}{\partial z}(z) & = \int_{|\eta|\leq \frac{|z|}{3}} \frac{z}{(z-\eta)\eta} h(\eta)dm(\eta) + \int_{1\leq |\eta|\leq \frac{|z|}{3}} \frac{z}{\eta^2} h(\eta)dm(\eta) \\
			 & \qquad  + \int_{|z-\eta|\leq \frac{|z|}{3}} \frac{z^2}{(z-\eta)\eta^2}h(\eta)dm(\eta) \\
			 & \qquad + \int_{\frac{|z|}{3}\leq \min(|\eta|,|z-\eta|)} \frac{z^2}{(z-\eta)\eta^2}h(\eta)dm(\eta) \\
			& = I_1 + I_2 + I_3+ I_4.
		\end{align*}
		Because $$\Big| 4\pi \frac{\partial \tilde{P}}{\partial z}(z)\Big| \leq A_1|z|$$
		by assumption, and we may estimate as above to see that
		$$|I_1|\lesssim \|h\|_{C^0}|z|,\quad |I_3|\lesssim \|h\|_{C^0}|z|,\quad \mbox{and}\quad |I_4|\lesssim \|h\|_{C^0}|z|,$$
		we must have
		$$|I_2|=|z|\Bigg| \int_{1\leq |\eta|\leq \frac{|z|}{3}} \frac{h(\eta)}{\eta^2}dm(\eta)\Bigg| \leq C(A_1,\|h\|_{C^0})|z|,$$
		so that
		$$\Bigg| \int_{1\leq |\eta|\leq \frac{|z|}{3}} \frac{h(\eta)}{\eta^2}dm(\eta)\Bigg| \leq C(A_1,\|h\|_{C^0}),\ \ |z|\geq 3.$$
		Because $r=\frac{|z|}{3}\in [1,+\infty)$ is arbitrary, the claim is proved.
		\bigskip
		
		\textbf{Claim 2: $\tilde{P}$ satisfies conclusion (ii) of (b).} 
		\medskip
		
		Let $P$ be a function satisfying (b) (for $\zeta=0$).
		By estimating directly as in the proof that $(a)\Rightarrow (b)$, one shows that
		$$|\nabla \tilde{P}(z)| \leq C(\|h\|_{C^1})|z|\log(|z|+2),$$
		while we have assumed \emph{a priori} that $|\nabla P(z)|\leq A_1|z|$.
		Now, $P(z)-\tilde{P}(z)=Q(z)$ is harmonic on $\mathbb{C}$ and satisfies $|\nabla Q(z)|\leq C(A_1,\|h\|_{C^1})|z|\log(|z|+2)$, and therefore $Q(z)$ is a degree 2 harmonic polynomial\footnote{Let $V(z)$ be a harmonic conjugate of $Q(z)$, and consider the entire function $f(z)=Q(z)+iV(z)$. The Cauchy-Riemann equations imply that $|\nabla V(z)|\leq C(h)|z|\log(|z|+2)$ as well, so that if $w\in \mathbb{C}$ is any fixed complex number and $R>2|w|+10$, then
			$$|f'''(w)| = |(f')''(w)| \leq 2R^{-2}\displaystyle \sup_{|\eta-w|=R} |f'(\eta)| \lesssim \frac{(R+|w|)\log(2+R+|w|)}{R^2}\to 0\ \mbox{as}\ R\to +\infty,$$
			proving that $f'''\equiv 0$, and therefore $f(z)$ is a degree 2 polynomial.}. Because $Q(0)=0$ and $|\nabla Q(0)|=0$, we may write $$Q(z)=2\Re(cz^2)\ \mbox{for some}\ c\in\mathbb{C}.$$
		It follows that
		\begin{equation}\label{eq:qderivform}
			2cz=\frac{\partial Q}{\partial z}(z)= \frac{\partial P}{\partial z}(z)-\frac{\partial \tilde{P}}{\partial z}(z).
		\end{equation}
		For $|z|\leq 3$, the inequalities
		$$\Big| \frac{\partial P}{\partial z} (z)\Big| \leq A_1|z|\quad \mbox{and}\quad \Big| \frac{\partial \tilde{P}}{\partial z}(z)\Big|\leq C(\|h\|_{C^1})|z|$$
		imply that 
		$$|c|\leq \frac{C(\|h\|_{C^1})+A_1}{2}.$$
		By writing $\tilde{P}(z)=P(z)-Q(z)$, we conclude that
		$$|\nabla \tilde{P}(z)| \leq C(\|h\|_{C^1},A_1)|z|.$$
		Because this constant depends only on $A_1$ and $\|h\|_{C^1}$ (and not on $\zeta$), the claim (and therefore the proposition) is proved.

\subsection{Biholomorphic Changes of Variables}
\label{sec:normalization-biholomorphic}

We start by defining a family of biholomorphisms $\Phi:\mathbb{C}^2\to \mathbb{C}^2$ that preserves the class of UFT domains.
Fix $\mb{\sigma}=(\sigma,\sigma_2)\in\mathbb{C}^2$ and an entire function $H:\mathbb{C}\to\mathbb{C}$ with $H(0)=0$, and define the map $\Phi:\mathbb{C}^2\to\mathbb{C}^2$ via
$$(\tilde{z},\tilde{z}_2)=\Phi(\mb{z})=(z-\sigma,z_2-\sigma_2-2i H(z-\sigma)).$$
It is immediate to check that $\Phi$ is a biholomorphism.

Define $\tilde{P}(\tilde{z})=P(\tilde{z}+\sigma)-\Im(\sigma_2)+2\Re(H(\tilde{z}))$ and $\tilde{\Omega}=\{(\t{z},\t{z}_2)\in\mathbb{C}^2\ :\ \Im(\t{z}_2)>\t{P}(\t{z})\}$, and give $\t{\Omega}$ and ${\rm b}\t{\Omega}$ the Lebesgue measures $dm_{\t{\Omega}}$ and $dm_{{\rm b}\t{\Omega}}$ as in the introduction. Then the following elementary result holds.

\begin{proposition}\label{prop:normalization-biholgeneral}{ The domain $\t\Omega$ and the biholomorphism $\Phi$ have the following properties.
		\begin{itemize}
			\item[(a)] $\t\Omega$ is a UFT domain with constants in (H1)-(H3) identical to those of $\Omega$.
			\item[(b)] $\Phi(\Omega)=\t{\Omega}$, $\Phi(\b \Omega)=\b\t\Omega$, and $\Phi(\mb{\sigma})=\mb{0}$.
			\item[(c)] $\Phi^\ast (dm_{\t\Omega})=dm_{\Omega}$ and $\Phi^\ast (dm_{\b\t\Omega})=dm_{\b\Omega}$.
			\item[(d)] As differential operators, $\Phi^\ast (V_{\b\t\Omega})=V_{\b\Omega}$ for $V\in \{Z,\bar{Z},T\}$.
			\item[(e)] $\mathbb{S}^{\b\t\Omega}=(\Phi^{-1})^\ast\circ \mathbb{S}\circ \Phi^\ast$.
		\end{itemize}
}\end{proposition}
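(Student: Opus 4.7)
The proposition is a bookkeeping result, and my plan is to dispatch the five parts in essentially the order they are stated, reducing most work to a single sign-careful chain-rule computation.

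For part (a), the key observation is that $2\Re H(\tilde z)$ is harmonic on $\mathbb{C}$ (since $H$ is entire), so $\tilde h(\tilde z) := \Delta \tilde P(\tilde z) = \Delta P(\tilde z+\sigma) = h(\tilde z+\sigma)$ is simply a translate of $h$. Each of (H1), (H2), (H3) is manifestly translation-invariant: (H1) is a pointwise supremum of directional derivatives taken over all $z\in\mathbb{C}$, (H2) involves $C^k$-norms over $\mathbb{C}$, and (H3) takes a supremum over all centers $\zeta\in\mathbb{C}$ and all $r\geq 1$ of the same annular integral. Hence $\tilde h$ satisfies (H1)--(H3) with the \emph{same} constants $m$, $C_1$, $C_2$ as $h$.

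Parts (b) and (c) are direct verifications. For (b), compute $\Im(\tilde z_2) - \tilde P(\tilde z)$ from the definitions and observe that the $\sigma$-, $\sigma_2$-, and $H$-contributions cancel, leaving $\Im(\tilde z_2)-\tilde P(\tilde z) = \Im(z_2) - P(z)$; this simultaneously gives $\Phi(\Omega)=\tilde\Omega$ and $\Phi(\b\Omega) = \b\tilde\Omega$, and $\Phi(\mb\sigma) = \mb 0$ is immediate from $H(0)=0$. For (c), the complex Jacobian matrix of $\Phi$ is triangular with $1$'s on the diagonal, so $|\det_{\mathbb{C}} d\Phi|=1$, giving $\Phi^\ast dm_{\tilde\Omega} = dm_\Omega$ on $\mathbb{C}^2$; conjugating with $\Pi$, the induced map $\tilde\Pi\circ\Phi\circ\Pi^{-1}:\mathbb{C}\times\mathbb{R}\to\mathbb{C}\times\mathbb{R}$ takes $(z,t)\mapsto(z-\sigma,\, t - \Re\sigma_2 + 2\Im H(z-\sigma))$, again triangular with unit real Jacobian, so $\Phi^\ast dm_{\b\tilde\Omega} = dm_{\b\Omega}$.

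Part (d), the real content of the proposition, is a chain-rule calculation. Using the triangular form of $d\Phi$, one has $d\Phi(\partial_z) = \partial_{\tilde z} + c\,\partial_{\tilde z_2}$ and $d\Phi(\partial_{z_2}) = \partial_{\tilde z_2}$, where $c$ is the $H'$-term from differentiating $\tilde z_2$ in $z$. Differentiating the defining formula for $\tilde P$ gives $\tilde P_{\tilde z}(\tilde z) = P_z(\tilde z+\sigma) \pm H'(\tilde z)$ (the sign matches that of $c$ because $H$ is holomorphic and $\overline{H(\tilde z)}$ is killed by $\partial_{\tilde z}$). Substituting into $\tilde Z = \partial_{\tilde z} + 2i\tilde P_{\tilde z}\partial_{\tilde z_2}$ and expanding, the $H'$-term from $d\Phi(\partial_z)$ cancels the $H'$-term in $\tilde P_{\tilde z}$, leaving exactly $Z_{\b\Omega}=\partial_z + 2iP_z(z)\partial_{z_2}$. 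The same cancellation works for $\bar Z$ (using that $\partial_{\bar{\tilde z}}$ kills $H$), while $T=\partial_{z_2}+\partial_{\bar z_2}$ is preserved trivially since $\partial_{z_2}$ does not see the $H$-correction. Keeping the signs straight is the one place where care is needed; everything else is routine.

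Finally, part (e) is a formal consequence of (c) and (d). By (c), $\Phi^\ast:L^2(\b\tilde\Omega)\to L^2(\b\Omega)$ is a unitary isomorphism with inverse $(\Phi^{-1})^\ast$. By (d), $\Phi^\ast\circ \bar Z_{\b\tilde\Omega} = \bar Z_{\b\Omega}\circ\Phi^\ast$ as operators on distributions, so $\Phi^\ast$ carries $B(\b\tilde\Omega) = \ker \bar Z_{\b\tilde\Omega}$ unitarily onto $B(\b\Omega) = \ker \bar Z_{\b\Omega}$. Thus the operator $(\Phi^{-1})^\ast \circ \mathbb{S}\circ \Phi^\ast$ is an orthogonal projection of $L^2(\b\tilde\Omega)$ onto $B(\b\tilde\Omega)$, which characterizes it as $\mathbb{S}^{\b\tilde\Omega}$. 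The only anticipated obstacle in the entire proposition is the sign bookkeeping in (d); all other steps reduce to the triangular structure of $\Phi$ and the harmonicity of $2\Re H$.
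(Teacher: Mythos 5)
Your proof is correct and follows essentially the same route as the paper, whose own proof simply notes that (a) comes from $\Delta\t P(\t z)=\Delta P(\t z+\sigma)$ together with the translation invariance of (H1)--(H3), that (b)--(d) are direct computations, and that (e) follows because $\Phi^\ast:L^2({\rm b}\t\Omega)\to L^2({\rm b}\Omega)$ is a measure-preserving isomorphism intertwining the kernels of $\bar Z_{{\rm b}\t\Omega}$ and $\bar Z_{{\rm b}\Omega}$ -- you have merely written those computations out. One remark: the signs as printed in the paper (the $-2iH$ in $\Phi$ versus the $+2\Re(H(\t z))$ in the definition of $\t P$) are mutually inconsistent, so the cancellations you describe in (b) and (d) tacitly use the intended, consistent sign convention; that is the right reading and not a gap in your argument.
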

\begin{proof}
	For part (a), we need only note that $\Delta \t P(\t z)=\Delta P(\t z+\sigma)$. 
	Parts (b), (c), and (d) follow from direct computations. 
	Part (e) is proved by noting that $\Phi^\ast:L^2(\b\t\Omega)\to L^2(\b\Omega)$ is an isomorphism with $\bar{Z}_{\b\t\Omega}f=0$ if and only if $\bar{Z}_{\b\Omega} \Phi^\ast f=0$ in the sense of distributions.
\end{proof}

By restricting our attention to $\mb{\sigma}\in\b\Omega$ and carefully choosing the entire function $H$, we can ensure that the function $\t P(\t z)$ behaves nicely near $\t z=0$. In particular, for $\mb{\sigma}=(\sigma,\sigma_2)\in\b\Omega$ we let $P^{\mb{\sigma}}(z)$ be the subharmonic potential function for $h(z + \sigma)$ given by part (b) of Proposition \ref{prop:potentials}.  For $\kappa\geq 2$, we then define
\begin{equation}\label{eq:normalization-pwk}
	P^{\mb{\sigma},\kappa}(z) = P^{\mb{\sigma}}(z)-2\Re\Big( \sum_{j=2}^\kappa \frac{1}{j!} \frac{\partial^j P^{\mb{\sigma}}}{\partial z^j}(0)z^j\Big).
	\end{equation}

Then our main result of this section is as follows. 

\begin{lemma}\label{lem:normalization-bihol}{ Let $\Omega=\{\mb{z}\in\mathbb{C}^2\ :\ \Im(z_2)>P(z)\}$ be a UFT domain, and fix $\mb{\sigma}\in \b\Omega$ and $\kappa\geq 2$. 
		\begin{itemize} 
			\item[(a)] There exists an entire function $H:\mathbb{C}\to\mathbb{C}$ with $H(0)=0$ so that the biholomorphism $\Phi:\mathbb{C}^2\to\mathbb{C}^2$ from Proposition \ref{prop:normalization-biholgeneral} sends $\Omega$ to $\t\Omega$, where $\t P(\t z)= P^{\mb{\sigma},\kappa}(\t z)$.
			\item[(b)] If we further assume that $P=P^{\mb{0},2}$, then $\displaystyle H(z)=\sum_{j=1}^\kappa \frac{1}{j!} \frac{\partial^j P}{\partial z^j}(\sigma)z^j$.
		\end{itemize}
	}\end{lemma}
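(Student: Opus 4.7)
The proof rests on the observation that $P^{\mb{\sigma}}(\tilde z)$ and $P(\tilde z + \sigma)$ are both potentials for $h(\tilde z + \sigma)$, so their difference $Q(\tilde z) := P^{\mb{\sigma}}(\tilde z) - P(\tilde z + \sigma)$ is harmonic on $\mathbb{C}$. Because $\mathbb{C}$ is simply connected, any real-valued harmonic function on it is $\Re F$ for some entire $F$---and this is exactly the degree of freedom available in choosing the entire $H$. Using $\mb{\sigma}\in\b\Omega$ to rewrite $\Im(\sigma_2) = P(\sigma)$, the function $\tilde Q(\tilde z) := Q(\tilde z) + P(\sigma)$ is harmonic on $\mathbb{C}$ and vanishes at $\tilde z = 0$ (since $P^{\mb{\sigma}}(0) = 0$), so there is a unique entire $\tilde F$ with $\Re \tilde F = \tilde Q$ and $\tilde F(0) = 0$.

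For part (a), set $G(\tilde z) := \sum_{j=2}^{\kappa}\tfrac{1}{j!}\partial_z^j P^{\mb{\sigma}}(0)\tilde z^j$, so that $P^{\mb{\sigma},\kappa}(\tilde z) = P^{\mb{\sigma}}(\tilde z) - 2\Re G(\tilde z)$ by (\ref{eq:normalization-pwk}). Substituting the definition of $\tilde P$ given just before Proposition~\ref{prop:normalization-biholgeneral} into the requirement $\tilde P = P^{\mb{\sigma},\kappa}$ yields a linear equation for $\Re H$; its unique solution with $H(0) = 0$ is (modulo the sign convention in the definition of $\tilde P$) the entire function $H(\tilde z) = \tfrac{1}{2}\tilde F(\tilde z) - G(\tilde z)$, proving (a).

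For part (b), the task is to show that under $P = P^{\mb{0},2}$ the entire function $H$ collapses to the polynomial $\sum_{j=1}^{\kappa}\tfrac{1}{j!}\partial_z^j P(\sigma)\tilde z^j$. Since $\tilde F$ is holomorphic, $\partial_{\tilde z}\Re\tilde F = \tfrac{1}{2}\tilde F'$, and differentiating at $\tilde z = 0$ gives
\[
\tilde F^{(j)}(0) = 2\bigl[\partial_z^j P^{\mb{\sigma}}(0) - \partial_z^j P(\sigma)\bigr],\qquad j \geq 1.
\]
Two computational identities then drive the argument. First, $\partial_z P^{\mb{\sigma}}(0) = 0$ for every $\sigma$, obtained by evaluating (\ref{eq:firstpartial}) applied to $h(\cdot + \sigma)$ at $z = 0$. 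Second,
\[
\partial_z^j P^{\mb{\sigma}}(0) = \partial_z^j P^{\mb{0}}(\sigma),\qquad j \geq 3,
\]
which follows by differentiating (\ref{eq:firstpartial}) $j-1$ times in $z$ (the polynomial correction terms inside the integrals vanish once $j-1\geq 2$), evaluating, and then reconciling the two sides via the substitution $\eta \mapsto \eta - \sigma$. Because $P = P^{\mb{0},2}$ yields $\partial_z^j P = \partial_z^j P^{\mb{0}}$ for every $j \neq 2$, the second identity promotes to $\partial_z^j P^{\mb{\sigma}}(0) = \partial_z^j P(\sigma)$ for all $j \geq 3$. Since $\kappa \geq 2$, the range $j > \kappa$ lies inside $j \geq 3$, so $H^{(j)}(0) = 0$ for $j > \kappa$; for $1 \leq j \leq \kappa$ the contributions of $G$ and $\tilde F$ combine (using the first identity when $j = 1$) to produce the coefficient $\tfrac{1}{j!}\partial_z^j P(\sigma)$, giving the stated closed form.

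The main obstacle is the second identity above, which informally says that \emph{canonical potentials commute with translations modulo low-degree harmonic polynomials}. Its verification is a purely computational exercise with (\ref{eq:normalization-ptilde}), but one has to keep careful track of how the two polynomial correction terms in $K_1$ and $K_2$ contribute after each differentiation. Fortunately, since we only need the identity for $j \geq 3$, all relevant integrals $\int_{\mathbb{C}} h(\eta)/(\eta - \sigma)^j\,dm(\eta)$ converge absolutely from boundedness of $h$ alone, so hypothesis (H3) is not invoked anywhere in the proof of (b).
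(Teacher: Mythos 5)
Your strategy for part (b) — writing $H=\tfrac12\tilde F-G$ and comparing Taylor coefficients, so that everything reduces to the two facts $\partial_z P^{\mb{\sigma}}(0)=0$ and $\partial_z^j P^{\mb{\sigma}}(0)=\partial_z^j P^{\mb{0}}(\sigma)$ for $j\geq 3$ — is sound, and it carries essentially the same content as the paper's proof, which pins down the $\kappa=2$ function $H_2$ as the explicit quadratic $P_z(\sigma)z+\tfrac12 P_{z,z}(\sigma)z^2$ and then reads off the higher coefficients from $\partial_z^j P^{\mb{\sigma},2}=\partial_z^j P(\cdot+\sigma)+\partial_z^j H_2$. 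The genuine gap is in your verification of the translation identity for $j\geq 3$. You propose to differentiate (\ref{eq:firstpartial}) $j-1$ times under the integral sign and assert that the resulting integrals $\int_{\mathbb{C}} h(\eta)(\eta-\sigma)^{-j}\,dm(\eta)$ "converge absolutely from boundedness of $h$ alone." That is false: in two real dimensions $|\eta-\sigma|^{-j}$ is not locally integrable for any $j\geq 2$, so the naively differentiated kernels diverge near $\eta=\sigma$ and the interchange of derivative and integral is unjustified. To salvage the computational route you would have to split $h=h_1+h_2$ and integrate by parts so that the $j-1$ derivatives land on $h_1$ near the singularity — exactly the device used in the paper's proof of Proposition \ref{prop:potentials}(iii) — and then check that the result is independent of the cutoffs; this is precisely the bookkeeping your last paragraph waves at. The cleaner repair (and the paper's actual route) avoids the computation entirely: $P^{\mb{\sigma}}(z)-P^{\mb{0}}(z+\sigma)$ is harmonic, since both are potentials of $h(\cdot+\sigma)$, and has gradient $O(|z|)$ by Proposition \ref{prop:potentials}(b)(ii) (even $O(|z|\log|z|)$ suffices), so by the Liouville-type footnote argument it is a harmonic polynomial of degree at most $2$; its pure $\partial_z^j$ derivatives then vanish for $j\geq 3$, which is exactly your identity. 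With that substitution your proof of (b) closes.

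Two smaller points. First, your parenthetical claim that $\partial_z^j P=\partial_z^j P^{\mb{0}}$ for every $j\neq 2$ fails at $j=1$ (the removed quadratic contributes a linear term to $\partial_z$), though this is harmless since you only invoke it for $j\geq 3$. Second, if you carry out your coefficient computation with the paper's displayed convention $\tilde P=P(\cdot+\sigma)-\Im(\sigma_2)+2\Re(H)$, you land on $H=-\sum_{j=1}^{\kappa}\tfrac1{j!}\partial_z^j P(\sigma)z^j$; the formula asserted in the lemma corresponds to the convention forced by the formula for $\Phi$, namely $2\Re(H)=P(\cdot+\sigma)-P(\sigma)-P^{\mb{\sigma},\kappa}$ (a check on the Heisenberg case $P(z)=|z|^2$, where the correct $H$ is $\bar\sigma z$, confirms this). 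You were right to flag the convention, but you should resolve it explicitly rather than leave the sign undetermined, since it propagates into $T_\kappa$ in Lemma \ref{lem:normalization-schwartz}.
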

\begin{proof}
Let $H$ be the unique entire function with $H(0)=0$ and
$$2\Re(H(z))=P^{\mb{\sigma},\kappa}(z)-P(z+\sigma)+P(\sigma).$$
Then 
\begin{align*}
	\t P(\t z) & = P(\t z+\sigma)-\Im(\sigma_2)+2\Re(H(\t z)) \\
	& =  P(\t z+\sigma)-P(\sigma)+(P^{\mb{\sigma},\kappa}(\t z)-P(\t z+ \sigma)+P(\sigma)) \\
	& =  P^{\mb{\sigma},\kappa}(\t z),
\end{align*}
proving (a).

Under the additional assumption that $P=P^{\mb{0},2}$ and $\kappa\geq 3$, we let $H_2$ denote the unique entire function with $H_2(0)=0$ and $$2\Re(H_2(z))=P^{\mb{\sigma},2}(z)-P(z+ \sigma)+P(\sigma).$$
Then $H(0)=H_2(0)=0$ and 
\begin{align*}
P^{\mb{\sigma},\kappa}(z)& = P^{\mb{\sigma},2}(z)+2\Re\Bigg( \sum_{j=3}^\kappa \frac{1}{j!} \frac{\partial^j P^{\mb{\sigma}}}{\partial z^j} (0)z^j\Bigg)\\
& = P^{\mb{\sigma},2}(z)+2\Re\Bigg( \sum_{j=3}^\kappa \frac{1}{j!} \frac{\partial^j P^{\mb{\sigma,2}}}{\partial z^j} (0)z^j\Bigg),\end{align*}
so uniqueness implies that
$$H(z)=H_2(z)+\sum_{j=3}^\kappa \frac{1}{j!} \frac{\partial^j P^{\mb{\sigma},2}}{\partial z^j}(0)z^j.$$
Moreover, writing
$$P^{\mb{\sigma},2}(z)=P(z+\sigma)-P(\sigma)+H_2(z)+\overline{H_2(z)}$$ yields
$$\frac{\partial^j P^{\mb{\sigma},2}}{\partial z^j}(z)= \frac{\partial^j P}{\partial z^j}(z+\sigma)+ \frac{\partial^j H_2}{\partial z^j}(z),$$
and hence
\begin{equation}\label{eq:normalization-polystep}
	H(z)=H_2(z) + \sum_{j=3}^\kappa \frac{1}{j!} \Big( \frac{\partial^j P}{\partial z^j}(\sigma)+\frac{\partial^j H_2}{\partial z^j}(0)\Big) z^j.
\end{equation}
The result (b) immediately follows from (\ref{eq:normalization-polystep}) once we show that $H_2(z)=P_z(\sigma)z + \frac{1}{2}P_{z,z}(\sigma)z^2$.

To see this, note first that $H_2'(z)=P^{\mb{\sigma},2}_z(z)-P_z(z+\sigma)$, so that
\begin{align*}
|P^{\mb{\sigma},2}(z)-P(z+\sigma)+P(\sigma)| & = |2\Re(H_2(z))|\\
& = 2\Big|\Re\Big(z\int_0^1 P^{\mb{\sigma},2}_z(tz)-P_z(\sigma+tz)dt\Big)\Big| \\
& \leq 2|z|\int_0^1 |P^{\mb{\sigma},2}_z(tz)-P_z(\sigma+tz)|dt.
\end{align*}
The Maximum Modulus Theorem applied to $H'_2(z)$ on the disc $|\zeta|\leq |z|$, together with part (b) of Proposition \ref{prop:potentials} and part (a) of Proposition \ref{prop:normalization-biholgeneral} then implies that
\begin{align*}
	|P^{\mb{\sigma},2}(z)-P(z+\sigma)+P(\sigma)| & \leq 2|z|\max_{|\zeta|=|z|} |P^{\mb{\sigma},2}_z(\zeta)-P_z(\sigma+\zeta)| \\
	& \leq 2 |z|\Bigg( \max_{|\zeta|=|z|} |P^{\mb{\sigma},2}_z(\zeta)| + \max_{|\zeta|=|z|} |P_z(\sigma+\zeta)|\Bigg)\\
	& \leq 2A_1|z|(|z|+|\sigma|+|z|)\\
	& \lesssim  |z|(1+|z|),
\end{align*}
and therefore $P^{\mb{\sigma},2}(z)-P(z+\sigma)+P(\sigma)$ is a harmonic polynomial of degree no more than 2.

We may therefore write $H_2(z)=a+bz+cz^2$ for some complex constants $a,b,c$. The condition $H_2(0)=0$ immediately yields $a=0$, while
$$b+2cz=H_2'(z)=P_z(\sigma+z)-P^{\mb{\sigma},2}_z(z),$$
so that $$b=H_2'(0)=P_z(\sigma)-\underbrace{P^{\mb{\sigma},2}_z(0)}_{=0}=P_z(\sigma).$$
Finally, 
$$2c=H''_2(0)=P_{z,z}(\sigma)-\underbrace{P^{\mb{\sigma},2}_{z,z}(0)}_{=0}=P_{z,z}(\sigma).$$
This concludes the proof.
\end{proof}

%
%\begin{remark}{\rm In the sequel we will primarily use $P^{\mb{w},2}(z)$ and $P^{\mb{w},m}(z)$. The former has nice properties for large $|z|$, while the latter has nice properties for small $|z|$.
%		In particular, $P^{\mb{w},2}(z)$ satisfies parts (b)-(ii) and (b)-(iii) of Proposition \ref{prop:potentials}, while $P^{\mb{w},m}(z)$ can be written as
%		$$P^{\mb{w},m}(z)=\sum_{|\alpha|=2,\ \alpha_1,\alpha_2\geq 1}^m \frac{1}{\alpha !} \frac{\partial^{|\alpha|-2}h}{\partial z^{\alpha_1-1}\partial \bar{z}^{\alpha_2-1}}(w)z^{\alpha_1}\bar{z}^{\alpha_2} + E_m(z),$$
%		where $|E_m(z)|\leq C|z|^{m+1}$ for some $C$ which depends only on $\|h\|_{C^{m-1}}$.
%	}\end{remark}

\subsection{Substitution of Schwartz Kernels}
\label{sec:normalization-substitution}

We return now to the question posed in the beginning of Section \ref{sec:normalization} about Schwartz kernels. 
In particular, let $\b\Omega=\{\mb{z}\in \mathbb{C}^2\ :\ \Im(z_2)>P(z)\}$ be a UFT domain and let $\mathbb{H}:L^2({\rm b}\Omega)\to L^2(\b \Omega)$ be a bounded $\Re(z_2)$-translation invariant operator.
We further assume that $P=P^{\mb{0},2}$. Fix $\mb{\sigma}=(\sigma,\sigma_2)\in \b\Omega$ and $\kappa\geq 2$, and let $\Phi:\mathbb{C}^2\to\mathbb{C}^2$ be the biholomorphism constructed in Section \ref{sec:normalization-biholomorphic} corresponding to $\mb{\sigma}$ and the entire function $H(z)$ in part (b) of Lemma \ref{lem:normalization-bihol}, so that $\Phi(\Omega)=\t\Omega=\{(\t z,\t z_2)\in\mathbb{C}^2\ :\ \Im(\t z_2)> P^{\mb{\sigma},\kappa}(\t z)\}.$
Then $\mathbb{H}^{\b\t\Omega}=(\Phi^{-1})^\ast \circ \mathbb{H}\circ \Phi^\ast$ is a bounded $\Re(\t z_2)$-translation invariant operator on $L^2(\b\t\Omega)$.

Denote by $\hat{\mathbb{H}}$ and $\hat{\mathbb{H}}^{\b\t\Omega}$ the bounded (on $L^2(\mathbb{C}\times\mathbb{R})$) operators
$$\hat{\mathbb{H}}=\mathcal{F}\circ(\Pi^{-1})^\ast \circ \mathbb{H}\circ \Pi^\ast\circ \mathcal{F}^{-1}$$
and
$$\hat{\mathbb{H}}^{\b\t\Omega}=\mathcal{F}\circ(\Pi^{-1})^\ast \circ \mathbb{H}^{\b\t\Omega}\circ \Pi^\ast\circ \mathcal{F}^{-1}.$$

\begin{lemma}\label{lem:normalization-schwartz}
	As functions on $\mathbb{C}\times\mathbb{C}\times\mathbb{R}$, the Schwartz kernels $[\hat{\mathbb{H}}](z,w,\tau)$ and $[\hat{\mathbb{H}}^{\b\t\Omega}](\t z,\t w,\t\tau)$ satisfy
	$$[\hat{\mathbb{H}}](z,w,\tau)= e^{-2\pi i \tau(T_\kappa(z,\sigma)-T_\kappa(w,\sigma))}[\hat{\mathbb{H}}^{\b\t\Omega}](z-\sigma,w-\sigma,\tau),$$
	where $$T_\kappa(\zeta,\sigma)=-2\Im\Big( \sum_{j=1}^\kappa \frac{1}{j!} \frac{\partial^j P}{\partial z^j}(\sigma)(\zeta-\sigma)^j\Big).$$
\end{lemma}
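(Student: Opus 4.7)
The plan is to systematically pull everything back to $\mathbb{C}\times\mathbb{R}$ via $\Pi$ and $\t\Pi$, and then track how the partial Fourier transform in $t$ converts the pullback by a translation-plus-shear into multiplication by a phase.

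First, I would introduce the map $\psi := \t\Pi\circ \Phi\circ \Pi^{-1}:\mathbb{C}\times\mathbb{R}\to\mathbb{C}\times\mathbb{R}$ induced by $\Phi$ under the boundary identifications. Using the explicit form $\Phi(z,z_2)=(z-\sigma,z_2-\sigma_2-2iH(z-\sigma))$, plus the parametrization $z_2 = t+iP(z)$ on $\b\Omega$, a direct calculation gives
\[ \psi(z,t) \;=\; \bigl(\,z-\sigma,\; t-\Re(\sigma_2)+2\Im H(z-\sigma)\,\bigr). \]
Combining the formula $H(z)=\sum_{j=1}^\kappa \frac{1}{j!}\frac{\partial^j P}{\partial z^j}(\sigma)z^j$ from Lemma \ref{lem:normalization-bihol}(b) with the definition of $T_\kappa(z,\sigma)$ gives $2\Im H(z-\sigma)=-T_\kappa(z,\sigma)$, so
\[ \psi(z,t) \;=\; \bigl(\,z-\sigma,\; t-\Re(\sigma_2)-T_\kappa(z,\sigma)\,\bigr). \]

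Second, I would rewrite both operators on $L^2(\mathbb{C}\times\mathbb{R})$ by setting $\mathbb{H}_0=(\Pi^{-1})^\ast\circ\mathbb{H}\circ\Pi^\ast$ and $\mathbb{H}_0^{\b\t\Omega}=(\t\Pi^{-1})^\ast\circ\mathbb{H}^{\b\t\Omega}\circ\t\Pi^\ast$. Since $\Phi=\t\Pi^{-1}\circ\psi\circ\Pi$, the identity $\mathbb{H}^{\b\t\Omega}=(\Phi^{-1})^\ast\circ\mathbb{H}\circ\Phi^\ast$ descends cleanly to
\[ \mathbb{H}_0^{\b\t\Omega} \;=\; (\psi^{-1})^\ast\circ\mathbb{H}_0\circ \psi^\ast, \]
so that after conjugating by $\mathcal{F}$,
\[ \hat{\mathbb{H}}^{\b\t\Omega} \;=\; \bigl(\mathcal{F}\circ(\psi^{-1})^\ast\circ\mathcal{F}^{-1}\bigr)\circ \hat{\mathbb{H}}\circ \bigl(\mathcal{F}\circ\psi^\ast\circ\mathcal{F}^{-1}\bigr). \]

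Third, I would compute the conjugated operator $B_\sigma:=\mathcal{F}\circ\psi^\ast\circ\mathcal{F}^{-1}$ explicitly. Writing $g=\mathcal{F}^{-1}\hat g$ and applying $\psi^\ast$ turns the phase $e^{2\pi i\tau t}$ into $e^{2\pi i\tau(t-\Re(\sigma_2)-T_\kappa(z,\sigma))}$; pulling the $z$-dependent factor outside the $\tau$-integral and Fourier transforming back in $t$ yields
\[ B_\sigma[\hat g](z,\tau) \;=\; e^{-2\pi i\tau(\Re(\sigma_2)+T_\kappa(z,\sigma))}\,\hat g(z-\sigma,\tau), \]
with inverse $B_\sigma^{-1}[\hat g](z,\tau)=e^{2\pi i\tau(\Re(\sigma_2)+T_\kappa(z+\sigma,\sigma))}\hat g(z+\sigma,\tau)$. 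Thus $\hat{\mathbb{H}}=B_\sigma\circ \hat{\mathbb{H}}^{\b\t\Omega}\circ B_\sigma^{-1}$.

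Finally, I would read off the kernel of $\hat{\mathbb{H}}$ from this composition. Applying $B_\sigma^{-1}$, then integrating against $[\hat{\mathbb{H}}^{\b\t\Omega}](\t z,\t w,\tau)$, then multiplying by $B_\sigma$'s phase, and performing the change of variable $w=\t w+\sigma$ (which also eliminates the two $\Re(\sigma_2)$ terms since they appear with opposite signs), produces precisely
\[ (\hat{\mathbb{H}} f)(z,\tau)=\int e^{-2\pi i\tau(T_\kappa(z,\sigma)-T_\kappa(w,\sigma))}[\hat{\mathbb{H}}^{\b\t\Omega}](z-\sigma,w-\sigma,\tau)\,f(w,\tau)\,dm(w), \]
which is the claimed identity. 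There is no real obstacle here; the main care required is in tracking the order of the compositions $\Pi$, $\Phi$, $\mathcal{F}$ and ensuring the sign of $T_\kappa$ matches the sign coming from $2\Im H$.
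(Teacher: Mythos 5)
Your proposal is correct and follows essentially the same route as the paper: both identify the induced map $\Pi\circ\Phi\circ\Pi^{-1}$ as a translation in $z$ composed with a $z$-dependent shift in $t$, observe that conjugation by $\mathcal{F}$ turns this into the multiplication--translation operator with phase $e^{-2\pi i\tau(\Re(\sigma_2)+T_\kappa(z,\sigma))}$ (using Lemma \ref{lem:normalization-bihol}(b) to rewrite $-2\Im H(\zeta-\sigma)$ as $T_\kappa(\zeta,\sigma)$), and then read off the kernel relation, with the $\Re(\sigma_2)$ phases cancelling. The only difference is presentational: you package the computation as conjugation $\hat{\mathbb{H}}=B_\sigma\circ\hat{\mathbb{H}}^{\b\t\Omega}\circ B_\sigma^{-1}$, while the paper computes the action on an arbitrary $f\in L^2(\mathbb{C}\times\mathbb{R})$ and identifies kernels from the resulting integral identity.
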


\begin{proof}
	The proof is similar in spirit to the derivation of Equation (\ref{szegoformula}) in \cite{Nagel1986}. We begin by noting that for $f\in L^2(\mathbb{C}\times\mathbb{R})$ we have
	$$\mathcal{F}\circ(\Pi^{-1})^\ast \circ \Phi^\ast \circ\Pi^\ast\circ \mathcal{F}^{-1}\circ\hat{\mathbb{H}}^{\b\t\Omega}[f](z,\tau)= \hat{\mathbb{H}}\circ \mathcal{F}\circ(\Pi^{-1})^\ast  \circ \Phi^\ast\circ \Pi^\ast\circ \mathcal{F}^{-1}[f](z,\tau)$$

	For $(w,s)\in \mathbb{C}\times\mathbb{R}$,
	\begin{align*}
	\Pi(\Phi(\Pi^{-1}(w,s))) & = \Pi(\Phi(w,s+i P(w))) \\
	& = \Pi(w-\sigma,s+iP(w)-\sigma_2-2iH(w-\sigma))\\
	& = (w-\sigma,s-\Re(\sigma_2)+2\Im(H(w-\sigma))),
	\end{align*}
	and therefore for $( w,\tau)\in \mathbb{C}\times\mathbb{R}$ we have
	\begin{align*}
		\mathcal{F}\circ(\Pi^{-1})^\ast  \circ \Phi^\ast\circ &\Pi^\ast\circ \mathcal{F}^{-1}[f]( w, \tau) \\
		& =  \int_\mathbb{R} e^{-2\pi i \tau s} \mathcal{F}^{-1}[f](\Pi(\Phi(\Pi^{-1}( w,s))))ds \\
		& =  \int_\mathbb{R} e^{-2\pi i \tau s} \mathcal{F}^{-1}[f]( w-\sigma,s-\Re(\sigma_2)+2\Im(H( w-\sigma)))ds \\
		& =  e^{-2\pi i  \tau( \Re(\sigma_2)-2\Im(H( w-\sigma)))}\int_\mathbb{R} e^{-2\pi i  \tau s} \mathcal{F}^{-1}[f](w-\sigma,s)ds\\
		& =  e^{-2\pi i \tau( \Re(\sigma_2)-2\Im(H( w-\sigma)))}f( w-\sigma,\tau)
	\end{align*}
	and
	$$\mathcal{F}\circ(\Pi^{-1})^\ast \circ \Phi^\ast \circ\Pi^\ast\circ \mathcal{F}^{-1}\circ\hat{\mathbb{H}}^{\b\t\Omega}[f]( z, \tau) = e^{-2\pi i  \tau (\Re(\sigma_2)-2\Im(H(z-\sigma)))}\hat{\mathbb{H}}^{\b\t\Omega}[f]( z-\sigma, \tau).$$
	We therefore have
	\begin{align*}
		\hat{\mathbb{H}}^{\b\t\Omega}[f]& ( z, \tau)\\
		 & =  e^{2\pi i  \tau (\Re(\sigma_2)-2\Im(H(z)))}\mathcal{F}\circ(\Pi^{-1})^\ast \circ \Phi^\ast \circ\Pi^\ast\circ \mathcal{F}^{-1}\circ\hat{\mathbb{H}}^{\b\t\Omega}[f]( z+\sigma, \tau)\\
		& =   e^{2\pi i  \tau (\Re(\sigma_2)-2\Im(H(z)))}\hat{\mathbb{H}}\circ \mathcal{F}\circ(\Pi^{-1})^\ast  \circ \Phi^\ast\circ \Pi^\ast\circ \mathcal{F}^{-1}[f](z+\sigma,\tau)\\
	 & =  e^{2\pi i  \tau (\Re(\sigma_2)-2\Im(H(z)))}\\
	 & \qquad\qquad\times\int_{\mathbb{C}} [\hat{\mathbb{H}}](z+\sigma,w,\tau)\mathcal{F}\circ(\Pi^{-1})^\ast  \circ \Phi^\ast\circ \Pi^\ast\circ \mathcal{F}^{-1}[f](w,\tau)dm(w)\\
	 & =  e^{2\pi i  \tau (\Re(\sigma_2)-2\Im(H(z)))} \\
	 & \qquad\qquad\times  \int_{\mathbb{C}} [\hat{\mathbb{H}}](z+\sigma,w,\tau)e^{-2\pi i \tau( \Re(\sigma_2)-2\Im(H( w-\sigma)))}f( w-\sigma,\tau)dm(w)\\
	 & =   \int_{\mathbb{C}} [\hat{\mathbb{H}}](z+\sigma,w,\tau) e^{-2\pi i \tau(2\Im(H(z))-2\Im(H( w-\sigma)))}f( w-\sigma,\tau)dm(w)\\
	 & =  \int_{\mathbb{C}} [\hat{\mathbb{H}}](z+\sigma,w+\sigma,\tau) e^{-2\pi i \tau(2\Im(H(z))-2\Im(H(w)))}f( w,\tau)dm(w).
	\end{align*}
	Because this holds for every $f\in L^2(\mathbb{C}\times\mathbb{R})$,
	$$[\hat{\mathbb{H}}^{\b\t\Omega}](\t z,\t w,\t\tau)=[\hat{\mathbb{H}}](\t z+\sigma,\t w+\sigma,\t \tau) e^{-2\pi i \tau(2\Im(H(\t z))-2\Im(H(\t w)))},$$
	or equivalently
	$$[\hat{\mathbb{H}}](z,w,\tau)=e^{2\pi i \tau(2\Im(H(z-\sigma))-2\Im(H(w-\sigma)))}[\hat{\mathbb{H}}^{\b\t\Omega}](z-\sigma,w-\sigma,\tau).$$
	Part (b) of Lemma \ref{lem:normalization-bihol} implies that $-2\Im(H(\zeta-\sigma))=T_\kappa(\zeta,\sigma)$. This completes the proof.	
\end{proof}

\section{Metrics}
\label{sec:geom}

In this section we study the properties of the Carnot-Carath\'eodory control metric $d(\mb{z},\mb{w})$ on ${\rm b}\Omega$, obtain approximate formulas for $d(\mb{z},\mb{w})$ to be used in later estimates, and construct a smooth version of $d$ that allows us to construct bump functions on ${\rm b}\Omega$ adapted to the control geometry.

On ${\rm b}\Omega$ we decompose $\bar{Z}=\frac{1}{2}(X+iY)$ and $Z=\frac{1}{2}(X-iY)$, where $X=Z+\bar{Z}$ and $Y=\frac{1}{i}(Z-\bar{Z})$ are real vector fields.
We begin by defining the control metric associated to the vector fields $X$ and $Y$, and recalling a few of its properties.

The Carnot-Carath\'eodory distance between $\mb{z}$ and $\mb{w}$ on ${\rm b}\Omega$ is defined to be
\begin{align*}
	d(\mb{z},\mb{w}) & = {\rm inf}\ \lb \delta\ :\ \exists \gamma:[0,1]\rightarrow {\rm b}\Omega,\ \gamma(0)=\mb{z},\ \gamma(1)=\mb{w},\right. \\
	& \qquad\qquad\quad \gamma'(t)=\alpha(t)\delta X(\gamma(t))+\beta(t)\delta Y(\gamma(t))\ a.e., \\
	& \qquad\qquad\quad \left. \alpha,\beta\in FPWS[0,1]\ \mbox{and}\  \||\alpha(\cdot)|^2+|\beta(\cdot)|^2\|_\infty \leq 1\rb.
\end{align*}

Here, the function space $FPWS[0,1]$ consists of all functions $f:[0,1]\rightarrow \mathbb{R}$ for which there exist $0=a_0<a_1<\cdots<a_N<a_{N+1}=1$ such that, for all $i=0,\ldots,N,$ $f$ is smooth on $(a_i,a_{i+1})$ and $f|_{(a_i,a_{i+1})}$ extends continuously to $[a_i,a_{i+1}]$.

By the results in \cite{Peterson2014}, the balls with respect to this metric are given by 
\begin{align}
	  B_d(\mb{z},&\delta) \nonumber \\
	:= & \lb \mb{w}\in\b\Omega\ :\ d(\mb{z},\mb{w})<\delta\rb \nonumber\\
	 \approx & \lb \mb{w}\in\b\Omega\ :\ |z-w|<\delta\ \mbox{and}\ |\Re(z_2)-\Re(w_2)-T(z,w)|<\Lambda(\mb{z},\delta)\rb,\label{eq:geom-ballapprox}
\end{align}
where $$T(z,w)=-2\Im\Big(\int_0^1 (z-w)P_z(w+(z-w)r)dr\Big)$$ and $\delta\mapsto\Lambda(\mb{z},\delta)$ is defined as 
\[ \Lambda(\mb{z},\delta):= \sup\{ |t|\ :\ t\in\mathbb{R}\ \mbox{and}\ d(\mb{z},(z,t+z_2))<\delta\}.\]
Indeed, defining
$$Cyl_d(\mb{z},\delta)=\{ \mb{w}\in\b\Omega\ :\ |z-w|<\delta\ \mbox{and}\ |\Re(z_2)-\Re(w_2)-T(z,w)|<\Lambda(\mb{z},\delta)\},$$
we may express (\ref{eq:geom-ballapprox}) quantitatively as
$$Cyl_d\Big(\mb{z},\frac{1}{4}\delta\Big)\subset B_d(\mb{z},\delta)\subset Cyl_d(\mb{z},3\delta).$$
To see this, note that if $|z-w|\leq \delta$ then $\Lambda(\mb{z},\delta)\geq \Lambda(\mb{w},\frac{1}{3}\delta)$, and therefore
\begin{align*}
	& Cyl_d(\mb{z},\delta) \\
	& =\ \{ \mb{w}\ :\ |w-z|\leq \delta\ \mbox{and}\ |\Re(z_2)-\Re(w_2)-T(z,w)|< \Lambda(\mb{z},\delta)\}\\
	& \subset\ \{ \mb{w}\ :\ |w-z|\leq \delta\ \mbox{and}\ |\Re(z_2)-\Re(w_2)-T(z,w)|< \max_{|z^\ast-z|\leq \delta}\Lambda((z^\ast,z^\ast_2),\delta)\}\\
	& \subset\ B_d(\mb{z},4\delta)
\end{align*}
and
\begin{align*}
	& B_d(\mb{z},\delta) \\
	& \subset\ \{ \mb{w}\ :\ |w-z|\leq \delta\ \mbox{and}\ |\Re(z_2)-\Re(w_2)-T(z,w)|< \max_{|z^\ast-z|\leq \delta}\Lambda((z^\ast,z^\ast_2),\delta)\}\\
	& \subset\ \{ \mb{w}\ :\ |w-z|\leq \delta\ \mbox{and}\ |\Re(z_2)-\Re(w_2)-T(z,w)|< \Lambda(\mb{z},3\delta)\}\\
	& \subset\ Cyl_d(\mb{z},3\delta).
\end{align*}
Thus, $B_d(\mb{z},\delta)$ is a `twisted ellipsoid', with minor radii $\delta$ in the $z$-direction and $\Lambda(\mb{z},\delta)$ in the $\Re(z_2)$-direction.

\begin{remark}\label{rem:metric-invar}{\rm By Proposition \ref{prop:normalization-biholgeneral} and part (a) of Lemma \ref{lem:normalization-bihol}, $Z$, $\bar{Z}$, $T$, $\mathbb{S}$, and the metric $d(\mb{z},\mb{w})$ are preserved under the biholomorphisms produced in Section \ref{sec:normalization}. In other words, the estimates appearing in Theorem \ref{thm:szego-decomp}, Corollary \ref{cor:szego-growth}, Corollary \ref{cor:szego-cancellation}, and Theorem \ref{thm:szego-mapping} are also invariant under these biholomorphisms.  We will therefore assume that $P=P^{\mb{0},2}$ throughout the rest of the paper.}\end{remark}

Our first major result in this section, proved in Section \ref{sec:metric-uftisaq}, describes $\Lambda(\mb{z},\delta)$ in terms of $h$.

\begin{lemma}\label{lem:geom-uftisaq}
	Suppose that $h(z)$ satisfies (H1) and (H2). Then, uniformly in $0<\delta<{+\infty}$ and $\mb{z}\in {\rm b}\Omega$,
	\begin{equation}\label{eq:geom-uftisaq}
		\Lambda(\mb{z},\delta)\approx\int_{|\eta-z|<\delta} h(\eta)dm(\eta).
	\end{equation}
	Moreover, there exists $\delta_0>0$ so that $$\Lambda(\mb{z},\delta)\approx \delta^2\qquad \mbox{for}\ \delta_0\leq \delta<+\infty\ \mbox{and}\ \mb{z}\in {\rm b}\Omega.$$
\end{lemma}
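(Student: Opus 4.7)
\medskip
\noindent\textbf{Proof proposal.} My plan is to bootstrap from the defining property of $\Lambda$: it measures the largest purely vertical displacement reachable from $\mb{z}$ by a CC path of length $\delta$. Since $[Z,\bar Z] = \frac{i}{2} h\, T$ modulo factors (the Levi form in this setting equals $h$), the Baker--Campbell--Hausdorff/commutator calculus says that the vertical component acquired by traversing a small horizontal loop equals, up to error, the integral of $h$ over the enclosed planar region. I would make this heuristic precise in two directions, one to obtain each half of the equivalence in \eqref{eq:geom-uftisaq}, and then handle the large-$\delta$ regime separately using (H1)--(H2).

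For the upper bound $\Lambda(\mb{z},\delta) \lesssim \int_{|\eta-z|<C\delta} h\,dm$, I would take an admissible path $\gamma(s) = (\alpha(s), \alpha_2(s))$ of $d$-length $\leq \delta$ with $\gamma(0)=\mb{z}$ and $\gamma(1)=(z,z_2 + it)$. The $z$-projection $\alpha(s)$ is a Lipschitz loop in $\mathbb{C}$ of diameter $\lesssim \delta$ lying in the disk $D(z,C\delta)$, and a Stokes'-theorem computation using $\Re(z_2)' = -2\Im(\alpha' P_z(\alpha)) = -\Im(\alpha' \partial_{\bar\alpha} \bar\partial P) + \dots$ yields $|t| \lesssim \iint_{D(z,C\delta)} h\,dm$ after bounding the winding-number weight uniformly (this is essentially the argument in Section~3 of \cite{NagelSteinWainger1985}, modified for the unbounded setting as in \cite{Peterson2014}).

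For the lower bound, I would explicitly construct a path. Given $\delta$, choose a point $z^\ast \in D(z,\delta/4)$ where $h$ is comparable to its average (which exists by a standard Vitali-type argument together with (H2)), and traverse a small square loop of side length $\approx \delta$ centered near $z^\ast$ using the $X$ and $Y$ vector fields. The vertical displacement produced is, by the commutator identity $[X,Y]\approx h\,T$ integrated against the enclosed area, comparable to $\int_{D(z,\delta)} h\,dm$. Iterating or concatenating such loops if needed gives the required lower bound on $\Lambda(\mb{z},\delta)$.

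For the second assertion, I would combine the equivalence just proved with a comparison of $\int_{D(z,\delta)} h\,dm$ to $\delta^2$ for $\delta$ large. The upper bound $\int_{D(z,\delta)} h\,dm \leq \pi\delta^2 \|h\|_\infty$ is immediate from (H2). For the lower bound I would first show, using (H1) and (H2) together with Taylor's theorem, that there exist $r_0 > 0$ and $c_0 > 0$ (depending only on $C_1$ and $\|h\|_{C^{m-2}}$) such that $\int_{D(z,r_0)} h\,dm \geq c_0$ uniformly in $z$: indeed (H1) forces at least one derivative of $h$ of order $\leq m-2$ to be bounded below by $C_1/m$ at $z$, and (H2) bounds the next derivative, so $h$ is comparable to a positive quantity on some sub-disk of controlled radius. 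Tiling $D(z,\delta)$ by $\approx (\delta/r_0)^2$ such disks then gives $\int_{D(z,\delta)} h\,dm \gtrsim \delta^2$ for $\delta \geq \delta_0 := r_0$, completing the proof.

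The main obstacle I anticipate is making the Stokes/commutator arguments fully quantitative and uniform in $\mb{z}$ without any appeal to homogeneity; in particular, establishing that a path of CC-length $\delta$ has $z$-projection trapped in a disk of radius $O(\delta)$ requires care when $\delta$ is large, since one must rule out long excursions that produce vertical displacement through cancellation rather than accumulation. This is where the hypothesis (H3) (rather than just (H1)--(H2)) plays its role, via Proposition \ref{prop:potentials}, which furnishes controlled potentials whose gradient growth is only linear in $|z|$ and hence prevents such pathological cancellations.
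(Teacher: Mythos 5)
Your plan founders at the exact point you flag as a technicality: ``bounding the winding-number weight uniformly'' is not possible. For an admissible path whose $z$-projection $\alpha$ is a closed loop, the Stokes computation gives a vertical displacement equal (up to a fixed constant) to the integral of $h$ weighted by the \emph{winding number} of $\alpha$, and a path of CC-length $\delta$ can wind on the order of $\delta/r$ times around a disk of radius $r\ll\delta$. So the upper bound you extract is the winding-weighted integral, which dominates $\int_{|\eta-z|<C\delta}h\,dm$ by an unbounded factor unless one knows a non-concentration estimate of the form $\delta^2\sup_{D(z,\delta)}h\lesssim\int_{D(z,\delta)}h\,dm$, uniformly in $z$ and $\delta$. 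That estimate is precisely the nontrivial content of (H1)--(H2) in this lemma: it is what Lemma \ref{lem:metric-approxsmall} establishes via the Taylor expansion of $h$, the maximal-direction argument, and the Bruna--Nagel--Wainger convexity lemma (Proposition \ref{prop:geom-obj-convex}); the paper then sidesteps path analysis altogether by invoking the stockyard characterization $\Lambda(\mb{z},\delta)=\sup\sum_i\int_{R_i}h\,dm$ from \cite{Peterson2014} (Theorem \ref{thm:geom-obj-stock}), whose total-perimeter bookkeeping is exactly the replacement for your winding-number claim. The same gap infects your lower bound: a loop of perimeter $\lesssim\delta$ around a point $z^\ast$ with $h(z^\ast)$ comparable to the average encloses only a region of diameter $\lesssim\delta$, and nothing in (H2) guarantees that $h\gtrsim\sup_{D(z,\delta)}h$ on a set of measure $\gtrsim\delta^2$ there --- at small scales the average can be of size $\delta^{m-2}$, against which the $O(1)$ bounds on $\nabla h$ are useless. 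Making either direction work is tantamount to proving the polynomial-like comparability $\Lambda_1\approx\Lambda_2\approx\cdots$ of Lemma \ref{lem:metric-approxsmall}, which your sketch never supplies.

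Two further points. First, your closing worry about ``long excursions'' and the appeal to (H3) are misplaced: since $\gamma'=\alpha\delta X+\beta\delta Y$ with $\||\alpha|^2+|\beta|^2\|_\infty\le1$ and $X,Y$ project to an orthonormal frame in the $z$-plane, the $z$-projection of an admissible path is $\delta$-Lipschitz on $[0,1]$ and never leaves $\overline{D(z,\delta)}$; moreover the lemma assumes only (H1)--(H2), so a proof that genuinely used Proposition \ref{prop:potentials} would be proving a different (weaker) statement. Second, your large-$\delta$ argument (uniform lower bound on unit-scale disks plus tiling) is sound in outline and is essentially what Lemma \ref{lem:geom-ugschar} packages, but its key input --- $\int_{D(z,r_0)}h\,dm\gtrsim1$ uniformly from (H1) --- again requires a quantitative version of ``some derivative bounded below forces unit mass on a controlled disk,'' i.e.\ the small-scale comparability above; as written, ``$h$ is comparable to a positive quantity on some sub-disk'' is an assertion, not a proof.
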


One consequence of Lemma \ref{lem:geom-uftisaq} is that (H1) and (H2) imply that every UFT domain is approximately quadratic in the sense of \cite{Peterson2014}.
The conclusion of Lemma \ref{lem:geom-uftisaq} for $\delta\geq \delta_0$ will follow from the following technical result, whose statement is slightly altered from (but admits the same proof as) the original.

\begin{lemma}[{\cite{Peterson2014}, Theorem 4.2}]\label{lem:geom-ugschar}
	For bounded and continuous $h(z)$, the following are equivalent:
	\begin{itemize}
		\item[(a)] There exists $0<\delta_0<+\infty$ with $\Lambda(\mb{z},\delta)\approx \delta^2$ uniformly in $\delta \geq \delta_0$ and $\mb{z}\in {\rm b}\Omega$,
		\item[(b)] For some $\delta_0> 0$, $\displaystyle \int_{|\eta-z|<\delta_0} h(\eta)dm(\eta) \approx 1$ uniformly in $z\in\mathbb{C}$, and
		\item[(c)] There exists $0<\delta_0<+\infty$ with $\displaystyle \int_{|\eta-z|<\delta} h(\eta)dm(\eta) \approx \delta^2$, uniformly in $z\in\mathbb{C}$ and $\delta\geq \delta_0.$	
	\end{itemize}
\end{lemma}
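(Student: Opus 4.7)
The strategy is to route everything through the intermediate asymptotic identity
\begin{equation}\label{eq:proposal-key}
\Lambda(\mb{z},\delta) \approx \int_{|\eta - z| < \delta} h(\eta) \, dm(\eta),
\end{equation}
valid uniformly in $\mb{z}\in\b\Omega$ and $\delta \geq \delta_0$. Once \eqref{eq:proposal-key} is in hand, the equivalence (a)$\Leftrightarrow$(c) becomes immediate by substitution, and (b)$\Leftrightarrow$(c) is a purely planar statement about $h$. The overall plan therefore has two halves: a direct Euclidean argument between (b) and (c), and a dynamical/Stokes argument linking (a) to (c) via \eqref{eq:proposal-key}.

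For (b)$\Leftrightarrow$(c): the direction (c)$\Rightarrow$(b) is trivial by evaluating (c) at $\delta = \delta_0$. For (b)$\Rightarrow$(c), fix $\delta \geq \delta_0$ and cover the disk $\{|\eta-z|<\delta\}$ by $\approx (\delta/\delta_0)^2$ disks of radius $\delta_0$ with bounded overlap. Summing the uniform upper bound from (b) over the cover yields $\int_{|\eta-z|<\delta} h\, dm \lesssim \delta^2$. For the matching lower bound, extract a maximal $(\delta_0/4)$-separated family inside $\{|\eta-z|<\delta\}$; it has cardinality $\approx (\delta/\delta_0)^2$, the associated $\delta_0$-disks are disjoint and each contributes $\gtrsim 1$ by (b). To establish \eqref{eq:proposal-key} I would exploit the identity $[X,Y] = hT$, which follows from $X = \partial_x + P_y T$ and $Y = -\partial_y + P_x T$ on $\b\Omega \cong \mathbb{C}\times\mathbb{R}$. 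The upper bound on $\Lambda$ comes from Green's theorem applied to any admissible CC-path $\gamma$ of length $<\delta$: its $\mathbb{C}$-projection has Euclidean length $<\delta$, lies in $\{|\eta-z|<\delta\}$, and sweeps a region over which the $t$-displacement is $\leq \int h\, dm$, modulo the explicit translation by $T(z,w)$ built into the cylinder description \eqref{eq:geom-ballapprox}. The lower bound comes from constructing an explicit closed $X/Y$-loop over $\partial D$, where $D = \{|\eta-z|<\delta\}$, traversed at unit projected speed via $X$ on horizontal arcs and $Y$ on vertical arcs of a polygonal approximation; Stokes returns to the base point shifted in $t$ by $\approx \int_D h\, dm$.

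The main obstacle is ensuring the lower-bound loop has CC-length genuinely $\lesssim \delta$ rather than being inflated by the $P_x T$ and $P_y T$ drift terms along each segment. I would handle this by recentering via the translation $(z,z_2)\mapsto (z, z_2+T(z,\eta))$ that defines $\Lambda$ and $Cyl_d$, which absorbs the first-order drift; the remaining second-order contribution is exactly the commutator-generated twist and produces the required $\int_D h\,dm$. A subsidiary obstacle is the $\delta$-independent constant in \eqref{eq:proposal-key}: to get constants uniform over all $\delta \geq \delta_0$ and $\mb{z}$, one iterates the loop $\lceil \tau/\int_D h\rceil$ times to hit arbitrary target twists $\tau$ up to the available ceiling, using that concatenation of such loops scales CC-length linearly and stays within a fixed multiple of $\delta$ by \eqref{eq:geom-ballapprox}. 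Once \eqref{eq:proposal-key} is verified at scales $\delta\geq \delta_0$, the three implications of the lemma follow mechanically.
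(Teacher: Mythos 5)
You are supplying a proof where the paper supplies only a citation: Lemma \ref{lem:geom-ugschar} is quoted from \cite{Peterson2014} (Theorem 4.2 there), with the remark that the slightly altered statement ``admits the same proof as the original,'' so your argument has to stand entirely on its own — and as written it does not. The pivot of your plan, the unconditional comparability $\Lambda(\mb{z},\delta)\approx \int_{|\eta-z|<\delta}h\,dm$ for all bounded continuous $h$ and all $\delta\geq\delta_0$, is false, for a locality reason: $\Lambda(\mb{z},\delta)$ is computed from admissible paths of Carnot--Carath\'eodory length $<\delta$ that return to the same $z$, so their $\mathbb{C}$-projections are closed curves based at $z$ of Euclidean length $<\delta$, hence contained in $\overline{\{|\eta-z|\leq\delta/2\}}$; $\Lambda(\mb{z},\delta)$ therefore cannot see $h$ outside the disc of radius $\delta/2$. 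Taking $h\equiv 0$ on $\{|\eta-z_0|\leq 3R/4\}$ and $h\equiv 1$ on $\{|\eta-z_0|\geq 7R/8\}$ gives $\Lambda(\mb{z}_0,R)=0$ while $\int_{|\eta-z_0|<R}h\gtrsim R^2$. Unconditionally one only has, say, $\int_{|\eta-z|<c\delta}h\,dm\leq\Lambda(\mb{z},\delta)\lesssim\|h\|_\infty\delta^2$ with a fixed $c<1/2$ (the lower bound comes from a based loop consisting of a radial segment plus the circle of radius $c\delta$ about $z$), and upgrading the inner radius $c\delta$ to $\delta$ is exactly what the large-scale uniformity (b)/(c) provides. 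Consequently ``(a)$\Leftrightarrow$(c) by substitution'' is circular in the direction (a)$\Rightarrow$(c), where (b)/(c) is not yet available; your lower-bound loop over $\partial\{|\eta-z|<\delta\}$ has projected length $2\pi\delta$ and so only bounds $\Lambda(\mb{z},2\pi\delta)$ from below, the same radius loss in different clothing.

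The second genuine gap is in your Green's theorem upper bound. For a closed projected loop that is not simple, the $t$-displacement equals $\int_{\mathbb{C}}w(\eta)h(\eta)\,dm(\eta)$ with $w$ the winding number, and this can exceed $\int_{|\eta-z|<\delta}h\,dm$ by an arbitrarily large factor: a loop of length $<\delta$ can wind many times around a small set where $h$ is concentrated. So the claimed bound ``the $t$-displacement is $\leq\int h\,dm$'' fails, and with it your route to the nontrivial implication (a)$\Rightarrow$(b)/(c); note that merely bounded continuous $h$ has no uniform modulus of continuity, so one cannot pass from a pointwise largeness of $h$ to an integral bound either. This can be repaired — e.g.\ by the superlevel-set estimate $|\{w\geq k\}|\lesssim\delta^2/k^2$ and the splitting $\int wh\leq k_0\int_{|\eta-z|<\delta/2}h+\|h\|_\infty\sum_{k\geq k_0}|\{w\geq k\}|$, or by invoking the stockyard characterization (Theorem \ref{thm:geom-obj-stock}), whose pens-with-multiplicity formulation is designed precisely for this — but some such argument must be supplied. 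By contrast, your (b)$\Leftrightarrow$(c) covering step and the direction (c)$\Rightarrow$(a) (explicit based loop of radius a fixed fraction of $\delta$, plus $\Lambda\lesssim\|h\|_\infty\delta^2$) are essentially sound, up to the small slip that disjointness of the radius-$\delta_0$ discs requires a $2\delta_0$-separated net rather than a $(\delta_0/4)$-separated one.
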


By Lemma \ref{lem:geom-uftisaq}, we may take $\delta\mapsto \Lambda(\mb{z},\delta)$  strictly increasing, and can therefore compute its inverse $\delta\mapsto \mu(\mb{z},\delta)$.
That is, 
\begin{equation}\label{eq:metrics-lambdamu}
	\Lambda(\mb{z},\mu(\mb{z},\delta))=\delta=\mu(\mb{z},\Lambda(\mb{z},\delta)).\end{equation}

The results of \cite{Peterson2014} and Lemma \ref{lem:geom-uftisaq} imply that
\begin{equation}\label{eq:geom-metric}
	d(\mb{z},\mb{w})\approx |z-w|+\mu(\mb{w},|\Re(z_2)-\Re(w_2)-T(z,w)|).
\end{equation}
We establish (\ref{eq:geom-metric}) by taking $\delta=d(\mb{z},\mb{w})$ in (\ref{eq:geom-ballapprox}), and noting that if $A=|z-w|$ and $B=|\Re(z_2)-\Re(w_2)-T(z,w)|$, then $B\lesssim \Lambda(\mb{z},\delta)$, so that $A+\mu(\mb{z},B)\lesssim \delta$.
Similarly, $A+\mu(\mb{z},B)\gtrsim \delta$, establishing (\ref{eq:geom-metric}).

By using our biholomorphisms from Section \ref{sec:normalization-biholomorphic}, we can obtain a simpler version of formula (\ref{eq:geom-metric}) for $d(\mb{z},\mb{w})$ depending on the size of $|z-w|$. 

\begin{lemma}\label{lem:metric-approx} 
	Let $d(\mb{z},\mb{w})$ be as above.
	\begin{itemize}
		\item[(a)] For $|z-w|\gtrsim 1$, \begin{align*}
			d(\mb{z},\mb{w}) & \approx |z-w|+\mu(\mb{w},|\Re(z_2)-\Re(w_2)-T_2(z,w)|)\\
			& \approx |z-w|+\mu(\mb{w},|\Re(z_2)-\Re(w_2)-T_1(z,w)|).\end{align*}
		\item[(b)] For $|z-w|\lesssim 1$ and $\kappa\geq m$, $$d(\mb{z},\mb{w})\approx |z-w|+\mu(\mb{w},|\Re(z_2)-\Re(w_2)-T_\kappa(z,w)|).$$
	\end{itemize}
\end{lemma}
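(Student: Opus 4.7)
The plan is to establish, in each of the two regimes, the error estimate
\[|T(z,w) - T_?(z,w)| \lesssim \Lambda(\mb{w}, |z-w|)\]
with $T_? \in \{T_1, T_2, T_\kappa\}$ as appropriate, and then to transfer this to the formula (\ref{eq:geom-metric}). Because Lemma \ref{lem:geom-uftisaq} renders $\delta \mapsto \Lambda(\mb{w},\delta)$ monotone and essentially doubling, its inverse $\mu(\mb{w},\cdot)$ is quasi-subadditive, so the error estimate implies $\mu(\mb{w}, |T - T_?|) \lesssim |z-w|$; the triangle inequality combined with (\ref{eq:geom-metric}) then yields $|z-w| + \mu(\mb{w}, |\Re(z_2 - w_2) - T_?|) \approx d(\mb{z},\mb{w})$, proving both parts.

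For part (a), with $|z-w| \gtrsim 1$: Proposition \ref{prop:potentials}(b)(ii) gives $|P_z(\zeta)| \lesssim 1 + |\zeta|$, so a direct estimation of the line integrals defining $T$, $T_1$, $T_2$ yields $|T - T_j| \lesssim |z-w|^2$ for $j = 1, 2$; Lemma \ref{lem:geom-uftisaq} gives $\Lambda(\mb{w}, |z-w|) \approx |z-w|^2$ at large scales, closing the argument.

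For part (b), with $|z-w| \lesssim 1$ and $\kappa \geq m$: Apply the biholomorphism $\Phi$ from Lemma \ref{lem:normalization-bihol}(b) at $\mb{\sigma} = \mb{w}$. In the normalized coordinates $\tilde z = z - w$, the new defining function $\tilde P = P^{\mb{w}, \kappa}$ has $\partial_{\tilde z}^j \tilde P(0) = 0$ for $j = 0, 1, \ldots, \kappa$ (the $j = 0, 1$ cases from the construction in Proposition \ref{prop:potentials}), and a direct computation tracking the twist through $\Phi$ analogous to Lemma \ref{lem:normalization-schwartz} shows that the twist $\tilde T$ in the normalized domain satisfies $|\tilde T(\tilde z, 0)| = |T(z,w) - T_\kappa(z,w)|$; moreover, Proposition \ref{prop:normalization-biholgeneral} gives $\tilde\Lambda(\tilde{\mb{0}}, |\tilde z|) = \Lambda(\mb{w}, |z-w|)$. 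The task thus reduces to bounding $|\tilde T(\tilde z, 0)| \lesssim \tilde\Lambda(\tilde{\mb{0}}, |\tilde z|)$. Writing $\tilde T(\tilde z, 0) = -2\Im \int_0^{\tilde z} \tilde P_{\tilde z}(u)\, du$ and decomposing via the Taylor expansion of $\tilde P_{\tilde z}$ at $0$, the pure holomorphic part vanishes through order $\kappa - 1$ and after integration contributes at most $|\tilde z|^{\kappa+1} \leq |\tilde z|^{m+1}$, absorbed by $\tilde\Lambda \gtrsim |\tilde z|^m$ (the lower bound coming from (H1) and (H2) via Taylor expansion of $\tilde h$ on the disk of radius $|\tilde z|$). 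The antiholomorphic part is treated by Stokes' theorem together with $\partial_{\bar u}\tilde P_{\tilde z} = \tilde h/4$: closing the segment from $0$ to $\tilde z$ with a return path inside the disk of radius $|\tilde z|$ converts the line integral into an area integral of $\tilde h$ (directly $\lesssim \tilde\Lambda$) plus a residual boundary integral whose leading contribution $|\tilde z|^2 h(w)/8$ is purely real and vanishes under $-2\Im$, with higher-order residuals absorbed by (H2). The main obstacle is the delicate choice of return path ensuring that all surviving contributions fit within $\tilde\Lambda$, which is precisely where the UFT hypothesis enters to produce the matching lower bound on $\tilde\Lambda$.
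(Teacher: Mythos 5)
Your reduction is structurally sound and matches the paper's framework: after normalizing at $\mb{w}$ the twist of the normalized domain is exactly $T-T_\kappa$ (resp. $T-T_2$), Proposition \ref{prop:normalization-biholgeneral} gives $\tilde\Lambda(\tilde{\mb{0}},\cdot)=\Lambda(\mb{w},\cdot)$, and your doubling/quasi-subadditivity transfer from $|T-T_?|\lesssim\Lambda(\mb{w},|z-w|)$ to the stated equivalences is correct and is equivalent to the paper's dichotomy argument. In part (a), however, the tool you cite is not quite the one you need: estimating $T$, $T_1$, $T_2$ separately via $|\nabla P(\zeta)|\leq A_1|\zeta|$ leaves an uncontrolled factor of $|w|$, since that gradient bound is centered at $0$. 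You should either bound the differences $T-T_1$, $T-T_2$ using $\|\nabla^2P\|_\infty\leq A_2$ from Proposition \ref{prop:potentials}(b)(iii), or, as the paper does, bound the normalized twist $\tilde T(\tilde z,0)=T-T_2$ by applying (b)(ii) to the normalized potential $P^{\mb{w},2}$, for which the gradient bound is centered at the right point. This is a one-line fix.

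The genuine gap is the decisive estimate in part (b), namely $|\tilde T(\tilde z,0)|\lesssim\tilde\Lambda(\tilde{\mb{0}},|\tilde z|)$ for $|\tilde z|\lesssim 1$, which your Stokes scheme does not establish. Converting the segment integral of the antiholomorphic part into an area integral of $\tilde h$ requires closing the contour, and the integral over the return path is a line integral of $\tilde P_{\tilde z}$ of exactly the same kind as the one you started with; you yourself flag the choice of return path as the main unresolved obstacle, and the absorption mechanism you invoke cannot close it. Hypothesis (H2) only produces residuals of the form $C|\tilde z|^{l}$ with $C$ depending on $\|h\|_{C^{l-2}}$, while (H1) only yields the generic lower bound $\tilde\Lambda(\tilde{\mb{0}},\delta)\gtrsim\delta^{m}$; a constant-coefficient residual of order $|\tilde z|^{l}$ with $2\leq l<m$ is therefore not dominated by $\tilde\Lambda$ when the derivatives of $h$ of order up to $l-2$ vanish at $w$. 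The missing ingredient, which also renders the Stokes detour unnecessary, is the pointwise comparison $\Lambda(\mb{w},\delta)\approx\Lambda_4(w,\delta)=\sum_{j=2}^{m}\bigl(\sum_{k=0}^{j-2}\bigl|\tfrac{\partial^{j-2}h}{\partial z^{k}\partial\bar z^{j-2-k}}(w)\bigr|\bigr)\delta^{j}$ from Lemma \ref{lem:metric-approxsmall} (cf. Remark \ref{rem:metric-approxhighorder}): every mixed Taylor term of $\tilde P_{\tilde z}$ has as coefficient a derivative of $h$ at $w$ of order at most $m-2$, so its contribution along the segment is bounded term by term by $\Lambda_4(\mb{w},|\tilde z|)\approx\Lambda(\mb{w},|\tilde z|)$, the pure holomorphic part vanishes through order $\kappa$ by construction, and the Taylor remainder contributes $\mathcal{O}(\|h\|_{C^{\kappa-1}}|\tilde z|^{m+1})\lesssim\Lambda(\mb{w},|\tilde z|)$ by the $\delta^{m}$ lower bound. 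This direct Taylor estimate is precisely how the paper closes part (b).
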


We prove Lemma \ref{lem:metric-approx} in Section \ref{sec:metric-approx}.

Finally, the proof of Theorem \ref{thm:szego-mapping} requires, for fixed $\mb{w}\in{\rm b}\Omega$, a smooth version of the function $\mb{z}\mapsto d(\mb{z},\mb{w})$ for constructing smooth bump functions; cf. \cite{NagelStein2001b,Street2014}.
This is accomplished by the following result, proved in Section \ref{sec:metric-smooth}.

\begin{lemma}\label{lem:metric-smooth} 
	For each $\mb{w}\in\b\Omega$ there is a function $d^\ast(\bullet,\mb{w}):\b\Omega\to[0,+\infty)$ with 
	\begin{itemize}
		\item[(a)] $d^\ast(\mb{z},\mb{w})\approx d(\mb{z},\mb{w})$, and
		\item[(b)] $|Z_{\mb{z}}^\alpha d^\ast(\mb{z},\mb{w})|\lesssim d^\ast(\mb{z},\mb{w})^{1-|\alpha|} \approx d(\mb{z},\mb{w})^{1-|\alpha|}$ for $|\alpha|\leq 2$,
	\end{itemize}
	where $Z_{\mb{z}}^\alpha$ denotes an arbitrary $|\alpha|$-order derivative in the vector fields $Z$ or $\bar{Z}$ acting in the $\mb{z}$ variables. Moreover, the constants in (a) and (b) can be chosen independently of $\mb{z}$ and $\mb{w}$.
	\end{lemma}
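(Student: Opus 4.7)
The plan is to normalize via biholomorphism so that $\mb{w}=\mb{0}$ and $P=P^{\mb{0},2}$, then write down an explicit smooth formula mimicking the asymptotic equivalence $d(\mb{z},\mb{0})\approx |z|+\mu(\mb{0},|t-T^\sharp(z)|)$ afforded by Lemma \ref{lem:metric-approx}, and finally verify the derivative bounds by direct chain-rule computation. In detail, by Remark \ref{rem:metric-invar} I may assume $\mb{w}=\mb{0}$ and $P=P^{\mb{0},2}$, which gives $P(0)=0$, $\nabla P(0)=0$, and $\partial_z^2 P(0)=0$. Working in $\Pi$-coordinates $(z,t)\in\mathbb{C}\times\mathbb{R}$, so $Z=\partial_z+iP_z(z)\partial_t$, I fix an integer $\kappa\geq m$ and a smooth cutoff $\chi:[0,+\infty)\to[0,1]$ equal to $1$ on $[0,\tfrac{1}{2}]$ and to $0$ on $[2,+\infty)$, and set $T^{\sharp}(z)=\chi(|z|)T_{\kappa}(z,0)$. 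This single phase interpolates the two regimes of Lemma \ref{lem:metric-approx}: on $|z|\lesssim 1$ it agrees with $T_{\kappa}$ (Lemma \ref{lem:metric-approx}(b)); on $|z|\gtrsim 1$ it vanishes, matching $T_{2}\equiv 0$ (Lemma \ref{lem:metric-approx}(a)); and the estimate $|T_{\kappa}(z,0)|\lesssim|z|^3$ absorbs the transition, yielding $d(\mb{z},\mb{0})\approx |z|+\mu(\mb{0},|t-T^{\sharp}(z)|)$ globally.

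Next I smooth $\Lambda$. Using (H2) and Lemma \ref{lem:geom-uftisaq}, I replace $\Lambda(\mb{0},\cdot)$ by a smooth strictly-increasing model $\Lambda^{*}(\delta)\approx \Lambda(\mb{0},\delta)$ with $(\Lambda^{*})'(\delta)\approx \Lambda^{*}(\delta)/\delta$ (for instance $\Lambda^{*}(\delta)=\sum_{j=2}^{m}|\nabla^{j-2}h(0)|\,\delta^{j}+\delta^{2}$); the inverse $\mu^{*}$ is then smooth on $(0,+\infty)$, satisfies $\mu^{*}\approx\mu(\mb{0},\cdot)$ and $(\mu^{*})'(s)\approx\mu^{*}(s)/s$, and $s\mapsto\mu^{*}(s)/s$ is decreasing. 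I would then define
$$d^{\ast}(\mb{z},\mb{0})^{2}\ :=\ |z|^{2}+\Bigl(\mu^{*}\bigl(\sqrt{(t-T^{\sharp}(z))^{2}+\Lambda^{*}(|z|)^{2}}\bigr)\Bigr)^{2}.$$
For $\mb{z}\neq\mb{0}$ the radicand is strictly positive, so $d^{\ast}$ is smooth in $\mb{z}$. Since $\sqrt{u^{2}+\Lambda^{*}(r)^{2}}\approx|u|+\Lambda^{*}(r)$ gives $\mu^{*}(\sqrt{\cdots})\approx \mu^{*}(|u|)+r$, we get $d^{\ast}\approx |z|+\mu(\mb{0},|t-T^{\sharp}(z)|)\approx d(\mb{z},\mb{0})$, establishing part (a).

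For (b), write $(d^{\ast})^{2}=A+B$ with $A=|z|^{2}$, $B=\bigl(\mu^{*}(\sqrt{E})\bigr)^{2}$, $E=(t-T^{\sharp}(z))^{2}+\Lambda^{*}(|z|)^{2}$, and use $Zd^{\ast}=Z((d^{\ast})^{2})/(2d^{\ast})$ together with its second-order analogue to reduce matters to $|Z^{k}A|,|Z^{k}B|\lesssim (d^{\ast})^{2-k}$ for $k\leq 2$. The $A$-bounds are immediate. The key estimate for $B$ is
$$|Z(t-T^{\sharp}(z))|=|iP_{z}(z)-\partial_{z}T^{\sharp}(z)|\lesssim \Lambda(\mb{0},|z|)/|z|,$$
which I would obtain by Taylor-expanding $P_{z}$ at $z=0$: by Lemma \ref{lem:normalization-bihol}(b) the holomorphic part of $P_z$ up to order $\kappa-1$ matches $\partial_{z}T_{\kappa}(\cdot,0)/i$, so the difference consists of non-holomorphic terms bounded by $\sum_{l=1}^{\kappa-1}|\nabla^{l-1}h(0)|\,|z|^{l}\approx\Lambda(\mb{0},|z|)/|z|$ plus a tail $O(|z|^{\kappa})$ which is controlled by the same quantity since $\kappa\geq m$; for $|z|\gtrsim 1$ the bound $|P_{z}(z)|\lesssim|z|$ from Proposition \ref{prop:potentials}(b)(ii) closes the argument. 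Combining with $(\Lambda^{*})'\approx \Lambda^{*}/\delta$ then yields $|ZE|/\sqrt{E}\lesssim \Lambda^{*}(|z|)/|z|$, and the monotonicity of $\mu^{*}(s)/s$ gives
$$|ZB|\lesssim \mu^{*}(\sqrt{E})\cdot\frac{\mu^{*}(\sqrt{E})}{\sqrt{E}}\cdot\frac{\Lambda^{*}(|z|)}{|z|}\lesssim \mu^{*}(\sqrt{E})\approx d^{\ast}.$$
The second-order bounds follow by tracking one additional $Z$ through the same chain-rule computation.

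The main obstacle is the Taylor cancellation above: without exactly matching the holomorphic expansion of $P_{z}$ against $\partial_{z}T_{\kappa}$, one only obtains $|Z(t-T^{\sharp})|\lesssim |z|$, which is too crude and causes $|Zd^{\ast}|\lesssim 1$ to fail in general. The normalization $P=P^{\mb{0},2}$, the choice $\kappa\geq m$, and the structure of $T^{\sharp}$ are arranged precisely so that this cancellation—and hence the final bounds—are uniform in the base point $\mb{w}$.
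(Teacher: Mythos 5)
Your proposal is correct and follows the same overall strategy as the paper's proof: normalize to $\mb{w}=\mb{0}$, $P=P^{\mb{0},2}$, build a smooth surrogate $\Lambda^{*}$ (hence $\mu^{*}$) for $\Lambda(\mb{0},\cdot)$ using Lemmas \ref{lem:geom-uftisaq} and \ref{lem:metric-approxsmall}, feed a twist-corrected height into $\mu^{*}$, and close with the chain rule plus the monotonicity of $\delta^{-1}\Lambda^{*}(\delta)$. The differences are in the details, and both routes work. The paper takes the exact integral twist $g(\mb{z})=\Re(z_2)-T(z,0)$, $T(z,0)=-2\Im\big(\int_0^1 zP_z(zr)\,dr\big)$, for which the cancellation $-iZg(\mb{z})=2\int_0^1 \bar{z}rP_{z,\bar z}(zr)\,dr$ is an identity (no remainder), so $|Wg|\lesssim \Lambda(\mb{0},|z|)/|z|$ comes straight from Lemma \ref{lem:metric-approxsmall}; you instead use the truncated polynomial twist $T^{\sharp}=\chi(|z|)T_\kappa(z,0)$, and the same bound requires matching the holomorphic Taylor polynomial of $P_z$ against $\partial_z T_\kappa$ and absorbing the $O(|z|^{\kappa})$ remainder via $\kappa\geq m$ together with $\Lambda(\mb{0},\delta)\gtrsim \delta^{m}$ from (H1) --- exactly the point you flag, and it is sound. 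Secondly, the paper glues a small-scale formula $\tilde\mu\big((\tilde\Lambda(|z|^2)+g^2)^{1/2}\big)$ to a large-scale one $(|z|^4+|\Re(z_2)|^2)^{1/4}$ with cutoffs, whereas you use a single global formula with $\Lambda^{*}(|z|)^2$ inside the square root; this is fine for the two derivatives required, since $\Lambda^{*}(|z|)$ is $C^2$ (the paper's device of inserting $\tilde\Lambda(|z|^2)$ merely sidesteps differentiating $|z|$). The only place you are thinner than the paper is the second-order estimate, which you assert follows by one more application of the chain rule: it does, but you should record that $\partial_z\partial_{\bar z}T_\kappa\equiv 0$ (the twist is pluriharmonic), so the mixed second derivative of $t-T^{\sharp}$ reduces to a multiple of $P_{z,\bar z}$, which is $\lesssim \Lambda(\mb{0},|z|)/|z|^2$ by the Taylor expansion of $h$ in Lemma \ref{lem:metric-approxsmall}, and that the transition annulus $|z|\approx 1$, where $\chi'$ hits $T_\kappa$, is harmless because there $\Lambda(\mb{0},|z|)/|z|^{k}\approx 1$ uniformly.
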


\subsection{Proof of Lemma \ref{lem:geom-uftisaq}}\label{sec:metric-uftisaq}

The first step in the proof of Lemma \ref{lem:geom-uftisaq} is a technical result establishing several quantities that are comparable to $\Lambda(\mb{z},\delta)$ for $\delta$ sufficiently small and $z\in\mathbb{C}$.

\begin{lemma}\label{lem:metric-approxsmall} Suppose that $h(z)$ satisfies (H1) and (H2). Then there exists $\delta_0>0$ and $\nu_0,\ldots,\nu_m\in S^1\subset\mathbb{C}$ so that if
$$\Lambda_1(z,\delta)=\int_{|\eta-z|<\delta} h(\eta)dm(\eta),\qquad \Lambda_2(z,\delta) = \sup_{|\nu|=1} \sum_{j=2}^m |\nabla_\nu^{j-2}h(z)|\delta^j,$$
$$\Lambda_3(z,\delta)=\sum_{j=2}^m \Bigg(\sum_{k=0}^m |\nabla_{\nu_k}^{j-2}h(z)|\Bigg)\delta^j,\quad \Lambda_4(z,\delta) = \sum_{j=2}^m \Bigg(\sum_{k=0}^{j-2} \Big| \frac{\partial^{j-2}h}{\partial z^k \partial \bar{z}^{j-2-k}}(z)\Big|\Bigg)\delta^{j},$$
then, uniformly in $0<\delta\leq \delta_0$ and $\mb{z}=(z,z_2)\in{\rm b}\Omega$,
$$\Lambda(\mb{z},\delta)\approx\Lambda_1(z,\delta)\approx\Lambda_2(z,\delta)\approx\Lambda_3(z,\delta)\approx\Lambda_4(z,\delta).$$\end{lemma}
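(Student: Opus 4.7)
My plan is to prove Lemma \ref{lem:metric-approxsmall} in three steps: (1) the algebraic equivalences $\Lambda_2\approx\Lambda_3\approx\Lambda_4$, which are purely linear-algebraic and independent of $\delta$ and $z$; (2) the analytic equivalence $\Lambda_1\approx\Lambda_4$ for $\delta\leq\delta_0$, via a Taylor expansion of $h$ combined with the non-negativity $h=\Delta P\geq 0$; and (3) the identification $\Lambda(\mb{z},\delta)\approx\Lambda_4(z,\delta)$ via the classical correspondence between the Carnot--Carath\'eodory geometry of $\b\Omega$ and iterated commutators of $X$ and $Y$.

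For step (1), writing $\nabla_\nu = \nu_1\partial_x+\nu_2\partial_y$ and expanding in partial derivatives gives $|\nabla_\nu^{j-2}h(z)|\leq C_m\sum_{a+b=j-2}|\partial_z^a\partial_{\bar z}^b h(z)|$ uniformly in $|\nu|=1$, so $\Lambda_3\leq\Lambda_2\lesssim\Lambda_4$. The reverse $\Lambda_4\lesssim\Lambda_3$ requires a choice of $\nu_0,\ldots,\nu_m\in S^1$ such that for each order $k\leq m-2$, the $(k+1)\times(k+1)$ linear system obtained by evaluating $\nabla_\nu^k h(z)$ at $k+1$ of the $\nu_j$ is invertible; any generic choice of $m+1$ directions (for instance $\nu_k = e^{i\pi k/(2m)}$) yields Vandermonde-type determinants that are nonzero, so every partial derivative $\partial_z^a\partial_{\bar z}^b h(z)$ is a bounded linear combination of the $\nabla_{\nu_j}^{a+b} h(z)$, with constants depending only on $m$.

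For step (2), write $h(\eta)=p(\eta)+R(\eta)$, where $p(\eta)=\sum_{|\alpha|\leq m-2}\tfrac{1}{\alpha!}\partial^\alpha h(z)(\eta-z)^\alpha$ is the degree $m-2$ Taylor polynomial of $h$ centered at $z$ and $|R(\eta)|\leq C\|h\|_{C^{m-1}}|\eta-z|^{m-1}$ by (H2). The rescaling $\eta=z+\delta\xi$ combined with the equivalence of all norms on the finite-dimensional space of polynomials of degree $\leq m-2$ yields
\[ \int_{|\eta-z|<\delta}|p(\eta)|\,dm(\eta)\approx \sum_{|\alpha|\leq m-2}|\partial^\alpha h(z)|\delta^{|\alpha|+2}\approx\Lambda_4(z,\delta),\]
with constants depending only on $m$. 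Since $h\geq 0$ we have $p\geq -C\delta^{m-1}$ on $B(z,\delta)$, so $q:=p+C\delta^{m-1}\geq 0$ there; then $\int |q|\,dm = \int q\,dm = \int p\,dm + O(\delta^{m+1}) = \Lambda_1(z,\delta)+O(\delta^{m+1})$, using $|\int R\,dm|\lesssim\delta^{m+1}$. Since adding a constant perturbs only the $|\alpha|=0$ term of $\Lambda_4$, one also has $\int|q|\,dm\approx\Lambda_4(z,\delta)+O(\delta^{m+1})$, and combining gives $\Lambda_4(z,\delta)\lesssim\Lambda_1(z,\delta)+\delta^{m+1}$; the reverse bound $\Lambda_1\lesssim\Lambda_4+\delta^{m+1}$ is immediate from the triangle inequality. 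The $\delta^{m+1}$ error is absorbed using (H1), which forces $\Lambda_2(z,\delta)\geq (C_1/(m-1))\delta^m$ for $\delta\leq 1$; hence $\delta^{m+1}\leq\delta_0\Lambda_4(z,\delta)$ for $\delta\leq\delta_0$, and choosing $\delta_0$ small enough closes the estimates.

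Step (3) will follow from the classical Nagel--Stein--Wainger \cite{NagelSteinWainger1985} description of the Carnot--Carath\'eodory geometry of $\b\Omega$: since $[\bar Z,Z]=ihT$, higher-order commutators of $X$ and $Y$ at $\mb{z}$ have $T$-coefficients that are (up to normalization) the partial derivatives of $h$ at $z$ appearing in $\Lambda_4$, and the ball shape formula \eqref{eq:geom-ballapprox} then identifies $\Lambda(\mb{z},\delta)$ with $\Lambda_4(z,\delta)$ up to constants uniform in $\mb{z}$. I expect the main technical obstacle to be step (2), specifically the lower bound $\Lambda_4\lesssim\Lambda_1$: the non-negativity of $h$ is essential, since without it the Taylor polynomial $p$ can fail to be controlled by its integral (e.g.\ $h(\eta)=\Re(\eta^2)$ has $\int_{|\eta|<\delta}h\,dm\equiv 0$ while $\Lambda_4(0,\delta)\approx\delta^4$), so the subharmonicity of the defining function $P$ is what prevents this pathology.
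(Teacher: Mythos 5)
Your steps (1) and (2) are correct, and step (2) is a genuinely different route from the paper's for the equivalences $\Lambda_1\approx\Lambda_2\approx\Lambda_3\approx\Lambda_4$. The paper gets the hard inequality $\Lambda_2\lesssim\Lambda_1$ by first extracting a single ``maximal'' direction $\nu_\ast$ through a compactness argument on normalized polynomials (Proposition \ref{prop:metric-maxdirection}) and then integrating $h$ over a thin sector about $\nu_\ast$, invoking the Bruna--Nagel--Wainger convexity lemma (Proposition \ref{prop:geom-obj-convex}) for $H(t)$ with $H''(t)=t\,h(te^{i\theta_0})\geq 0$. You replace all of this by the observation that, since $h\geq 0$, the Taylor polynomial $p$ satisfies $p\geq -C\|h\|_{C^{m-1}}\delta^{m-1}$ on $B(z,\delta)$, so $q=p+C\|h\|_{C^{m-1}}\delta^{m-1}\geq 0$ there and $\int_B|q|=\int_B q=\Lambda_1+O(\delta^{m+1})$, while equivalence of the $L^1(B(0,1))$-norm and the coefficient norm on the finite-dimensional space of polynomials of degree $\leq m-2$ gives $\int_B|q|\approx\Lambda_4+O(\delta^{m+1})$; the error is absorbed because (H1) forces $\Lambda_4\gtrsim\Lambda_2\geq C_1\delta^m$ for $\delta\leq 1$. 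This is more elementary than the paper's argument and yields the same uniformity (constants depending only on $m$, $C_1$, $\|h\|_{C^{m-1}}$); your step (1) is essentially the paper's Proposition \ref{prop:metric-vandermonde}.

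The gap is step (3), i.e.\ the identification $\Lambda(\mb{z},\delta)\approx\Lambda_1(z,\delta)$, and it cannot be closed by citing \cite{NagelSteinWainger1985} together with (\ref{eq:geom-ballapprox}). First, (\ref{eq:geom-ballapprox}) describes $B_d(\mb{z},\delta)$ as a twisted cylinder of height $\Lambda(\mb{z},\delta)$ \emph{in terms of $\Lambda(\mb{z},\delta)$ itself}, so it gives no identification of $\Lambda$ with the commutator polynomial $\Lambda_4$; that identification is precisely what must be proved. Second, the Nagel--Stein--Wainger ball-box and volume theorems carry constants that are uniform only on compact sets and depend on bounds for the vector fields' coefficients and their derivatives in the chosen coordinates; here the coefficients involve $P_z$ (and, after recentering, $P_{zz}$, $P_{zzz}$, \dots), none of which are controlled by (H1)--(H2) alone -- see the remark following Example \ref{ex:examples-tube}, where (H1)--(H2) hold but $|\nabla\tilde P|\approx|z|\log|z|$. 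Making the NSW constants uniform over the noncompact boundary therefore requires a local normalization at each base point (subtracting a harmonic function so that all relevant data on a unit neighborhood are controlled by $\|h\|_{C^k}$ and the (H1) nondegeneracy) together with a check that the NSW constants depend only on those quantities; this is exactly the nontrivial work your sketch elides, and it is the reason the paper does not argue this way. Instead, the paper invokes the stockyard characterization $\Lambda(\mb{z},\delta)=\sup\sum_i\int_{R_i}h\,dm$ over $(z,\delta)$-stockyards (Theorem \ref{thm:geom-obj-stock}, valid for all $z$ and $\delta$ with no compactness and involving only $h$), and closes the loop with the chain $\int_{|\eta|<\delta}h\leq\Lambda(\mb{0},10\delta)\lesssim\big(\sup_{|\eta|<10\delta}h\big)\delta^2\lesssim\Lambda_2(0,\delta)+O(\delta^{m+1})\approx\Lambda_1(0,\delta)$. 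Your steps (1)--(2) plug directly into that chain, so the cleanest repair is to replace your step (3) by this appeal to Theorem \ref{thm:geom-obj-stock}; otherwise you must supply the uniform NSW argument in full.
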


The proof of Lemma \ref{lem:metric-approxsmall} uses the following two elementary facts. The first allows us to interchange mixed partial derivatives with linear combinations of directional derivatives.

\begin{proposition}\label{prop:metric-vandermonde} Fix $j\geq 0$ and $z\in\mathbb{C}$, and suppose that $f$ is smooth in a neighborhood of $z.$
	\begin{itemize}
		\item[(a)] If $\nu\in S^1\subset \mathbb{C}$, then $$\nabla^j_\nu f(z) = \sum_{k=0}^{j} \binom{j}{k} \nu^k \bar{\nu}^{j-k} \frac{\partial^j f}{\partial z^k \partial \bar{z}^{j-k}}(z).$$
		\item[(b)] If $\nu_0,\ldots,\nu_j\in S^1\subset \mathbb{C}$ are chosen so that the $\nu_n^2$ are distinct, then there exist constants $a(n,k)$ for $0\leq n,k\leq j$, with $$\frac{\partial^j f}{\partial z^k \partial \bar{z}^{j-k}}(z) = \sum_{n=0}^j a(n,k)\nabla_{\nu_n}^j f(z),$$
		where $|a(n,k)|\leq\displaystyle (j!)^2 \Big(\min_{\alpha\neq\beta} |\nu_\alpha^2-\nu_\beta^2|\Big)^{-\frac{j(j+1)}{2}}.$
	\end{itemize}
\end{proposition}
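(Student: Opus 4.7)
The plan for (a) is a direct application of the binomial theorem. Writing $\nu = a+ib \in S^1$, a short calculation with the Wirtinger derivatives $\partial_z = \tfrac{1}{2}(\partial_x - i\partial_y)$ and $\partial_{\bar z} = \tfrac{1}{2}(\partial_x + i\partial_y)$ shows that $\nabla_\nu = \nu\,\partial_z + \bar\nu\,\partial_{\bar z}$ as first-order differential operators. Since $\partial_z$ and $\partial_{\bar z}$ commute, the binomial theorem gives
$$\nabla_\nu^j = (\nu\,\partial_z + \bar\nu\,\partial_{\bar z})^j = \sum_{k=0}^j \binom{j}{k} \nu^k \bar\nu^{\,j-k}\, \partial_z^k \partial_{\bar z}^{\,j-k},$$
which, evaluated at $z$, is exactly (a).

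For (b), I would view the identities from (a), applied with $\nu = \nu_0,\ldots,\nu_j$, as a single linear system. Setting $\mathbf{u}_k = \tfrac{\partial^j f}{\partial z^k \partial \bar z^{j-k}}(z)$ and $\mathbf{v}_n = \nabla_{\nu_n}^j f(z)$, this reads $\mathbf{v} = M\mathbf{u}$ with entries
$$M_{n,k} = \binom{j}{k}\,\nu_n^{2k-j},$$
where I have used $\bar\nu_n = \nu_n^{-1}$. The key observation is that $M = D_1\,V\,D_2$, where $D_1 = \mathrm{diag}(\nu_n^{-j})$ is unimodular, $D_2 = \mathrm{diag}\bigl(\binom{j}{k}\bigr)$, and $V_{n,k} = (\nu_n^2)^k$ is the Vandermonde matrix in the $j+1$ distinct points $\nu_0^2,\ldots,\nu_j^2$. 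Invertibility of $V$, and hence of $M$, is then immediate, and defining $a(n,k) := (M^{-1})_{k,n}$ produces constants for which the required identity is precisely $\mathbf{u} = M^{-1}\mathbf{v}$.

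The quantitative bound on $|a(n,k)|$ is then a routine Cramer's-rule computation. For the denominator, the Vandermonde determinant formula yields
$$|\det M| = \prod_{k=0}^j \binom{j}{k}\, \prod_{0\leq \alpha<\beta\leq j}|\nu_\beta^2 - \nu_\alpha^2| \geq \Big(\min_{\alpha\neq\beta}|\nu_\alpha^2 - \nu_\beta^2|\Big)^{j(j+1)/2},$$
since there are $j(j+1)/2$ Vandermonde factors and each binomial coefficient is at least one. For the numerator, applying Cramer's rule to the factored form $M^{-1} = D_2^{-1} V^{-1} D_1^{-1}$ and invoking the classical formula for the entries of $V^{-1}$ in terms of elementary symmetric polynomials in $\nu_n^2$ (whose moduli are controlled by $\binom{j}{k'}\leq j!$, since each $|\nu_n^2|=1$) produces an upper estimate of the $j\times j$ minors bounded, after a crude combinatorial majorization, by $(j!)^2$. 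Combining these two estimates gives exactly the bound recorded in the statement.

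The main obstacle: there is essentially none; the proposition is linear-algebraic bookkeeping. The one moment of insight is to recognize that $\nu_n^{2k-j} = \nu_n^{-j}(\nu_n^2)^k$ exposes a Vandermonde structure in the squares $\nu_n^2$, after which both invertibility and the quantitative bound follow mechanically from standard formulas.
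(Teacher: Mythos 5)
Your overall route is the same as the paper's: part (a) from $\nabla_\nu=\nu\partial_z+\bar\nu\partial_{\bar z}$, and part (b) by recognizing the Vandermonde structure in the squares $\nu_n^2$, inverting, and estimating the inverse by an adjugate-over-determinant argument with $|\det V|\geq\big(\min_{\alpha\neq\beta}|\nu_\alpha^2-\nu_\beta^2|\big)^{j(j+1)/2}$. Parts (a) and the invertibility step are fine.

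The one step that does not hold together as written is the numerator estimate. You invoke the classical formula for $(V^{-1})_{k,n}$ via elementary symmetric polynomials, but that formula has denominator $\prod_{m\neq n}(\nu_n^2-\nu_m^2)$ (only $j$ factors), not $\det V$; combining a symmetric-polynomial numerator bound with the full-determinant lower bound $\min^{j(j+1)/2}$ is a mismatch, and the resulting estimate $\binom{j}{j-k}\min^{-j}$ neither matches nor implies the stated bound when $\min>1$ (which can happen for points on the circle). Likewise, if you instead apply Cramer's rule directly to $M$, its $j\times j$ minors are \emph{not} bounded by $(j!)^2$ in general, since each column carries a factor $\binom{j}{k}$ and the product of these over $j$ columns grows like $2^{cj^2}$; the bound only comes out because those binomials cancel against the same factors in $\det M$, which you would need to track. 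The clean repair — and essentially what the paper does — is to put the binomial coefficients on the other side (i.e., work with the unimodular-entry matrix $A_{n,k}=\nu_n^{2k-j}$, equivalently your $D_1V$), bound its $j\times j$ cofactors by $j!$ via the permutation expansion, and divide by $|\det|\geq\min^{j(j+1)/2}$; this gives $|a(n,k)|\leq \binom{j}{k}^{-1}j!\,\min^{-j(j+1)/2}\leq (j!)^2\min^{-j(j+1)/2}$. With that substitution your argument is correct and coincides with the paper's proof.
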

\begin{proof}
	Part (a) follows immediately from the formula $\nabla_\nu f(z)=\nu f_z(z)+\bar{\nu}f_{\bar{z}}(z)$ and induction.
	
	For (b), define vectors $\mb{D},\mb{H},\mb{V}^{n}\in\mathbb{C}^{j+1}$ for $n=0,1,\ldots,j$ by $$\displaystyle \mb{D}_{k+1} = \nabla_{\nu_k}^jf(z),\ \mb{H}_{k+1}=\binom{j}{k}\frac{\partial^j f}{\partial z^k \partial \bar{z}^{j-k}}(z),$$
	and $$\mb{V}^n_{k+1} = \nu_n^k\bar{\nu}_n^{j-k}=\nu_n^{2k-j}\quad\mbox{for}\quad 0\leq n,k\leq j.$$
	Then if $A_\nu$ is the $(j+1)\times(j+1)$ matrix with rows $\mb{V}^0,\mb{V}^1,\ldots,\mb{V}^{j}$, then $A_\nu \mb{H}=\mb{D}$ and 
	\begin{align*}
	{\rm det}A_\nu & =  (\nu_0\nu_1\cdots\nu_j)^{-j}{\rm det}\begin{bmatrix} 1 & \nu_0^2 & \nu_0^4 & \cdots & \nu_0^{2j}\\ 1 & \nu_1^2 & \nu_1^4 & \cdots & \nu_1^{2j}\\ 1 & \nu_2^2 & \nu_2^4 & \cdots & \nu_2^{2j} \\
	\vdots & \vdots & \vdots & \ddots & \vdots \\ 1 & \nu_j^2 & \nu_j^4 & \cdots & \nu_j^{2j}\end{bmatrix} \\
	& =  (\nu_0\nu_1\cdots\nu_j)^{-j}\prod_{0\leq \beta<\alpha\leq j} (\nu_\alpha^2-\nu_\beta^2)\neq 0\end{align*}
	because the numbers $\nu_k^2$ are distinct, where we have used the formula for the determinant of a Vandermonde matrix.
	
	Therefore $A_\nu$ is invertible, and $\mb{H} = A_{\nu}^{-1}\mb{D}$. We now estimate the entries of $A_{\nu}^{-1}$. Note that if $A_{\nu}^{p,q}$ denotes the $pq$-th minor of $A_\nu$, then $|A_{\nu}^{p,q}|\leq j!$ because all of the entries of $A_\nu$ have unit modulus. By the well-known formula for the classical adjoint and the above explicit formula for ${\rm det}A_\nu$, 
	$$|(A_{\nu}^{-1})_{pq}|\leq j!\Big|\prod_{0\leq \beta<\alpha\leq j} (\nu_\alpha^2-\nu_\beta^2)\Big|^{-1}\leq j!\Big(\min_{\alpha\neq \beta} |\nu_\alpha^2-\nu_\beta^2|\Big)^{-\frac{j(j+1)}{2}},$$ and therefore the constants $a(n,k)$ in $$\frac{\partial^j f}{\partial z^k \partial \bar{z}^{j-k}}(z) = \sum_{n=0}^j a(n,k)\nabla_{\nu_n}^j f(z)$$  are bounded by 
	\begin{align*}
		|a(n,k)|& \leq \binom{j}{k}^{-1} j!\Big(\min_{\alpha\neq \beta} |\nu_\alpha^2-\nu_\beta^2|\Big)^{-\frac{j(j+1)}{2}}\\
		& = k!(j-k)!\Big(\min_{\alpha\neq \beta} |\nu_\alpha^2-\nu_\beta^2|\Big)^{-\frac{j(j+1)}{2}}\leq (j!)^2 \Big(\min_{\alpha\neq\beta} |\nu_\alpha^2-\nu_\beta^2|\Big)^{-\frac{j(j+1)}{2}}
		\end{align*}
	as desired.
	\end{proof}

The second elementary fact allows us to choose, for a fixed $C^{J}(\mathbb{C})$ function $h$ and $z\in\mathbb{C}$, a direction $\nu_\ast\in S^1\subset \mathbb{C}$ so that $|\nabla_\nu^j h(z)|$ is essentially maximal for $j=0,1,\ldots,J$.

\begin{proposition}\label{prop:metric-maxdirection}
	Fix $J\in\mathbb{N}$ and let $h:\mathbb{C}\to\mathbb{C}$ be $C^J$. Then there exists a constant $C(J)>0$, independent of $h$, and for each $z\in\mathbb{C}$ there exists a direction $\nu_\ast\in S^1\subset\mathbb{C}$ so that
	$$|\nabla_{\nu_\ast}^j h(z)|\geq C(J)\sum_{k=0}^{j} \Big| \frac{\partial^j h}{\partial z^k \partial \bar{z}^{j-k}}(z)\Big|,\quad j=0,\ldots,J.$$
	In particular, taking $J=m-2$ and $C=\min\limits_{0\leq J\leq m-2} C(J)$ we have
	$$\sum_{j=2}^{m} |\nabla_{\nu_\ast}^{j-2}h(z)|\delta^j \geq C\sum_{j=2}^{m} \Bigg(\sum_{k=0}^{j-2} \Big| \frac{\partial^{j-2} h}{\partial z^k \partial \bar{z}^{j-2-k}}(z)\Big|\Bigg)\delta^j,\quad 0<\delta<+\infty.$$
	\end{proposition}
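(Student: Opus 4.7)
The plan is to convert the statement into a claim about trigonometric polynomials in one angular variable, then invoke finite-dimensional compactness. Setting $\nu=e^{i\theta}$ and applying part (a) of Proposition \ref{prop:metric-vandermonde} expresses each directional derivative as a trigonometric polynomial
$$M_j(\theta)=\nabla_{e^{i\theta}}^j h(z)=\sum_{k=0}^{j}\binom{j}{k}\, e^{i(2k-j)\theta}\frac{\partial^j h}{\partial z^k\partial\bar z^{j-k}}(z),$$
of degree at most $j$, whose coefficients are (up to nonzero binomials) precisely the mixed partials $a_{j,k}$. The desired conclusion reduces to showing: for each $J$ there is $C(J)>0$ such that for every complex array $\{a_{j,k}\}_{0\le k\le j\le J}$ some $\theta_\ast$ satisfies $|M_j(\theta_\ast)|\ge C(J)\sum_k|a_{j,k}|$ simultaneously for all $j\le J$, with rows $\sum_k|a_{j,k}|=0$ imposing no condition. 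Crucially, the ambient space $\{a_{j,k}\}$ is a fixed finite-dimensional complex vector space whose dimension depends only on $J$, so constants will be independent of $h$ and $z$.

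I would establish the reduced claim by contradiction and compactness. If $C(J)$ could be taken arbitrarily small, there would exist arrays $a^{(n)}$ such that for every $\theta\in S^1$ some ``active'' index $j$ (with $\sum_k|a_{j,k}^{(n)}|>0$) witnesses failure at rate $1/n$. After rescaling each row so that $\sum_k|a_{j,k}^{(n)}|\in\{0,1\}$ and passing to a subsequence, the set of active indices stabilizes to a fixed $J_\infty\subseteq\{0,\ldots,J\}$ and the coefficients converge to a limit array $a^{(\infty)}$ with $\sum_k|a^{(\infty)}_{j,k}|=1$ for every $j\in J_\infty$. Coefficient convergence yields uniform convergence $M_j^{(n)}\to M_j^{(\infty)}$ on $S^1$, so for each fixed $\theta$ pigeonholing the offending index $j(\theta,n)\in J_\infty$ along a further subsequence produces some $j(\theta)\in J_\infty$ with $M_{j(\theta)}^{(\infty)}(\theta)=0$.

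This forces $S^1=\bigcup_{j\in J_\infty}Z_j$ where $Z_j$ is the zero set of $M_j^{(\infty)}$. Each $M_j^{(\infty)}$ is a trigonometric polynomial of degree $\le j$, so $Z_j$ is either finite (at most $2j$ points) or all of $S^1$. Since $S^1$ is uncountable and $J_\infty$ is finite, some $M_{j^\ast}^{(\infty)}\equiv 0$, forcing every $a^{(\infty)}_{j^\ast,k}=0$ and contradicting $\sum_k|a^{(\infty)}_{j^\ast,k}|=1$. The ``in particular'' statement then follows by taking $J=m-2$ in the result just proved, multiplying the $j$-th inequality by $\delta^j$, and summing over $j=2,\ldots,m$. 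The main technical subtlety I expect is the bookkeeping around possibly vanishing rows $\sum_k|a_{j,k}|=0$: one must normalize each row separately and keep track of the (stabilized) active set $J_\infty$ to avoid spurious degenerations. Once that is handled, the argument reduces to the standard fact that a nonzero trigonometric polynomial of bounded degree has only finitely many zeros.
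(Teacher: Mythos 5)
Your argument is correct, but it is organized quite differently from the paper's proof, so it is worth comparing the two. The paper also reduces to $\ell^1$-normalized coefficient arrays, but it works one $j$ at a time and proves a measure-theoretic statement: for each $j$ and each $c<2\pi$ there is a uniform $C(j,c)$ so that the set of directions $\nu$ with $|\nabla_\nu^j h(z)|\geq C(j,c)\sum_k|\partial^j h/\partial z^k\partial\bar z^{j-k}(z)|$ has arc-length measure greater than $c$; this is obtained by a contradiction/compactness argument involving a mollified superlevel-set functional $f_C(\mathbf{A})$ and a nested family of compact sets $H(C)$, and the simultaneous direction $\nu_\ast$ is then produced by choosing $c_j=2\pi-2^{-j-1}$ and applying a union bound so that the good sets for $j=0,\ldots,m-2$ have a common point. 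You instead treat all $j\leq J$ in a single compactness pass: normalize each row, extract a subsequence with stabilized active set and convergent coefficients, and conclude that the circle would be covered by the zero sets of finitely many trigonometric polynomials of bounded degree, forcing one of them to vanish identically and contradicting the row normalization. Your route is somewhat more elementary (no measure estimates, no continuity argument for $f_C$, and it directly yields one constant $C(J)$ valid simultaneously for all $j\leq J$, so the $\min$ over $J$ in the statement is not even needed), while the paper's route proves the quantitatively stronger fact that for each fixed $j$ the set of admissible directions has nearly full measure — information that is not used elsewhere, so nothing is lost by your weaker intermediate claim. The points you flag as bookkeeping (vanishing rows, the pigeonhole in the offending index depending on $\theta$, and the implicit nonemptiness of the stabilized active set) are exactly the right ones and are handled correctly; note only that the row-wise homogeneity of the inequality is what justifies the separate normalization of each row, and that the finiteness of each zero set follows from writing $e^{ij\theta}M_j^{(\infty)}(\theta)$ as a polynomial of degree at most $2j$ in $e^{i\theta}$, as you indicate.
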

\begin{proof}
	
	We claim that for each $j\geq 0$ and $c\in (0,2\pi)$ there exists $C=C(j,c)>0$ so that, if $\sigma$ denotes arc-length measure on $S^1\subset \mathbb{C}$, 
	\begin{equation}\label{eq:metric-maxdir} 
		\sigma\Big( \Big\{ \nu\in S^1\ :\ |\nabla_{\nu}^j h(z)|\geq C(j,c)\sum_{k=0}^{j} \Big| \frac{\partial^j h}{\partial z^k \partial \bar{z}^{j-k}}(z)\Big|\Big\}\Big) > c.\end{equation}
	Granting this for the moment, for $j\geq 0$ we set $c=2\pi - 2^{-j-1}$ and choose $C(j,2\pi - 2^{-j-1})$ accordingly. If $$A(j)=\Big\{\nu\in S^1\ :\ |\nabla_\nu^j h(z)|\geq C(j,2\pi-2^{-j-1})\sum_{k=0}^{j} \Big| \frac{\partial^j h}{\partial z^k \partial \bar{z}^{j-k}}(z)\Big|\Big\},$$ then
	\begin{align*}
	\sigma\Bigg(\bigcap_{j=0}^{m-2} A(j)\Bigg) & = 2\pi - \sigma\Bigg(\bigcup_{j=0}^{m-2} A(j)^c\Bigg) \geq 2\pi - \sum_{j=0}^{m-2} \sigma(A(j)^c) \\
	& \geq 2\pi - \sum_{j=0}^{m-2} 2^{-j-1} \geq 2\pi -1>0,\end{align*}
	so we can choose $\displaystyle \nu_\ast\in \bigcap _{j=0}^{m-2} A(j)$. Setting $C=\min\limits_{0\leq j\leq m-2} C(j,2\pi -2^{-j-1})$, for this particular $\nu_\ast$, we have
	$$|\nabla_{\nu_\ast}^{j-2} h(z)|\geq C\sum_{k=0}^{j-2} \Big| \frac{\partial^{j-2} h}{\partial z^k \partial\bar{z}^{j-2-k}}(z)\Big|,\quad j=2,3,\ldots,m,$$
	and therefore
	$$\sum_{j=2}^m |\nabla_{\nu_\ast}^{j-2} h(z)|\delta^j\geq C\sum_{j=2}^m \Bigg( \sum_{k=0}^{j-2} \Big| \frac{\partial^{j-2} h}{\partial z^k \partial\bar{z}^{j-2-k}}(z)\Big|\Bigg) \delta^j,\quad 0<\delta<+\infty.$$
	
	It therefore suffices to establish (\ref{eq:metric-maxdir}).
	Fix $z\in \mathbb{C}$. For $\nu\in S^1$, part (a) of Proposition \ref{prop:metric-vandermonde} implies that
	$$\nu^j \nabla_\nu^{j} h(z) = \sum_{k=0}^j \binom{j}{k} \frac{\partial^j h}{\partial z^k \partial \bar{z}^{j-k}}(z)\nu^{2k}.$$
	Write $\displaystyle b_k = \binom{j}{k} \frac{\partial^j h}{\partial z^k \partial \bar{z}^{j-k}}(z),$
	so that 
	$$\nu^j \nabla_\nu^{j} h(z) = \sum_{k=0}^j b_k\nu^{2k}.$$
	
	If $b_k=0$ for $k=0,1,\ldots,j$, then for any choice of $c$ (\ref{eq:metric-maxdir}) holds for any $C(j,c)>0$. We may therefore assume without loss of generality that $b_k\neq 0$ for some $k$. 
	Define $B=\sum_{k=0}^j |b_k|$, and $a_k=B^{-1}b_k$. Defining $S_{\ell^1(\mathbb{C}^{N})}(0,1)=\{\mb{a}\in \mathbb{C}^{N}\ :\ \|\mb{a}\|_{\ell^1}=1\}$ to be the unit sphere in $\mathbb{C}^{N}$ in the $\ell^1$-norm, we have
	$$h_{\mb{a}}(\nu) := B^{-1}\nu^j \nabla_\nu^{j} h(z) = \sum_{k=0}^j a_k\nu^{2k}\in \mathcal{P}(2j,1),$$
	where $\mathcal{P}(m,1)\displaystyle :=\{ p_{\mb{A}}(\nu)=\sum_{k=0}^{m} A_k \nu^k\ :\ \mb{A}\in S_{\ell^1(\mathbb{C}^{m+1})}(0,1)\}.$
	
	We now show that if $c\in (0,2\pi)$, then there exists $C>0$ so that for $p_{\mb{A}}\in \mathcal{P}(2j,1)$, 
	\begin{equation}\label{eq:metric-maxdirhyp} 
		\sigma(\{\ \nu\ :\ |p_{\mb{A}}(\nu)|\geq C\}) > c.
		\end{equation}
	To see this, suppose that on the contrary that for all $C>0$ there exists $\mb{A}(C)\in S_{\ell^1(\mathbb{C}^{2j+1})}(0,1)$  with 
	$$\sigma( \{ \nu\in S^1\ :\ |p_{\mb{A}(c)}(\nu)|\geq C\})\leq c.$$
	Define a smooth, nondecreasing function $\chi:\mathbb{R}\to [0,1]$ with $$\chi(t) = \begin{cases} 1 & \mbox{if}\ t\geq 2,\\ 0 & \mbox{if}\ t\leq 1,\end{cases}$$
	and for $\mathbf{A}\in \mathbb{C}^{2j+1}$ define
	$$f_C(\mb{A}) = \int_{S^1} \chi(C^{-1} |p_{\mb{A}}(\nu)|) d\sigma(\nu).$$
	Note that
	\begin{itemize}
		\item[(a)] $C\mapsto f_C(\mb{A})$ is non-increasing.
		\item[(b)] $\mb{A}\mapsto f_C(\mb{A})$ is continuous in $\mb{A}$.
		\item[(c)] $\sigma(\{ \nu\ :\ |p_{\mb{A}}(\nu)|\geq 2C\})\leq f_C(\mb{A})\leq \sigma(\{ \nu\ :\ |p_{\mb{A}}(\nu)|\geq C\}).$
	\end{itemize}
	Let $$H(C)= \{\mb{A}\ :\ \mb{A}\in S_{\ell^1(\mathbb{C}^{2j+1})}(0,1)\ \mbox{and}\ f_C(\mb{A})\leq c\}.$$
	By (b), $H(C)$ is a closed subset of $S_{\ell^1(\mathbb{C}^{2j+1})}(0,1)$, and is therefore compact. Moreover, (a) implies that $H(C')\subseteq H(C)$ for $C'\leq C$.
	By choosing $\mb{A}(C)$ for a sequence $C\to 0$ and passing to a convergent subsequence, we see that there exists $\displaystyle \mb{A}\in \bigcap_{C>0} H(C)$. But then
	$$\sigma(\{\nu\ :\ |p_{\mb{A}}(\nu)|>0\}) = \lim_{C\to 0^+} \sigma(\{\nu\ :\ |p_{\mb{A}}(\nu)|\geq 2C\}) \leq c<2\pi,$$
	so that
	$$\sigma(\{\nu\in S^1\ :\ |p_{\mb{A}}(\nu)|=0\}>0.$$
	By interpolation we have $\mb{A}=\mb{0}$, contradicting the fact that $\mb{A}\in S_{\ell^1(\mathbb{C}^{2j+1})}(0,1)$. 
	This establishes (\ref{eq:metric-maxdirhyp}).
	
	To see how (\ref{eq:metric-maxdir}) follows, we need only note that
		\begin{equation*}
			\sigma\Big( \Big\{ \nu\in S^1\ :\ |\nabla_{\nu}^j h(z)|\geq C\sum_{k=0}^{j} \Big| \frac{\partial^j h}{\partial z^k \partial \bar{z}^{j-k}}(z)\Big|\Big\}\Big) = \sigma\Big( \{ \nu\in S^1\ :\ |h_{\mb{a}}(\nu)|\geq C\}\Big).
			\end{equation*}
\end{proof}

We also recall the following result from \cite{BrunaNagelWainger1988} for convex functions of one variable.

\begin{proposition}[{\cite{BrunaNagelWainger1988}, Lemmas 3.2 and 3.3}]\label{prop:geom-obj-convex}
	Suppose that $Q(t)=\displaystyle\sum_{j=0}^m a_j t^j + R(t)$ is convex on $[0,T]$, such that $Q(0)=a_0=0$, $Q'(0)=a_1=0$. We assume that $|R^{(k)}(t)|\leq C_k t^{m+1-k}$ for $0\leq k\leq m+1$, and that $\displaystyle\sum_{j=2}^m |a_j| \approx 1$. Then there is a positive constant $C=C(m,C_k)$ such that, for $0\leq t\leq \min(C,T)$,
	\begin{align}
		Q(t)& \approx\ \displaystyle\sum_{j=2}^m |a_j|t^j,\label{eq:geom-obj-polybound}\\
		Q'(t)& \approx\ \displaystyle\sum_{j=2}^m |a_j|t^{j-1}.\label{eq:geom-obj-polyderivbound}
	\end{align}
\end{proposition}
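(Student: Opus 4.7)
The plan is to combine the equivalence of norms on polynomials of bounded degree with the monotonicity of $Q$ and $Q'$ supplied by convexity. I would write $P(t) = \sum_{j=2}^m a_j t^j$ and $\|P\|_1 = \sum_{j=2}^m |a_j|$, and reduce to the range $0 \leq t \leq 1$; the final constant $C = C(m,C_k)$ will be adjusted accordingly.

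The upper bounds in (\ref{eq:geom-obj-polybound}) and (\ref{eq:geom-obj-polyderivbound}) should be routine: the triangle inequality combined with the remainder estimate yields $Q(t) \leq \sum_j |a_j| t^j + C_0 t^{m+1}$ and $|Q'(t)| \leq \sum_j j|a_j|t^{j-1} + C_1 t^m$. Since $\|P\|_1 \approx 1$ and $t \leq 1$, I have $\sum_j |a_j| t^j \geq c t^m$, so writing $C_0 t^{m+1} = C_0 t \cdot t^m$ shows that the remainder is absorbed into a fraction of $\sum_j |a_j| t^j$ once $t$ is smaller than a constant depending only on $m$, $C_0$, and $c$; the parallel manipulation handles $Q'$.

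The heart of the argument is the lower bound in (\ref{eq:geom-obj-polybound}). First I would invoke the equivalence of norms on the finite-dimensional space $\mathcal{P} = \bigl\{ \sum_{j=2}^m b_j s^j : b_j \in \mathbb{R} \bigr\}$: the functionals $P \mapsto \sup_{s \in [0,1]} |P(s)|$ and $P \mapsto \|P\|_1$ are both norms, so by compactness there exists $c_m > 0$, depending only on $m$, such that $\sup_{s \in [0,1]} |P(s)| \geq c_m \|P\|_1$. Applied to $\tilde P(s) := P(Ts)$, whose coefficients are $a_j T^j$, this produces a point $t^\ast \in [0,T]$ at which $|P(t^\ast)| \geq c_m \sum_j |a_j| T^j$. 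Convexity of $Q$ together with $Q(0) = Q'(0) = 0$ forces $Q \geq 0$ and $Q$ non-decreasing on $[0,T]$, so $|P(t^\ast)| = |Q(t^\ast) - R(t^\ast)| \leq Q(t^\ast) + |R(t^\ast)|$ and $Q(T) \geq Q(t^\ast)$, yielding
\[ Q(T) \;\geq\; c_m \sum_j |a_j| T^j - C_0 T^{m+1}, \]
from which the remainder is absorbed exactly as in the previous paragraph.

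Once (\ref{eq:geom-obj-polybound}) is in hand, the two estimates on $Q'$ will follow directly from the corresponding estimates on $Q$ by convexity alone: the one-sided tangent inequality $Q'(t) \geq Q(t)/t$ (valid whenever $Q$ is convex with $Q(0) = 0$) produces the lower bound, while the secant-slope inequality $Q'(t) \leq (Q(2t) - Q(t))/t \leq Q(2t)/t$ gives the matching upper bound, both valid so long as $2t$ remains in the interval where (\ref{eq:geom-obj-polybound}) holds (so the final $C$ is halved). The main obstacle throughout is obtaining the constant $c_m$ uniformly across all admissible coefficient vectors $(a_2,\ldots,a_m)$; this uniformity is exactly what the compactness/norm-equivalence step delivers.
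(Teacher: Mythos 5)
Your argument is correct and essentially self-contained. Note that the paper does not prove this proposition at all: it is quoted from \cite{BrunaNagelWainger1988} (Lemmas 3.2 and 3.3), with only the remark that the cited lower bound carries an extra term $At^{m+1}$ which is negligible for small $t$ because $\sum_{j=2}^m|a_j|\approx 1$. Your proof therefore supplies an argument the paper omits, by an elementary route: equivalence of the norms $P\mapsto\sup_{s\in[0,1]}|P(s)|$ and $P\mapsto\sum_j|b_j|$ on the finite-dimensional space $\{\sum_{j=2}^m b_j s^j\}$, rescaled to $[0,t]$, combined with the non-negativity and monotonicity of $Q$ forced by convexity and $Q(0)=Q'(0)=0$; the remainder is then absorbed exactly in the spirit of the paper's remark, via $\sum_j|a_j|t^j\gtrsim t^m$ for $t\le 1$. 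Two points of presentation rather than substance: (i) you establish the lower bound in (\ref{eq:geom-obj-polybound}) only at the endpoint $T$; to get it for every $t\le\min(C,T)$ you should run the identical rescaling argument on $[0,t]$, which works verbatim since $Q$ is convex there and $s\mapsto P(ts)$ has coefficients $a_jt^j$. (ii) The secant estimate $Q'(t)\le Q(2t)/t$ needs $2t\le\min(C,T)$ and so would shrink the admissible range to $\min(C,T/2)$ when $T$ is the binding constraint; but this step is redundant, because your direct bound $|Q'(t)|\le\sum_j j|a_j|t^{j-1}+C_1t^m$ already yields the upper half of (\ref{eq:geom-obj-polyderivbound}) on the full range, so from the final paragraph only the tangent inequality $Q'(t)\ge Q(t)/t$ is actually needed.
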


\begin{remark}{\rm The result in \cite{BrunaNagelWainger1988} actually shows that, for example, $$Q(t)\gtrsim \displaystyle\sum_{j=2}^m |a_j|t^j + At^{m+1}.$$
		Because the sum of the $|a_j|$ is comparable to $1$, the `junk' term $At^{m+1}$ is negligible if $t$ is sufficiently small, which yields the result above.}
\end{remark}

Before we give the proof of Lemma \ref{lem:metric-approxsmall}, we need to recall some terminology from \cite{Peterson2014}.
We say that a set $A\subset \mathbb{C}$ is a \emph{pen} if it is open, connected, simply connected, and if ${\rm b}A$ is FPWS (i.e. it is locally parametrized by a continuous function with $FPWS$ velocity).
Let $L({\rm b}A)$ denote the perimeter of $A$.
Then for $z\in \mathbb{C}$ and $\delta>0$, a finite collection of pens $R=\lb R_1,\ldots,R_N\rb$ is called a $(z,\delta)-stockyard$ if
$$z\in \bigcup_{i=1}^N {\rm b}R_i,\quad \displaystyle\sum_{i=1}^N L({\rm b}R_i)\leq \delta,\quad\mbox{and}\quad \bigcup_{i=1}^N {\rm b}R_i\ \mbox{is connected}.$$
One of the main results of \cite{Peterson2014} characterizes $\Lambda(\mb{z},\delta)$ in terms of $(z,\delta)-$stockyards.

\begin{theorem}[{\cite{Peterson2014}, Theorem 1.1}]\label{thm:geom-obj-stock} $$\displaystyle \Lambda(\mb{z},\delta)=\displaystyle\sup_{(z,\delta)-stockyards\, R} \displaystyle\sum_{R_i\in R} \int_{R_i} h(\eta)dm(\eta).$$\end{theorem}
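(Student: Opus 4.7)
The plan is to realize both sides of the identity as suprema of the same functional over equivalent sets of admissible data, with horizontal curves in $\b\Omega$ (for the left side) passing back and forth to planar curves with a prescribed network structure (for the right side) via Green's theorem. First I would record the key horizontality computation: any absolutely continuous curve $s\mapsto (\zeta(s), t(s))\in\mathbb{C}\times\mathbb{R}\cong\b\Omega$ whose velocity lies in the span of $X$ and $Y$ at each point must satisfy
\[t'(s) = -2\,\Im\bigl(\zeta'(s)\,P_z(\zeta(s))\bigr),\]
so along any \emph{closed} planar loop the $t$-displacement is the line integral of the real $1$-form $\omega := -2\,\Im(P_z\,d\zeta)$. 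Since $d\omega = \Delta P\,dm = h\,dm$, Stokes' theorem gives $\oint_{\partial R}\omega = \int_R h\,dm$ for any pen $R$ with compatibly oriented boundary. This identifies $t$-displacements along closed projections of horizontal curves with sums of Laplacian-integrals over enclosed pens.

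The lower bound $\Lambda(\mb{z},\delta)\geq \sup_R\sum_i\int_{R_i}h\,dm$ is then the constructive direction. Given a $(z,\delta)$-stockyard $\{R_1,\ldots,R_N\}$, the set $\Gamma := \bigcup_i \partial R_i$ is a connected FPWS network of total length $\leq\delta$ passing through $z$. Parameterize $\Gamma$ as a closed walk starting and ending at $z$ that traverses each boundary $\partial R_i$ once in the positive orientation (this is possible up to inserting back-and-forth edges whose net contribution to both arclength and $t$-displacement cancels appropriately, using connectedness of $\Gamma$); lift this walk horizontally to $\b\Omega$. The resulting horizontal curve has CC-length $\leq\delta$ and terminates at $(z, t_0 + z_2)$ with $t_0 = \sum_i\int_{R_i}h\,dm$ by the Stokes computation above, so $\Lambda(\mb{z},\delta)\geq t_0$.

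The upper bound is the substantive direction and carries the main obstacle. Take a horizontal curve of CC-length $<\delta$ from $\mb{z}$ to $(z, t+z_2)$ with $|t|$ arbitrarily close to $\Lambda(\mb{z},\delta)$, and let $\zeta:[0,1]\to\mathbb{C}$ be its projection, an FPWS closed curve based at $z$ of arclength $\leq\delta$. The formula for $t'(s)$ rewrites the $t$-displacement as
\[t = \int_0^1\omega(\zeta'(s))\,ds = \int_\mathbb{C} w_\zeta(\eta)\,h(\eta)\,dm(\eta),\]
where $w_\zeta$ is the winding number function. The bounded components of $\mathbb{C}\setminus \zeta([0,1])$ are pens, on each of which $w_\zeta$ is a fixed integer, so $|t|\leq \sum_j |w_\zeta(R_j)|\int_{R_j}h\,dm$. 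The obstacle is that $w_\zeta$ may exceed one and that the pens $R_j$ together with their boundaries need not a priori form a stockyard with perimeter $\leq\delta$. The plan is to resolve this by a surgical decomposition of $\zeta$ into simple Jordan loops: repeatedly locate a pair of parameters $s_1<s_2$ with $\zeta(s_1)=\zeta(s_2)$ and excise the subloop $\zeta|_{[s_1,s_2]}$, replacing $\zeta$ with its complement and recording the excised simple loop; iterate until no self-intersection remains. This process terminates because $\zeta$ is FPWS, conserves total arclength, and yields a finite collection $\{R'_k\}$ of pens whose boundaries (together with the residual simple loop based at $z$) form a connected FPWS network through $z$ of total length $\leq\delta$, hence a valid $(z,\delta)$-stockyard. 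A careful bookkeeping shows that the signed sum $\sum_k\varepsilon_k\int_{R'_k}h\,dm$ for appropriate $\varepsilon_k\in\{\pm 1\}$ recovers $t$ exactly (reversing the orientation of any pen flips the sign of the line integral without changing perimeter, so we may absorb signs). The supremum over such lifts then gives $\Lambda(\mb{z},\delta)\leq\sup_R\sum_i\int_{R_i}h\,dm$, completing the proof.
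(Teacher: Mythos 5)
A preliminary remark: this paper does not prove Theorem \ref{thm:geom-obj-stock} at all; it is quoted from \cite{Peterson2014}, so there is no in-paper proof to compare against. Judged on its own terms, your strategy is the natural one and is in the spirit of the cited work: the horizontality identity $t'(s)=-2\Im\bigl(\zeta'(s)P_z(\zeta(s))\bigr)$ is correct, the $1$-form $\omega=-2\Im(P_z\,d\zeta)$ satisfies $d\omega=-h\,dm$ (note the sign, which is harmless since $\Lambda$ is defined through $|t|$ and orientations may be reversed), and the equality should indeed come from passing between horizontal lifts of planar loops and Green's theorem applied to loop decompositions.

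There are, however, concrete gaps. In the lower bound, your claim that one may insert back-and-forth connector edges ``whose net contribution to both arclength and $t$-displacement cancels'' is false for arclength: retraced arcs cancel in $\oint\omega$ but add length, and could push the walk above the budget $\delta$. This step is repairable without connectors: each $\partial R_i$ is a closed curve, and finitely many continua with connected union have a connected intersection graph, so one can build the closed walk by successively inserting whole boundary loops at points where they meet the walk already constructed; each loop is then traversed exactly once and the total length is exactly $\sum_i L({\rm b}R_i)\le\delta$.

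The substantive gaps are in the upper bound. First, termination of the excision procedure does not follow from $\zeta$ being FPWS: the definition only forces the velocity to be piecewise smooth with continuous one-sided extensions, and two such arcs can intersect infinitely often (velocities of the type $t\sin(1/t)$ are admissible), so ``repeatedly excise a subloop'' need not stop after finitely many steps; some additional device (a careful choice of maximal subloops, or an approximation argument) is required. Second, even granting a finite decomposition, your assertion that the excised simple loops together with the residual loop form ``a connected FPWS network through $z$,'' hence a $(z,\delta)$-stockyard, is false in general: for a dumbbell-shaped projection, in which $z$ is joined by one arc to a circle $C_1$ and by another arc to a circle $C_2$ with each arc traversed out and back, the decomposition produces the two circles plus degenerate retraced arcs, and $C_1\cup C_2$ is disconnected and omits $z$. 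One must repair this, for instance by using the strict length inequality (the lift has length $<\delta$, so there is slack) to append thin pens hugging the doubly-traversed connector arcs; this restores connectivity and the base point, costs only slightly more perimeter than was already spent on those arcs, and only increases the stockyard sum since $h\ge 0$. As written, the passage from the loop decomposition to a bona fide $(z,\delta)$-stockyard does not go through, so the upper bound is incomplete.
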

We are now ready to prove Lemma \ref{lem:metric-approxsmall}.

\begin{proof}[Proof of Lemma \ref{lem:metric-approxsmall}]
	
Without loss of generality, we may assume that $\mb{z}=\mb{0}$. Let $\nu_n=\exp\Big(\frac{\pi i n}{m+1}\Big)$ for $n=0,1,2,\ldots,m$. Then the numbers $\nu_n^2$ are distinct, and therefore satisfy the hypotheses of Proposition \ref{prop:metric-vandermonde}.

We first claim that there is some $\delta_0>0$ such that 
\begin{equation}\label{eq:geom-obj-small}\Lambda_1(0,\delta)\approx \Lambda_2(0,\delta)\approx \Lambda_3(0,\delta)\approx \Lambda_4(0,\delta)\end{equation}
holds uniformly for all $0<\delta\leq \delta_0$, with constants that depend only on $m$ and the constants from (H1) and (H2).

To see this, first expand $h(\eta)$ in its Taylor series as
\begin{equation}\label{eq:metric-taylor}
h(\eta)=\displaystyle\sum_{j=2}^{m}\displaystyle\sum_{k=0}^{j-2} \frac{1}{k!(j-2-k)!} \frac{\partial^{j-2}h}{\partial \eta^k \partial \bar{\eta}^{j-2-k}}(0) \eta^k \bar{\eta}^{j-2-k} + R_{m-2}(\eta),\end{equation}
and choose $\nu_\ast=e^{i\theta_0}$ as in Proposition \ref{prop:metric-maxdirection}.
A simple size estimate and Proposition \ref{prop:metric-vandermonde} yield
\begin{align}
	\int_{|\eta|<\delta} h(\eta)dm(\eta) & \lesssim \displaystyle\sum_{j=2}^{m}\sum_{k=0}^{j-2} \Big| \frac{\partial^{j-2}h}{\partial \eta^k \partial \bar{\eta}^{j-2-k}}(0)\Big| \delta^{j} + \mathcal{O}(\|h\|_{C^{m-1}}\delta^{m+1}) \nonumber\\
	& \lesssim \sum_{n=0}^m \Big(\sum_{j=2}^m |\nabla_{\nu_n}^{j-2}h(0)|\Big)\delta^j + \mathcal{O}(\|h\|_{C^{m-1}}\delta^{m+1})\nonumber \\
	& \lesssim \sup_{|\nu|=1} \displaystyle\sum_{j=2}^m |\nabla_{\nu}^{j-2}h(0)| \delta^j + \mathcal{O}(\|h\|_{C^{m-1}}\delta^{m+1})\nonumber\\
	& \approx  \sup_{|\nu|=1} \displaystyle\sum_{j=2}^m  |\nabla_{\nu}^{j-2}h(0)| \delta^j \nonumber\\
	& \approx  \displaystyle\sum_{j=2}^m  |\nabla_{\nu_\ast}^{j-2}h(0)| \delta^j\label{eq:metric-upperbound}
\end{align}
if $0<\delta\leq \delta_0$, provided that $\delta_0$ is taken small enough (depending only on $C_1$ from (H1), $\|h\|_{C^{m-1}}$, and $m$).
%In other words, we have shown that $$\Lambda_1(0,\delta)\lesssim \Lambda_4(0,\delta)\lesssim \Lambda_3(0,\delta)\lesssim \Lambda_2(0,\delta).$$ 

We next show that $\Lambda_2(0,\delta)\lesssim \Lambda_1(0,\delta)$, which (together with (\ref{eq:metric-upperbound})) immediately implies (\ref{eq:geom-obj-small}) as long as $\delta_0>0$ is chosen sufficiently small.
Fix $c>0$ (to be chosen later),  for integers $\alpha,\beta$ define $\displaystyle C(\alpha,\beta,c)=\frac{2\sin(c|\alpha-\beta|)}{|\alpha-\beta|}$, and note that for $|\alpha|,|\beta|\leq m$ we have $|C(\alpha,\beta,c)-2c|\lesssim c^3$.
We compute
\begin{align}
	\Lambda_1(0,\delta) & =  \int_{|\eta|<\delta} h(\eta)dm(\eta) \nonumber \\
	& \geq \displaystyle\int_0^\delta \int_{|\theta-\theta_0|\leq c} rh(re^{i\theta})d\theta dr \nonumber \\
	& =  \displaystyle\int_0^\delta \int_{|\theta-\theta_0|\leq c} rh(re^{i\theta_0})d\theta dr + \displaystyle\int_0^\delta \int_{|\theta-\theta_0|\leq c} r(h(re^{i\theta})-h(re^{i\theta_0}))d\theta dr\nonumber \\
	& =  2c \Bigg[ \displaystyle\sum_{j=2}^{m} \frac{\delta^{j}}{j} \displaystyle\sum_{k=0}^{j-2} \frac{1}{k!(j-2-k)!} \frac{\partial^{j-2}h}{\partial \eta^k \partial \bar{\eta}^{j-2-k}}(0)\nu_\ast^k \bar{\nu}_\ast^{j-2-k} \\
	&  \qquad\qquad\qquad+ \int_0^\delta rR_{m-2}(re^{i\theta_0})dr\Bigg] \nonumber \\
	&  \qquad +\displaystyle\sum_{j=2}^{m}\frac{\delta^{j}}{j} \displaystyle\sum_{k=0}^{j-2} \frac{[C(k,j-2-k,c)-2c]}{k!(j-2-k)!} \frac{\partial^{j-2}h}{\partial \eta^k \partial \bar{\eta}^{j-2-k}}(0)\nu_\ast^k \bar{\nu}_\ast^{j-2-k} \nonumber \\
	&  \qquad +\int_0^\delta \int_{|\theta-\theta_0|\leq c} r[R_{m-2}(re^{i\theta})-R_{m-2}(re^{i\theta_0})]d\theta dr \nonumber \\
	& = 2c \Bigg[ \displaystyle\sum_{j=2}^{m} \frac{\delta^{j}}{(j-2)!j} \nabla_{\nu_\ast}^{j-2}h(0) + \int_0^\delta rR_{m-2}(re^{i\theta_0})dr\Bigg] \label{eq:geom-obj-inteform} \\
	&  \qquad +\displaystyle\sum_{j=2}^{m}\frac{\delta^{j}}{j} \displaystyle\sum_{k=0}^{j-2} \frac{[C(k,j-2-k,c)-2c]}{k!(j-2-k)!} \frac{\partial^{j-2}h}{\partial \eta^k \partial \bar{\eta}^{j-2-k}}(0)\nu_\ast^k \bar{\nu}_\ast^{j-2-k} \nonumber \\
	&  \qquad +\int_0^\delta \int_{|\theta-\theta_0|\leq c} r[R_{m-2}(re^{i\theta})-R_{m-2}(re^{i\theta_0})]d\theta dr, \nonumber
	%& = & 2c \Bigg[ \displaystyle\sum_{j=2}^{m} \frac{\delta^{j}}{(j-2)!j} \nabla_{\nu_0}^{j-2}h(0) + \int_0^\delta rR_{m-2}(re^{i\theta_0})dr\Bigg] 
	%& & \qquad +\mathcal{O}\Big(c\delta^{m+1}+c^2\displaystyle\sum_{j=2}^m \frac{1}{j!}|\nabla_{\nu_0}^{j-2}h(0)|\delta^j\Big). 
\end{align}
where we used (\ref{eq:metric-taylor}) in the third step and part (a) of Proposition \ref{prop:metric-vandermonde} in the fourth step.

Note that because
\begin{align*}\Big|\displaystyle\sum_{j=2}^{m}\frac{\delta^{j}}{j} \displaystyle\sum_{k=0}^{j-2} \frac{[C(k,j-2-k,c)-2c]}{k!(j-2-k)!}& \frac{\partial^{j-2}h}{\partial \eta^k \partial \bar{\eta}^{j-2-k}}(0)\nu_\ast^k \bar{\nu}_\ast^{j-2-k}\Big| \\
&\lesssim c^3\displaystyle\sum_{j=2}^{m}\sum_{k=0}^{j-2} \Big| \frac{\partial^{j-2}h}{\partial \eta^k \partial \bar{\eta}^{j-2-k}}(0)\Big| \delta^{j}= c^3 \Lambda_2(0,\delta)\end{align*}
and
$$\Big|\int_0^\delta \int_{|\theta-\theta_0|\leq c} r[R_{m-2}(re^{i\theta})-R_{m-2}(re^{i\theta_0})]d\theta dr\Big|\lesssim c\|h\|_{C^{m-1}}\delta^{m+1},$$
by (\ref{eq:metric-upperbound}) and by taking $\delta_0$ and $c$ sufficiently small it suffices to show that 
$$\Bigg[ \displaystyle\sum_{j=2}^{m} \frac{\delta^{j}}{(j-2)!j} \nabla_{\nu_\ast}^{j-2}h(0) + \int_0^\delta rR_{m-2}(re^{i\theta_0})dr\Bigg]\gtrsim \displaystyle\sum_{j=2}^m |\nabla_{\nu_\ast}^{j-2}h(0)|\delta^j.$$

To this end, for $t\geq 0$ define $H(t)$ as
$$H(t)=\int_0^t \Bigg[ \displaystyle\sum_{j=2}^{m} \frac{r^{j}}{(j-2)!j} \nabla_{\nu_\ast}^{j-2} h(0) + \int_0^r s R_{m-2}(se^{i\theta_0})ds\Bigg]dr.$$
Because $$H(0)=H'(0)=0,\quad H''(t)=t h(t e^{i\theta_0})\geq 0,$$
and $$\Big|\frac{d^k}{dt^k} \int_0^t \int_0^r s R_{m-2}(se^{i\theta_0})dsdr\Big|\leq C_2 t^{m+2-k},\quad k=0,\ldots,m+2,$$
we can apply conclusion (\ref{eq:geom-obj-polyderivbound}) of Proposition \ref{prop:geom-obj-convex} to obtain
\begin{align*}
	\displaystyle\sum_{j=2}^{m} \frac{t^{j}}{(j-2)!j} \nabla_{\nu_\ast}^{j-2}h(0) + \int_0^t rR_{m-2}(re^{i\theta_0})dr & \gtrsim  \displaystyle\sum_{j=2}^m |\nabla_{\nu_\ast}^{j-2}h(0)|t^j\end{align*}
for all $0<t\leq \delta_0$, if $\delta_0$ is taken to be sufficiently small (depending only on $C_1$, $\|h\|_{C^{m-1}}$, and $m$).

In particular, choosing $t=\delta$ we see that the right hand side of (\ref{eq:geom-obj-inteform}) is therefore bounded below by a constant multiple of
\begin{align*}
2c\displaystyle\sum_{j=2}^m |\nabla_{\nu_\ast}^{j-2}h(0)|\delta^j +\mathcal{O}\Big(c^2\displaystyle\sum_{j=2}^m &  |\nabla_{\nu_\ast}^{j-2}h(0)|\delta^j+c\delta^{m+1}\Big)\\
& \gtrsim c\displaystyle\sum_{j=2}^m  |\nabla_{\nu_\ast}^{j-2}h(0)|\delta^j\gtrsim c\Lambda_2(0,\delta),\end{align*}
provided that $c$ and $\delta$ are taken to be sufficiently small (depending only on $C_1$, $\|h\|_{C^{m-1}}$, $m$, and $\delta_0$).
This concludes the proof of the (\ref{eq:geom-obj-small}). 

We now show that for $0<\delta\leq \delta_0$ (for $\delta_0$ as above),
$$\Lambda(\mb{0},\delta)\approx \int_{|\eta|<\delta} h(\eta)dm(\eta).$$
We apply (\ref{eq:geom-obj-small}) to obtain
\begin{align*}
	\int_{|\eta|<\delta} h(\eta)dm(\eta) & \leq  \sup_{(0,10\delta)-stockyards\ R} \displaystyle\sum_{R_i\in R} \int_{R_i} h(\eta)dm(\eta) \\
	& \lesssim \Bigg( \sup_{|\eta|<10\delta} h(\eta) \Bigg)\delta^2 \\
	& \lesssim \sup_{|\eta|<10\delta}\displaystyle\sum_{j=2}^{m} \frac{|\eta|^{j-2}\delta^2}{(j-2)!}\nabla_{\frac{\eta}{|\eta|}}^{j-2}h(0)+ \mathcal{O}(\|h\|_{C^{m-1}}|\eta|^{m-1})\delta^2 \\
	& \lesssim \displaystyle\sup_{|\nu|=1}\sum_{j=2}^{m} |\nabla_{\nu}^{j-2}h(0)|\delta^j+ \mathcal{O}(\|h\|_{C^{m-1}}\delta^{m+1}) \\
	& \approx  \displaystyle\sup_{|\nu|=1}\sum_{j=2}^{m} |\nabla_{\nu}^{j-2}h(0)|\delta^j \\
	& \approx  \sum_{j=2}^{m} |\nabla_{\nu_\ast}^{j-2}h(0)|\delta^j\\
	& \approx  \int_{|\eta|<\delta} h(\eta)dm(\eta),
\end{align*}
which, after applying Theorem \ref{thm:geom-obj-stock}, concludes the proof of Lemma \ref{lem:metric-approxsmall}.
\end{proof}

\begin{remark}\label{rem:metric-approxhighorder}{\rm In the sequel it will be helpful to note that, for fixed $\delta_0>0$ and $\kappa\geq m$, we have
$$\sum_{j=2}^\kappa \Bigg(\sum_{k=0}^{j-2} \Big| \frac{\partial^{j-2}h}{\partial z^k \partial \bar{z}^{j-2-k}}(z)\Big|\Bigg)\delta^{j}\approx \Lambda(\mb{z},\delta)\quad \mbox{for}\ 0<\delta\leq\delta_0,$$
with constants that depend only on $\delta_0$, $\kappa$, and the constants appearing in (H1) and (H2).
}\end{remark}

\begin{proof}[Proof of Lemma \ref{lem:geom-uftisaq}]
The conclusion for $0<\delta\leq \delta_0$ follows immediately from Lemma \ref{lem:metric-approxsmall}. The proof is complete once one observes that (H1) and Lemma \ref{lem:metric-approxsmall} imply that $h(z)$ satisfies Lemma \ref{lem:geom-ugschar}(b), and therefore also satisfies parts (a) and (c).
\end{proof}

As an immediate corollary of Lemma \ref{lem:geom-uftisaq} and Lemma \ref{lem:metric-approxsmall}, we gain the ability (to be used in the proof of Theorem \ref{thm:szego-decomp}) to approximate the function $\tau\mapsto \mu(\mb{z},\tau)$ with one that has a specified bound on its growth rate.
\begin{corollary}\label{cor:metric-mutilde}
	There exist constants $0<c<C<+\infty$, and for fixed $\mb{z}\in \b\Omega$ there exists a non-decreasing function $\tau\mapsto \mu^\ast(\mb{z},\tau)$, such that $c\mu^\ast(\mb{z},\tau)\leq \mu(\mb{z},\tau)\leq C\mu^\ast(\mb{z},\tau)$ and $$\mu^\ast(\mb{z},2\tau)\leq 2^{\frac{1}{2}}\mu^\ast(\mb{z},\tau).$$
\end{corollary}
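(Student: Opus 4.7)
My plan is to construct $\mu^\ast(\mb{z},\tau)$ by a standard regularization of $\mu(\mb{z},\tau)$ along the $\sqrt{\tau}$ scale, namely
\[
\mu^\ast(\mb{z},\tau) := \sqrt{\tau}\,\sup_{s\geq \tau}\frac{\mu(\mb{z},s)}{\sqrt{s}},
\]
so that by construction $\tau \mapsto \mu^\ast(\mb{z},\tau)/\sqrt{\tau}$ is non-increasing and hence $\mu^\ast(\mb{z},2\tau)\leq 2^{1/2}\mu^\ast(\mb{z},\tau)$. The whole proof then reduces to showing (i) that this sup is finite and comparable to $\mu(\mb{z},\tau)/\sqrt{\tau}$, and (ii) that $\tau\mapsto \mu^\ast(\mb{z},\tau)$ is still non-decreasing.

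The heart of the argument is the quantitative statement
\begin{equation}\label{eq:lambda-comparison}
\Lambda(\mb{z},\delta')/\Lambda(\mb{z},\delta)\gtrsim (\delta'/\delta)^2\qquad\text{for all }\delta'\geq\delta>0,
\end{equation}
with a constant that depends only on the UFT constants. I will establish \eqref{eq:lambda-comparison} in three regimes. When $\delta'\leq\delta_0$, Lemma \ref{lem:metric-approxsmall} gives $\Lambda(\mb{z},\delta)\approx \sum_{j=2}^m a_j(z)\delta^j$ with $a_j(z)\geq 0$, so since the lowest exponent is $2$, $\Lambda(\mb{z},\delta')\geq (\delta'/\delta)^2 \sum_j a_j(z)\delta^j \approx (\delta'/\delta)^2\Lambda(\mb{z},\delta)$. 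When $\delta\geq \delta_0$, Lemma \ref{lem:geom-uftisaq} gives $\Lambda(\mb{z},\cdot)\approx (\cdot)^2$ and the ratio is immediate. For the mixed case $\delta\leq\delta_0\leq \delta'$, I just need the \emph{upper} bound $\Lambda(\mb{z},\delta)\lesssim \delta^2$ on $[0,\delta_0]$, which follows from Lemma \ref{lem:metric-approxsmall} and hypothesis (H2) via $a_j(z)\lesssim \|h\|_{C^{j-2}}<+\infty$; combined with $\Lambda(\mb{z},\delta')\approx (\delta')^2$ this gives \eqref{eq:lambda-comparison}.

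Inverting \eqref{eq:lambda-comparison}: setting $\delta=\mu(\mb{z},\tau)$, $\delta'=\mu(\mb{z},s)$ for $s\geq\tau$, we get $s/\tau \gtrsim (\mu(\mb{z},s)/\mu(\mb{z},\tau))^2$, i.e.
\[
\frac{\mu(\mb{z},s)}{\sqrt{s}}\leq C\,\frac{\mu(\mb{z},\tau)}{\sqrt{\tau}}\qquad\text{whenever }s\geq\tau,
\]
for an absolute constant $C$. This says $f(\tau):=\mu(\mb{z},\tau)/\sqrt{\tau}$ is bounded on $[\tau,+\infty)$ and satisfies $\sup_{s\geq \tau}f(s)\leq Cf(\tau)$, so $\mu^\ast(\mb{z},\tau)$ is finite and satisfies $\mu(\mb{z},\tau)\leq \mu^\ast(\mb{z},\tau)\leq C\mu(\mb{z},\tau)$, giving the two-sided bound with $c=1/C$, $C=C$.

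It remains to check that $\tau\mapsto\mu^\ast(\mb{z},\tau)$ is itself non-decreasing. For $\tau_1<\tau_2$, any $s\geq \tau_1$ falls into one of two cases: if $s\geq\tau_2$ then $\sqrt{\tau_1}\,\mu(\mb{z},s)/\sqrt{s}\leq \sqrt{\tau_2}\,\mu(\mb{z},s)/\sqrt{s}\leq \mu^\ast(\mb{z},\tau_2)$; if instead $\tau_1\leq s<\tau_2$ then $\sqrt{\tau_1}\mu(\mb{z},s)/\sqrt{s}\leq \mu(\mb{z},s)\leq \mu(\mb{z},\tau_2)\leq\mu^\ast(\mb{z},\tau_2)$, using monotonicity of $\mu$ and the trivial bound $\mu^\ast\geq\mu$. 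Taking the sup over $s\geq\tau_1$ yields $\mu^\ast(\mb{z},\tau_1)\leq\mu^\ast(\mb{z},\tau_2)$, completing the proof. The only technical step that requires real care is \eqref{eq:lambda-comparison} in the mixed regime $\delta\leq\delta_0\leq\delta'$; everything else is algebraic.
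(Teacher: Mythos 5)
Your argument is correct, but it reaches the corollary by a genuinely different construction than the paper. The paper builds an explicit model function $\Lambda^\ast(\mb{z},\delta)$ --- a normalized polynomial $\sum_{j=2}^m a_j\delta^j$ for $\delta\leq 1$ (with $a_j\geq 0$, $a_2+\cdots+a_m=1$, using (H1), (H2), Lemma \ref{lem:geom-uftisaq}, and Lemma \ref{lem:metric-approxsmall} to get $\Lambda^\ast\approx\Lambda$ uniformly) glued to $\delta^2$ for $\delta\geq 1$ --- and takes $\mu^\ast$ to be its exact inverse; since every exponent in $\Lambda^\ast$ is at least $2$, one has $2\Lambda^\ast(\mb{z},\delta)\leq \Lambda^\ast(\mb{z},2^{1/2}\delta)$, and the bound $\mu^\ast(\mb{z},2\tau)\leq 2^{1/2}\mu^\ast(\mb{z},\tau)$ drops out in one line, with comparability and monotonicity immediate. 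You instead regularize $\mu$ directly via $\mu^\ast(\mb{z},\tau)=\sqrt{\tau}\,\sup_{s\geq\tau}\mu(\mb{z},s)/\sqrt{s}$, which makes the $2^{1/2}$-doubling automatic, and you concentrate all of the geometry into the single inequality $\Lambda(\mb{z},\delta')/\Lambda(\mb{z},\delta)\gtrsim(\delta'/\delta)^2$ for $\delta'\geq\delta$, proved from the same two lemmas (polynomial model with exponents $\geq 2$ at small scales, $\Lambda(\mb{z},\delta)\approx\delta^2$ at large scales, and the (H2) bound $\Lambda(\mb{z},\delta)\lesssim\delta^2$ on $[0,\delta_0]$ for the crossover regime). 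Both proofs rest on the same underlying fact --- that $\delta^{-2}\Lambda(\mb{z},\delta)$ is essentially non-decreasing --- but your route avoids normalizing coefficients and gluing at $\delta=1$, at the price of the separate (easy) verifications of finiteness, two-sided comparability, and monotonicity of $\mu^\ast$, all of which you carry out correctly; the inversion step and the mixed regime $\delta\leq\delta_0\leq\delta'$, which is the only delicate point, are handled properly. One cosmetic remark: in the small-scale regime the ``$\geq$'' in your chain should formally be ``$\gtrsim$'', since $\Lambda$ is only comparable to, not equal to, the polynomial model; this does not affect the conclusion.
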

\begin{proof}
	By Lemmas \ref{lem:geom-uftisaq} and \ref{lem:metric-approxsmall} and (H1), for fixed $\mb{z}\in \b\Omega$ there exists a continuous function $$\delta\to \Lambda^\ast(\mb{z},\delta)=\begin{cases} \displaystyle\sum_{j=2}^m a_j \delta^j & \mbox{if}\ \delta\leq 1,\\ \delta^2 & \mbox{if}\ \delta\geq 1,\end{cases}$$
	with the $a_i\geq 0$, $a_2+\cdots+a_m=1$,  and $\Lambda^\ast(\mb{z},\delta)\approx \Lambda(\mb{z},\delta)$ for $0<\delta<+\infty$ with constants in the approximation depend only on the constants from (H1) and (H2).  If $\mu^\ast(\mb{z},\tau)$ satisfies
	$$\mu^\ast(\mb{z},\Lambda^\ast(\mb{z},\delta))=\delta=\Lambda^\ast(\mb{z},\mu^\ast(\mb{z},\delta)),$$
	then we have $\mu^\ast(\mb{z},\tau)\approx \mu(\mb{z},\tau)$, as desired.
	The inequality
	$$\Lambda^\ast(\mb{z},\mu^\ast(\mb{z},2\tau))=2\tau=2\Lambda^\ast(\mb{z},\mu^\ast(\mb{z},\tau))\leq \Lambda^\ast(\mb{z},2^{\frac{1}{2}}\mu^\ast(\mb{z},\tau)),$$
	then yields
	$$\mu^\ast(\mb{z},2\tau)\leq 2^{\frac{1}{2}}\mu^\ast(\mb{z},\tau).$$
\end{proof}

\subsection{Proof of Lemma \ref{lem:metric-approx}}\label{sec:metric-approx}

	Let $\t\Omega=\{\tilde{\mb{z}}\in\mathbb{C}^2\ :\ \Im(\t z_2)>P^{\mb{w},2}(\t z)\}$, and let $\Phi:\mathbb{C}^2\to\mathbb{C}^2$ be the associated biholomorphism in Section \ref{sec:normalization-biholomorphic}. If $\tilde{d}$ denotes the Carnot-Carath\'eodory metric on $\b\t\Omega$, then
	$$d(\mb{z},\mb{w})=\tilde{d}(\Phi(\mb{z}),\Phi(\mb{w}))=\t d(\Phi(\mb{z}),\mb{0}).$$
	Denote by $\tilde{\Lambda}$ and $\tilde{\mu}$ the analogues of $\Lambda$ and $\mu$ on $\b\t\Omega$, and note that because $\tilde{\Lambda}(\mb{0},\delta)=\Lambda(\mb{w},\delta)$ for $\delta>0$ by Remark \ref{rem:metric-invar}, we have $\tilde{\mu}(\mb{0},\delta)=\mu(\mb{w},\delta)$ as well.
	Let $$\tilde{\mb{z}}=(\t z,\t z_2)=\Phi(\mb{z})=(z-w,z_2-w_2-2i\sum_{j=1}^2 \frac{1}{j!} \frac{\partial^j P}{\partial z^j}(w)(z-w)^j).$$	
	Setting $\displaystyle \t T(\t z,0)=-2\Im\Big(\int_0^1 \t z P^{\mb{w},2}_{z}(\t z s)ds\Big),$ equation (\ref{eq:geom-metric}) yields
	$$\t{d}(\tilde{\mb{z}},\mb{0})\approx |\t z| + \t \mu(\mb{0},|\Re(\t z_2)-\t T(\t z,0)|).$$
	
	Note that when $|\t z|\gtrsim 1$ property (b)-(ii) of Proposition \ref{prop:potentials} and Lemma \ref{lem:geom-uftisaq} yield
	$$|\t T(\t z,0)|=\Big|2\Im\Big(\int_0^1 \t z \frac{\partial P^{\mb{w},2}}{\partial\t z}(\t zs)ds\Big)\Big|\lesssim |\t z|^2\approx \t\Lambda(\mb{0},|\t z|).$$
	Hence, either
	\begin{align*}
		|\Re(\t z_2)-\t T(\t z,0)| & \approx  |\Re(\t z_2)|\quad\mbox{or} \\
		|\Re(\t z_2)-\t T(\t z,0)| & \lesssim \tilde{\Lambda}(\mb{0},|\t z|),
	\end{align*}
	so that $\t d(\tilde{\mb{z}},\mb{0})\approx |\t z|+\t\mu(\mb{0},|\Re(\t z_2)|)$. For $|z-w|\gtrsim 1$ we therefore have
	\begin{align*}
	d(\mb{z},\mb{w}) & =\t d(\Phi(\mb{z}),\mb{0})\approx |\t z|+\t\mu(\mb{0},|\Re(\t z_2)|)\\
	& =|z-w|+\mu(\mb{w},|\Re(z_2)-\Re(w_2)-T_2(z,w)|).\end{align*}
	Because $$\displaystyle \Big|\frac{1}{2}\frac{\partial^2 P}{\partial z^2}(w)(z-w)^2\Big|\lesssim |z-w|^2\approx \Lambda(\mb{w},|z-w|)\quad\mbox{when}\ |z-w|\gtrsim 1,$$ we also have
	$$d(\mb{z},\mb{w})\approx |z-w|+\mu(\mb{w},|\Re(z_2)-\Re(w_2)-T_1(z,w)|)\quad\mbox{for}\ |z-w|\gtrsim 1,$$
	which proves (a).
	
	On the other hand, if $|\t z|\lesssim 1$ and $\kappa\geq m$ then 
	\begin{align*}
		& P^{\mb{w},2}(\t z)\\
		& =  2\Re\Bigg(\sum_{j=1}^m \frac{1}{j!} \frac{\partial^j P^{\mb{w},2}}{\partial \t z^j}(0)\t z^j\Bigg)+ \displaystyle\sum_{k=2}^m \frac{1}{k!} \displaystyle\sum_{j=1}^{k-1} \binom{k}{j} \frac{\partial^k P^{\mb{w},2}}{\partial\t z^j \partial \bar{\t z}^{k-j}}(0) \t z^j \bar{\t z}^{k-j} + R_m(\t z)\\
		& =  2\Re\Bigg(\sum_{j=1}^\kappa \frac{1}{j!} \frac{\partial^j P^{\mb{w},2}}{\partial \t z^j}(0)\t z^j\Bigg)+ \displaystyle\sum_{k=2}^m \frac{1}{k!} \displaystyle\sum_{j=1}^{k-1} \binom{k}{j} \frac{\partial^k P^{\mb{w},2}}{\partial\t z^j \partial \bar{\t z}^{k-j}}(0) \t z^j \bar{\t z}^{k-j}+\tilde{R}_{m,\kappa}(\tilde{z}),
	\end{align*}
	where
	$$\tilde{R}_{m,\kappa}(\tilde{z})=R_m(\t z)-2\Re\Bigg(\sum_{j=m+1}^\kappa \frac{1}{j!} \frac{\partial^j P^{\mb{w},2}}{\partial \t z^j}(0)\t z^j\Bigg).$$
	Then
	\begin{align*}
		\frac{\partial P^{\mb{w},2}}{\partial \t z}(\t z)& =  \sum_{j=1}^\kappa \frac{1}{(j-1)!} \frac{\partial^j P^{\mb{w},2}}{\partial \t z^j}(0)\t z^{j-1} + \frac{\partial \tilde{R}_{m,\kappa}}{\partial \t z}(\t z)\\
		& \qquad + \displaystyle\sum_{k=2}^m \frac{1}{(k-1)!} \displaystyle\sum_{j=1}^{k-1} \binom{k-1}{j-1} \frac{\partial^k P^{\mb{w},2}}{\partial\t z^j \partial \bar{\t z}^{k-j}}(0)\t z^{j-1} \bar{\t z}^{k-j},
	\end{align*}
	so that 
	\begin{align*}
	 -2\Im\Bigg( \int_0^1 &\t z \frac{\partial P^{\mb{w},2}}{\partial\t z}(\t zs)ds\Bigg)\\
		  = & -2\Im\Bigg(\sum_{j=1}^\kappa \frac{1}{j!} \frac{\partial^j P^{\mb{w},2}}{\partial \t z^j}(0)\t z^j\Bigg) -2\Im\Bigg(\int_0^1 \t z \frac{\partial \tilde{R}_{m,\kappa}}{\partial \t z}(\t z s)ds\Bigg)\\
		& \qquad -2\Im\Bigg( \displaystyle\sum_{k=2}^m \frac{1}{k!} \sum_{j=1}^{k-1} \binom{k-1}{j-1} \frac{\partial^k P^{\mb{w},2}}{\partial\t z^j \partial \bar{\t z}^{k-j}}(0)\t z^j \bar{\t z}^{k-j}\Bigg).
	\end{align*}
	Because $$-2\Im\Bigg(\sum_{j=1}^\kappa \frac{1}{j!} \frac{\partial^j P^{\mb{w},2}}{\partial \t z^j}(0)\t z^j\Bigg)=\t T_\kappa(\t z,0),$$
	$$\int_0^1 \t z \frac{\partial \tilde{R}_{m,\kappa}}{\partial \t z}(\t z s)ds=\mathcal{O}(\|h\|_{C^{\kappa-1}}|\t z|^{m+1}),$$
	and
	$$\Bigg|-2\Im\Bigg( \displaystyle\sum_{k=2}^m \frac{1}{k!} \sum_{j=1}^{k-1} \binom{k-1}{j-1} \frac{\partial^k P^{\mb{w},2}}{\partial\t z^j \partial \bar{\t z}^{k-j}}(0)\t z^j \bar{\t z}^{k-j}\Bigg)\Bigg|\lesssim \tilde{\Lambda}(\mb{0},|\t z|)$$
	by Lemma \ref{lem:metric-approxsmall}, we have
	$$\t T(\t z,0)-\t T_\kappa(\t z,0) = \mathcal{O}(\tilde{\Lambda}(\mb{0},|\t z|)).$$
	Hence, either
	\begin{align*}
		|\Re(\t z_2)-\t T(\t z,0)| & \approx  |\Re(\t z_2)-\t T_\kappa(\t z,0)|\quad\mbox{or} \\
		|\Re(\t z_2)-\t T(\t z,0)| & \lesssim \t\Lambda(\mb{0},|\t z|),
	\end{align*}
		so that $\t d(\tilde{\mb{z}},\mb{0})\approx |\t z|+\t\mu(\mb{0},|\t z-\t T_\kappa(\t z,0)|)$. This yields
		$$d(\mb{z},\mb{w})=\t d(\Phi(\mb{z}),\mb{0})\approx |\t z|+\t\mu(\mb{0},|\t z-\t T_\kappa(\t z,0)|)=|z-w|+\mu(\mb{w},|z-w-T_\kappa(z,w)|)$$
		for $|z-w|\lesssim 1,$ which proves (b).

\subsection{Proof of Lemma \ref{lem:metric-smooth}}\label{sec:metric-smooth}

By arguing as in the proof of Lemma \ref{lem:metric-approx}, it suffices to prove the result when $\mb{w}=\mb{0}$ and $P=P^{\mb{0},2}$. 
Fix $\mb{z}=(z,z_2)\in {\rm b}\Omega$, and define
$$g(\mb{z})=\Re(z_2)-T(z,0).$$
Choosing $\nu_0,\ldots,\nu_m$ to be $\nu_n=\exp(\frac{\pi i n}{m+1})$, we apply Lemma \ref{lem:metric-approxsmall} and Lemma \ref{lem:geom-uftisaq} to define a smooth, increasing function 
$$\tilde{\Lambda}(\delta)\approx\begin{cases} \displaystyle\sum_{j=2}^m \Bigg(\sum_{n=0}^m |\nabla_{\nu_n}^{j-2}h(0)|\Bigg)\delta^j\quad &\mbox{if}\ \delta \leq 1,\\ \delta^2\quad &\mbox{if}\ \delta \gg 1,\end{cases}$$
with $\delta^{-1}\tilde{\Lambda}(\delta)$ and $\delta^{-2}\tilde{\Lambda}(\delta)$ non-decreasing, and let $\tilde{\mu}(\delta)$ be the inverse function to $\delta\mapsto \tilde{\Lambda}(\delta)$.
We define
$$d^\ast_{small}(\mb{z},\mb{0}):=\tilde{\mu}\Big(\big(\tilde{\Lambda}(|z|^2)+g(\mb{z})^2\big)^\frac{1}{2}\Big)$$
and 
$$d^\ast_{large}(\mb{z},\mb{0}):=(|z|^4 + |\Re(z_2)|^2)^{\frac{1}{4}}.$$
By the explicit formula in Lemma \ref{lem:geom-uftisaq}, 
$$d(\mb{z},\mb{0})\approx \begin{cases} d^\ast_{small}(\mb{z},\mb{0}) \quad & \mbox{if}\ d(\mb{z},\mb{0})\lesssim 1,\\ d^\ast_{large}(\mb{z},\mb{0})\quad &\mbox{if}\ d(\mb{z},\mb{0})\gtrsim 1.\end{cases}$$
Let $\chi:[0,{+\infty})\rightarrow [0,1]$ be a smooth, non-increasing function such that $\chi(t)\equiv 1$ for $t\leq 1$ and $\chi(t)\equiv 0$ for $t\geq 2$.
We then define the $\tilde{d}$ on ${\rm b}\Omega$ via the formula
$$d^\ast:= \chi(d^\ast_{small})d^\ast_{small}+(1-\chi(\epsilon d^\ast_{large}))d^\ast_{large}.$$
By choosing $\epsilon$ sufficiently small, we can guarantee that $d\approx d^\ast$. 

It remains to show that the formulas in (b) hold. To this end, it suffices to estimate the derivatives of $d^\ast_{small}$ when $d^\ast\lesssim 1$ (so that $|z|\lesssim 1$), and the derivatives of $d^\ast_{large}$ when $d^\ast\gtrsim 1$. Because the derivatives of $d^\ast_{large}$ are much simpler than those of $d^\ast_{small}$, we only show the details for $d^\ast_{small}$.

We start with $d^\ast_{small}$.  Note first that
\begin{align*}
   -iZg(\mb{z})	& = P_{z}(z)-\int_0^1 P_{z}(zr)dr - \int_0^1 zrP_{z,z}(zr)dr +\int_0^1 \bar{z}rP_{z,\bar{z}}(zr)dr\\
	& = 2\int_0^1 \bar{z}rP_{z,\bar{z}}(zr)dr
\end{align*}
and
$$i\bar{Z}g(\mb{z})=2\int_0^1 zrP_{z,\bar{z}}(zr)dr.$$
Arguing as in the proof of Lemma \ref{lem:metric-approxsmall} and writing $P_{z,\bar{z}}=h$, for $|\alpha|=1$ we have
\begin{align*}
|W^\alpha g(\mb{z})| & =2|z|\int_0^1 rh(zr)dr \leq 2|z|\Big(\sup_{|\eta|\leq |z|} h(\eta) \Big) \\
& \lesssim \frac{1}{|z|}\int_{|\eta|<|z|}h(\eta)dm(\eta)\approx \frac{1}{|z|}\tilde\Lambda(|z|)\end{align*}
by Lemma \ref{lem:metric-approxsmall}.

For second-order derivatives when $|z|\lesssim 1$, we apply (\ref{eq:metric-taylor}) to see that
\begin{align*}
	& -iZZ g(\mb{z}) \\
	& =  2\bar{z}\partial_z \int_0^1 rh(zr)dr \\
	& =  2\bar{z}\partial_z \int_0^1 \sum_{j=2}^m \sum_{k=0}^{j-2} \frac{r^{j-1}}{k!(j-2-k)!} \frac{\partial^{j-2}h}{\partial z^k \partial \bar{z}^{j-2-k}}(0)z^k \bar{z}^{j-2-k} + rR_{m-2}h(zr)dr\\
	& = 2\bar{z} \int_0^1 \Bigg\{\sum_{j=2}^m \sum_{k=1}^{j-2} \frac{r^{j-1}}{(k-1)!(j-2-k)!} \frac{\partial^{j-2}h}{\partial z^k \partial \bar{z}^{j-2-k}}(0)z^{k-1} \bar{z}^{j-2-k} \\
	& \qquad\qquad\qquad\qquad\qquad\qquad+ r^2\frac{\partial R_{m-2}h}{\partial z}(zr)\Bigg\}dr,
\end{align*}
so that because $|z|\lesssim 1$ we have
$$|ZZg(\mb{z})|\lesssim \sum_{j=2}^{m} \Bigg(\sum_{k=0}^{j-2} \Big| \frac{\partial^{j-2}h}{\partial z^k \partial \bar{z}^{j-2-k}}(0)\Big|\Bigg) |z|^{j-2} + \mathcal{O}(\|h\|_{C^{m-1}}|z|^{m-1})\approx \frac{1}{|z|^2}\tilde{\Lambda}(|z|).$$
Similar computations show that
$$|\bar{Z}\bar{Z}g(\mb{z})|+|Z\bar{Z}g(\mb{z})|+|\bar{Z}Zg(\mb{z})|\lesssim\frac{1}{|z|^2}\tilde{\Lambda}(|z|)\quad\mbox{for}\ |z|\lesssim 1$$
as well. Note also that 
$$\displaystyle \Big|\frac{\partial^k}{\partial \delta^k} \tilde{\mu}(\sqrt{\delta})\Big|\approx \tilde{\mu}(\sqrt{\delta})\delta^{-k}\quad \mbox{for}\ k\geq 0.$$

Writing $g:=g(\mb{z}),$ and $d^\ast:=d^\ast(\mb{z},\mb{0}),$ we compute that for $d^\ast\lesssim 1$ (in which case $|z|\lesssim 1$ as well),
	\begin{align*}
		|Wd^\ast| & = \Big|W\t\mu \Big(\big(\t\Lambda(|z|^2)+g^2\big)^{\frac{1}{2}}\Big)\Big|\\
	    & \lesssim \frac{d^\ast}{\tilde{\Lambda}(|z|^2)+g^2}\Big(\frac{\tilde{\Lambda}(|z|^2)}{|z|}+|g|\frac{\tilde{\Lambda}(|z|)}{|z|}\Big) \\
		& \lesssim \frac{d^\ast}{\tilde{\Lambda}(|z|)+|g|}\frac{\tilde{\Lambda}(|z|)}{|z|}\\
		& \approx \frac{d^\ast}{\tilde{\Lambda}(d^\ast)}\frac{\tilde{\Lambda}(|z|)}{|z|} \lesssim 1,
	\end{align*}
	where in the third line we used the facts that $\tilde{\Lambda}(\delta^2)\approx (\tilde{\Lambda}(\delta))^2$ and $a^2+b^2\approx (a+b)^2$ for $a,b\geq 0$, and in the last inequality we used the fact that $\delta\mapsto \delta^{-1}\tilde{\Lambda}(\delta)$ is increasing.
	Similarly, we have
	\begin{align*}
		|W^2d^\ast|&\lesssim \frac{d^\ast}{\tilde{\Lambda}(d^\ast)^4}\Big( \frac{\tilde{\Lambda}(|z|)^2}{|z|}+|g|\frac{\tilde{\Lambda}(|z|)}{|z|}\Big)^2 + \frac{d^\ast}{\tilde{\Lambda}(d^\ast)^2}\Big( \frac{\tilde{\Lambda}(|z|)^2}{|z|^2}+|g|\frac{\tilde{\Lambda}(|z|)}{|z|^2}\Big)\\
		& \lesssim \frac{d^\ast}{\tilde{\Lambda}(d^\ast)^4} \tilde{\Lambda}(d^\ast)^2 \Big(\frac{\tilde{\Lambda}(d^\ast)}{d^\ast}\Big)^2 +\frac{d^\ast}{\tilde{\Lambda}(d^\ast)^2} \tilde{\Lambda}(d^\ast) \frac{\tilde{\Lambda}(d^\ast)}{d^\ast} \\
		& \lesssim (d^\ast)^{-1},
	\end{align*}
	which gives the desired estimates for $d^\ast\lesssim 1$.
	
	The proof for $d^\ast_{large}$ when $d^\ast\gtrsim 1$ is similar, except that one must estimate the derivative of $d^\ast_{large}$ and use the fact that $|P_{z}(z)|\lesssim |z|$.
	This concludes the proof of Lemma \ref{lem:metric-smooth}.

\begin{remark}{\rm Lemma \ref{lem:metric-smooth} is very similar to a result of Nagel and Stein \cite{NagelStein2001b}, who constructed such a metric $d^\ast$ on the boundary of compact domains and polynomial model domains.
In that setting, compactness allowed them to take higher order derivatives of $d^\ast$.
We do not pursue this here, because our results only necessitate control over the first and second order derivatives of $d^\ast$.}\end{remark}
%Indeed, control over higher order non-isotropic derivatives is probably impossible in the non-homogeneous (and non-local) setting.

\section{Definition of $\mathbb{F}_{K,K'}$ and $\mathbb{N}_{K,K'}$}\label{sec:fn}

Throughout this section, fix a UFT domain $\Omega=\{\mb{z}\in\mathbb{C}^2\ :\ \Im(z_2)>P(z)\}$ and assume that $P=P^{\mb{\sigma},\kappa}$ for some $\mb{\sigma},\kappa$.  The proof of Theorem \ref{thm:szego-decomp} is fundamentally reduced to the study of the weighted $\bar{\partial}$ equation on $\mathbb{C}$, carried out by Christ\footnote{Christ only considers the case $\tau=1$ in his paper, so the following discussion and the results in Section \ref{sec:kernels} should be interpreted (in the notation of \cite{Christ1991a}) as an application of Christ's results to the function $\phi(z)=\tau P(z)$ for $\tau>0$.} in \cite{Christ1991a}.
That is, for subharmonic $P:\mathbb{C}\rightarrow \mathbb{R}$ such that $4\pi \tau\Delta P dm$ is a doubling measure, he defines the (closed, densely defined) operators
\begin{equation}\label{eq:fn-zzbar}\hat{\bar{Z}}=\bar{D}_\tau=\bar{\partial}+2\pi \tau P_{\bar{z}},\qquad -\hat{Z}=D_\tau=-\partial + 2\pi \tau P_{z}\end{equation}
on $L^2(\mathbb{C})$, which are equivalent to $\bar{\partial}$ and $-\partial$ acting on $L^2(\mathbb{C}; e^{-4\pi \tau P(z)}dm(z))$.
He then carefully studies the operators 
\begin{equation}\label{eq:dbarops}
	\begin{aligned}
	& \mathbb{G}=(\bar{D}_\tau D_\tau)^{-1},\quad  \mathbb{R}_\tau=D_\tau\circ(\bar{D}_\tau D_\tau)^{-1},\\
	&\mathbb{R}_\tau^\ast =(\bar{D}_\tau D_\tau)^{-1}\circ \bar{D}_\tau, \quad \mbox{and}\quad  \mathbb{S}=I-D_\tau\circ(\bar{D}_\tau D_\tau)^{-1}\bar{D}_\tau,
\end{aligned}
\end{equation}
giving pointwise bounds on their Schwartz kernels in terms of a metric $\rho_\tau:\mathbb{C}\times\mathbb{C}\to[0,+\infty)$ and a closely related smooth function $\sigma_\tau:\mathbb{C}\to [0,+\infty)$ defined by 
$$\int_{|\eta-z|<\sigma_\tau(z)} 4\pi \tau\Delta P(\eta)dm(\eta)\approx 1$$
uniformly in $z\in\mathbb{C}$, and which measures the local average degeneracy of $\tau\Delta P$.

\begin{remark}{\rm For the UFT domain $\Omega$, the fact that $\tau\Delta P$ is a doubling measure on $\mathbb{C}$ (with doubling constant independent of $\tau$) and the existence of the function $\sigma_\tau(z)$ follow immediately from Lemma \ref{lem:geom-uftisaq}.}\end{remark}
 
 In this section we use Christ's weighted operators to define the operators $\mathbb{F}_{K,K'}$ and $\mathbb{N}_{K,K'}$ appearing in the statement of Theorem \ref{thm:szego-decomp}.
 More detailed information about $\sigma_\tau$, $\rho_\tau$, and the pointwise bounds for $[\mathbb{S}_\tau]$, $[\mathbb{G}_\tau]$, $[\mathbb{R}_\tau]$, and $[\mathbb{R}^\ast_\tau]$ can be found in Section \ref{sec:kernels}, where we give pointwise bounds on the Schwartz kernels of $(Z^\alpha \mathbb{F}_{K,K'}Z^\beta)^\wedge$ and $(Z^\alpha \mathbb{N}_{K,K'}Z^\beta)^\wedge$. 
 
 We begin by making three basic observations.

	\begin{lemma}\label{lem:unifL2bound} Let $\displaystyle A(\tau)=\sup_{z\in\mathbb{C}} \sigma_\tau (z)$. Then $A(\tau)<+\infty$ and is non-increasing for $\tau\in (0,+\infty)$. Moreover, there exists a constant $C>0$, depending only on the constants appearing in (H1)-(H3) (but independent of $\tau$), such that as operators from $L^2(\mathbb{C})\to L^2(\mathbb{C})$,
	$$\|\mathbb{G}_\tau\|\leq C A(\tau)^2,\quad \|\mathbb{R}_\tau\|+\|\mathbb{R}^\ast_\tau\|\leq CA(\tau),\quad \mbox{and}\ \|\mathbb{S}_\tau\|\leq 1.$$
	\end{lemma}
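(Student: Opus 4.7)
The plan is to establish the four assertions separately, reducing the $L^2$ operator bounds to the subelliptic estimate developed by Christ in \cite{Christ1991a}.

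First I would analyze $A(\tau)$. The defining relation
$$\int_{|\eta - z| < \sigma_\tau(z)} 4\pi\tau\, h(\eta)\, dm(\eta) \approx 1$$
together with Lemma \ref{lem:geom-uftisaq} identifies $\sigma_\tau(z)$, up to universal constants, with the unique $\delta$ for which $\tau\Lambda(\mb{z},\delta) \approx 1$; equivalently, $\sigma_\tau(z) \approx \mu(\mb{z}, c/\tau)$ for a suitable $c$. Monotonicity of $\tau \mapsto A(\tau)$ is then immediate: the integrand $4\pi\tau h(\eta)$ is pointwise non-decreasing in $\tau$, so the radius $\sigma_\tau(z)$ needed for the integral to reach a fixed value is non-increasing in $\tau$, a property preserved by taking the supremum over $z$. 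Finiteness follows because Lemma \ref{lem:geom-uftisaq} supplies $\Lambda(\mb{z},\delta) \approx \delta^2$ whenever $\delta \geq \delta_0$, uniformly in $\mb{z}$, which forces $\sigma_\tau(z) \lesssim \max(\delta_0, \tau^{-1/2})$ with a constant depending only on the data in (H1)--(H3).

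For the bound on $\mathbb{G}_\tau$, I would invoke Christ's subelliptic estimate
$$\|u\|_{L^2(\mathbb{C})}^2 \leq C\, A(\tau)^2\, \langle \bar{D}_\tau D_\tau u, u\rangle_{L^2(\mathbb{C})}, \qquad u \in \mathrm{dom}(\bar{D}_\tau D_\tau).$$
Morally, this is a Poincar\'e-type inequality on balls of radius $\sigma_\tau(z)$, activated by the commutator identity $[\bar{D}_\tau, D_\tau] = \pi\tau\Delta P$, which converts control by $\|D_\tau u\|$ into control by the weight $4\pi\tau h$. Once this estimate is in hand, Cauchy--Schwarz applied to $\bar{D}_\tau D_\tau\, \mathbb{G}_\tau f = f$ yields $\|\mathbb{G}_\tau f\|_{L^2}^2 \leq C A(\tau)^2 \langle f, \mathbb{G}_\tau f\rangle \leq C A(\tau)^2 \|f\|_{L^2} \|\mathbb{G}_\tau f\|_{L^2}$, hence $\|\mathbb{G}_\tau\|_{L^2 \to L^2} \leq C A(\tau)^2$.

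The remaining bounds are short operator-theoretic consequences. For $\mathbb{R}_\tau = D_\tau \mathbb{G}_\tau$ one computes
$$\|\mathbb{R}_\tau f\|_{L^2}^2 = \langle D_\tau \mathbb{G}_\tau f, D_\tau \mathbb{G}_\tau f\rangle = \langle \bar{D}_\tau D_\tau \mathbb{G}_\tau f, \mathbb{G}_\tau f\rangle = \langle f, \mathbb{G}_\tau f\rangle \leq \|\mathbb{G}_\tau\|\,\|f\|_{L^2}^2 \leq C A(\tau)^2 \|f\|_{L^2}^2,$$
giving $\|\mathbb{R}_\tau\| \leq C^{1/2} A(\tau)$; the identical calculation for $\mathbb{R}_\tau^\ast = \mathbb{G}_\tau \bar{D}_\tau$ (or simply passing to the adjoint) produces the matching bound. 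Finally, one checks directly that $\mathbb{S}_\tau = I - D_\tau \mathbb{G}_\tau \bar{D}_\tau$ is self-adjoint, idempotent, and fixes $\ker \bar{D}_\tau$, so it is the orthogonal projection onto that null-space and $\|\mathbb{S}_\tau\| \leq 1$ is automatic. The hard part, which I would quote rather than reprove, is the subelliptic estimate with the sharp constant $A(\tau)^2$; every other piece is either algebra or an immediate consequence of the geometric bounds on $\Lambda$ and $\mu$.
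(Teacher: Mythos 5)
Your treatment of $A(\tau)$ is the same as the paper's: finiteness comes from $\sigma_\tau(z)\approx\mu(\mb{z},\tau^{-1})$ together with Lemma \ref{lem:geom-uftisaq} (the paper records $\sigma_\tau\lesssim\tau^{-1/2}$ for $\tau\leq 1$ and $\lesssim\tau^{-1/m}$ for $\tau\geq 1$), and monotonicity is exactly the observation that $A(\tau)$ is a supremum of functions that are individually non-increasing in $\tau$. For the operator bounds, however, you take a genuinely different route. The paper deduces $\|\mathbb{G}_\tau\|\lesssim A(\tau)^2$ and $\|\mathbb{R}_\tau\|+\|\mathbb{R}^\ast_\tau\|\lesssim A(\tau)$ directly from Christ's pointwise kernel estimates (Theorem \ref{thm:chr-rec-Christest}, in the form of Remark \ref{rem:kernels-estrestate}) via the Schur test, with the kernel integrals $\int|[\mathbb{G}_\tau](z,w)|\,dm(w)\lesssim\sigma_\tau(z)^2$ and $\int|[\mathbb{R}_\tau](z,w)|\,dm(w)\lesssim\sigma_\tau(z)$ computed by the same scaling arguments as in Lemma \ref{lem:kernels-basicest}; the bound $\|\mathbb{S}_\tau\|\leq 1$ is, as in your proposal, just the fact that $\mathbb{S}_\tau$ is an orthogonal projection. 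Your route instead rests on a quadratic-form coercivity estimate $\|u\|^2\leq CA(\tau)^2\langle\bar{D}_\tau D_\tau u,u\rangle$, from which the $\mathbb{G}_\tau$ bound follows by Cauchy--Schwarz and the $\mathbb{R}_\tau$, $\mathbb{R}_\tau^\ast$ bounds follow by the clean identity $\|\mathbb{R}_\tau f\|^2=\langle f,\mathbb{G}_\tau f\rangle$ (modulo routine domain/integration-by-parts justifications). That algebraic reduction is correct and arguably tidier than running Schur separately for each kernel.

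The one point you should be careful about is the status of the quoted subelliptic estimate. It is not among the results this paper imports from \cite{Christ1991a} -- Theorem \ref{thm:chr-rec-Christest} records only pointwise kernel bounds -- and, with the constant $A(\tau)^2$, it is essentially equivalent to the conclusion $\|\mathbb{G}_\tau\|\leq CA(\tau)^2$ (indeed $\|u\|^2=\langle\bar{D}_\tau D_\tau u,\mathbb{G}_\tau u\rangle\leq\|D_\tau u\|\,\|\mathbb{R}_\tau u\|$ recovers it from the lemma). So unless you can point to a precise statement in Christ's paper giving this coercivity with a scale-invariant constant comparable to $\sup_z\sigma_\tau(z)^2$ (and uniform in the doubling constant, hence in $\tau$), your ``quote'' is the whole lemma in disguise for $\mathbb{G}_\tau$. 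The paper's Schur-test argument avoids this issue entirely, since it needs nothing beyond the kernel estimates already stated and the elementary integral computations; if you want your version to stand alone, either supply the coercivity estimate (e.g.\ via a local uncertainty principle on balls of radius $\sigma_\tau(z)$ using $[\bar{D}_\tau,D_\tau]=\pi\tau\Delta P$ and a covering argument) or fall back on the Schur test.
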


	\begin{proposition}\label{prop:continuity} As operators on $L^2(\mathbb{C})$,  $\mathbb{G}_\tau,$ $\mathbb{R}_\tau$, $\mathbb{R}_\tau^\ast$, and $\mathbb{S}_\tau$ are strongly continuous in $\tau\in (0,+\infty)$.  That is, for fixed $f\in L^2(\mathbb{C})$ and any operator $\mathbb{H}_\tau$ listed above, $$\lim_{h\to 0} \mathbb{H}_{\tau+h}f=\mathbb{H}_\tau f\quad\mbox{in}\ L^2(\mathbb{C}).$$
\end{proposition}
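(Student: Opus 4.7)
The plan is to prove strong continuity of $\mathbb{G}_\tau$ first on the dense subspace $C_c^\infty(\mathbb{C}) \subset L^2(\mathbb{C})$, then pass to all of $L^2(\mathbb{C})$ by the uniform operator bound $\|\mathbb{G}_{\tau'}\| \leq CA(\tau')^2$ from Lemma \ref{lem:unifL2bound}; since $A(\cdot)$ is non-increasing, this bound is locally uniform in $\tau > 0$. The continuity of $\mathbb{R}_\tau$, $\mathbb{R}_\tau^\ast$, and $\mathbb{S}_\tau$ will then be deduced by manipulating the defining relations in (\ref{eq:dbarops}) and combining strong continuity of $\mathbb{G}_\tau$ with the uniform norm bounds on $\mathbb{R}_{\tau'}$ and $\mathbb{R}_{\tau'}^\ast$.

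For the core step, fix $f \in C_c^\infty(\mathbb{C})$ and $\tau > 0$, and set $u = \mathbb{G}_\tau f$. Since $\bar{D}_\tau D_\tau$ is a second-order elliptic operator with smooth coefficients, $u \in C^\infty(\mathbb{C})$, and the pointwise kernel bounds to be established in Section \ref{sec:kernels} (for $[\mathbb{G}_\tau]$, $[\mathbb{R}_\tau]$, and $[\mathbb{R}_\tau^\ast]$, applied to the compactly supported $f$) yield exponential decay of $u$, $\partial_z u$, and $\partial_{\bar z} u$ outside a neighborhood of $\mathrm{supp}\, f$. A direct expansion gives
\begin{equation*}
\bar{D}_{\tau'} D_{\tau'} - \bar{D}_\tau D_\tau \;=\; 2\pi(\tau'-\tau)\Big[P_z \partial_{\bar z} - P_{\bar z}\partial_z + \tfrac{h}{4}\Big] + 4\pi^2(\tau'^2-\tau^2)|P_z|^2.
\end{equation*}
Combining this with Proposition \ref{prop:potentials}(b) (which supplies $|P_z(z)| \lesssim 1 + |z|$) and (H2) (which bounds $h$), the exponential decay of $u$ and its first derivatives dominates the polynomial growth of the coefficients, so $g_{\tau'} := (\bar{D}_{\tau'} D_{\tau'} - \bar{D}_\tau D_\tau) u$ lies in $L^2(\mathbb{C})$ with $\|g_{\tau'}\|_{L^2} \lesssim |\tau'-\tau|$ as $\tau' \to \tau$. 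Because $\bar{D}_{\tau'} D_{\tau'} \mathbb{G}_{\tau'} f = f = \bar{D}_\tau D_\tau u$, we obtain the resolvent-type identity $\mathbb{G}_\tau f - \mathbb{G}_{\tau'} f = \mathbb{G}_{\tau'} g_{\tau'}$, and the uniform bound on $\|\mathbb{G}_{\tau'}\|$ then forces $\mathbb{G}_{\tau'} f \to \mathbb{G}_\tau f$ in $L^2$.

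To bootstrap, the plan for $\mathbb{R}_\tau^\ast = \mathbb{G}_\tau \bar{D}_\tau$ is to decompose $\mathbb{R}_{\tau'}^\ast f - \mathbb{R}_\tau^\ast f = \mathbb{G}_{\tau'}(\bar{D}_{\tau'} - \bar{D}_\tau)f + (\mathbb{G}_{\tau'} - \mathbb{G}_\tau)\bar{D}_\tau f$: the first term is $2\pi(\tau'-\tau)\mathbb{G}_{\tau'}(P_{\bar z} f) \to 0$ by uniform boundedness of $\mathbb{G}_{\tau'}$, while the second vanishes because $\bar{D}_\tau f \in L^2$ is $\tau'$-independent and strong continuity of $\mathbb{G}_\tau$ has just been established. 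For $\mathbb{R}_\tau = D_\tau \mathbb{G}_\tau$, applying $D_{\tau'}$ to the resolvent identity and rearranging yields $\mathbb{R}_{\tau'} f - \mathbb{R}_\tau f = 2\pi(\tau'-\tau) P_z u - \mathbb{R}_{\tau'} g_{\tau'}$, and the uniform bound $\|\mathbb{R}_{\tau'}\| \leq CA(\tau')$ disposes of both terms. Finally, using $\mathbb{S}_\tau f = f - \mathbb{R}_\tau \bar{D}_\tau f$ on $C_c^\infty(\mathbb{C})$, the same splitting used for $\mathbb{R}_\tau^\ast$ combined with strong continuity of $\mathbb{R}_\tau$ delivers strong continuity of $\mathbb{S}_\tau$, extended to all of $L^2(\mathbb{C})$ by $\|\mathbb{S}_{\tau'}\| \leq 1$. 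The main technical obstacle is the bookkeeping required to verify that each formal algebraic manipulation is a genuine $L^2$ identity, which hinges on the exponential decay furnished by Christ's kernel bounds absorbing the linear growth of $P_z$ and $P_{\bar z}$.
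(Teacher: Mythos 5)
Your overall strategy is sound and, at the level of ingredients, matches the paper's: both arguments reduce to a dense class of nice functions via the locally uniform operator bounds of Lemma \ref{lem:unifL2bound}, both exploit that $\bar{D}_{\tau'}D_{\tau'}-\bar{D}_\tau D_\tau$ is $O(|\tau'-\tau|)$ times coefficients of polynomial growth, and both use Christ's kernel bounds (Theorem \ref{thm:chr-rec-Christest} together with Lemma \ref{lem:chr-rec-metest}) to get enough decay to make the perturbation an honest $L^2$ object. The mechanics differ, though. You run a direct resolvent-identity argument on $u=\mathbb{G}_\tau f$ for compactly supported $f$, whereas the paper works weakly: it sets $\phi_h=\mathbb{G}_{\tau+h}f-\mathbb{G}_\tau f$, proves rapid decay of $\phi_h$, $D_\tau\phi_h$, $\bar{D}_\tau\phi_h$ for Schwartz $f$ (see (\ref{eq:link-fastdecay})--(\ref{eq:link-decaycond})), and computes $\|\phi_h\|_2^2$ by pairing and moving $\bar{D}_{\tau+h}D_{\tau+h}$ across the inner product onto $\mathbb{G}_\tau\phi_h$, using the algebraic identities of Proposition \ref{prop:kernels-alg-derivs} as in (\ref{eq:link-rewrite})--(\ref{eq:link-adjoint}); the bootstraps for $\mathbb{R}_\tau$, $\mathbb{R}_\tau^\ast$, $\mathbb{S}_\tau$ are likewise carried out by adjoint pairings rather than by splitting $\mathbb{G}_{\tau'}(\bar{D}_{\tau'}-\bar{D}_\tau)f+(\mathbb{G}_{\tau'}-\mathbb{G}_\tau)\bar{D}_\tau f$ as you do. Your bootstrap steps are, if anything, cleaner, and they are correct given strong continuity of $\mathbb{G}_\tau$ and the uniform bounds. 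Two harmless imprecisions: the decay furnished by the kernel bounds is stretched-exponential, $e^{-\epsilon|z|^{\delta}}$, not exponential, and $|P_z(z)|\lesssim 1+|z|$ only for the normalization $P=P^{\mb{\sigma},2}$; for general $P^{\mb{\sigma},\kappa}$ one only has polynomial bounds — either way the decay dominates.

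The one step you should not dismiss as bookkeeping is the resolvent identity itself, $\mathbb{G}_\tau f-\mathbb{G}_{\tau'}f=\mathbb{G}_{\tau'}g_{\tau'}$. This requires $u=\mathbb{G}_\tau f$ to lie in the domain of the closed (self-adjoint) realization of the \emph{perturbed} operator $\bar{D}_{\tau'}D_{\tau'}$ and that $\mathbb{G}_{\tau'}$ is a genuine left inverse there, i.e.\ $\mathbb{G}_{\tau'}\bar{D}_{\tau'}D_{\tau'}u=u$. Knowing that $u$ is smooth with rapid decay and that $\bar{D}_{\tau'}D_{\tau'}u\in L^2$ distributionally does not by itself place $u$ in that domain: one needs, e.g., essential self-adjointness of $\bar{D}_{\tau'}D_{\tau'}$ on $C_c^\infty(\mathbb{C})$ (so that the operator domain is the maximal distributional one), or a regularization/approximation argument of the sort used in Lemma \ref{lem:smoothapprox} and in the proof of Lemma \ref{lem:chr-der-derivatives}. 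This is a fillable gap — the operator is a magnetic Schr\"odinger-type operator with smooth, polynomially bounded coefficients and nonnegative potential, so the needed self-adjointness facts are available — but the justification is of a different nature than "decay absorbing the growth of $P_z$," and it is precisely the issue the paper's weak formulation is designed to sidestep (at the cost of instead needing the decay estimates for $\phi_h$ and its first-order derivatives). With that point addressed, your argument goes through.
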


\begin{proposition}\label{prop:link}For almost every $(z,w,\tau)\in \mathbb{C}\times\mathbb{C}\times\mathbb{R}$, $$[\hat{\mathbb{S}}](z,w,\tau)=\begin{cases} [\mathbb{S}_\tau](z,w) & \mbox{if}\ \tau>0,\\ 0 & \mbox{if}\ \tau \leq 0.\end{cases}$$\end{proposition}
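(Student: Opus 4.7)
The argument proceeds by direct-integral decomposition along fibers of the partial Fourier transform. Because $\mathbb{S}$ is $\Re(z_2)$-translation invariant (inherited from the corresponding invariance of $\bar Z$ and of $L^2(\b\Omega)$), its conjugate $\hat{\mathbb{S}}$ commutes with multiplication by any bounded function of $\tau$, and standard fibered-operator theory supplies a decomposition $\hat{\mathbb{S}}=\int^{\oplus}\hat{\mathbb{S}}(\tau)\,d\tau$ in which $\hat{\mathbb{S}}(\tau):L^{2}(\mathbb{C})\to L^{2}(\mathbb{C})$ is a bounded operator whose Schwartz kernel agrees with $[\hat{\mathbb{S}}](z,w,\tau)$ for a.e.\ $\tau$. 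The plan is to identify $\hat{\mathbb{S}}(\tau)$ explicitly for each $\tau$, since this yields the claimed formula for the Schwartz kernels at once.

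The first step is to identify $\hat{\mathbb{S}}(\tau)$ as the orthogonal projection onto $K_{\tau}:=\{f\in L^{2}(\mathbb{C}):\bar D_{\tau}f=0\text{ in }\mathcal{D}'(\mathbb{C})\}$. Since $\mathbb{S}$ projects onto $B(\b\Omega)=\ker\bar Z\subset L^{2}(\b\Omega)$, and closed subspaces of a direct integral are themselves direct integrals of closed subspaces, I need only verify that $F\in L^2(\b\Omega)$ satisfies $\bar Z F=0$ distributionally iff $\bar D_{\tau}\hat F(\cdot,\tau)=0$ distributionally for a.e.\ $\tau$. This is carried out by pairing $\bar Z F$ against tensor-product test functions $\phi(z)\psi(t)$ with $\phi\in C_{c}^{\infty}(\mathbb{C})$ and $\psi\in C_{c}^{\infty}(\mathbb{R})$, using Fubini, and invoking the density of such tensor products in $C_{c}^{\infty}(\mathbb{C}\times\mathbb{R})$.

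The second step is to compute $K_{\tau}$. For $\tau>0$ the orthogonal projection onto $K_{\tau}$ is by definition $\mathbb{S}_{\tau}$. For $\tau\leq 0$ I claim $K_{\tau}=\{0\}$: given $f\in L^{2}(\mathbb{C})$ with $\bar D_{\tau}f=0$, the function $g:=e^{2\pi\tau P}f$ satisfies $\bar\partial g=0$ distributionally, hence is entire by elliptic regularity, and the $L^{2}$ condition on $f$ becomes $\int_{\mathbb{C}}|g|^{2}e^{-4\pi\tau P}\,dm<+\infty$. For $\tau=0$ this is $g\in L^{2}(\mathbb{C})$ entire, forcing $g\equiv 0$ via the mean-value inequality applied on expanding disks. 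For $\tau<0$ the weight is $e^{4\pi|\tau|P}$; the UFT hypotheses together with Lemma~\ref{lem:geom-uftisaq} yield $\int_{|\eta-z|<R}h\,dm\approx R^{2}$ uniformly in $z$, which in turn (via the logarithmic-potential representation~(\ref{eq:normalization-ptilde}), after reducing to the subharmonic representative from Proposition~\ref{prop:potentials} since $P$ is determined only up to a harmonic polynomial of degree $\leq 2$) produces a quadratic lower bound $P(z)\gtrsim|z|^{2}-C$ at infinity. Using $|\nabla P(z)|\lesssim|z|$ from Proposition~\ref{prop:potentials}(b)(ii) to control the oscillation of $P$ on unit disks, the mean-value inequality for $|g|^{2}$ then yields the pointwise decay $|g(z)|^{2}\lesssim e^{-4\pi|\tau|P(z)+O(|z|)}\|f\|_{L^{2}}^{2}\to 0$, so $g\equiv 0$ by Liouville.

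The main obstacle is converting the integral lower bound $\int_{|\eta-z|<R}h\,dm\approx R^{2}$ (an averaged statement) into the pointwise quadratic lower bound on $P$ used above; this is the only non-formal ingredient. Once that is established, the direct-integral identification produces $\hat{\mathbb{S}}(\tau)=\mathbb{S}_{\tau}$ for $\tau>0$ and $\hat{\mathbb{S}}(\tau)=0$ for $\tau\leq 0$, and Fubini upgrades the resulting a.e.-$\tau$ equality of fiber Schwartz kernels to an a.e.-$(z,w,\tau)$ equality, which is precisely the claim.
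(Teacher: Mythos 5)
Your reduction to fiber operators is reasonable in outline, but the step you yourself flag as "the only non-formal ingredient" is not just hard, it is false: the averaged bound $\int_{|\eta-z|<R}h\,dm\approx R^{2}$ from Lemma \ref{lem:geom-uftisaq} does \emph{not} imply a pointwise lower bound $P(z)\gtrsim|z|^{2}-C$. The tube domains of Example \ref{ex:examples-tube} (take $b''\approx 1$, so (H1)--(H3) hold with $m=2$) have $P(z)=b(\Re(z))$, which vanishes identically on the imaginary axis even though $\int_{|\eta-z|<R}h\,dm\approx R^{2}$ uniformly; and passing to the normalized representative does not help, since $P^{\mb{0},2}$ differs from $b(\Re(z))$ by a harmonic polynomial of degree $2$ and can in fact tend to $-\infty$ quadratically in some directions (e.g.\ $b''\equiv 1$ near $0$ and $b''\equiv 1/10$ for $|x|\geq 1$), while for $\kappa\geq 3$ the standing normalization $P=P^{\mb{\sigma},\kappa}$ of Sections \ref{sec:fn}--\ref{sec:kernels} need not even be bounded below. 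So your Liouville argument for $\tau<0$ collapses, and there is no easy patch via pointwise growth of $P$. The paper never proves a fiberwise Liouville theorem: the vanishing of $\hat{f}(\cdot,\tau)$ for $\tau<0$ for boundary values of $\mathcal{H}^2$ functions is Proposition \ref{prop:kernels-hnw25alt}(b), a statement about \emph{joint} $L^2(\mathbb{C}\times\mathbb{R})$ solutions of $\bar{Z}f=0$ quoted from Nagel--Stein, and the vanishing of the kernel for $\tau\leq 0$ is then built into the definition of the comparison operator rather than deduced from triviality of each $\ker\bar{D}_\tau$.

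Beyond that, the appeal to "standard fibered-operator theory" conceals exactly the analytic content this proposition requires. To decompose $\hat{\mathbb{S}}$ and identify its fibers with the concrete projections $\mathbb{S}_\tau$ (and then identify Schwartz kernels a.e.), you need measurability of $\tau\mapsto\mathbb{S}_\tau$ and enough regularity to compare kernels; these are precisely Proposition \ref{prop:continuity} (strong continuity in $\tau$), Lemma \ref{lem:link-cont}, and the uniform bounds of Lemma \ref{lem:unifL2bound}, none of which can be waved away. The paper's route avoids reduction theory altogether: it defines $\mathbb{T}$ by integration against $[\mathbb{S}_\tau]$ over $\tau>0$ (and $0$ for $\tau\leq 0$), checks $\|\mathbb{T}\|\leq 1$ and that $\mathbb{T}$ maps into $B(\mathbb{C}\times\mathbb{R})$, shows $\mathbb{T}$ fixes $B(\mathbb{C}\times\mathbb{R})$ by writing $\mathbb{S}_\tau=I-\mathbb{R}_\tau\bar{D}_\tau$ and using the approximation Lemma \ref{lem:smoothapprox} (so that $\|A(\bullet)\bar{D}_\bullet\hat{f}_n\|_2\to 0$ controls the error), and kills $B(\mathbb{C}\times\mathbb{R})^{\perp}$ by self-adjointness, concluding $\mathbb{T}=\mathbb{S}$. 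If you want to keep your direct-integral framing, you would still have to supply those continuity/approximation lemmas and replace your $\tau<0$ step by an argument of the Nagel--Stein type; as written, the proposal has one genuinely false ingredient and defers the remaining substance to unproved "standard" facts.
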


The proofs of Lemma \ref{lem:unifL2bound}, Proposition \ref{prop:continuity}, and Proposition \ref{prop:link} are provided in Appendix \ref{sec:link}.

	Now fix a smooth, non-increasing function $\chi:\mathbb{R}\to [0,1]$ that satisfies $\chi(\tau)=1$ for $\tau\leq 1$ and $\chi(\tau)=0$ for $\tau\geq 2$, and formally define the operators $\hat{\mathbb{F}}_{K,K'}$ and $\hat{\mathbb{N}}_{K,K'}$ on $L^2(\mathbb{C}\times\mathbb{R})$ via
	$$\hat{\mathbb{F}}_{K,K'}[f](z,\tau) = \int_{\mathbb{C}}\chi(\tau)[\mathbb{R}_\tau^K \mathbb{S}_\tau (\mathbb{R}_\tau^\ast)^{K'}](z,w)f(w,\tau)dm(w)$$
	and
	$$\hat{\mathbb{N}}_{K,K'}[f](z,\tau) = \int_{\mathbb{C}}(1-\chi(\tau))[\mathbb{R}_\tau^K \mathbb{S}_\tau (\mathbb{R}_\tau^\ast)^{K'}](z,w)f(w,\tau)dm(w)$$
	for $\tau> 0$, and $$\hat{\mathbb{F}}_{K,K'}[f](z,\tau)\equiv \hat{\mathbb{N}}_{K,K'}[f](z,\tau)\equiv 0\quad \mbox{for}\ \tau\leq 0.$$
	By Lemma \ref{lem:unifL2bound}, 
	$$\|\mathbb{R}_\tau^K\mathbb{S}_\tau(\mathbb{R}_\tau^\ast)^{K'}\|_{L^2(\mathbb{C})\to L^2(\mathbb{C})}\lesssim A(\tau)^{K+K'},\quad \tau>0,$$
	we therefore immediately have that $\hat{\mathbb{F}}_{K,K'}$ and $\hat{\mathbb{N}}_{K,K'}$ are closed and densely defined operators on $L^2(\mathbb{C}\times\mathbb{R})$. 
	
	Finally, define $$\mathbb{F}_{K,K'}=\Pi^\ast\circ \mathcal{F}^{-1}\circ\hat{\mathbb{F}}_{K,K'}\circ\mathcal{F}\circ (\Pi^{-1})^\ast$$
	and
	$$\mathbb{N}_{K,K'}=\Pi^\ast\circ \mathcal{F}^{-1}\circ\hat{\mathbb{N}}_{K,K'}\circ\mathcal{F}\circ (\Pi^{-1})^\ast.$$
	By (\ref{eq:fn-zzbar}) we immediately have
	$$(Z^\alpha \mathbb{F}_{K,K'}Z^\beta)^{\wedge}=W_\tau^\alpha \hat{\mathbb{F}}_{K,K'}W_\tau^\beta\quad\mbox{and}\quad
	(Z^\alpha \mathbb{N}_{K,K'}Z^\beta)^{\wedge}=W_\tau^\alpha \hat{\mathbb{N}}_{K,K'}W_\tau^\beta,$$
	so that
	\begin{equation}\label{eq:fn-fkernel}
		[(Z^\alpha \mathbb{F}_{K,K'}Z^\beta)^{\wedge}](z,w,\tau)=\chi(\tau)[W_\tau^\alpha \mathbb{R}_\tau^K\mathbb{S}_\tau(\mathbb{R}_\tau^\ast)^{K'}W_\tau^\beta](z,w)
		\end{equation}
	and
	\begin{equation}\label{eq:fn-nkernel}
		[(Z^\alpha \mathbb{N}_{K,K'}Z^\beta)^{\wedge}](z,w,\tau)=(1-\chi(\tau))[W_\tau^\alpha \mathbb{R}_\tau^K\mathbb{S}_\tau(\mathbb{R}_\tau^\ast)^{K'}W_\tau^\beta](z,w),
		\end{equation}
	where for a multi-index $\alpha$ we have $Z^\alpha=Z_{\alpha_1}\cdots Z_{\alpha_\ell}$ and $D_\tau^\alpha=D_{\tau,\alpha_1}\cdots D_{\tau,\alpha_\ell}$, where $Z_{\alpha_i}\in\{\bar{Z},Z\}$ and $D_{\tau,\alpha_i}=\hat{Z}_{\alpha_i}$.
	
	In particular,
	\begin{align}\label{eq:fn-fkkkernel}
		& T^N Z_{\mb{z}}^\alpha (Z^\beta_{\mb{w}})^\ast [\mathbb{F}_{K,K'}](\mb{z},\mb{w})\\
		& =(2\pi i)^N\int_0^{{+\infty}} e^{2\pi i\tau (\Re(z_2)-\Re(\bar{w}_2))}\chi(\tau)\tau^N[W_\tau^\alpha \mathbb{R}_\tau^K\mathbb{S}_\tau(\mathbb{R}_\tau^\ast)^{K'}W_\tau^\beta](z,w)d\tau\nonumber
	\end{align}
	and
	\begin{align}\label{eq:fn-nkkkernel}
		& T^N Z_{\mb{z}}^{\alpha} (Z^\beta_{\mb{w}})^{\ast}[\mathbb{N}_{K,K'}](\mb{z},\mb{w})\\
		& =(2\pi i)^N\int_0^{{+\infty}} e^{2\pi i\tau (\Re(z_2)-\Re(\bar{w}_2))}(1-\chi(\tau))\tau^N[W_\tau^\alpha \mathbb{R}_\tau^K\mathbb{S}_\tau(\mathbb{R}_\tau^\ast)^{K'}W_\tau^\beta](z,w)d\tau.\nonumber
	\end{align}	
	
	\begin{remark}{\rm By defining $\mathbb{F}=\mathbb{F}_{0,0}$ and $\mathbb{N}=\mathbb{N}_{0,0}$, we have $\mathbb{S}=\mathbb{N}+\mathbb{F}$ as well as parts (a)-(i) and (b)-(i) of Theorem \ref{thm:szego-decomp}.}\end{remark}
		
	Let $\Phi:\mathbb{C}^2\to\mathbb{C}^2$, $\Phi(\Omega)=\tilde{\Omega}$ be the biholomorphism constructed in Section \ref{sec:normalization-biholomorphic} associated to $\mb{\sigma}\in \b\Omega$ and $\kappa\geq 2$. Then Remark \ref{rem:metric-invar}, Lemma \ref{lem:normalization-schwartz}, and the above discussion immediately imply that
\begin{lemma}\label{lem:fn-substitution}
	If $$\mathbb{F}^{\alpha,\beta,N}_{K,K'}=T^N Z^\alpha\mathbb{F}_{K,K'}Z^\beta\ ,\qquad \mathbb{N}^{\alpha,\beta,N}_{K,K'}=T^N Z^\alpha\mathbb{N}_{K,K'}Z^\beta,$$ and 
	$$\tilde{\mathbb{F}}^{\alpha,\beta,N}_{K,K'}=T^N_{\b\t\Omega}Z_{\b\t\Omega}^\alpha\mathbb{F}^{\b\t\Omega}_{K,K'}Z_{\b\t\Omega}^\beta\ ,\qquad \tilde{\mathbb{N}}^{\alpha,\beta,N}_{K,K'}=T^N_{\b\t\Omega}Z_{\b\t\Omega}^\alpha\mathbb{N}^{\b\t\Omega}_{K,K'}Z_{\b\t\Omega}^\beta,$$
	Then
	\begin{equation}\label{eq:fn-subn}[\hat{\mathbb{N}}^{\alpha,\beta,N}_{K,K'}](z,w,\tau)=e^{-2\pi i \tau(T_\kappa(z,\sigma)-T_\kappa(w,\sigma))}[\hat{\tilde{\mathbb{N}}}^{\alpha,\beta,N}_{K,K'}](z-\sigma,w-\sigma,\tau).\end{equation}
	and
	\begin{equation}\label{eq:fn-subf}[\hat{\mathbb{F}}^{\alpha,\beta,N}_{K,K'}](z,w,\tau)=e^{-2\pi i \tau(T_\kappa(z,\sigma)-T_\kappa(w,\sigma))}[\hat{\tilde{\mathbb{F}}}^{\alpha,\beta,N}_{K,K'}](z-\sigma,w-\sigma,\tau)\end{equation}
\end{lemma}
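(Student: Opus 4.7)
Plan: By (\ref{eq:fn-fkernel}) and (\ref{eq:fn-nkernel}) together with their $\b\tilde\Omega$-analogues, both (\ref{eq:fn-subn}) and (\ref{eq:fn-subf}) reduce to the single fiberwise Schwartz-kernel identity
\begin{equation*}
[W_\tau^\alpha \mathbb{R}_\tau^K \mathbb{S}_\tau (\mathbb{R}_\tau^\ast)^{K'} W_\tau^\beta](z,w)= e^{-2\pi i\tau(T_\kappa(z,\sigma)-T_\kappa(w,\sigma))}[\tilde W_\tau^\alpha \tilde{\mathbb{R}}_\tau^K \tilde{\mathbb{S}}_\tau (\tilde{\mathbb{R}}_\tau^\ast)^{K'} \tilde W_\tau^\beta](z-\sigma,w-\sigma),
\end{equation*}
valid for every $\tau > 0$, where tildes denote Christ's operators on $L^2(\mathbb{C})$ built from the weight $\tilde P = P^{\sigma,\kappa}$; the spatially-constant scalar factors $\chi(\tau)$, $1-\chi(\tau)$, and $(2\pi i\tau)^N$ match identically on both sides and drop out.

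The strategy is to realize this identity as conjugation by an explicit unitary on $L^2(\mathbb{C})$. Define $U_\tau\colon L^2(\mathbb{C}) \to L^2(\mathbb{C})$ by $(U_\tau f)(z) = e^{-2\pi i\tau T_\kappa(z,\sigma)} f(z-\sigma)$; since $|e^{-2\pi i\tau T_\kappa(\cdot,\sigma)}|\equiv 1$ and translation is unitary, so is $U_\tau$. Its distributional Schwartz kernel $e^{-2\pi i\tau T_\kappa(z,\sigma)}\delta(w-(z-\sigma))$ yields, for any densely defined operator $\tilde{\mathbb{H}}$ on $L^2(\mathbb{C})$, that the kernel of $U_\tau\tilde{\mathbb{H}} U_\tau^{-1}$ at $(z,w)$ equals $e^{-2\pi i\tau(T_\kappa(z,\sigma)-T_\kappa(w,\sigma))}[\tilde{\mathbb{H}}](z-\sigma,w-\sigma)$. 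Thus it suffices to prove the operator identity $W_\tau^\alpha \mathbb{R}_\tau^K \mathbb{S}_\tau (\mathbb{R}_\tau^\ast)^{K'} W_\tau^\beta = U_\tau\bigl(\tilde W_\tau^\alpha \tilde{\mathbb{R}}_\tau^K \tilde{\mathbb{S}}_\tau (\tilde{\mathbb{R}}_\tau^\ast)^{K'}\tilde W_\tau^\beta\bigr)U_\tau^{-1}$.

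The heart of the argument is the pair of fundamental intertwinings $\bar D_\tau\, U_\tau = U_\tau\, \tilde{\bar D}_\tau$ and $D_\tau\, U_\tau = U_\tau\, \tilde D_\tau$ on a common dense subspace (e.g., Schwartz functions). These are direct computations built from $T_\kappa(z,\sigma) = -2\Im H(z-\sigma)$ from Lemma \ref{lem:normalization-bihol}(b), the fact that $H$ is entire (so $H(\cdot-\sigma)$ is annihilated by $\partial_{\bar z}$), and the formulas for $\tilde P_{\bar z}$ and $\tilde P_z$ obtained by differentiating $\tilde P(\tilde z) = P(\tilde z+\sigma) - \Im\sigma_2 + 2\Re H(\tilde z)$. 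The $H'$- and $\overline{H'}$-contributions from $\partial_z$ and $\partial_{\bar z}$ falling on the phase factor in $U_\tau$ precisely balance those from the differences $\tilde P_z(\cdot-\sigma) - P_z$ and $\tilde P_{\bar z}(\cdot-\sigma) - P_{\bar z}$; alternatively, once the first intertwining is verified, the second follows by taking adjoints and using unitarity of $U_\tau$.

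Once the intertwinings are in place, everything downstream is automatic. Since $\bar D_\tau D_\tau = U_\tau(\tilde{\bar D}_\tau \tilde D_\tau)U_\tau^{-1}$ as positive self-adjoint operators on $L^2(\mathbb{C})$, their bounded inverses satisfy $\mathbb{G}_\tau = U_\tau \tilde{\mathbb{G}}_\tau U_\tau^{-1}$, and propagating through (\ref{eq:dbarops}) yields $\mathbb{R}_\tau = U_\tau \tilde{\mathbb{R}}_\tau U_\tau^{-1}$, $\mathbb{R}_\tau^\ast = U_\tau \tilde{\mathbb{R}}_\tau^\ast U_\tau^{-1}$, $\mathbb{S}_\tau = U_\tau \tilde{\mathbb{S}}_\tau U_\tau^{-1}$, and likewise $W_\tau^\alpha = U_\tau \tilde W_\tau^\alpha U_\tau^{-1}$ for every mixed derivative. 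Composing these factorizations and reading off the Schwartz kernel of the conjugation as in the second paragraph produces the displayed fiberwise identity, completing the proof. The main obstacle is the intertwining computation in the third paragraph—careful bookkeeping of sign conventions and the $H'/\overline{H'}$-terms is required—while everything else is either abstract operator algebra or a direct kernel calculation.
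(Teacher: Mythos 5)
Your proposal is correct in substance, but it takes a genuinely different route from the paper. The paper never works at the fiberwise level here: it notes (via Lemma \ref{lem:unifL2bound}, since $A(\tau)$ is bounded for $\tau\geq 1$) that $\mathbb{N}_{K,K'}$ is a bounded, $\Re(z_2)$-translation-invariant operator on $L^2(\b\Omega)$, invokes the already-proved substitution Lemma \ref{lem:normalization-schwartz} (with Proposition \ref{prop:normalization-biholgeneral}(d) absorbing the $T^NZ^\alpha\cdots Z^\beta$ decorations), and, because $\mathbb{F}_{K,K'}$ need not be bounded on $L^2(\b\Omega)$ (the fiber norms grow like $A(\tau)^{K+K'}$ as $\tau\to 0$), introduces the truncations $\mathbb{F}^\delta_{K,K'}$ supported in $\tau\geq\delta$ and lets $\delta\to 0$. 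You instead establish a unitary equivalence of the two Christ calculi fiber by fiber via $U_\tau$; this treats $\mathbb{N}$, $\mathbb{F}$, and all derivative-decorated versions in one stroke, needs no truncation or limiting argument, and has the bonus of making explicit the identification of $(\Phi^{-1})^\ast\circ\mathbb{H}\circ\Phi^\ast$ with the operators built intrinsically from $P^{\mb{\sigma},\kappa}$ --- a naturality the paper leaves implicit but uses downstream. What your route costs is exactly what you flag. First, the sign bookkeeping: your choice of phase is the right one, since with $T_\kappa(z,\sigma)=-2\Im H(z-\sigma)$ one gets $i\partial_{\bar z}\bigl(-2\pi\tau T_\kappa(z,\sigma)\bigr)=-2\pi\tau\,\overline{H'(z-\sigma)}$, which matches $2\pi\tau\bigl(\t P_{\bar z}(z-\sigma)-P_{\bar z}(z)\bigr)$ once one uses the sign of $2\Re H$ forced by $\Phi(\Omega)=\t\Omega$; be aware that the displayed definition of $\t P$ in Section \ref{sec:normalization-biholomorphic} (with $+2\Re(H)$) is inconsistent with the displayed $\Phi$, and it is the convention compatible with Lemma \ref{lem:normalization-bihol}(b) and Lemma \ref{lem:normalization-schwartz} that makes your cancellation work. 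Second, since $\bar D_\tau$, $D_\tau$, and $\mathbb{G}_\tau$ are closed unbounded operators, the intertwining should be upgraded from a dense subspace to the level of (maximal) domains --- e.g.\ verify it distributionally and observe that conjugation by $U_\tau$, whose phase is smooth with polynomially bounded derivatives, carries the graph of $\tilde{\bar D}_\tau$ onto that of $\bar D_\tau$ --- before inverting to conclude $\mathbb{G}_\tau=U_\tau\tilde{\mathbb{G}}_\tau U_\tau^{-1}$; for $\mathbb{S}_\tau$ you can also bypass (\ref{eq:dbarops}) entirely by noting that $U_\tau$ maps $\ker\tilde{\bar D}_\tau$ onto $\ker\bar D_\tau$, so the orthogonal projections are conjugate. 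These are routine repairs, not structural flaws.
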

\begin{proof}
	Lemma \ref{lem:unifL2bound} implies that $\mathbb{N}_{K,K'}$ is bounded on $L^2(\b\Omega)$, and so Lemma \ref{lem:normalization-schwartz} and standard Schwartz kernel arguments imply (\ref{eq:fn-subn}).
	
	For $\delta>0$ we define the operators $\mathbb{F}^\delta_{K,K'}$ via
	$$[\mathbb{F}^\delta_{K,K'}](\mb{z},\mb{w}) = \int_{\delta}^{+\infty} e^{2\pi i \tau (\Re(z_2)-\Re(\bar{w}_2))}\chi(\tau)[\mathbb{R}_\tau^K \mathbb{S}_\tau (\mathbb{R}_\tau^\ast)^{K'}](z,w,\tau)d\tau.$$
	Then Lemma \ref{lem:unifL2bound} implies that $\mathbb{F}^\delta_{K,K'}$ is bounded on $L^2(\b\Omega)$ for $\delta>0$ and therefore
	$$[\hat{\mathbb{F}}^\delta_{K,K'}](z,w,\tau) = \begin{cases} \chi(\tau)[\mathbb{R}_\tau^K \mathbb{S}_\tau (\mathbb{R}_\tau^\ast)^{K'}](z,w,\tau) & \mbox{if}\ \tau\geq \delta, \\ 0 & \mbox{otherwise}.\end{cases}$$
	By arguing as for (\ref{eq:fn-subn}) and then taking $\delta\to 0$, we obtain (\ref{eq:fn-subf}).	
	\end{proof}
	
\section{Kernel Estimates}\label{sec:kernels}

We now come to the heart of the argument.
Throughout this section we work with a UFT domain $\Omega=\{\mb{z}\in\mathbb{C}^2\ :\ \Im(z_2)>P(z)\}$, where $ P=P^{\mb{\sigma},\kappa}$ for some $\mb{\sigma}\in\b\Omega$ and $\kappa\geq 2$.

\subsection{Past Work and Translation}

In this subsection we recall the results of Christ \cite{Christ1991a} on the weighted $\bar{\partial}$ equation on $\mathbb{C}$.
As briefly mentioned in Section \ref{sec:fn}, for subharmonic $P:\mathbb{C}\rightarrow \mathbb{R}$ such that $4\pi \tau\Delta P dm$ is a doubling measure, we define the (closed, densely defined) operators $\bar{D}_\tau=\bar{\partial}+2\pi \tau P_{\bar{z}}$ and $D_\tau=-\partial + 2\pi \tau P_{z}$
on $L^2(\mathbb{C})$.

When $\tau=1$, \cite{Christ1991a} Christ carefully studies the operators
\begin{align*}
	\mathbb{G}=(\bar{D}_\tau D_\tau)^{-1},\quad & \quad  \mathbb{R}_\tau=D_\tau\circ(\bar{D}_\tau D_\tau)^{-1},\\  \mathbb{R}_\tau^\ast =(\bar{D}_\tau D_\tau)^{-1}\circ \bar{D}_\tau, \quad & \quad  \mathbb{S}=I-D_\tau\circ(\bar{D}_\tau D_\tau)^{-1}\bar{D}_\tau
\end{align*}
in terms of a metric $\rho_\tau(z,w)$ on $\mathbb{C}$ given essentially as $d\rho_\tau^2=\sigma_\tau(z)^{-2}ds^2$, where $ds^2$ is the standard Euclidean metric on $\mathbb{C}$ and $\sigma_\tau(z)$ is a smooth function satisfying
$$\int_{|\eta-z|<\sigma_\tau(z)} 4\pi \tau \Delta P(\eta)dm(\eta)\approx 1,$$
uniformly in $z\in\mathbb{C}$. Because the constants here can be taken to universal, we can extend $\sigma_\tau(z)$ to $\tau\in (0,+\infty)$.

The metric $\rho_\tau(z,w)$ and function $\sigma_\tau(z)$ satisfy the following estimates.
\begin{lemma}[\cite{Christ1991a}]\label{lem:chr-rec-metest} If $z,w\in\mathbb{C}$ satisfy $|z-w|\geq \sigma_\tau(w)$, then 
	\begin{equation}\label{eq:chr-rec-rholarge}
		\rho_\tau(z,w)\geq C\Big( \frac{|z-w|}{\sigma_\tau(w)}\Big)^\delta,
	\end{equation}
	where $C,\delta>0$ depend only on the doubling constant $2^Q$ of $4\pi \tau\Delta P dm$.
	
	Moreover, one can find $C,M>0$ also depending only on $2^Q$ such that
	\begin{equation}\label{eq:chr-rec-sigmaratios}
		\frac{\sigma_\tau(z)}{\sigma_\tau(w)}+\frac{\sigma_\tau(w)}{\sigma_\tau(z)}\leq C\Big( \frac{|z-w|}{\sigma_\tau(w)}\Big)^{M}.
	\end{equation}
\end{lemma}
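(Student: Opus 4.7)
The plan is to derive both estimates from the fact that $d\mu:=4\pi\tau\Delta P\, dm$ is a doubling measure on $\mathbb{C}$ with constant $2^Q$ (uniformly in $\tau$, by Lemma \ref{lem:geom-uftisaq}) together with the defining normalization $\mu(B(z,\sigma_\tau(z)))\approx 1$. I would establish (\ref{eq:chr-rec-sigmaratios}) first, using ball-inclusion arguments combined with reverse doubling, and then derive (\ref{eq:chr-rec-rholarge}) from a dyadic path-integral argument that feeds on the scale control provided by (\ref{eq:chr-rec-sigmaratios}).

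For (\ref{eq:chr-rec-sigmaratios}), the starting point is that doubling on the connected space $\mathbb{C}$ automatically supplies a quantitative reverse-doubling inequality $\mu(B(\zeta,R))/\mu(B(\zeta,r))\gtrsim (R/r)^q$ for some $q=q(Q)>0$. Writing $r=|z-w|$, $s=\sigma_\tau(w)$, $s'=\sigma_\tau(z)$, and assuming $r\geq s$, I would proceed by contradiction: if $s'\geq 2\max(r,s)$, then the inclusion $B(w,s'/2)\subset B(z,s')$ forces $\mu(B(w,s'/2))\lesssim 1$, while reverse doubling forces $\mu(B(w,s'/2))\gtrsim (s'/s)^q$, pinning $s'/s$ to at most a fixed power of $r/s$. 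A symmetric argument using $B(z,s/2)\subset B(w,s)$ (and the reverse doubling bound centred at $z$, together with the upper estimate $\mu(B(z,r))\lesssim (r/s)^Q$ propagated from $w$) produces the companion bound on $s/s'$, yielding (\ref{eq:chr-rec-sigmaratios}) with an explicit $M=M(Q)$.

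For (\ref{eq:chr-rec-rholarge}), I would exploit that $\rho_\tau$ is (essentially) the length metric generated by the conformal factor $\sigma_\tau^{-1}$, so that any rectifiable path $\gamma$ from $w$ to $z$ must cross each dyadic Euclidean annulus $A_k=\{\sigma_\tau(w)2^k\leq|\zeta-w|<\sigma_\tau(w)2^{k+1}\}$ for $k=0,\ldots,K$ with $K\approx\log_2(|z-w|/\sigma_\tau(w))$. On $A_k$, bound (\ref{eq:chr-rec-sigmaratios}) controls $\sigma_\tau$ by $\sigma_\tau(w)\,2^{kM}$, so the Euclidean width $2^k\sigma_\tau(w)$ of $A_k$ contributes at least $2^{k(1-M)}$ to the $\rho_\tau$-length of $\gamma$; summing these scale contributions (after dyadic regrouping if needed to handle the case of large $M$) converts the logarithmic count of scales into a power lower bound $(|z-w|/\sigma_\tau(w))^\delta$ with $\delta=\delta(Q)>0$. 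A dual formulation --- testing $\rho_\tau$ against an admissible radial Lipschitz function $f(x)=g(|x-w|)$ with $\sigma_\tau|\nabla f|\leq 1$ built scale by scale from (\ref{eq:chr-rec-sigmaratios}) --- gives the same conclusion.

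The main obstacle is producing the reverse-doubling step quantitatively in terms only of the doubling constant $2^Q$, since this is where the connectedness of $\mathbb{C}$ enters essentially and where the numerical relationship between the two exponents $M$ and $\delta$ is set. Once this is in hand, the rest of the argument is a careful bookkeeping of exponents through ball inclusions and through the dyadic annular sum, with uniformity in $\tau$ guaranteed by the uniform doubling constant from Lemma \ref{lem:geom-uftisaq}; these are exactly the steps carried out in Christ's treatment \cite{Christ1991a}.
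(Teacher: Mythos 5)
You are proving a result the paper only quotes from Christ \cite{Christ1991a}, so there is no internal proof to compare with; judging your argument on its own terms, the first half is essentially sound. Writing $d\mu=4\pi\tau\Delta P\,dm$, reverse doubling with some exponent $q=q(Q)>0$ does follow from the doubling constant alone because every annulus in $\mathbb{C}$ is nonempty, and your two ball-inclusion arguments then give (\ref{eq:chr-rec-sigmaratios}) with $M\approx\max(1,Q/q-1)$. (One slip: the inclusion $B(z,s/2)\subset B(w,s)$ is false when $|z-w|=r\geq s$; what you want, and what your parenthetical already points at, is $B(z,r)\subset B(w,2r)$ together with doubling transported from $B(w,s)$ and reverse doubling at $z$.)

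The genuine gap is in deducing (\ref{eq:chr-rec-rholarge}) from (\ref{eq:chr-rec-sigmaratios}). The exponent $M$ produced above is in general at least $1$, so your annular contributions $2^{k(1-M)}$ form a convergent series: the sum is bounded by a constant, and no dyadic regrouping can convert a bounded sum into a power of $|z-w|/\sigma_\tau(w)$. The failure is structural, not bookkeeping: a conformal factor growing like $|\zeta-w|^{M}$ with $M>1$ assigns finite $\rho_\tau$-length to entire rays, so the power lower bound simply is not a consequence of (\ref{eq:chr-rec-sigmaratios}) alone; the same obstruction kills the radial competitor function, which is then bounded. What the argument needs is the strictly sublinear pointwise bound
\begin{equation*}
\sigma_\tau(\zeta)\ \lesssim\ \sigma_\tau(w)^{q/Q}\,|\zeta-w|^{\,1-q/Q},\qquad |\zeta-w|\geq \sigma_\tau(w),
\end{equation*}
which follows by combining reverse doubling at $w$ (so that $\mu(B(w,|\zeta-w|))\gtrsim (|\zeta-w|/\sigma_\tau(w))^{q}$) with plain doubling at $\zeta$, comparing $B(\zeta,\sigma_\tau(\zeta))$, of $\mu$-measure $\approx 1$, with the large ball $B(\zeta,4|\zeta-w|)\supset B(w,|\zeta-w|)$. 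Substituting this for (\ref{eq:chr-rec-sigmaratios}) in your annulus-crossing (or competitor-function) argument, the $k$-th annulus contributes $\gtrsim 2^{k q/Q}$, and summing the geometric series at the top scale yields (\ref{eq:chr-rec-rholarge}) with $\delta=q/Q$. So the architecture of your proof is salvageable, but the key input must be this sublinear growth estimate for $\sigma_\tau$, not the ratio bound with an unspecified exponent $M$.
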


The main theorem of that work provides the following estimates on the Schwartz kernels $\mathbb{G}_\tau,\ \mathbb{R}_\tau,$ and $\mathbb{S}_\tau$, respectively.

\begin{theorem}[\cite{Christ1991a}]\label{thm:chr-rec-Christest}
	There exist constants $C,\epsilon>0$, depending only on the doubling constant of $4\pi \tau\Delta P dm$, such that
	\begin{align}
		|[\mathbb{G}_\tau](z,w)| & \leq  C\begin{cases} \log\Big( \frac{2\sigma_\tau(w)}{|z-w|}\Big),\quad & |z-w|\leq \sigma_\tau(w), \\ e^{-\epsilon\rho_\tau(z,w)},\quad & |z-w|\geq \sigma_\tau(w),\end{cases} \label{eq:chr-rec-bbinvest}\\
		|[\mathbb{R}_\tau](z,w)| & \leq  C\begin{cases} |z-w|^{-1},\quad & |z-w|\leq \sigma_\tau(w), \\ \sigma_\tau(w)^{-1}e^{-\epsilon\rho_\tau(z,w)},\quad & |z-w|\geq \sigma_\tau(w),\end{cases} \label{eq:chr-rec-dbinvest}\\
		|[\mathbb{S}_\tau](z,w)| & \leq  C\sigma_\tau(w)^{-2}e^{-\epsilon\rho_\tau(z,w)}. \label{eq:chr-rec-szegoest}
	\end{align}
\end{theorem}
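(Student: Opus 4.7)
The plan is to work in the weighted space $L^2(\mathbb{C},e^{-4\pi\tau P}\,dm)$, which is isometric to $L^2(\mathbb{C})$ via the map $u\mapsto v=e^{2\pi\tau P}u$. Under this change $\bar D_\tau u$ corresponds to $e^{2\pi\tau P}\bar\partial v$, so $\ker\bar D_\tau\subset L^2(\mathbb{C})$ is identified with the Bergman space of entire functions in $L^2(\mathbb{C},e^{-4\pi\tau P}\,dm)$, and $\mathbb{S}_\tau,\mathbb{G}_\tau,\mathbb{R}_\tau,\mathbb{R}_\tau^\ast$ become, respectively, a weighted Bergman projection, the Kohn inverse for $\bar\partial$, and its building blocks. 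The scale $\sigma_\tau(z)$ is defined so that on $B(z,\sigma_\tau(z))$ the weight $e^{-4\pi\tau P}$ is essentially constant and the subharmonic mass $\int 4\pi\tau\Delta P\,dm$ is $\approx 1$.

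The $L^2$ backbone is the H\"ormander--Berndtsson estimate with subharmonic weight: for each $f\in L^2(e^{-4\pi\tau P})$ one solves $\bar\partial v=f\,d\bar z$ with
\[\int|v|^2\,e^{-4\pi\tau P}\,dm\;\le\;\int\frac{|f|^2}{4\pi\tau\Delta P}\,e^{-4\pi\tau P}\,dm.\]
Un-conjugating this and using the comparability $\sigma_\tau^{-2}(z)\approx$ local average of $4\pi\tau\Delta P$ produces weighted $L^2$ mapping bounds for $\mathbb{G}_\tau,\mathbb{R}_\tau,\mathbb{R}_\tau^\ast$ with the appropriate powers of $\sigma_\tau$, and with constants depending only on the doubling constant of $4\pi\tau\Delta P\,dm$.

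To upgrade to pointwise kernel bounds I would localize on the natural ball $B(w,\sigma_\tau(w))$. There the weight is essentially constant, and $\bar D_\tau$ differs from $\bar\partial$ by a smooth coefficient of size $\tau|P_{\bar z}|\lesssim\sigma_\tau(w)^{-1}$; comparing $\mathbb{G}_\tau$ and $\mathbb{R}_\tau$ with the Cauchy parametrix on this ball recovers the logarithmic near-diagonal singularity for $\mathbb{G}_\tau$ and the $|z-w|^{-1}$ singularity for $\mathbb{R}_\tau$. For $\mathbb{S}_\tau$, elements of its range become (after the weighted conjugation) holomorphic, so the submean-value inequality on $B(w,\sigma_\tau(w))$ converts the $L^2$ bound into a pointwise bound with the factor $\sigma_\tau(w)^{-2}$ appearing in \eqref{eq:chr-rec-szegoest}.

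The main obstacle, and the heart of Christ's argument, is the off-diagonal exponential decay in $\rho_\tau$. I would produce it by an Agmon/H\"ormander weight-twist: fix $w_0\in\mathbb{C}$ and replace the weight $e^{-4\pi\tau P}$ by $e^{-4\pi\tau P+2\lambda\rho_\tau(\cdot,w_0)}$ for a small $\lambda>0$. Since $d\rho_\tau\approx\sigma_\tau^{-1}|dz|$, the twist contributes $O(\lambda^2\sigma_\tau^{-2})$ to $\Delta$ of the exponent; for $\lambda$ below a threshold depending only on the doubling constant, this is absorbed into the main subharmonic term $4\pi\tau\Delta P\approx\sigma_\tau^{-2}$, so the H\"ormander inequality persists with the extra weight. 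Re-running the parametrix-plus-mean-value arguments in the twisted space, and then setting $w_0=w$, introduces the factor $e^{-\epsilon\rho_\tau(z,w)}$ (with $\epsilon=\lambda$) into each of \eqref{eq:chr-rec-bbinvest}, \eqref{eq:chr-rec-dbinvest}, and \eqref{eq:chr-rec-szegoest}, with $C$ and $\epsilon$ depending only on the doubling constant, as required.
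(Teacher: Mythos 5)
First, a point of context: the paper does not prove this theorem at all --- it is imported verbatim from Christ \cite{Christ1991a} --- so the only meaningful comparison is with Christ's original argument. Your outline (all analysis carried out at the scale $\sigma_\tau$, a local parametrix to produce the near-diagonal $\log$ and $|z-w|^{-1}$ singularities, a sub-mean-value argument for the projection, and exponentially weighted $L^2$ estimates to generate the factor $e^{-\epsilon\rho_\tau(z,w)}$) is indeed the general shape of that argument, and constants depending only on the doubling constant come out of it naturally.

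However, the $L^2$ backbone as you state it has a genuine gap. The H\"ormander--Berndtsson inequality controls the minimal solution by $\int |f|^2(4\pi\tau\Delta P)^{-1}e^{-4\pi\tau P}\,dm$, with the \emph{pointwise} Laplacian in the denominator. Here $\Delta P$ is merely nonnegative and of finite type, so it vanishes on the zero set of $h$ and this right-hand side is in general infinite; it cannot be converted into $\sigma_\tau$-weighted bounds, because $\sigma_\tau(z)^{-2}\approx \int_{|\eta-z|<\sigma_\tau(z)}4\pi\tau\Delta P\,dm$ is a statement about averages and there is no pointwise comparison between $4\pi\tau\Delta P$ and $\sigma_\tau^{-2}$ in either direction. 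What is actually needed --- and what Christ establishes as the key lemma, using the doubling property of $4\pi\tau\Delta P\,dm$ and a local argument on balls of radius $\sigma_\tau$ --- is the coercivity estimate $\int |u|^2\sigma_\tau^{-2}e^{-4\pi\tau P}\,dm\lesssim \int|\bar\partial u|^2 e^{-4\pi\tau P}\,dm$ (equivalently $\|\sigma_\tau^{-1}u\|_{L^2}\lesssim\|\bar D_\tau u\|_{L^2}$ off the kernel); alternatively one must construct a regularized weight $\tilde P$ with $|P-\tilde P|\lesssim\tau^{-1}$ and $\tau\Delta\tilde P\approx\sigma_\tau^{-2}$ before invoking H\"ormander. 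Either version is a substantive step that your sketch replaces with an inequality that does not apply. A smaller imprecision: $\rho_\tau$ is only Lipschitz, so the Agmon twist should not be justified through ``$\Delta$ of the exponent''; the standard implementation conjugates $\bar D_\tau$ by $e^{\lambda\rho_\tau(\cdot,w_0)}$ and absorbs the commutator, bounded by $\lambda|\nabla\rho_\tau|\lesssim\lambda\sigma_\tau^{-1}$, into the coercivity above (or smooths $\rho_\tau$ at scale $\sigma_\tau$ first). With that lemma supplied, the remainder of your plan --- parametrix comparison for $\mathbb{G}_\tau$ and $\mathbb{R}_\tau$, sub-mean-value for \eqref{eq:chr-rec-szegoest}, and the exponential weight for the off-diagonal decay --- is sound and is essentially Christ's route.
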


\begin{remark}{\rm Because $[\mathbb{R}_\tau^\ast](z,w)=\overline{[\mathbb{R}_\tau](w,z)}$, $[\mathbb{R}_\tau^\ast]$ satisfies the same estimates as does $[\mathbb{R}_\tau]$.}\end{remark}

\begin{remark}{\rm For some large fixed $M$, at the expense of perhaps a larger $C=C(M)$, we may replace the estimates for $[\mathbb{G}_\tau](z,w)$ by
		\begin{equation}\label{eq:chr-rec-bbaltered}
			|[\mathbb{G}_\tau](z,w)|  \leq  C\begin{cases} \log\Big( \frac{2M\sigma_\tau(w)}{|z-w|}\Big),\quad & |z-w|\leq M\sigma_\tau(w), \\ e^{-\epsilon\rho_\tau(z,w)},\quad & |z-w|\geq M\sigma_\tau(w),\end{cases}
		\end{equation}
		and similarly for $|[\mathbb{R}_\tau](z,w)|$ and $|[\mathbb{R}_\tau^\ast](z,w)|$.
	}\end{remark}
	
	In order to utilize Christ's results in our setting, we first make two crucial observations.
	First, note that the value of $\tau$ does not affect whether or not $4\pi \tau Pdm$ is a doubling measure, nor its doubling constant. In particular,
	\begin{proposition}{\rm The estimates in (\ref{eq:chr-rec-rholarge}), (\ref{eq:chr-rec-sigmaratios}), and Theorem \ref{thm:chr-rec-Christest} only depend on the constants in (H1)-(H3), and not on $\tau$.}\end{proposition}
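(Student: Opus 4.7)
The plan is to trace the $\tau$-dependence through Christ's hypotheses and show that the only way $\tau$ enters his estimates is via the doubling constant of the measure $4\pi\tau \Delta P\, dm$, which is in fact independent of $\tau$. Since Christ's estimates are applied (as noted in the footnote of Section \ref{sec:fn}) to the weight $\phi(z)=\tau P(z)$, the relevant measure is $4\pi\Delta\phi\, dm = 4\pi\tau h\, dm$, where $h=\Delta P$.

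First I would observe that $4\pi\tau h\, dm$ is simply a positive scalar multiple of $4\pi h\, dm$, and scalar multiplication of a measure does not change its doubling constant: for any ball $B$, the ratio $\mu_\tau(B(z,2r))/\mu_\tau(B(z,r))$ equals $\mu_1(B(z,2r))/\mu_1(B(z,r))$, so the smallest $2^Q$ with $\mu_\tau(B(z,2r))\leq 2^Q\mu_\tau(B(z,r))$ is independent of $\tau>0$. Thus it suffices to control the doubling constant of $h\, dm$.

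Next I would invoke Lemma \ref{lem:geom-uftisaq}, which identifies $\int_{|\eta-z|<\delta} h(\eta)\, dm(\eta)\approx \Lambda(\mb{z},\delta)$ uniformly in $\mb{z}$ and $\delta>0$, with constants depending only on those appearing in (H1), (H2) (and (H3) is used implicitly via the setup). Combined with the fact that $\delta\mapsto\Lambda(\mb{z},\delta)$ is explicitly comparable to $\sum_{j=2}^m a_j(\mb{z})\delta^j$ for $\delta\lesssim 1$ and to $\delta^2$ for $\delta\gtrsim 1$ (Lemma \ref{lem:metric-approxsmall} and Corollary \ref{cor:metric-mutilde}), doubling $\delta$ multiplies $\Lambda(\mb{z},\delta)$ by at most $2^m$. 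Hence $h\, dm$ is doubling with a constant that depends only on the constants in (H1)--(H3).

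Finally, I would check that the constants $C,\delta,M,\epsilon$ appearing in (\ref{eq:chr-rec-rholarge}), (\ref{eq:chr-rec-sigmaratios}), and (\ref{eq:chr-rec-bbinvest})--(\ref{eq:chr-rec-szegoest}) are, according to the statements quoted from \cite{Christ1991a}, functions only of the doubling constant $2^Q$. Since we have just shown $2^Q$ is independent of $\tau$ and controlled by the constants in (H1)--(H3), the same is true of $C,\delta,M,\epsilon$. I do not anticipate a genuine obstacle here: the whole argument is essentially a one-line observation about scalar invariance of doubling combined with bookkeeping from Lemma \ref{lem:geom-uftisaq}; the only point requiring mild care is making certain that no auxiliary constants in Christ's proofs depend on $\tau$ through some other channel (e.g. normalizations of $\sigma_\tau$), which one verifies by inspecting the definition of $\sigma_\tau$ and noting that rescaling the weight by $\tau$ merely rescales $\sigma_\tau(z)$ without affecting the qualitative structure of the estimates.
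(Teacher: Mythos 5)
Your proposal is correct and follows essentially the same route as the paper: the paper's entire justification is the observation immediately preceding the proposition, namely that multiplying the measure $4\pi\Delta P\,dm$ by the scalar $\tau$ changes neither its doubling property nor its doubling constant, while the constants in (\ref{eq:chr-rec-rholarge}), (\ref{eq:chr-rec-sigmaratios}), and Theorem \ref{thm:chr-rec-Christest} are stated in \cite{Christ1991a} to depend only on that doubling constant. Your additional bookkeeping via Lemma \ref{lem:geom-uftisaq} to tie the doubling constant of $h\,dm$ back to (H1)--(H2) is a harmless (and reasonable) elaboration of the same idea.
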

	
	% The second important fact is that we can use $\mathbb{S}_\tau$ to represent $\mathbb{S}$, in the following sense.
	
	% \begin{proposition}\label{prop:link}{\rm For $f\in L^2(\mathbb{C}\times \mathbb{R})$, $$\mathbb{S}[f](z,t) = \int_0^\infty e^{2\pi i t\tau}\int_\mathbb{C} [\mathbb{S}]_\tau (z,w)\hat{f}(w,\tau)dm(w)d\tau.$$
	% In other words, as distributions we have $[\hat{\mathbb{S}}](z,w,\tau)=[\mathbb{S}_\tau](z,w).$}\end{proposition}
	
	% The proof of Proposition \ref{prop:link}, although elementary, is rather technical and is given in Appendix \ref{sec:link}.
	
	Second, we must understand the quantities $\sigma_\tau(z)$ and $\rho_\tau(z,w)$ appearing in Christ's estimates in terms of the geometric quantities studied in Section \ref{sec:geom}. 
	
	\begin{proposition}\label{prop:chr-rec-specforms}Let $\mb{w}\in \b\Omega$. Then for some $C,C',C'',\nu,\delta>0$ which depend only on the constants in (H1) and (H2),
		\begin{itemize}
			\item[(a)] $\sigma_\tau(w)\approx \mu(\mb{w},\tau^{-1})$,
			\item[(b)] $\displaystyle \rho_\tau(z,w)\geq C\Big(\frac{|z-w|}{\sigma_\tau(w)}\Big)^{\delta}\geq C'\Big(\tau\Lambda(\mb{w},|z-w|)\Big)^\nu\geq C''\Big(\frac{|z-w|}{\sigma_\tau(w)}\Big)^{2\nu} $ for $|z-w|\geq \sigma_\tau(w)$.
		\end{itemize}
	\end{proposition}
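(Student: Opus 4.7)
My plan is to deduce both conclusions from the explicit description of $\Lambda(\mb{w},\delta)$ obtained in Lemmas \ref{lem:geom-uftisaq} and \ref{lem:metric-approxsmall} together with Christ's abstract estimates.

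For (a), the defining property $\int_{|\eta-w|<\sigma_\tau(w)} 4\pi\tau \Delta P(\eta)\,dm(\eta)\approx 1$ combined with $\Delta P = h$ and Lemma \ref{lem:geom-uftisaq} gives $\Lambda(\mb{w},\sigma_\tau(w))\approx \tau^{-1}$. Since $\delta\mapsto \Lambda(\mb{w},\delta)$ is strictly increasing with inverse $\mu(\mb{w},\cdot)$, this yields $\sigma_\tau(w)=\mu(\mb{w},c\tau^{-1})$ for some $c\in[c_1,c_2]$. The doubling-type inequality $\mu^\ast(\mb{w},2\tau)\leq 2^{1/2}\mu^\ast(\mb{w},\tau)$ from Corollary \ref{cor:metric-mutilde}, together with $\mu^\ast\approx\mu$, absorbs the constant $c$ and gives $\sigma_\tau(w)\approx \mu(\mb{w},\tau^{-1})$.

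For (b), set $\nu = \delta_C/m$, where $\delta_C$ is the exponent furnished by Lemma \ref{lem:chr-rec-metest}. Since $|z-w|\geq\sigma_\tau(w)$, the ratio $|z-w|/\sigma_\tau(w)\geq 1$, so raising Christ's inequality to the appropriate power gives the first estimate $\rho_\tau(z,w)\geq C(|z-w|/\sigma_\tau(w))^{m\nu}$. The remaining two inequalities reduce, after using part (a) to rewrite $\tau\approx 1/\Lambda(\mb{w},\sigma_\tau(w))$, to showing the approximate monotonicity statements
\[
\frac{\Lambda(\mb{w},|z-w|)}{|z-w|^m}\ \lesssim\ \frac{\Lambda(\mb{w},\sigma_\tau(w))}{\sigma_\tau(w)^m},\qquad \frac{\Lambda(\mb{w},|z-w|)}{|z-w|^2}\ \gtrsim\ \frac{\Lambda(\mb{w},\sigma_\tau(w))}{\sigma_\tau(w)^2}.
\]
Both are consequences of the piecewise form $\Lambda(\mb{w},\delta)\approx\sum_{j=2}^m a_j\delta^j$ for $\delta\leq\delta_0$ and $\Lambda(\mb{w},\delta)\approx\delta^2$ for $\delta\geq\delta_0$, or equivalently of the smooth approximation $\Lambda^\ast(\mb{w},\delta)$ built in Corollary \ref{cor:metric-mutilde}. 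Indeed, $\Lambda^\ast(\mb{w},\delta)/\delta^m$ is a sum of non-positive powers $\delta^{j-m}$ (with $2\leq j\leq m$) on $[0,1]$ and equals $\delta^{2-m}$ on $[1,\infty)$, hence is non-increasing; similarly $\Lambda^\ast(\mb{w},\delta)/\delta^2$ is a sum of non-negative powers on $[0,1]$ and is constant on $[1,\infty)$, hence non-decreasing. Raising the inequalities to the $\nu$-th power then chains them with the first estimate to give the full conclusion.

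The only real subtlety is verifying that the constants in these monotonicity estimates depend only on those in (H1) and (H2), but this follows because the coefficients appearing in the small-scale polynomial form of $\Lambda(\mb{w},\delta)$ are controlled by those constants via Lemma \ref{lem:metric-approxsmall}, and the transition scale $\delta_0$ and the matching constant for the large-scale regime $\delta^2$ are likewise universal. No step looks like a serious obstacle; the main task is bookkeeping to confirm that the implied constants in each of the three chained bounds are independent of $\mb{w}$, $\tau$, and $|z-w|$.
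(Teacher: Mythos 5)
Your proposal is correct and follows essentially the same route as the paper: part (a) is the direct application of Lemma \ref{lem:geom-uftisaq} to the defining relation $\tau\Lambda(\mb{w},\sigma_\tau(w))\approx 1$, and part (b) takes $\nu=\delta/m$ with $\delta$ from Lemma \ref{lem:chr-rec-metest} and exploits that $\Lambda(\mb{w},\cdot)$ grows at least quadratically and at most like the $m$-th power. The only cosmetic difference is that the paper packages these growth bounds as a dyadic chain ($|z-w|\approx 2^k\sigma_\tau(w)$, $2^{2k}\lesssim\tau\Lambda(\mb{w},|z-w|)\lesssim 2^{mk}$), whereas you phrase them as approximate monotonicity of $\Lambda(\mb{w},\delta)/\delta^2$ and $\Lambda(\mb{w},\delta)/\delta^m$ via the piecewise form from Lemma \ref{lem:metric-approxsmall} and Corollary \ref{cor:metric-mutilde}; both rest on the same facts with constants depending only on (H1) and (H2).
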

	\begin{proof}
		For (a), we merely apply Lemma \ref{lem:geom-uftisaq} to the definition of $\sigma_\tau(w)$.
		
		To prove (b), let $\delta$ be as in Lemma \ref{lem:chr-rec-metest} and define $\nu=\frac{\delta}{m}$. Choose $k\geq 0$ so that $|z-w|\approx 2^k\sigma_\tau(w)$. Then Lemma \ref{lem:geom-uftisaq} yields
		\begin{align*}
		\Big(\frac{|z-w|}{\sigma_\tau(w)}\Big)^2 & \approx 2^{2k}\tau\Lambda(\mb{w},\sigma_\tau(w))\lesssim\tau\Lambda(\mb{w},|z-w|)\\
		& \lesssim \tau 2^{mk}\Lambda(\mb{w},\sigma_\tau(w))\approx 2^{mk}\approx \Big(\frac{|z-w|}{\sigma_\tau(w)}\Big)^m.\end{align*}
		Raising each term to the power $\nu$ and applying (\ref{eq:chr-rec-rholarge}) gives the result.
	\end{proof}
	
		\begin{remark}{\rm As before, the above estimates can be taken to be symmetric in $z$ and $w$, and one can interchange the roles of $z$ and $w$ at the expense of perhaps larger constants.}\end{remark}
	
	To simplify the computations in the rest of the paper, it will be convenient to replace $\rho_\tau(z,w)$ with a (simpler) quasimetric. To this end, define
	$$\t\rho_\tau (z,w)=(\tau \Lambda(\mb{w},|z-w|)+\tau \Lambda(\mb{z},|z-w|))^\nu,$$
	where $\mb{z}=(z,z_2)\in \b\Omega$ and $\nu$ is as in Proposition \ref{prop:chr-rec-specforms}. Then Proposition \ref{prop:chr-rec-specforms} and the results in Section \ref{sec:geom} immediately imply that
	\begin{proposition}\label{prop:kernels-rhotilde} Uniformly in $z,w,\eta\in\mathbb{C}$ and $\tau\in (0,+\infty)$,
		\begin{itemize}
			\item[(i)] $\t\rho_\tau(z,w)\approx (\tau \Lambda(\mb{w},|z-w|))^\nu\approx (\tau \Lambda(\mb{z},|z-w|))^\nu$,
			\item[(ii)] $\t\rho_\tau(z,w)\lesssim \rho_\tau(z,w)$ when $|z-w|\geq \sigma_\tau(w)$,
			\item[(iii)] $\t\rho_\tau(z,w)= \t\rho_\tau(w,z)$,
			\item[(iv)] $\t\rho_\tau(z,w)\lesssim \t\rho_\tau(z,\eta)+\t\rho_\tau(\eta,w)$,
			\item[(v)] If $|z-w|\approx |z-\eta|$, then $\t\rho_\tau(z,w)\approx \t\rho_\tau(z,\eta)$.
			\item[(iv)] $\displaystyle\frac{|z-w|}{\sigma_\tau(w)}\lesssim \t\rho_\tau(z,w)^N$ when $|z-w|\geq \sigma_\tau(w)$, for some $N>0$.
		\end{itemize}
	\end{proposition}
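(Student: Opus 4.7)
The plan is to establish (i) first as the main technical input, and then derive the remaining parts from (i) together with Lemma \ref{lem:geom-uftisaq}, the doubling property of $4\pi\tau\Delta P\,dm$, and Proposition \ref{prop:chr-rec-specforms}. A useful preliminary remark is that, since $\nu\in(0,1)$ (coming from Lemma \ref{lem:chr-rec-metest}), one has $(a+b)^\nu\approx a^\nu+b^\nu$ for $a,b\geq 0$, so it suffices to work with the unraised quantities $\tau\Lambda(\mb{z},|z-w|)+\tau\Lambda(\mb{w},|z-w|)$.

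For (i), by Lemma \ref{lem:geom-uftisaq} we have $\Lambda(\mb{w},|z-w|)\approx \int_{|\eta-w|<|z-w|} h(\eta)\,dm(\eta)$, and similarly with $w$ replaced by $z$. The two disks $B(z,|z-w|)$ and $B(w,|z-w|)$ each contain the other's center, hence each is contained in $B(z,2|z-w|)$ (resp.\ $B(w,2|z-w|)$). The doubling property of the measure $h\,dm$ (which holds with constant independent of $\tau$ by Lemma \ref{lem:geom-uftisaq} and (H1)--(H2)) then gives $\Lambda(\mb{z},|z-w|)\approx \Lambda(\mb{w},|z-w|)$, and (i) follows.

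Parts (ii), (iii), (v), and (vi) then fall out quickly. Part (ii) is immediate from (i) combined with the middle inequality in Proposition \ref{prop:chr-rec-specforms}(b). Part (iii) is immediate from the definition. For (v), if $|z-w|\approx |z-\eta|$, then doubling in the radius variable gives $\Lambda(\mb{z},|z-w|)\approx \Lambda(\mb{z},|z-\eta|)$, and (i) lets us freely swap the base point to $\mb{w}$ or $\mb{\eta}$. For (vi), apply (i) to rewrite $\tilde\rho_\tau(z,w)\approx (\tau\Lambda(\mb{w},|z-w|))^\nu$ and use the rightmost inequality in Proposition \ref{prop:chr-rec-specforms}(b), which gives $(\tau\Lambda(\mb{w},|z-w|))^\nu\gtrsim (|z-w|/\sigma_\tau(w))^{2\nu}$ whenever $|z-w|\geq \sigma_\tau(w)$; take $N=1/(2\nu)$.

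The main step requiring a bit of care is (iv). Given $z,w,\eta\in\mathbb{C}$, either $|z-w|\leq 2|z-\eta|$ or $|z-w|\leq 2|\eta-w|$; treat the second case (the first is symmetric). By monotonicity and doubling of $\delta\mapsto\Lambda(\mb{w},\delta)$,
\[
\tau\Lambda(\mb{w},|z-w|)\leq \tau\Lambda(\mb{w},2|\eta-w|)\lesssim \tau\Lambda(\mb{w},|\eta-w|),
\]
which is one of the two terms controlling $\tilde\rho_\tau(\eta,w)^{1/\nu}$. For the other term $\tau\Lambda(\mb{z},|z-w|)$, the same monotonicity argument reduces it to $\tau\Lambda(\mb{z},|\eta-w|)$, and then (i) applied with radius $|\eta-w|\gtrsim |z-w|/2\gtrsim |z-\eta|-|\eta-w|$ — or more simply, since $|\eta-w|\gtrsim |z-w|/2$ and $B(z,|\eta-w|)$ and $B(w,|\eta-w|)$ are comparable disks up to a doubling constant — yields $\Lambda(\mb{z},|\eta-w|)\lesssim \Lambda(\mb{w},|\eta-w|)$. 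Summing the two contributions and applying $(a+b)^\nu\approx a^\nu+b^\nu$ gives $\tilde\rho_\tau(z,w)\lesssim \tilde\rho_\tau(\eta,w)\lesssim \tilde\rho_\tau(z,\eta)+\tilde\rho_\tau(\eta,w)$, proving (iv).

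The only mildly delicate point is the base-point swap in (iv) when the comparison radius is forced on us by the triangle inequality rather than by the direct hypothesis of (i); this is handled by noting that doubling lets us inflate the relevant disk so it contains both $z$ and $w$, after which the argument for (i) applies verbatim.
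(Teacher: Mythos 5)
Your proof is correct and uses exactly the ingredients the paper has in mind: the paper states Proposition \ref{prop:kernels-rhotilde} as an immediate consequence of Proposition \ref{prop:chr-rec-specforms} and the results of Section \ref{sec:geom}, and your argument simply fills in those steps (the integral formula of Lemma \ref{lem:geom-uftisaq}, doubling of $h\,dm$ in both radius and base point, and the chain of inequalities in Proposition \ref{prop:chr-rec-specforms}(b)). The only cosmetic issue is the slightly garbled first attempt at the base-point swap in (iv), but your ``more simply'' version (inflating $B(z,|\eta-w|)$ into $B(w,3|\eta-w|)$ and using doubling) is exactly right.
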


	\begin{remark}\label{rem:kernels-estrestate}{\rm The formulas from Theorem \ref{thm:chr-rec-Christest} can be recast in the following slightly weaker form.  Defining $$K_{\tau,0}(z,w)=1,\quad K_{\tau,1}(z,w)=\Big(1+\frac{\sigma_\tau(w)}{|z-w|}\Big),\quad K_{\tau,2}(z,w)=\Big(1+\log\Big(\frac{2\sigma_\tau(w)}{|z-w|}\Big)\Big),$$
			and taking $\t\rho_\tau(z,w)$ as above, there is a constant $C>0$ so that
			$$|[\mathbb{G}_\tau](z,w)|\leq C\sigma_\tau(w)^{-2+2}K_{\tau,2}(z,w)e^{-\epsilon \t\rho_\tau(z,w)},$$
			$$|[\mathbb{R}_\tau](z,w)|+|[\mathbb{R}^\ast_\tau](z,w)|\leq C \sigma_\tau(w)^{-2+1}K_{\tau,1}(z,w)e^{-\epsilon \t\rho_\tau(z,w)},$$
			and
			$$|[\mathbb{S}_\tau](z,w)|\leq C\sigma_\tau(w)^{-2+0}K_{\tau,0}(z,w)e^{-\epsilon \t\rho_\tau(z,w)}.$$ These estimates are symmetric in $z$ and $w$, perhaps at the cost of slightly enlarging $C$.}\end{remark}

	In light of equations (\ref{eq:fn-fkkkernel}) and (\ref{eq:fn-nkkkernel}), in order to prove Theorem \ref{thm:szego-decomp} it is sufficient to obtain precise size estimates for $[W_\tau^\alpha \mathbb{R}^K_\tau \mathbb{S}_\tau (\mathbb{R}_\tau^\ast)^{K'}W_\tau^\beta](z,w).$
	
	\begin{remark}{\rm To take advantage of the oscillatory term in the integrals (\ref{eq:fn-fkkkernel}) and (\ref{eq:fn-nkkkernel}), we will also need to make sense of, and prove size estimates for, the Schwartz kernels of operators of the form 
	\begin{equation}\label{eq:ops}
		\partial_\tau^M (W_\tau^\alpha \mathbb{R}_\tau^K \mathbb{S}_\tau (\mathbb{R}_\tau^\ast)^{K'}W_\tau^\beta).
	\end{equation}
	Note that by the arguments used to prove Lemma \ref{lem:normalization-schwartz} the operators $\partial_\tau$ associated to $\Omega$ and $\tilde{\partial}_{\tau}$ associated to $\tilde\Omega$ are related by $\partial_\tau=e^{-2\pi i \tau T_\kappa(z,\sigma)}\circ \tilde{\partial}_\tau \circ e^{2\pi i \tau T_\kappa(z,\sigma)} = \tilde\partial_\tau+2\pi i T_\kappa(z,\sigma)$, and therefore it is natural (see \cite{Raich2006b}) to replace $\partial_\tau$ with the `twisted' derivative $$e^{2\pi i \tau T_\kappa(z,\sigma)}\circ \partial_\tau \circ e^{-2\pi i \tau T_\kappa(z,\sigma)}=\partial_\tau -2\pi i T_\kappa(z,\sigma)$$ when studying the kernel $[\mathbb{F}^{\alpha,\beta}_{K,K'}](z,w,\tau)$ described in Lemma \ref{lem:fn-substitution}. 
	The substitution described in Lemma \ref{lem:fn-substitution} has the effect of `un-twisting' the operator $$W_\tau^\alpha \mathbb{R}_\tau^K \mathbb{S}_\tau (\mathbb{R}_\tau^\ast)^{K'}W_\tau^\beta,$$ and therefore the $\tau$-derivatives considered in (\ref{eq:ops}), which will be computed after applying Lemma \ref{lem:fn-substitution}, are not twisted in this sense.
}\end{remark}

	The size estimates that we obtain for the operators (\ref{eq:ops}) are most easily expressed as follows:
	For $k\in\mathbb{Z}$ and $w_0\in \mathbb{C}$, and an operator $\mathbb{H}_\tau$ on $\mathbb{C}$, we say that $\mathbb{H}_\tau= {\rm Op}^{w_0}_\tau(k)$ if, for $\phi$ supported in $\{ w\in \mathbb{C}\ :\ |w-w_0|<\sigma_\tau(w_0)\}$, $\mathbb{H}_\tau[\phi](z)$ is given by integrating $\phi$ against a locally integrable Schwartz kernel $[\mathbb{H}_\tau]$ which satisfies
	\begin{equation}
		|[\mathbb{H}_\tau](z,w)|  \lesssim \epsilon^{-1}\sigma_\tau(w)^{k-2} e^{-\epsilon \t\rho_\tau(z,w)},\quad z\in \mathbb{C},\ |w-w_0|\leq \sigma_\tau(w_0)
	\end{equation}
	for some $\epsilon>0$.
	
	If $I\subset (0,{+\infty})$ and $\mathbb{H}_I:=\lb\mathbb{H}_\tau\rb _{\tau\in I}$ is a one-parameter family of operators on $\mathbb{C}$, then say that $\mathbb{H}_I\in {\rm Op}^{w_0}_{I}(k)$ if $\mathbb{H}_\tau\in {\rm Op}^{w_0}_\tau(k)$ uniformly for $\tau\in I$.
	
	\begin{remark}{\rm 
			As is customary, we will often use the notation ${\rm Op}^{w_0}_I(k)$ to refer to an arbitrary sum of operators in ${\rm Op}^{w_0}_I(k)$.
		}\end{remark}
		
		\begin{remark}{\rm The operators ${\rm Op}^{w_0}_I$ are similar in spirit to the one-parameter families of Raich \cite{Raich2006b}, which were designed for the situation when $P$ is a subharmonic, non-harmonic polynomial. Our families are, in a sense, an adaptation of his to the non-polynomial setting, although we have no need for the type of cancellation conditions he imposes on his operators of order $\leq 0$ because our operators all have locally integrable Schwartz kernels.
			}\end{remark}
			
		%Theorem \ref{thm:kernels-kernelests} is now a consequence of induction, Lemma \ref{lem:kernels-potderivs}, Lemma \ref{lem:kernels-basicest}, and the following elementary proposition.
		
		One simple but crucial observation is the relationship between ${\rm Op}_I^{w_0}(k)$ and ${\rm Op}_I^{w_0}(\ell)$ for various intervals $I$.
		
		\begin{proposition}\label{prop:kernels-inclusion} For fixed $0<\Theta<{+\infty}$,
			$${\rm Op}^{w_0}_{(0,\Theta]}(k)\subset {\rm Op}^{w_0}_{(0,\Theta]}(\ell)\quad \mbox{for}\ k\leq \ell,$$
			and
			$${\rm Op}^{w_0}_{[\Theta,{+\infty})}(k)\subset {\rm Op}^{w_0}_{[\Theta,{+\infty})}(\ell)\quad \mbox{for}\ k\geq \ell.$$
		\end{proposition}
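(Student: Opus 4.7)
The plan is to deduce the inclusions from a simple observation about how $\sigma_\tau(w)$ varies with $\tau$, uniformly in $w$. By Proposition \ref{prop:chr-rec-specforms}(a) we have $\sigma_\tau(w)\approx\mu(\mb{w},\tau^{-1})$, so the question reduces to pinning down the range of $\mu(\mb{w},\tau^{-1})$ as $\tau$ varies over each of the intervals $(0,\Theta]$ and $[\Theta,{+\infty})$.

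First I would establish the two basic size facts: for $\tau\in(0,\Theta]$ there is a constant $c=c(\Theta)>0$, independent of $w$, with $\sigma_\tau(w)\geq c$; and for $\tau\in[\Theta,{+\infty})$ there is a constant $C=C(\Theta)<{+\infty}$, also independent of $w$, with $\sigma_\tau(w)\leq C$. Both follow from the monotonicity of $\delta\mapsto\Lambda(\mb{w},\delta)$ (hence of $\tau\mapsto\mu(\mb{w},\tau^{-1})$) combined with the uniform two-sided bounds $\Lambda(\mb{w},\delta)\approx\delta^2$ for $\delta\geq\delta_0$ from Lemma \ref{lem:geom-uftisaq}, and $\delta^m\lesssim\Lambda(\mb{w},\delta)\lesssim\delta^2$ for $\delta\leq\delta_0$ from Lemma \ref{lem:metric-approxsmall} together with hypotheses (H1) and (H2). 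Inverting these at $\delta=\mu(\mb{w},\Theta^{-1})$ gives the claimed $w$-uniform control, with constants depending only on $\Theta$, $m$, and the constants in (H1)--(H3).

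With these bounds in hand, the two inclusions reduce to an elementary exponent comparison. Given $\mathbb{H}_I\in{\rm Op}^{w_0}_I(k)$ with kernel satisfying $|[\mathbb{H}_\tau](z,w)|\lesssim\sigma_\tau(w)^{k-2}e^{-\epsilon\t\rho_\tau(z,w)}$, I would factor
\[\sigma_\tau(w)^{k-2}=\sigma_\tau(w)^{k-\ell}\cdot\sigma_\tau(w)^{\ell-2}.\]
In the first case ($I=(0,\Theta]$, $k\leq\ell$), the map $a\mapsto a^{k-\ell}$ is non-increasing on $(0,{+\infty})$ and $\sigma_\tau(w)\geq c$, so $\sigma_\tau(w)^{k-\ell}\leq c^{k-\ell}$; in the second case ($I=[\Theta,{+\infty})$, $k\geq\ell$), the exponent $k-\ell\geq 0$ together with $\sigma_\tau(w)\leq C$ yields $\sigma_\tau(w)^{k-\ell}\leq C^{k-\ell}$. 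Either way the bound becomes $|[\mathbb{H}_\tau](z,w)|\lesssim\sigma_\tau(w)^{\ell-2}e^{-\epsilon\t\rho_\tau(z,w)}$, which is precisely the defining estimate for membership in ${\rm Op}^{w_0}_I(\ell)$.

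There is no serious obstacle here; the only thing requiring real care is verifying that $c$ and $C$ can be chosen independently of $w$, which is where the full strength of (H1)--(H3) (through Lemma \ref{lem:geom-uftisaq} and Lemma \ref{lem:metric-approxsmall}) comes in---pointwise monotonicity of $\mu(\mb{w},\cdot)$ in its second argument is not by itself enough.
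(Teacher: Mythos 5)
Your argument is correct and is essentially the paper's own proof: the whole point is the uniform bounds $\sigma_\tau(w)\gtrsim 1$ for $\tau\leq\Theta$ and $\sigma_\tau(w)\lesssim 1$ for $\tau\geq\Theta$ (which follow from $\sigma_\tau(w)\approx\mu(\mb{w},\tau^{-1})$ and Lemmas \ref{lem:geom-uftisaq} and \ref{lem:metric-approxsmall}), after which the inclusion is the trivial exponent comparison you give. You have simply spelled out the details that the paper leaves as an ``immediate observation.''
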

		\begin{proof}
			This follows immediately from the observation that $\sigma_\tau(w)\lesssim 1$ for $\tau\geq \Theta$, and $\sigma_\tau(w)\gtrsim 1$ for $\tau\leq \Theta$.
		\end{proof}	
			
			Our main result of this section is as follows.
			
			\begin{theorem}\label{thm:chr-der-szegoderivatives} 
				Let $\alpha$ and $\beta$ be multi-indices, and fix $0<\Theta<{+\infty}$.  If $M,K,K'\geq 0$, then the following hold.
				\begin{itemize}
					\item[(a)] $\tau^M\partial_\tau^M \big(W_\tau^\alpha \mathbb{R}_\tau^K\mathbb{S}_\tau(\mathbb{R}_\tau^\ast)^{K'}W_\tau^\beta\big) \in {\rm Op}_{(0,\Theta]}^0(K+K'-\min(|\alpha|,2)-\min(|\beta|,2))$, if $P=P^{\mb{\sigma},2}$,
					\item[(b)] $\tau^M\partial_\tau^M \big(W_\tau^\alpha \mathbb{R}_\tau^K\mathbb{S}_\tau(\mathbb{R}_\tau^\ast)^{K'}W_\tau^\beta\big)\in {\rm Op}_{[\Theta,{+\infty})}^0(K+K'-|\alpha|-|\beta|)$,\newline if $P=P^{\mb{\sigma},\max(m,|\alpha|,|\beta|)}$.
				\end{itemize}
				In each case, the constants in the estimates depend on $\Theta$, (H1), (H2), (H3), $M,\ K,\ K',\ m,\ |\alpha|$, and $|\beta|$, but are independent of $\tau$.
			\end{theorem}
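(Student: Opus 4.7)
The strategy is to recast Christ's pointwise kernel bounds in the ${\rm Op}^{w_0}_\tau$ language, establish a composition calculus, carry out an algebraic reduction using the identities satisfied by $\bar D_\tau$, $D_\tau$, and Christ's operators, and finally absorb the $\tau$-derivatives using the resolvent structure of Christ's operators.

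First I would translate Remark \ref{rem:kernels-estrestate}: absorbing the factors $K_{\tau,j}(z,w)$ into the exponential $e^{-\epsilon \t\rho_\tau(z,w)}$ (using the last part of Proposition \ref{prop:kernels-rhotilde} and slightly decreasing $\epsilon$) yields $\mathbb{S}_\tau \in {\rm Op}^{w_0}_{(0,+\infty)}(0)$, $\mathbb{R}_\tau,\mathbb{R}_\tau^\ast \in {\rm Op}^{w_0}_{(0,+\infty)}(1)$, and $\mathbb{G}_\tau \in {\rm Op}^{w_0}_{(0,+\infty)}(2)$ uniformly in $w_0\in\mathbb{C}$. Next I would prove a composition lemma stating ${\rm Op}^{w_0}_\tau(k)\circ{\rm Op}^{w_0}_\tau(\ell)\subset{\rm Op}^{w_0}_\tau(k+\ell)$; this is a standard Gaussian-type convolution estimate using (\ref{eq:chr-rec-sigmaratios}) together with the subadditivity and symmetry of $\t\rho_\tau$.

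The heart of the argument is an algebraic reduction built around the identities
\[ \bar D_\tau\mathbb{S}_\tau=0,\quad \mathbb{S}_\tau D_\tau=0,\quad \bar D_\tau\mathbb{R}_\tau=I,\quad \mathbb{R}_\tau^\ast D_\tau=I,\quad \mathbb{R}_\tau\bar D_\tau=I-\mathbb{S}_\tau=D_\tau\mathbb{R}_\tau^\ast, \]
and the commutator $[\bar D_\tau,D_\tau]=\pi\tau h(z)$. By Lemma \ref{lem:geom-uftisaq}, $\pi\tau h$ has size $\sigma_\tau^{-2}$ on the characteristic scale, so it behaves as an ${\rm Op}^{w_0}_\tau(0)$ multiplier under composition. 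I would then induct on $|\alpha|+|\beta|$ with $M,K,K'$ fixed: when the entry of $W_\tau^\alpha$ adjacent to $\mathbb{R}_\tau^K$ is $\bar D_\tau$ we absorb it into an $\mathbb{R}_\tau$ (dropping $K$ by one) or annihilate $\mathbb{S}_\tau$ if $K=0$; when it is $D_\tau$ we either commute it past a $\bar D_\tau$ generating a $\pi\tau h$ error term or leave it as a differential operator acting directly on the kernel. The direct-differentiation step is where the hypothesis on $\kappa$ enters: a single derivative $D_\tau$ introduces the multiplier $2\pi\tau P_z$, and its $j$-fold iterate produces terms involving $\partial_z^k P$ for $k\le j$; these behave like $\sigma_\tau^{-k}$ on the scale $|z-w_0|\lesssim\sigma_\tau(w_0)$ only when the corresponding holomorphic Taylor coefficients $\partial_z^k P(0)$ vanish (i.e.\ $k\le\kappa$), by virtue of Proposition \ref{prop:potentials}(b) and Lemma \ref{lem:normalization-bihol}. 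For part (a) we have $\kappa=2$, so only two $W_\tau$'s can be absorbed before uncontrolled higher Taylor coefficients appear, which yields the cap $\min(|\alpha|,2)+\min(|\beta|,2)$; for part (b) we have $\kappa=\max(m,|\alpha|,|\beta|)$, which absorbs everything and produces the full reduction $|\alpha|+|\beta|$.

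For the $\tau$-derivatives I would use the resolvent-type identity
\[ \partial_\tau\mathbb{G}_\tau=-\mathbb{G}_\tau\bigl(\partial_\tau(\bar D_\tau D_\tau)\bigr)\mathbb{G}_\tau=-2\pi\mathbb{G}_\tau P_{\bar z}\mathbb{R}_\tau-2\pi\mathbb{R}_\tau^\ast P_z\mathbb{G}_\tau, \]
and analogous expressions for $\partial_\tau\mathbb{R}_\tau,\partial_\tau\mathbb{R}_\tau^\ast,\partial_\tau\mathbb{S}_\tau$ obtained from $\mathbb{R}_\tau=D_\tau\mathbb{G}_\tau$ and $\mathbb{S}_\tau=I-\mathbb{R}_\tau\bar D_\tau$. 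By Proposition \ref{prop:potentials}(b-ii), $|P_z|,|P_{\bar z}|\lesssim|z|$, so on the characteristic scale $|z-w_0|\lesssim\sigma_\tau(w_0)$ after biholomorphic normalization the operators $\tau P_z$ and $\tau P_{\bar z}$ are multipliers of size $\tau\sigma_\tau\lesssim\sigma_\tau^{-1}$ (using $\tau\approx\sigma_\tau^{-2}$ from Lemma \ref{lem:geom-uftisaq}), so the composition lemma shows that $\tau\partial_\tau$ preserves the ${\rm Op}^{w_0}_\tau$ order; iterating yields $\tau^M\partial_\tau^M$ preserving the order. The main obstacle will be the bookkeeping in the algebraic reduction: verifying that every commutator, Taylor-normalized derivative of $P$, and composition of Christ's operators produced along the way lies in exactly the class claimed, uniformly in $\tau$ and the base point, and confirming that the stated hypothesis on $\kappa$ is precisely what is needed.
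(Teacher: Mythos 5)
Your overall architecture---recasting Christ's bounds, a composition calculus, algebraic elimination of the $W_\tau$'s via the identities among $\bar D_\tau$, $D_\tau$, $\mathbb{G}_\tau$, $\mathbb{R}_\tau$, $\mathbb{R}_\tau^\ast$, $\mathbb{S}_\tau$, and the resolvent-type formula for $\partial_\tau\mathbb{G}_\tau$---is essentially the paper's (Proposition \ref{prop:kernels-alg-derivs}, Lemmas \ref{lem:kernels-alg-structure}, \ref{lem:kernels-kernelests}, and \ref{lem:chr-der-derivatives}), but two of your steps fail as stated. First, $\mathbb{R}_\tau$, $\mathbb{R}_\tau^\ast$, $\mathbb{G}_\tau$ are \emph{not} in ${\rm Op}^{w_0}_\tau(1)$, ${\rm Op}^{w_0}_\tau(2)$: their kernels carry the diagonal singularities $|z-w|^{-1}$ and $\log(2\sigma_\tau(w)/|z-w|)$, which the ${\rm Op}$ classes (pointwise bound $\sigma_\tau(w)^{k-2}e^{-\epsilon\t\rho_\tau}$) do not allow, and the blanket law ${\rm Op}(k)\circ{\rm Op}(\ell)\subset{\rm Op}(k+\ell)$ is false for a pair of singular factors, since $\int |z-\eta|^{-1}|\eta-w|^{-1}e^{-\epsilon(\cdots)}dm(\eta)$ produces a logarithmic singularity rather than a bounded kernel. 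The paper's Lemma \ref{lem:kernels-basicest} keeps track of the singular factors $K_{\tau,1},K_{\tau,2}$, and the class $\mathbb{O}_\tau(L,M,N)$ is required to contain at least one bounded factor $\mathbb{S}_\tau$, which is exactly what guarantees that after at most two compositions the local singularity is washed out and the final kernel is genuinely pointwise bounded; your calculus needs this bookkeeping (or an enlarged, composition-stable class) to be correct.

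Second, the branch ``leave it as a differential operator acting directly on the kernel'' cannot be carried out: Christ's theorem provides no estimates for $z$- or $w$-derivatives of $[\mathbb{S}_\tau]$, $[\mathbb{R}_\tau]$, $[\mathbb{G}_\tau]$, and none are available here. The whole point of the algebraic reduction in Lemma \ref{lem:kernels-alg-structure} is that \emph{every} bare derivative is eliminated---for instance $D_\tau\mathbb{S}_\tau=\mathbb{R}_\tau(4\pi\tau P_{z,\bar z})\mathbb{S}_\tau$---so that only Christ's operators and multiplications by $\tau\nabla P$ (arising from $\tau$-differentiation) and $\tau\nabla^k h$ (arising from the commutators) remain, and these multipliers are estimated by Lemma \ref{lem:kernels-potderivs}. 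Relatedly, your explanation of the cap $\min(|\alpha|,2)+\min(|\beta|,2)$ is not quite right: with $P=P^{\mb{\sigma},2}$ and $\tau\leq\Theta$ the higher multipliers are not ``uncontrolled''; by (H2) one has $|\tau\nabla^k P^{\mb{\sigma},2}|\lesssim\tau\approx\sigma_\tau(w)^{-2}$ for $k\geq 2$, i.e.\ the bound saturates at $\sigma_\tau^{-\min(k,2)}$ instead of improving to $\sigma_\tau^{-k}$, and summing these losses through the structure lemma is what produces the $\min$ caps, while the full gain $\sigma_\tau^{-k}$ for $\tau\geq\Theta$ is what requires $P^{\mb{\sigma},\kappa}$ with $\kappa\geq\max(m,|\alpha|,|\beta|)$. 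Your $\tau$-derivative step is sound and matches Lemma \ref{lem:chr-der-derivatives} and Corollary \ref{cor:chr-der}, with the caveat that $\tau\approx\sigma_\tau^{-2}$ only for $\tau\lesssim 1$; for $\tau\gtrsim 1$ one instead uses $\tau\Lambda(\mb{w},\sigma_\tau(w))\approx 1$ together with the normalized potential to get $|\tau\nabla P|\lesssim\sigma_\tau^{-1}$ on the characteristic scale, with the growth away from that scale absorbed by the factor $e^{-\epsilon\t\rho_\tau}$.
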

			
			The proof of Theorem \ref{thm:chr-der-szegoderivatives} is accomplished in several stages. 
			We begin with a definition.
			Say that an operator $\mathbb{H}_\tau$ is $\mathbb{O}_\tau (L,M,N)$ (and write $\mathbb{H}_\tau=\mathbb{O}_\tau(L,M,N)$) if either
			\begin{itemize}
				\item[(a)] $\mathbb{H}_\tau$ is a composition of $A+B+C+D+M$ operators, with $A\geq 1$ and $B+2C-D=L$, where the factors consist of
				\subitem(i) $A$ copies of $\mathbb{S}_\tau$,
				\subitem(ii) $B$ operators from $\{\mathbb{R}_\tau,\mathbb{R}_\tau^\ast\}$,
				\subitem(iii) $C$ copies of $\mathbb{G}_\tau$,
				\subitem(iv) $D$ multiplication operators of the form $\tau \nabla P$
				\subitem(v) $M$ multiplication operators of the form $\tau\nabla^{k_i+2}P$, with $k_i\geq 0$ and $$\displaystyle \sum_{1\leq i\leq M} k_i=N,$$ or
				\item[(b)] $\mathbb{H}_\tau$ is a linear combination of operators described by (a).
			\end{itemize}
			
			In Section \ref{sec:kernels-alg} we rewrite expressions of the form $W_\tau^\alpha \mathbb{R}^K_\tau \mathbb{S}_\tau (\mathbb{R}_\tau^\ast)^{K'} W_\tau^\beta$ as a sum of operators in the classes $\mathbb{O}_\tau(L,M,N)$.
			We then show in Section \ref{sec:kernels-kernelests} that for $\mathbb{H}_\tau=\mathbb{O}_\tau(L,M,N)$ and for $\phi$ supported in $\{|w|<\sigma_\tau(0)\}$, the operator $\phi\mapsto \mathbb{H}_\tau[\phi](z)$ is given by integration against a Schwartz kernel which is in ${\rm Op}^0_{I}(k)$ for appropriate $I$ and $k$, depending on our choice of $\kappa$.
			The arguments in Section \ref{sec:kernels-der} show that the Schwartz kernels of $\mathbb{G}_\tau,\ \mathbb{R}_\tau,\ \mathbb{R}_\tau^\ast,$ and $\mathbb{S}_\tau$  are differentiable in $\tau$, and explicitly compute formulas for their derivatives.
			This gives us a natural definition of $\partial_\tau \mathbb{H}_\tau$ for any $\mathbb{H}_\tau$ in the class $\mathbb{O}_\tau(L,M,N)$; see Corollary \ref{cor:chr-der} for the details. 
			These results are combined in Section \ref{sec:kernels-comp} to prove Theorem \ref{thm:chr-der-szegoderivatives}.

			\subsection{Alternate Expression for $W_\tau^\alpha \mathbb{R}_\tau^K \mathbb{S}_\tau (\mathbb{R}_\tau^\ast)^{K'} W_\tau^\beta$}
			\label{sec:kernels-alg}

			Our goal in this section is to prove the following lemma, which allows us to write operators such as $W_\tau^\alpha \mathbb{R}_\tau^K \mathbb{S}_\tau (\mathbb{R}_\tau^\ast)^{K'} W_\tau^\beta$ in a form which does not involve any explicit differentiation.
			
			\begin{lemma}\label{lem:kernels-alg-structure} Let $K,K',|\alpha|,|\beta|\geq 0$. Then
				\begin{align}
				  W_\tau^\alpha & \mathbb{R}_\tau^K \mathbb{S}_\tau (\mathbb{R}_\tau^\ast)^{K'} W_\tau^\beta\nonumber\\
					 & =  \sum_{\ell} \sum_{\ell'} \sum_{i=0}^{|\alpha|-\ell} \sum_{i'=0}^{|\beta|-\ell'} \mathbb{O}_\tau(K-|\alpha|+2i+\ell,i,\ell)\mathbb{O}_\tau(K'-|\beta|+2i'+\ell',i',\ell')\label{eq:kernels-alg-structure1}\\
					& =  \sum_{\ell} \sum_{\ell'} \sum_{i=0}^{|\alpha|-\ell} \sum_{i'=0}^{|\beta|-\ell'} \mathbb{O}_\tau(K+K'-|\alpha|-|\beta|+2i+2i'+\ell+\ell',i+i',\ell+\ell'),\label{eq:kernels-alg-structure2} \end{align}
				where the outer two sums are over $0\leq \ell\leq \max(|\alpha|-2,0)$ and \newline $0\leq \ell' \leq \max(|\beta|-2,0)$.
			\end{lemma}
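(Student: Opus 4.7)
My plan is to prove \eqref{eq:kernels-alg-structure1} by induction on $|\alpha|$ (with the analogous statement for $|\beta|$ following by adjointness) and then to deduce \eqref{eq:kernels-alg-structure2} algebraically. Since $\mathbb{S}_\tau$ is an orthogonal projection, we can write $\mathbb{S}_\tau = \mathbb{S}_\tau\circ\mathbb{S}_\tau$ and factor
$$W_\tau^\alpha\mathbb{R}_\tau^K\mathbb{S}_\tau(\mathbb{R}_\tau^\ast)^{K'}W_\tau^\beta = \bigl(W_\tau^\alpha \mathbb{R}_\tau^K\mathbb{S}_\tau\bigr)\bigl(\mathbb{S}_\tau(\mathbb{R}_\tau^\ast)^{K'}W_\tau^\beta\bigr),$$
whose second factor is the adjoint of an expression of the first factor's type (swap $\alpha\leftrightarrow\beta$, $K\leftrightarrow K'$, $\bar D_\tau\leftrightarrow D_\tau$, $\mathbb{R}_\tau\leftrightarrow\mathbb{R}_\tau^\ast$). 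It therefore suffices to establish the one-sided identity
$$W_\tau^\alpha\mathbb{R}_\tau^K\mathbb{S}_\tau = \sum_{\ell,i}\mathbb{O}_\tau\bigl(K-|\alpha|+2i+\ell,\,i,\,\ell\bigr)$$
with the appropriate index ranges. Once that is in hand, \eqref{eq:kernels-alg-structure1} follows by multiplying the two sums, and \eqref{eq:kernels-alg-structure2} follows from \eqref{eq:kernels-alg-structure1} via the inclusion $\mathbb{O}_\tau(L_1,M_1,N_1)\circ\mathbb{O}_\tau(L_2,M_2,N_2)\subset \mathbb{O}_\tau(L_1+L_2,M_1+M_2,N_1+N_2)$, which is immediate from the definition: composition of two representatives simply concatenates their lists of constituent factors.

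The inductive step rests on a short list of commutation identities I would establish first. A one-line computation from the definitions gives $[\bar D_\tau, D_\tau] = \pi\tau h$; combined with $\mathbb{G}_\tau = (\bar D_\tau D_\tau)^{-1}$ and the formula $[A,B^{-1}] = -B^{-1}[A,B]B^{-1}$, this yields $[D_\tau,\mathbb{R}_\tau] = \pi\tau\mathbb{R}_\tau h\mathbb{R}_\tau$ and (by adjointness) $[\bar D_\tau,\mathbb{R}_\tau^\ast] = \pi\tau\mathbb{R}_\tau^\ast h\mathbb{R}_\tau^\ast$. The identity $\mathbb{S}_\tau = I - \mathbb{R}_\tau\bar D_\tau$ immediately gives $\bar D_\tau\mathbb{R}_\tau = I$, $\bar D_\tau\mathbb{S}_\tau = 0$, and $\mathbb{S}_\tau D_\tau = 0$, and combining all of these yields the crucial identity $D_\tau\mathbb{S}_\tau = \pi\tau\mathbb{R}_\tau h\mathbb{S}_\tau$ (and its adjoint). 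The final ingredient is $[W_\tau,f] = \pm\nabla f$ for any smooth multiplication operator $f$; acting on a factor of type $\tau\nabla^{k+2}P$, this produces a factor of type $\tau\nabla^{(k+1)+2}P$. With these identities, I would push one $W_\tau$ at a time through a representative composition of factors, and at each step one of three things happens: (a) $W_\tau$ cancels against an adjacent $\mathbb{R}_\tau$, $\mathbb{R}_\tau^\ast$, or $\mathbb{S}_\tau$, reducing $B$ (hence $L$) by $1$; (b) $W_\tau$ commutes past an $\mathbb{R}_\tau$ or $\mathbb{S}_\tau$ picking up a commutator term of type $\pi\tau\mathbb{R}_\tau h(\cdot)$, raising $B$ and $M$ each by $1$; or (c) $W_\tau$ crosses a pre-existing multiplication $\tau\nabla^{k+2}P$, producing $\pm\tau\nabla^{(k+1)+2}P$ and raising $N$ by $1$. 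Tallying the number of type-(b) operations as $i$ and type-(c) operations as $\ell$ produces exactly the stated parameters $L = K - |\alpha| + 2i + \ell$, $M=i$, $N=\ell$.

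The main obstacle will be verifying that the parameter ranges stated in the lemma are not exceeded. Each type-(c) operation requires a preexisting high-order multiplication to act on, so any summand with $\ell\geq 1$ requires a chain of at least one type-(b) operation followed by type-(c) operations; this interdependence constrains the admissible $(i,\ell)$ more tightly than the naive budget $i+\ell \leq |\alpha|$ would suggest. Pinning down the precise restriction $\ell\leq\max(|\alpha|-2,0)$ will likely require a strong induction whose hypothesis keeps track not merely of membership in the right $\mathbb{O}_\tau(L,M,N)$ class but also of structural information about how each summand was produced — in particular, ruling out candidate terms that raw parameter-counting would permit but that the actual commutation tree cannot realize. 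I expect this bookkeeping, rather than the verification of the individual algebraic identities, to be the most involved part of the argument.
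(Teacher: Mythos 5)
Your overall framework is the same as the paper's: factor through $\mathbb{S}_\tau=\mathbb{S}_\tau\circ\mathbb{S}_\tau$ and adjointness to reduce to the one-sided identity for $W_\tau^\alpha\mathbb{R}_\tau^K\mathbb{S}_\tau$, use the composition property of the $\mathbb{O}_\tau$ classes, establish the commutator identities ($[D_\tau,\mathbb{R}_\tau]=\mathbb{R}_\tau(4\pi\tau P_{z\bar z})\mathbb{R}_\tau$, $\bar D_\tau\mathbb{R}_\tau=I$, $\bar D_\tau\mathbb{S}_\tau=0$, $D_\tau\mathbb{S}_\tau=\mathbb{R}_\tau(4\pi\tau P_{z\bar z})\mathbb{S}_\tau$, etc.), and induct one derivative at a time; your three-way tally of operations (a), (b), (c) is exactly the content of the paper's one-step corollary, and the arithmetic $L=K-|\alpha|+2i+\ell$, $M=i$, $N=\ell$ is right. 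The induction from $|\alpha|\geq 2$ to $|\alpha|+1$ is indeed just this bookkeeping.

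The genuine gap is in the base case $|\alpha|=2$, and your own diagnosis of the difficulty points in the wrong direction. With $|\alpha|=2$ the naive scheme produces, e.g. from $D_\tau D_\tau\mathbb{R}_\tau^K\mathbb{S}_\tau$, the term in which the second $D_\tau$ differentiates the multiplication $4\pi\tau P_{z\bar z}$ created by the first: this yields summands of the form $\mathbb{R}_\tau^{j+1}(4\pi\tau P_{zz\bar z})\mathbb{R}_\tau^{K-j}\mathbb{S}_\tau$, which lie in $\mathbb{O}_\tau(K+1,1,1)$, i.e.\ have $\ell=1>\max(|\alpha|-2,0)=0$. These terms are \emph{genuinely realized} by the commutation tree, so no amount of structural bookkeeping or ``strong induction ruling out candidate terms'' will eliminate them; they must instead be algebraically rewritten. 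The missing idea is the identity $[D_\tau,4\pi\tau P_{z,\bar z}]=-4\pi\tau P_{z,\bar z,z}=-[\bar D_\tau,4\pi\tau P_{z,z}]$: it trades the third-derivative multiplication for multiplications by $4\pi\tau P_{z,z}$ at the cost of extra $\bar D_\tau$'s, which are then absorbed using $\mathbb{R}_\tau\bar D_\tau=I-\mathbb{S}_\tau$, $\bar D_\tau\mathbb{R}_\tau=I$, and $\bar D_\tau\mathbb{S}_\tau=0$, leaving only terms with $N=0$ (this is where the middle factors $\mathbb{R}_\tau^{j}\mathbb{S}_\tau(4\pi\tau P_{z,z})\cdots$ come from). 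Without this step you can only prove the lemma with the weaker range $\ell\leq\max(|\alpha|-1,0)$, which is not the stated result and, downstream, would only give $\min(|\alpha|,1)$-type gains in Theorem \ref{thm:chr-der-szegoderivatives}(a) and Theorem \ref{thm:szego-decomp}(b) instead of $\min(|\alpha|,2)$. Once the $|\alpha|\leq 2$ cases are computed explicitly with this trick, the rest of your plan goes through as written.
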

			
			\begin{remark}{\rm Note that (\ref{eq:kernels-alg-structure1}) implies that, when writing $W_\tau^\alpha \mathbb{R}_\tau^K \mathbb{S}_\tau (\mathbb{R}_\tau^\ast)^{K'} W_\tau^\beta$ as a linear combination of operators which are $\mathbb{O}_\tau(L,M,N)$, we may assume that each term $\tau\nabla^{2+k_i}P$ satisfies $k_i\leq \max(|\alpha|,|\beta|)$. We will see later how this leads us to use $P^{\mb{\sigma},\max(m,|\alpha|,|\beta|)}$ in part (b) of Theorem \ref{thm:chr-der-szegoderivatives}. }\end{remark}
			
			We begin with a few algebraic computations.
			
			\begin{proposition}\label{prop:kernels-alg-derivs} Let $\mathbb{I}$ denote the identity operator. Then
				\begin{itemize}
					\item[(a)] $D_\tau \mathbb{G}_\tau = \mathbb{R}_\tau$,\quad  $\bar{D}_\tau \mathbb{G}_\tau = -\mathbb{R}_\tau^\ast (4\pi \tau P_{z,\bar{z}})\mathbb{G}_\tau + \mathbb{R}_\tau^\ast$
					\item[(b)] $\mathbb{G}_\tau \bar{D}_\tau = \mathbb{R}_\tau^\ast,$\quad $\mathbb{G}_\tau D_\tau = -\mathbb{G}_\tau (4\pi\tau P_{z,\bar{z}})\mathbb{R}_\tau + \mathbb{R}_\tau.$
					\item[(c)] $\bar{D}_\tau\mathbb{R}_\tau = \mathbb{I}=\mathbb{R}_\tau^\ast D_\tau$,\quad $[D_\tau, \mathbb{R}_\tau] = \mathbb{R}_\tau (4\pi \tau P_{z,\bar{z}})\mathbb{R}_\tau$,
					\item[] \quad $[\bar{D}_\tau,\mathbb{R}_\tau^\ast] = -\mathbb{R}_\tau^\ast (4\pi \tau P_{z,\bar{z}})\mathbb{R}_\tau^\ast$,\quad  $D_\tau \mathbb{R}_\tau^\ast = \mathbb{I}-\mathbb{S}_\tau = \mathbb{R}_\tau \bar{D}_\tau$.
					\item[(d)] $\bar{D}_\tau \mathbb{S}_\tau = 0 = \mathbb{S}_\tau D_\tau$,\quad $D_\tau \mathbb{S}_\tau = \mathbb{R}_\tau (4\pi \tau P_{z,\bar{z}})\mathbb{S}_\tau$,\quad $\mathbb{S}_\tau \bar{D}_\tau = \mathbb{S}_\tau (4\pi \tau P_{z,\bar{z}})\mathbb{R}_\tau^\ast$.
				\end{itemize}
			\end{proposition}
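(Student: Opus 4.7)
The entire proposition is a bundle of algebraic identities tying together the four operators $\mathbb{G}_\tau,\mathbb{R}_\tau,\mathbb{R}_\tau^{\ast},\mathbb{S}_\tau$ defined in (\ref{eq:dbarops}). My plan is to extract everything from three basic facts: (i) the commutator $[\bar D_\tau,D_\tau]=4\pi\tau P_{z,\bar z}$, which is a one-line calculation from $\bar D_\tau=\bar\partial+2\pi\tau P_{\bar z}$ and $D_\tau=-\partial+2\pi\tau P_z$; (ii) the defining identity $\bar D_\tau D_\tau\,\mathbb{G}_\tau=\mathbb{I}$ (valid on all of $L^2(\mathbb{C})$) together with $\mathbb{G}_\tau\,\bar D_\tau D_\tau=\mathbb{I}$ on $\mathrm{Dom}(\bar D_\tau D_\tau)$; and (iii) the self-adjointness $\mathbb{G}_\tau^{\ast}=\mathbb{G}_\tau$ and the adjoint pairs $D_\tau=\bar D_\tau^{\ast}$, $\mathbb{R}_\tau^{\ast}=(\mathbb{R}_\tau)^{\ast}$, so that each identity obtained on one side of the diagram immediately has a mirror on the other.

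Granted these, I would work through the four parts in order. For \textbf{(a)}, $D_\tau\mathbb{G}_\tau=\mathbb{R}_\tau$ is just the definition. For the second identity I would insert $\mathbb{I}=\bar D_\tau D_\tau\,\mathbb{G}_\tau$ after $\mathbb{G}_\tau$ and slide $\bar D_\tau$ across $D_\tau$ using the commutator:
\[
\mathbb{G}_\tau D_\tau=\mathbb{G}_\tau D_\tau(\bar D_\tau D_\tau)\mathbb{G}_\tau=\mathbb{G}_\tau(\bar D_\tau D_\tau)D_\tau\mathbb{G}_\tau-\mathbb{G}_\tau(4\pi\tau P_{z,\bar z})D_\tau\mathbb{G}_\tau=\mathbb{R}_\tau-\mathbb{G}_\tau(4\pi\tau P_{z,\bar z})\mathbb{R}_\tau,
\]
which is the second identity of \textbf{(b)}; the first identity of \textbf{(b)} is the definition of $\mathbb{R}_\tau^{\ast}$. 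Taking adjoints of the two identities in (b) then gives the second identity of (a) and re-derives $D_\tau\mathbb{G}_\tau=\mathbb{R}_\tau$.

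For \textbf{(c)}, the identities $\bar D_\tau\mathbb{R}_\tau=\mathbb{I}$ and $\mathbb{R}_\tau^{\ast}D_\tau=\mathbb{I}$ are immediate from $\bar D_\tau D_\tau\mathbb{G}_\tau=\mathbb{I}$ and its adjoint. The relation $D_\tau\mathbb{R}_\tau^{\ast}=\mathbb{R}_\tau\bar D_\tau=\mathbb{I}-\mathbb{S}_\tau$ is just the definition of $\mathbb{S}_\tau$ unpacked. For the commutator, I would rewrite
\[
[D_\tau,\mathbb{R}_\tau]=D_\tau(D_\tau\mathbb{G}_\tau)-(D_\tau\mathbb{G}_\tau)D_\tau=D_\tau\bigl[\,D_\tau,\mathbb{G}_\tau\bigr],
\]
and then use part (b) to identify $[D_\tau,\mathbb{G}_\tau]=\mathbb{R}_\tau-\mathbb{G}_\tau D_\tau=\mathbb{G}_\tau(4\pi\tau P_{z,\bar z})\mathbb{R}_\tau$, giving $\mathbb{R}_\tau(4\pi\tau P_{z,\bar z})\mathbb{R}_\tau$; the companion identity $[\bar D_\tau,\mathbb{R}_\tau^{\ast}]=-\mathbb{R}_\tau^{\ast}(4\pi\tau P_{z,\bar z})\mathbb{R}_\tau^{\ast}$ comes from taking the adjoint and noting the sign flip.

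For \textbf{(d)}, $\bar D_\tau\mathbb{S}_\tau=0$ is because $\mathbb{S}_\tau$ projects onto $\ker\bar D_\tau$, and $\mathbb{S}_\tau D_\tau=0$ is its adjoint. The interesting case is $D_\tau\mathbb{S}_\tau$. Fix $g=\mathbb{S}_\tau f\in\ker\bar D_\tau$; the commutator (i) gives $\bar D_\tau D_\tau g=D_\tau\bar D_\tau g+4\pi\tau P_{z,\bar z}g=4\pi\tau P_{z,\bar z}g$, so applying $\mathbb{R}_\tau$ produces $\mathbb{R}_\tau(4\pi\tau P_{z,\bar z})g$, which differs from $D_\tau g$ by an element of $\ker\bar D_\tau$. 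The remaining (and only real) subtle point is to show this difference is zero, which I would do by checking $D_\tau g\in(\ker\bar D_\tau)^{\perp}$ directly: for any $k\in\ker\bar D_\tau$, $\langle D_\tau g,k\rangle=\langle g,\bar D_\tau k\rangle=0$. Taking adjoints yields the last identity. The only genuine obstacle is being precise about which identities hold on all of $L^2(\mathbb{C})$ versus on $\mathrm{Dom}(\bar D_\tau D_\tau)$, but since the final statements all compose bounded operators with densely defined closed ones this is handled by the standard density argument.
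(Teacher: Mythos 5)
Your proposal is correct and takes essentially the same route as the paper: the paper's own proof consists of the single computation $\bar{D}_\tau \mathbb{G}_\tau = \mathbb{R}_\tau^\ast[D_\tau,\bar{D}_\tau]\mathbb{G}_\tau + \mathbb{R}_\tau^\ast$ obtained by inserting $\mathbb{G}_\tau\bar{D}_\tau D_\tau=\mathbb{I}$ and commuting, which is precisely the mirror image of your derivation of $\mathbb{G}_\tau D_\tau$, and then asserts that the remaining identities follow from the definitions in (\ref{eq:dbarops}) by the same techniques. Your adjoint/commutator/projection bookkeeping (including the orthogonality argument for $D_\tau\mathbb{S}_\tau$) simply spells out what the paper leaves implicit, and the domain caveats you flag are handled at the same formal level as in the paper.
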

			\begin{proof}
				We will only prove the formula for $\bar{D}_\tau \mathbb{G}_\tau$, as the other formulas either use the same techniques or follow immediately from the formulas in (\ref{eq:dbarops}).
				
				We compute as follows:
				$$\bar{D}_\tau \mathbb{G}_\tau = \underbrace{\mathbb{G}_\tau \bar{D}_\tau}_{=\mathbb{R}_\tau^\ast} D_\tau \bar{D}_\tau \mathbb{G}_\tau = \mathbb{R}_\tau^\ast [D_\tau,\bar{D}_\tau]\mathbb{G}_\tau + \mathbb{R}_\tau^\ast \underbrace{\bar{D}_\tau D_\tau \mathbb{G}_\tau}_{=\mathbb{I}} = - \mathbb{R}_\tau^\ast (4\pi \tau P_{z,\bar{z}})\mathbb{G}_\tau + \mathbb{R}_\tau^\ast.$$
			\end{proof}

			As an immediate application of the above formulas, we have
			\begin{corollary}\label{cor:kernels-alg-upshot} Let $W_\tau$ denote either $D_\tau$ or $\bar{D}_\tau$. Then
				$$W_\tau \mathbb{O}_\tau(L,M,N) = \mathbb{O}_\tau(L-1,M,N) + \mathbb{O}_\tau(L+1,M+1,N)+ \mathbb{O}_\tau(L,M,N+1)$$
				and
				$$\mathbb{O}_\tau (L,M,N)W_\tau = \mathbb{O}_\tau(L-1,M,N) + \mathbb{O}_\tau(L+1,M+1,N)+ \mathbb{O}_\tau(L,M,N+1).$$
			\end{corollary}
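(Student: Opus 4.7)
The plan is to prove both identities by strong induction on the total number $k = A+B+C+D+M$ of factors appearing in a representation of $\mathbb{H}_\tau \in \mathbb{O}_\tau(L,M,N)$ as a composition $F_1 F_2 \cdots F_k$. The base case $k=1$ forces $\mathbb{H}_\tau = \mathbb{S}_\tau$ (since $A \geq 1$) and $L=M=N=0$, and both $\bar{D}_\tau \mathbb{S}_\tau = 0$ and $D_\tau \mathbb{S}_\tau = \mathbb{R}_\tau (4\pi \tau P_{z,\bar z}) \mathbb{S}_\tau \in \mathbb{O}_\tau(1,1,0)$ match the claim via Proposition \ref{prop:kernels-alg-derivs}(d).

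For the inductive step I would split into cases according to the leading factor $F_1$. In the \emph{absorbing cases}---namely $F_1 \in \{\mathbb{S}_\tau, \mathbb{G}_\tau\}$, or $(F_1, W_\tau) \in \{(\mathbb{R}_\tau, \bar{D}_\tau), (\mathbb{R}_\tau^\ast, D_\tau)\}$---Proposition \ref{prop:kernels-alg-derivs} rewrites $W_\tau F_1$ as a sum of operators containing no residual derivative, and the structural bookkeeping $L = B + 2C - D$, $N = \sum k_i$ confirms directly that each resulting term lies in one of $\mathbb{O}_\tau(L-1,M,N)$, $\mathbb{O}_\tau(L+1,M+1,N)$, or $\mathbb{O}_\tau(L,M,N+1)$, as required. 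In the remaining non-absorbing cases a commutator step is needed. If $F_1 = \mathbb{R}_\tau$ and $W_\tau = D_\tau$, then $D_\tau \mathbb{R}_\tau = \mathbb{R}_\tau D_\tau + \mathbb{R}_\tau (4\pi \tau P_{z,\bar z}) \mathbb{R}_\tau$ produces one summand visibly in $\mathbb{O}_\tau(L+1,M+1,N)$ and one in which $D_\tau$ is re-applied to the shorter composition $F_2 \cdots F_k \in \mathbb{O}_\tau(L-1,M,N)$; the inductive hypothesis applied to the latter, followed by left-multiplication by $\mathbb{R}_\tau$ (which raises $L$ by one), closes this case. The mirror case $(F_1, W_\tau) = (\mathbb{R}_\tau^\ast, \bar{D}_\tau)$ is identical. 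When $F_1$ is a multiplication operator, I would write $W_\tau F_1 = F_1 W_\tau + [W_\tau, F_1]$; the commutator $[W_\tau, F_1]$ is itself a multiplication operator of one higher derivative order, which either converts a $\tau\nabla P$ into a $\tau\nabla^2 P$ (raising $L$ and $M$ by one) or raises a single $k_i$ by one (raising $N$ by one), and applying the inductive hypothesis to $W_\tau(F_2 \cdots F_k)$ in the first term finishes the analysis.

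The right-multiplication identity $\mathbb{O}_\tau(L,M,N) W_\tau$ follows from the same case split applied to the trailing factor $F_k$, using the right-action formulas in Proposition \ref{prop:kernels-alg-derivs}(b) together with the symmetric commutator identities. The main obstacle is not conceptual---the algebraic identities of Proposition \ref{prop:kernels-alg-derivs} do essentially all the work---but rather purely combinatorial: in each sub-case one must carefully verify that the resulting tuple $(B,C,D,M,\{k_i\})$ places the output into exactly the permitted union of three classes, and not into some spurious larger collection. Since every recursive invocation strictly decreases the factor count $k$, termination of the induction is automatic.
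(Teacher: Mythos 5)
Your proof is correct and follows essentially the same route as the paper: the paper deduces the corollary directly from the product and commutation formulas of Proposition \ref{prop:kernels-alg-derivs} (treating the bookkeeping as immediate), and your induction on the number of factors, with the absorbing versus commutator case split for the leading (respectively trailing) factor, is simply a careful formalization of that same argument. The count-tracking you describe (a $\tau\nabla P$ becoming $\tau\nabla^2 P$ raises $L$ and $M$ by one; raising a $k_i$ raises $N$ by one; the $\mathbb{R}_\tau(4\pi\tau P_{z,\bar z})\mathbb{R}_\tau$ commutator lands in $\mathbb{O}_\tau(L+1,M+1,N)$) checks out in every case.
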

			
			We now prove Lemma \ref{lem:kernels-alg-structure}.
			
			\begin{proof}[Proof of Lemma \ref{lem:kernels-alg-structure}.]
				By writing $$W_\tau^\alpha \mathbb{R}_\tau^K \mathbb{S}_\tau (\mathbb{R}_\tau^\ast)^{K'} W_\tau^\beta = (W_\tau^\alpha \mathbb{R}_\tau^K\mathbb{S}_\tau) \mathbb{S}_\tau \big((W_\tau^\beta)^{\ast}\mathbb{R}_\tau^{K'}\mathbb{S}_\tau\big)^\ast$$ and observing that
				$$\mathbb{O}_\tau(L,M,N)\mathbb{O}_\tau(L',M',N')=\mathbb{O}_\tau(L+L',M+M',N+N'),$$
				we are done when we prove (\ref{eq:kernels-alg-structure1}) for $|\beta|=K'=0$.
				
				We prove the cases where $|\alpha|\leq 2$ directly, and then handle the higher-order cases by induction.
				When $|\alpha|=1$, Proposition \ref{prop:kernels-alg-derivs} implies that
				\begin{equation*}
					\bar{D}_\tau \mathbb{R}_\tau^K\mathbb{S}_\tau  =  \begin{cases} \mathbb{R}_\tau^{K-1}\mathbb{S}_\tau &\mbox{if}\ K\geq 1,\\ 0 &\mbox{if}\ K=0,\end{cases}\quad \mbox{and}\quad D_\tau \mathbb{R}_\tau^K \mathbb{S}_\tau  =  \sum_{j=0}^K \mathbb{R}_\tau^{j+1}(4\pi \tau P_{z,\bar{z}})\mathbb{R}_\tau^{K-j}\mathbb{S}_\tau.
				\end{equation*}
				
				There are four possibilities when $|\alpha|=2$:
%				\begin{equation*}
%					\bar{D}_\tau\bar{D}_\tau \mathbb{R}_\tau^K\mathbb{S}_\tau  =  \begin{cases} \mathbb{R}_\tau^{K-2}\mathbb{S}_\tau\quad &\mbox{if}\ K\geq 2,\\ 0\quad &\mbox{if}\ K\leq 1,\end{cases}
%				\end{equation*}
%				\begin{equation*}
%					D_\tau \bar{D}_\tau \mathbb{R}_\tau^K \mathbb{S}_\tau  =  \begin{cases} 0\quad &\mbox{if}\ K=0,\\ \displaystyle \sum_{j=0}^{K-1} \mathbb{R}_\tau^{j+1}(4\pi \tau P_{z,\bar{z}})\mathbb{R}_\tau^{K-1-j}\mathbb{S}_\tau\quad &\mbox{if}\ K\geq 1,\end{cases}
%				\end{equation*}
%				\begin{equation*}
%					\bar{D}_\tau D_\tau \mathbb{R}_\tau^K \mathbb{S}_\tau  =  \sum_{j=0}^K \mathbb{R}_\tau^j (4\pi \tau P_{z,\bar{z}})\mathbb{R}_\tau^{K-j}\mathbb{S}_\tau,
%				\end{equation*}
				\begin{align*}
					\bar{D}_\tau\bar{D}_\tau \mathbb{R}_\tau^K\mathbb{S}_\tau  &=  \begin{cases} \mathbb{R}_\tau^{K-2}\mathbb{S}_\tau\quad &\mbox{if}\ K\geq 2,\\ 0\quad &\mbox{if}\ K\leq 1,\end{cases}\\
					D_\tau \bar{D}_\tau \mathbb{R}_\tau^K \mathbb{S}_\tau  &=  \begin{cases} 0\quad &\mbox{if}\ K=0,\\ \displaystyle \sum_{j=0}^{K-1} \mathbb{R}_\tau^{j+1}(4\pi \tau P_{z,\bar{z}})\mathbb{R}_\tau^{K-1-j}\mathbb{S}_\tau\quad &\mbox{if}\ K\geq 1,\end{cases}\\
					\bar{D}_\tau D_\tau \mathbb{R}_\tau^K \mathbb{S}_\tau  &=  \sum_{j=0}^K \mathbb{R}_\tau^j (4\pi \tau P_{z,\bar{z}})\mathbb{R}_\tau^{K-j}\mathbb{S}_\tau,\\
					D_\tau  D_\tau \mathbb{R}_\tau^K\mathbb{S}_\tau & =  \sum_{j=0}^K \sum_{\ell=0}^j \mathbb{R}_\tau^{\ell+1}(4\pi \tau P_{z,\bar{z}})\mathbb{R}_\tau^{j+1-\ell}(4\pi \tau P_{z,\bar{z}})\mathbb{R}_\tau^{K-j}\mathbb{S}_\tau \\
					& \qquad \qquad + \sum_{j=0}^K \mathbb{R}_\tau^j \mathbb{S}_\tau(4\pi \tau P_{z,z})\mathbb{R}_\tau^{K-j}\mathbb{S}_\tau - \sum_{j=0}^K \mathbb{R}_\tau^j (4\pi \tau P_{z,z})\mathbb{R}_\tau^{K-j}\mathbb{S}_\tau \\
					& \qquad\qquad + \sum_{j=0}^{K-1} \mathbb{R}_\tau^{j+1} (4\pi \tau P_{z,z})\mathbb{R}_\tau^{K-1-j}\mathbb{S}_\tau\\
					&  \qquad\qquad  + \sum_{j=0}^K \sum_{\ell=0}^{K-j} \mathbb{R}_\tau^{j+1}(4\pi \tau P_{z,\bar{z}})\mathbb{R}_\tau^{\ell+1}(4\pi \tau P_{z,\bar{z}})\mathbb{R}_\tau^{K-j-\ell}\mathbb{S}_\tau,
				\end{align*}
				where in the case $D_\tau D_\tau \mathbb{R}_\tau^K \mathbb{S}_\tau$ we needed to use, in addition to Proposition \ref{prop:kernels-alg-derivs}, the computation $$[D_\tau,4\pi \tau P_{z,\bar{z}}] = -4\pi \tau P_{z,\bar{z},z} = -[\bar{D}_\tau,4\pi \tau P_{z,z}].$$
				Because $\mathbb{R}_\tau^K\mathbb{S}_\tau=\mathbb{O}_\tau(K,0,0)$, we see that for $|\alpha|\leq 2$, $$\displaystyle W_\tau^\alpha \mathbb{R}_\tau^K\mathbb{S}_\tau = \sum_{i=0}^{|\alpha|} \mathbb{O}_\tau(K-|\alpha|+2i,i,0)$$ as desired.
				
				We turn now to the proof of equation (\ref{eq:kernels-alg-structure1}), which we have shown holds for $|\alpha|=0,1,2$. If we know that (\ref{eq:kernels-alg-structure1}) holds for some $\alpha$ with $|\alpha|\geq 2$, then by Corollary \ref{cor:kernels-alg-upshot} we have
				\begin{align*}
					W_\tau W^\alpha_\tau \mathbb{R}_\tau^{K} \mathbb{S}_\tau & =  W_\tau \sum_{\ell=0}^{|\alpha|-2} \sum_{i=0}^{|\alpha|-\ell} \mathbb{O}_\tau (K-|\alpha|+2i+\ell,i,\ell) \\
					& =  \sum_{\ell=0}^{|\alpha|-2}\sum_{i=0}^{|\alpha|-\ell} \Big[ \mathbb{O}_\tau(K-(|\alpha|+1)+2i+\ell,i,\ell)\\
					& \qquad\qquad\qquad + \mathbb{O}_\tau (K-(|\alpha|+1)+2i+(\ell+1),i,\ell+1) \\
					&  \qquad\qquad\qquad + \mathbb{O}_\tau(K-(|\alpha|+1)+2(i+1)+\ell,i+1,\ell)\Big] \\
					& =  \sum_{\ell=0}^{(|\alpha|+1)-2}\sum_{i=0}^{(|\alpha|+1)-\ell} \mathbb{O}_\tau (K-(|\alpha|+1)+2i+\ell,i,\ell).
				\end{align*}
				This completes the proof of Lemma \ref{lem:kernels-alg-structure}.
			\end{proof}

			\begin{remark}{\rm The inclusion of the multiplication operators $\tau\nabla P$ in the definition of $\mathbb{O}_\tau(L,M,N)$ might currently seem superfluous because these terms have not yet appeared in the above proof (indeed, only higher-order derivatives of $P$ appeared).
					These operators will show up when we differentiate operators in $\mathbb{O}_\tau(L,M,N)$ with respect to $\tau$.
				}\end{remark}
			
			\begin{remark}\label{rem:kernels-heisenberg}{\rm If we are working with the Heisenberg group (i.e. if $P(z)=|z|^2$), then the above results simplify substantially. This is due to the fact that $P_{z,z}\equiv 0$ and $P_{z,\bar{z}}\equiv 1$, and therefore 
			$$W_\tau^\alpha \mathbb{R}_\tau^K \mathbb{S}_\tau (\mathbb{R}_\tau^\ast)^{K'} W_\tau^\beta=\sum_{j=0}^{|\alpha|+|\beta|} \mathbb{O}_\tau (K+K'-|\alpha|-|\beta|+2j,j,0).$$
			The arguments used to prove Theorem \ref{thm:chr-der-szegoderivatives} in Section \ref{sec:kernels-comp} then yield $$\tau^M \partial_\tau^M (W_\tau^\alpha \mathbb{R}_\tau^K \mathbb{S}_\tau (\mathbb{R}_\tau^\ast)^{K'} W_\tau^\beta)\in {\rm Op}_{(0,{+\infty})}^0 (K+K'-|\alpha|-|\beta|),$$ which is a much stronger result for $\tau\lesssim 1$ than that of Theorem \ref{thm:chr-der-szegoderivatives} for general $P$.  }\end{remark}
				
				\subsection{Estimates for $\mathbb{O}_\tau (L,M,N)$}
				\label{sec:kernels-kernelests}
				
				Our next goal is the following lemma.
				
				\begin{lemma}\label{lem:kernels-kernelests} Let $\mathbb{H}_\tau\in \mathbb{O}_\tau(L,M,N)$, and restrict $\mathbb{H}_\tau$ to test functions supported in $\{|w|<\sigma_\tau(0)\}$. As before, fix $0<\Theta<{+\infty}$.
					
					If $P=P^{\mb{\sigma},2}$, then
					\begin{equation}\label{eq:kernels-kernelest2}\mathbb{H}_{(0,\Theta]} \in {\rm Op}^0_{(0,\Theta]}(L-2M).\end{equation}
					If $P=P^{\mb{\sigma},\kappa}$ for $\kappa\geq \max(m,|\alpha|,|\beta|)$, then
					\begin{equation}\label{eq:kernels-kernelestkappa}\mathbb{H}_{[\Theta,{+\infty})} \in {\rm Op}^0_{[\Theta,{+\infty})}(L-2M-N).\end{equation}
				\end{lemma}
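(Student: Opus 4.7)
The plan is to prove the lemma in three steps: bound each ``atomic'' factor appearing in $\mathbb{O}_\tau(L,M,N)$, establish a composition principle that respects orders, and then sum orders.

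\textbf{Atomic bounds.} The estimates of Theorem \ref{thm:chr-rec-Christest}, recast in Remark \ref{rem:kernels-estrestate}, place $\mathbb{S}_\tau,\mathbb{R}_\tau,\mathbb{R}_\tau^\ast,\mathbb{G}_\tau$ in the calculus at orders $0,1,1,2$, modulo the locally integrable factors $K_{\tau,j}$. For the multiplication operators I would split on the size of $\tau$. When $\tau\leq\Theta$, Lemma \ref{lem:geom-uftisaq} gives $\sigma_\tau\gtrsim 1$ and $\tau\sigma_\tau^2\approx 1$, so Proposition \ref{prop:potentials}(b)(ii)--(iii) immediately yields $|\tau\nabla P(w)|\lesssim\tau|w|\lesssim\sigma_\tau^{-1}$ and $|\tau\nabla^{k_i+2}P(w)|\leq\tau\|h\|_{C^{k_i}}\lesssim\sigma_\tau^{-2}$ for $|w|<\sigma_\tau(0)$, yielding orders $-1$ and $-2$. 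When $\tau\geq\Theta$, I would invoke the normalization $P=P^{\mb{\sigma},\kappa}$ of Section \ref{sec:normalization-biholomorphic} with $\kappa\geq\max(m,|\alpha|,|\beta|)$: the pure-holomorphic derivatives $\partial_z^jP(0)$ vanish for $j\leq\kappa$. Taylor-expanding $\nabla^{k_i+2}P$ about $0$, every surviving coefficient is then a mixed partial derivative of $h$, which Remark \ref{rem:metric-approxhighorder} identifies with a negative power of $\sigma_\tau$ via the relation $\sum|\partial^{j-2}h(0)|\sigma_\tau^j\approx\Lambda(\mb{0},\sigma_\tau)\approx\tau^{-1}$. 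Multiplying by $|w|^{p+q}\leq\sigma_\tau^{p+q}$ and summing (with the Taylor remainder controlled by $\tau\sigma_\tau^{\kappa+1}\leq\tau\sigma_\tau^m\lesssim 1$) produces $|\tau\nabla^{k_i+2}P(w)|\lesssim\sigma_\tau^{-(k_i+2)}$ and $|\tau\nabla P(w)|\lesssim\sigma_\tau^{-1}$.

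\textbf{Composition and assembly.} Next I would prove that if $\mathbb{H}_1,\mathbb{H}_2$ have kernels bounded by $\sigma_\tau(w)^{k_i-2}K_{\tau,j_i}(z,w)e^{-\epsilon\tilde\rho_\tau(z,w)}$ with $K_{\tau,j_i}$ locally integrable on $\sigma_\tau$-scale balls, then $\mathbb{H}_1\mathbb{H}_2\in{\rm Op}^{w_0}_\tau(k_1+k_2)$. The argument uses the quasi-triangle inequality in Proposition \ref{prop:kernels-rhotilde} to factor $e^{-\epsilon\tilde\rho(z,\eta)-\epsilon\tilde\rho(\eta,w)}$ as $e^{-\epsilon'\tilde\rho(z,w)}$ times residual decay; then the polynomial bound $|z-w|/\sigma_\tau(w)\lesssim\tilde\rho_\tau(z,w)^N$ (also from Proposition \ref{prop:kernels-rhotilde}) lets me replace $\sigma_\tau(\eta)$ by $\sigma_\tau(w)$ up to a power of $\tilde\rho$ absorbed into that residual decay; and finally local integrability of $K_{\tau,j}$ on the $\sigma_\tau$-scale ball to which the residual decay restricts $\eta$ contributes a net factor of $\sigma_\tau(w)^2$. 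Multiplication operators fit in because composing with them just inserts a pointwise factor into the kernel, and the bounds from Step 1 control that factor (trading polynomial growth in $|z|$ against the exponential decay in $\tilde\rho$ when $m(z)$ rather than $m(w)$ is inserted). Because every $\mathbb{H}_\tau\in\mathbb{O}_\tau(L,M,N)$ factors through at least one $\mathbb{S}_\tau$ (whose $K_{\tau,0}=1$), iterating this principle terminates with a clean kernel. Summing orders then gives $A\cdot 0+B\cdot 1+C\cdot 2+D\cdot(-1)+M\cdot(-2)=L-2M$ on $(0,\Theta]$, and $L-2M-N$ on $[\Theta,+\infty)$ after substituting $\sum k_i=N$.

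\textbf{Main obstacle.} The hardest piece is the large-$\tau$ bound on the higher-order multiplications: the naive estimate $|\tau\nabla^{k_i+2}P(w)|\leq\tau\|h\|_{C^{k_i}}$ is of order at worst $\sigma_\tau^{-m}$ when $k_i+2<m$, which is too weak. Closing the gap requires combining three distinct pieces of information: the vanishing of pure-holomorphic Taylor coefficients from the normalization (to eliminate the wrong-sized terms), the Remark \ref{rem:metric-approxhighorder} identity converting mixed-partial derivatives of $h$ at $0$ into negative powers of $\sigma_\tau$, and the tail estimate $\tau\sigma_\tau^{\kappa+1}\lesssim 1$ for the Taylor remainder. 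By comparison, the composition step is essentially standard Calder\'on--Zygmund-style kernel algebra on the quasimetric space $(\mathbb{C},\tilde\rho_\tau)$, and the final assembly is pure bookkeeping.
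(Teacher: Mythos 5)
Your proposal is correct and follows essentially the same route as the paper: pointwise bounds on $\tau\nabla^k P$ with the small-$\tau$/large-$\tau$ dichotomy (the paper's Lemma \ref{lem:kernels-potderivs}, which uses exactly your three ingredients — vanishing of the pure Taylor coefficients after the $P^{\mb{\sigma},\kappa}$ normalization, Remark \ref{rem:metric-approxhighorder} to convert derivatives of $h$ at $0$ into powers of $\sigma_\tau$, and the remainder bound $\tau\sigma_\tau^{\kappa+1}\lesssim 1$, with the extension to arbitrary intermediate points obtained by trading polynomial growth against the $\tilde\rho_\tau$-decay), then a composition estimate on $(\mathbb{C},\tilde\rho_\tau)$ (the paper's Lemma \ref{lem:kernels-basicest}), and finally inductive order bookkeeping giving $L-2M$ and $L-2M-N$. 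The only imprecision is your claim that the composition of two kernels each carrying the factor $K_{\tau,1}$ lands directly in ${\rm Op}(k_1+k_2)$: a logarithmic factor $K_{\tau,2}$ survives that particular composition and is removed only by the next one — which is precisely where the requirement $A\geq 1$ (at least one $\mathbb{S}_\tau$ factor) is really used — and the paper handles this by tracking $K_{\tau,0},K_{\tau,1},K_{\tau,2}$ through a short case analysis; this does not affect your final conclusion.
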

				
				As a preliminary step, we give additional pointwise bounds for the derivatives of $P^{\mb{\sigma},2}$ and $P^{\mb{\sigma},\kappa}$.
				
				\begin{lemma}\label{lem:kernels-potderivs}
					Let $|w|\leq \sigma_\tau(0)$ and $\eta\in \mathbb{C}$, and fix $\epsilon>\epsilon'>0$, $\kappa\geq m$, and $1\leq k\leq \kappa$. Then for $\tau\leq \Theta$ we have
					\begin{equation}\label{eq:kernels-potderivs2} |\tau\nabla^k P^{\mb{\sigma},2}(\eta)|e^{-\epsilon\t\rho_\tau(\eta,w)}\lesssim  \sigma_\tau(w)^{-\min(k,2)}e^{-\epsilon'\t\rho_\tau(\eta,w)},\end{equation}
					while for $\tau\geq \Theta$ we have
					\begin{equation}\label{eq:kernels-potderivsk} |\tau\nabla^k P^{\mb{\sigma},\kappa}(\eta)|e^{-\epsilon\t\rho_\tau(\eta,w)}\lesssim \sigma_\tau(w)^{-k}e^{-\epsilon'\t\rho_\tau(\eta,w)},\end{equation}
					where the constants involved depend only on $\epsilon$, $\epsilon'$, $\Theta$, and (H1)-(H3).
				\end{lemma}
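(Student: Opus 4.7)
The plan is to reduce both inequalities to showing $\tau\sigma_\tau(w)^{k^\ast}|\nabla^k P^{\mb{\sigma},\kappa_0}(\eta)| \lesssim e^{(\epsilon - \epsilon')\t\rho_\tau(\eta,w)}$, where $(\kappa_0, k^\ast) = (2, \min(k,2))$ in \eqref{eq:kernels-potderivs2} and $(\kappa_0, k^\ast) = (\kappa, k)$ in \eqref{eq:kernels-potderivsk}. The hypothesis $|w|\leq \sigma_\tau(0)$ together with Lemma \ref{lem:chr-rec-metest} gives $\sigma_\tau(w)\approx \sigma_\tau(0)$, which I will use freely. I will split the argument into a near region $|\eta - w| \leq C\sigma_\tau(w)$, where $\t\rho_\tau(\eta,w) = O(1)$ and it suffices to bound the left-hand side by a constant, and a far region, where polynomial factors of the form $|\eta-w|/\sigma_\tau(w)$ get absorbed into the exponential decay via the last item of Proposition \ref{prop:kernels-rhotilde}.

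For \eqref{eq:kernels-potderivs2}, the hypothesis $\tau \leq \Theta$ together with Lemma \ref{lem:geom-uftisaq} gives $\sigma_\tau(w) \gtrsim 1$ and $\tau\sigma_\tau(w)^2 \approx \tau\Lambda(\mb{w}, \sigma_\tau(w)) \approx 1$. Differentiating the definition of $P^{\mb{\sigma},2}$ and invoking Proposition \ref{prop:potentials}(b) yields $|\nabla P^{\mb{\sigma},2}(\eta)|\lesssim|\eta|$ and $|\nabla^k P^{\mb{\sigma},2}(\eta)|\lesssim 1$ for $k\geq 2$. The case $k\geq 2$ is then immediate; for $k=1$ on the near region, $\tau|\eta|\sigma_\tau(w) \lesssim \tau\sigma_\tau(w)^2 \approx 1$, while on the far region $|\eta|\lesssim |\eta-w|$, and $\tau|\eta-w|\sigma_\tau(w)\approx |\eta - w|/\sigma_\tau(w)$ is dominated by a power of $\t\rho_\tau(\eta,w)$.

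Part \eqref{eq:kernels-potderivsk} is the heart of the proof, and the main obstacle is to avoid the naive bound $\tau|\nabla^k P^{\mb{\sigma},\kappa}(\eta)|\sigma_\tau(w)^k \lesssim \tau\sigma_\tau(w)^k$, which can be as large as $\tau^{1 - k/m}$ when $\Lambda(\mb{\sigma}, \delta) \sim \delta^m$ at small scales. This is precisely the motivation for taking $\kappa \geq m$: all pure derivatives $\partial_z^j P^{\mb{\sigma},\kappa}(0)$ and $\partial_{\bar z}^j P^{\mb{\sigma},\kappa}(0)$ vanish for $1\leq j\leq \kappa$, and because the subtracted polynomial consists solely of $z^j$ and $\bar z^j$ monomials one has the identity $\partial_z^a \partial_{\bar z}^b P^{\mb{\sigma},\kappa}(\eta) = \tfrac{1}{4}\partial_z^{a-1}\partial_{\bar z}^{b-1}h(\eta + \sigma)$ for all $a,b\geq 1$. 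Expanding $\nabla^k P^{\mb{\sigma},\kappa}$ in Taylor series at $\eta = 0$ to order $\kappa - k$, applying the vanishing of pure derivatives and the above identity for mixed terms, and controlling the remainder via (H2), produces
\[
|\nabla^k P^{\mb{\sigma},\kappa}(\eta)| \lesssim \sum_{\ell = \max(0,\,2-k)}^{\kappa - k} |\nabla^{k + \ell - 2}h(\sigma)|\, |\eta|^\ell + |\eta|^{\kappa - k + 1}
\]
on a bounded neighborhood of $\eta = 0$. Remark \ref{rem:metric-approxhighorder} then converts $\tau \Lambda(\mb{\sigma}, \sigma_\tau(w)) \approx 1$ into $\tau |\nabla^{j-2}h(\sigma)|\sigma_\tau(w)^j \lesssim 1$ term-by-term for $2\leq j\leq \kappa$, and multiplying the Taylor bound by $\tau\sigma_\tau(w)^k$ yields $\tau\sigma_\tau(w)^k|\nabla^k P^{\mb{\sigma},\kappa}(\eta)| \lesssim \sum_\ell(|\eta|/\sigma_\tau(w))^\ell$, which is $O(1)$ on the near region and dominated by a power of $\t\rho_\tau(\eta, w)$ on the far region. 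For $|\eta|$ outside the scale where the Taylor expansion is accurate, I will instead use the global bound $|\nabla^k P^{\mb{\sigma},\kappa}(\eta)|\lesssim 1 + |\eta|^{\max(\kappa - k,\, 0)}$ coming from (H2) and the polynomial part of $P^{\mb{\sigma},\kappa}$, and absorb the resulting polynomial factor into the exponential via Proposition \ref{prop:kernels-rhotilde}.
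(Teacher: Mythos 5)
Your proposal is correct and follows essentially the same route as the paper: Proposition \ref{prop:potentials}(b) plus $\tau\sigma_\tau(w)^2\approx 1$ for the case $\tau\leq\Theta$, and for $\tau\geq\Theta$ a Taylor expansion of $P^{\mb{\sigma},\kappa}$ at $0$ in which the pure derivatives vanish and the mixed ones reduce to derivatives of $h$, controlled term-by-term by $\tau\Lambda(\mb{\sigma},\sigma_\tau(w))\approx 1$ (Remark \ref{rem:metric-approxhighorder}), with a near/far split and the polynomial factors absorbed into the exponential via Proposition \ref{prop:kernels-rhotilde}. The only cosmetic difference is that the paper keeps the Taylor-derived bound for all large $|\eta|$ (its remainder is globally controlled by $\|\nabla^{\kappa+1}P^{\mb{\sigma}}\|_\infty$), whereas your cruder bound $1+|\eta|^{\max(\kappa-k,0)}$ leaves a loose factor of $\tau$ that must also be dominated by a power of $\t\rho_\tau(\eta,w)$ there, which follows from $\sigma_\tau(w)\lesssim\tau^{-1/m}$ together with the last item of Proposition \ref{prop:kernels-rhotilde}.
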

				\begin{proof}
					
					When $\tau\leq \Theta$, note that $\sigma_\tau(\eta)^2\tau \approx 1$ uniformly in $\eta$. Thus if $|\eta|\leq \sigma_\tau(\eta)\approx \sigma_\tau(w)$, then Proposition \ref{prop:potentials} gives
					$$|\tau \nabla^k P^{\mb{\sigma},2}(\eta)|\lesssim \tau |\eta|^{2-\min(k,2)} \lesssim \tau \sigma_\tau(\eta)^{2-\min(k,2)}\approx \sigma_\tau(w)^{-\min(k,2)}.$$
					On the other hand, if $|\eta|\geq \sigma_\tau(\eta)$ then we apply part (b) of Proposition \ref{prop:chr-rec-specforms} to obtain
					\begin{align*}
					 |\tau\nabla^k P^{\mb{\sigma},2}(\eta)|&\lesssim \tau |\eta|^{2-\min(k,2)}\lesssim \tau\sigma_\tau(\eta)^{2-\min(k,2)}\t\rho_\tau(\eta,0)^N \\
					 &\lesssim \sigma_\tau(w)^{-\min(k,2)}(\t\rho_\tau(\eta,w)+\t\rho_\tau(w,0))^N \\
					 & \lesssim \sigma_\tau(w)^{-\min(k,2)}(\t\rho_\tau(\eta,w)+1)^N,\end{align*}
					from which (\ref{eq:kernels-potderivs2}) follows.
					
					When $\tau\geq \Theta$, we write
					\begin{equation}\label{eq:kernels-potder}
						P^{\mb{\sigma},\kappa}(\eta)=\sum_{\on{2\leq \alpha+\beta\leq \kappa,}{\alpha,\beta\geq 1}} \frac{\partial^{\alpha+\beta-2}h}{\partial z^\alpha \partial \bar{z}^\beta}(0) \eta^\alpha \bar{\eta}^\beta + \mathcal{O}(\|h\|_{C^{\kappa-1}}|\eta|^{\kappa+1}). \end{equation}
					Computing $\nabla^k P^{\mb{\sigma},\kappa}(\eta)$ and estimating yields
					$$|\nabla^k P^{\mb{\sigma},\kappa}(\eta)|\lesssim \sum_{\on{\max(2,k)\leq\alpha+\beta\leq \kappa}{\alpha,\beta\geq 1}} \Big| \frac{\partial^{\alpha+\beta-2}h}{\partial z^\alpha \partial \bar{z}^\beta}(0)\Big| |\eta|^{\alpha+\beta-k}+\|h\|_{C^{\kappa-1}}|\eta|^{\kappa-k+1}.$$
					
					If $|\eta|\lesssim \sigma_\tau(0)$, then because $\sigma_\tau(w)\approx\sigma_\tau(0)\lesssim 1$ we apply Remark \ref{rem:metric-approxhighorder} to get
					$$|\tau\nabla^k P^{\mb{\sigma},\kappa}(\eta)|\lesssim \tau\sigma_\tau(0)^{-k}\Lambda(\mb{0},\sigma_\tau(0))=\sigma_\tau(0)^{-k}\approx \sigma_\tau(w)^{-k}.$$
					Similarly, for $|\eta|\gg \sigma_\tau(0)$ we apply part (b) of Proposition \ref{prop:chr-rec-specforms} to see that
					$$|\tau\nabla^k P^{\mb{\sigma},\kappa}(\eta)|\lesssim \tau\sigma_\tau(0)^{-k}\Big(\frac{|\eta|}{\sigma_\tau(0)}\Big)^{\kappa+1-k} \lesssim \sigma_\tau (w)^{-k}\t\rho_\tau(\eta,w)^N,$$
					which yields (\ref{eq:kernels-potderivsk}).
				\end{proof}

				Our first lemma shows how operators behave under composition.
				
				\begin{lemma}\label{lem:kernels-basicest} For $\epsilon>0$ there exists $\epsilon'>0$ such that for $0\leq i,j\leq 2$,
					\begin{align*}
					\int_\mathbb{C} e^{-\epsilon(\t\rho_\tau(z,\eta)+\t\rho_\tau(\eta,w))}& K_{\tau,i}(z,\eta)K_{\tau,j}(\eta,w)dm(\eta)\\
					&\lesssim \begin{cases} \sigma_\tau(w)^{2}K_{\tau,2}(z,w)e^{-\epsilon'\t\rho_\tau(z,w)}\ & \mbox{if}\ i=j=1,\\ \sigma_\tau(w)^{2}e^{-\epsilon'\t\rho_\tau(z,w)} &\mbox{otherwise}.\end{cases}\end{align*}
				\end{lemma}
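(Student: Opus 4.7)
The plan is to peel off a global exponential factor using the quasi-triangle inequality, then reduce to essentially local estimates concentrated near $z$ and $w$. First, by Proposition \ref{prop:kernels-rhotilde}(iv), fix $C\ge 1$ with $\tilde\rho_\tau(z,w)\le C(\tilde\rho_\tau(z,\eta)+\tilde\rho_\tau(\eta,w))$. Splitting $\epsilon=\epsilon'+\epsilon''$ with $\epsilon'=\epsilon/(2C)>0$ extracts the desired factor $e^{-\epsilon'\tilde\rho_\tau(z,w)}$ while retaining positive exponential damping at both base points, so it suffices to bound
\[
J\;:=\;\int_\mathbb{C} e^{-\epsilon''(\tilde\rho_\tau(z,\eta)+\tilde\rho_\tau(\eta,w))}K_{\tau,i}(z,\eta)K_{\tau,j}(\eta,w)\,dm(\eta)
\]
by $\sigma_\tau(w)^2$ in general, and by $\sigma_\tau(w)^2 K_{\tau,2}(z,w)$ when $i=j=1$.

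Next I would split $\mathbb{C}=A_z\cup A_w\cup A_\infty$ with $A_z=\{|z-\eta|\le\tfrac12|z-w|\}$, $A_w=\{|w-\eta|\le\tfrac12|z-w|\}$, and $A_\infty$ the remainder. On $A_z$ one has $|\eta-w|\approx|z-w|$ together with $\sigma_\tau(\eta)\approx\sigma_\tau(z)$ from (\ref{eq:chr-rec-sigmaratios}) and $\tilde\rho_\tau(\eta,w)\approx\tilde\rho_\tau(z,w)$ from Proposition \ref{prop:kernels-rhotilde}(v), so $K_{\tau,j}(\eta,w)$ is dominated by a constant multiple of $K_{\tau,j}(z,w)$ and can be pulled out of the integral; what remains is a single-centered integral in $\eta$ around $z$, controlled by the polynomial lower bound $\tilde\rho_\tau(z,\eta)\gtrsim(|z-\eta|/\sigma_\tau(z))^{2\nu}$ from Proposition \ref{prop:chr-rec-specforms}(b) together with the elementary planar integrability of $|z-\eta|^{-1}$ and $\log(2\sigma_\tau(z)/|z-\eta|)$. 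This yields a $\sigma_\tau(w)^2$ bound, and the region $A_w$ is symmetric. On $A_\infty$ both $|z-\eta|$ and $|\eta-w|$ exceed $\tfrac12|z-w|$, so the $K$-factors contribute at most polynomial growth in $\sigma_\tau(w)/|z-\eta|$ and $\sigma_\tau(w)/|\eta-w|$, which is absorbed by the two-sided exponential decay using the polynomial domination of $|z-w|/\sigma_\tau(w)$ by $\tilde\rho_\tau(z,w)^N$ recorded in the last item of Proposition \ref{prop:kernels-rhotilde}.

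The hard step is the case $i=j=1$. If $|z-w|\gtrsim\sigma_\tau(w)$ then $K_{\tau,2}(z,w)\approx 1$ and the generic argument above applies, but if $|z-w|\lesssim\sigma_\tau(w)$ the two singularities $1/|z-\eta|$ and $1/|\eta-w|$ both lie inside the effective integration domain and the only clean bound on $A_\infty$ is the classical planar estimate
\[
\int_{|\eta-w|\lesssim\sigma_\tau(w)}\frac{dm(\eta)}{|z-\eta|\,|\eta-w|}\;\approx\;\log\Big(\frac{\sigma_\tau(w)}{|z-w|}\Big)\;\approx\;K_{\tau,2}(z,w),
\]
which is precisely where the extra logarithmic factor on the right-hand side enters. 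This is the single substantive integral of the proof; every other configuration (one singular factor against a bounded or logarithmic factor, a product of two logarithmic factors, etc.) involves at most one non-integrable singularity at a time and integrates in two dimensions without producing any $|z-w|$-dependent gain. Assembling the bounds on the three regions with the already-extracted global factor $e^{-\epsilon'\tilde\rho_\tau(z,w)}$ yields the stated dichotomy.
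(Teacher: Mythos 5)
Your proposal is correct and takes essentially the same approach as the paper: split the plane according to whether $\eta$ is near $z$, near $w$, or neither, extract the logarithmic gain in the case $i=j=1$ from the classical planar estimate for $\int |z-\eta|^{-1}|\eta-w|^{-1}\,dm(\eta)$ over a disc of radius $\approx\sigma_\tau(w)$ (this is precisely the change-of-variables computation in the paper's Case 1), and let the exponential factor absorb all large-scale contributions. The one imprecision is the appeal to (\ref{eq:chr-rec-sigmaratios}) for ``$\sigma_\tau(\eta)\approx\sigma_\tau(z)$'' on $A_z$: that inequality gives only polynomial control of the ratio, so beyond the scale $|z-\eta|\lesssim\sigma_\tau(z)$ (and likewise when trading $\sigma_\tau(z)$ for $\sigma_\tau(w)$ in the final bound) you must absorb the resulting powers of $|z-\eta|/\sigma_\tau(z)$ and $|z-w|/\sigma_\tau(w)$ into the retained exponential via Proposition \ref{prop:kernels-rhotilde}, exactly as you already do on $A_\infty$.
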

				\begin{proof}
					We will prove the case $i=j=1$, as it is almost identical to the other cases and exhibits all of the relevant techniques. Throughout the proof, $\epsilon$ denotes an arbitrary small positive number that may shrink from line to line.
					
					We want to estimate the integral $$\displaystyle \mathcal{I}:=\int_\mathbb{C} e^{-\epsilon(\t\rho_\tau(z,\eta)+\t\rho_\tau(\eta,w))}|K_{\tau,1}(z,\eta)||K_{\tau,1}(\eta,w)|dm(\eta).$$
					To do this, we consider two cases:
					\medskip
					
					\noindent\textbf{Case 1:} $|z-w|\leq 2{\rm max}(\sigma_\tau(z),\sigma_\tau(w))$,
					\smallskip
					
					\noindent\textbf{Case 2:} $|z-w|\geq 2{\rm max}(\sigma_\tau(z),\sigma_\tau(w))$.
					\medskip
					
					For Case 1, break the integral $\mathcal{I}$ into 
					$$\mathcal{I}=\int_{|z-\eta|\leq M\sigma_\tau(w)}+\int_{|z-\eta|\geq M\sigma_\tau(w)}=:I_1+I_2.$$
					Here, $M$ is chosen large but depends only on the doubling constant of $h(\eta)dm(\eta)$.
					Throughout this case we will use the fact that $\t\rho_\tau(z,w)\lesssim 1$ and $\sigma_\tau(z)\approx \sigma_\tau(w)$.
					
					For $I_1$, we may assume that $|z-w|>0$. Choosing $M$ so large that $|z-w|\leq \displaystyle \frac{M\sigma_\tau (w)}{20}$ and setting $\displaystyle a=\frac{|z-w|}{M\sigma_\tau(w)}$, we have
					\begin{align*}
						|I_1|&\lesssim  \int_{|z-\eta|\leq M\sigma_\tau(w)} |z-\eta|^{-1}\sigma_\tau(\eta)|\eta-w|^{-1}\sigma_\tau(w) dm(\eta) \\
						&\lesssim  \sigma_\tau(w)^2 \int_{|\hat{\eta}|\leq a^{-1}} |\hat{\eta}|^{-1}|\hat{\eta}+1|^{-1}dm(\eta) \\
						& \leq  \sigma_\tau(w)^2\Bigg(\int_{|\hat{\eta}|\leq 3}  |\hat{\eta}|^{-1}\big|\hat{\eta}+1\big|^{-1} dm(\hat{\eta}) +\int_{3\leq|\hat{\eta}|\leq a^{-1}} |\hat{\eta}|^{-1}\big|\hat{\eta}+1\big|^{-1} dm(\hat{\eta})\Bigg) \\
						& \lesssim  \sigma_\tau (w)^2\Bigg(1 +\int_{3\leq|\hat{\eta}|\leq a^{-1}} |\hat{\eta}|^{-2} dm(\hat{\eta})\Bigg)\\
						& \approx  \sigma_\tau(w)^2 \ln(a^{-1}) \\
						& =  \sigma_\tau (w)^2 \ln\Big( \frac{M\sigma_\tau(w)}{|z-w|}\Big),
					\end{align*}
					where in the second step we made the change of variable $(z-w)\hat{\eta}=\eta-z$.
					
					For $I_2$, we make the change of variable $z-\eta=\sigma_\tau(w)\hat{\eta}$ and note that $|\eta-z|\approx |\eta-w|$ to get
					\begin{align*}
						|I_2| &\lesssim  \int_{|z-\eta|\geq M\sigma_\tau(w)} e^{-\epsilon\t\rho_\tau(z,\eta)}e^{-\epsilon\t\rho_\tau(\eta,w)}dm(\eta) \\
						& \lesssim  \sigma_\tau(w)^2\int_{|\hat\eta|\geq M} e^{-\epsilon(\tau\Lambda(z,|\hat\eta|\sigma_\tau(w)))^\nu} dm(\hat\eta) \\
						& \lesssim  \sigma_\tau(w)^{2}\int_{|\hat\eta|\geq M} e^{-\epsilon|\hat\eta|^\nu}dm(\hat\eta)\\
						&\lesssim  \sigma_\tau(w)^{2}, \\
					\end{align*}
					where we used Proposition \ref{prop:kernels-rhotilde} in the first two lines.
					This completes the proof of Case 1.
					
					For Case 2, we need to break $\mathcal{I}$ into five pieces:
					\begin{align*}
						\mathcal{I} &= \int_{|z-\eta|\leq \sigma_\tau(z)} + \int_{|w-\eta|\leq \sigma_\tau(w)} + \int_{\sigma_\tau(z)\leq |z-\eta|\leq \frac{|z-w|}{2}}\\
						&  \qquad\qquad +\int_{\sigma_\tau(w)\leq |w-\eta|\leq \frac{|z-w|}{2}}+\int_{\min(|z-\eta|,|w-\eta|)\geq \frac{|z-w|}{2}}\\
						&=: I_1+I_2+I_3+I_4+I_5.
					\end{align*}
					 
					For $I_1$ we note that because $|w-\eta|\approx |w-z|$, part (v) of Proposition \ref{prop:kernels-rhotilde} implies that
					\begin{align*}
						|I_1| & \lesssim  \int_{|z-\eta|\leq \sigma_\tau(z)} |z-\eta|^{-1}\sigma_\tau(\eta)e^{-\epsilon\t\rho_\tau(\eta,w)}dm(\eta) \\
						& \lesssim  \sigma_\tau(w)\int_{|z-\eta|\leq \sigma_\tau(z)} |z-\eta|^{-1}e^{-\epsilon\t\rho_\tau(\eta,w)}dm(\eta) \\
						& \lesssim   \sigma_\tau(w)^2 e^{-\epsilon\t\rho_\tau(z,w)}.
					\end{align*}
					In the second line we used (\ref{eq:chr-rec-sigmaratios}) and (\ref{eq:chr-rec-rholarge}).
					$I_2$ is estimated in the same way.
					
					For $I_3$, again applying Proposition \ref{prop:kernels-rhotilde} and Proposition \ref{prop:chr-rec-specforms},
					\begin{align*}
						|I_3| & \lesssim  \int_{\sigma_\tau(z)\leq |z-\eta|\leq \frac{|z-w|}{2}} e^{-\epsilon\t\rho_\tau(z,\eta)}e^{-\epsilon\t\rho_\tau(\eta,w)}dm(\eta) \\
						& \lesssim  e^{-\epsilon\t\rho_\tau(z,w)}\sigma_\tau(z)^2\int_{1\leq |\hat{\eta}|} e^{-\epsilon |\hat{\eta}|^\nu}dm(\hat{\eta}) \\
						& \lesssim  \sigma_\tau(w)^2\t\rho_\tau(z,w)^M e^{-\epsilon\t\rho_\tau(z,w)} \\
						& \lesssim  \sigma_\tau(w)^2e^{-\epsilon\t\rho_\tau(z,w)}.
					\end{align*}
					
					Of course, $I_4$ is almost identical.
					Because $I_5$ involves nothing more than changing variables and using the fact that $|\eta-z|\approx |\eta-w|\geq |w-z|$, the proof of the case $i=j=1$ is complete.
				\end{proof}

				\begin{proof}[Proof of Lemma \ref{lem:kernels-kernelests}.]
					
					It is enough to prove the theorem when $\mathbb{H}_\tau$ falls under part (a) of the definition of $\mathbb{O}_\tau(L,M,N)$.
					
					We prove (\ref{eq:kernels-kernelestkappa}); the proof of (\ref{eq:kernels-kernelest2}) is exactly the same, but with all of the $k_i$ replaced with $\min(k_i,2)$.
					
					Write $\mathbb{H}_\tau =\displaystyle \prod_{i=1}^I \mathbb{H}_{\tau,i}$, where each $\mathbb{H}_{\tau,i}$ is either $\mathbb{S}_\tau,\ \mathbb{R}_\tau,\ \mathbb{R}_\tau^\ast,\ \mathbb{G}_\tau$, or multiplication by $\tau \nabla^{k_i}P$ (with $k_i\geq 1$), and define
					$$d(i)=\begin{cases} -k & \mbox{if}\ \mathbb{H}_{\tau,i}=\tau \nabla^{k}P,\\ 0 & \mbox{if}\ \mathbb{H}_{\tau,i}=\mathbb{S}_\tau,\\
					1 & \mbox{if}\ \mathbb{H}_{\tau,i}=\mathbb{R}_\tau,\mathbb{R}^\ast_\tau,\\
					2 & \mbox{if}\ \mathbb{H}_{\tau,i}=\mathbb{G}_\tau.\end{cases}$$
					Let $1\leq i_1<i_2<\cdots<i_\theta\leq I$ be the indices for which $\mathbb{H}_{\tau,i_j}\in \{\mathbb{S}_\tau,\mathbb{R}_\tau,\mathbb{R}_\tau^\ast,\mathbb{G}_\tau\}$, and set $i_0=0$.
					Define $$\mathcal{I}_1(z_1,w),\ \mathcal{I}_2(z_2,w),\ldots,\ \mathcal{I}_{\theta-1}(z_{\theta-1},w),\ \mathcal{I}_{\theta}(z,w)$$
					by setting
					\begin{align*}
						\mathcal{I}_1(z_1,w) & =  |[\mathbb{H}_{\tau,i_1}](z_1,w)|\prod_{0<i<i_1} |\tau \nabla^{k_i}P(w)|,\\
						\mathcal{I}_2(z_2,w) & =  \int_\mathbb{C} |[\mathbb{H}_{\tau,i_2}](z_2,z_1)|\prod_{i_1<i<i_2} |\tau \nabla^{k_i}P(w)| \mathcal{I}_1(z_1,w)dm(z_1)\\
						 & \vdots  \\
						 \mathcal{I}_{\theta}(z,w) & =  \int_\mathbb{C} |[\mathbb{H}_{\tau,i_\theta}](z,z_{\theta-1})|\prod_{i_{\theta-1}<i<i_\theta} |\tau \nabla^{k_i}P(w)| \mathcal{I}_1(z_{\theta-1},w)dm(z_{\theta-1}).
					\end{align*}
					By the Fubini-Tonelli Theorem,
					\begin{equation}\label{eq:kernels-induction}
						|[\mathbb{H}_\tau](z,w)| \leq \Big(\prod_{i_{\theta}<i\leq I} |\tau \nabla^{k_i}P(w)|\Big) \mathcal{I}_\theta (z,w),\end{equation}
					and therefore our first task is to bound the right-hand-side of (\ref{eq:kernels-induction}) by induction.
					
					By the definition of $\mathbb{O}_\tau (L,M,N)$, There are four mutually exclusive cases to consider:
					\begin{center}
					\begin{tabular}{rl}
						\textbf{Case 1:} & $\mathbb{H}_{\tau,i_1}=\mathbb{S}_\tau,$ \\
						\textbf{Case 2:} & $\mathbb{H}_{\tau,i_2}=\mathbb{S}_\tau$ and $\mathbb{H}_{\tau,i_1}\neq \mathbb{S}_\tau,$\\
						\textbf{Case 3:} & $\theta\geq 3$ and $\mathbb{H}_{\tau,i_1},\mathbb{H}_{\tau,i_2}\in\{\mathbb{R}_\tau,\mathbb{R}^\ast_\tau\},$ \\
						\textbf{Case 4:} & $\theta\geq 3$ and either $\mathbb{H}_{\tau,i_1}=\mathbb{G}_\tau$ or $\mathbb{H}_{\tau,i_2}=\mathbb{G}_\tau$.\end{tabular}
						\end{center}
				The proofs of the various cases are almost identical, differing only in the details of applying Lemma \ref{lem:kernels-basicest} to establish the inductive base step. We provide the details for Case 3.
				Throughout the argument, $\epsilon>0$ is a small number which might shrink from line to line.
				
				Assume that $\Theta\leq \tau<{+\infty}$, $\theta\geq 3$, and that $d(i_1)=d(i_2)=1$. By Remark \ref{rem:kernels-estrestate} and Lemma \ref{lem:kernels-potderivs} we have
				$$\mathcal{I}_1(z_1,w)\lesssim e^{-\epsilon \t\rho_\tau (z_1,w)}\sigma_\tau(w)^{-2+\sum_{0<i\leq i_1} d(i)} K_{\tau,1}(z_1,w),$$
				where $K_{\tau,1}(z_1,w)$ is as in Remark \ref{rem:kernels-estrestate}. Applying Remark \ref{rem:kernels-estrestate}, Lemma \ref{lem:chr-rec-metest}, Proposition \ref{prop:chr-rec-specforms}, Lemma \ref{lem:kernels-potderivs}, and Lemma \ref{lem:kernels-basicest} gives
				\begin{align*}
					& \mathcal{I}_2(z_2,w)\\
					& \lesssim\ \int_{\mathbb{C}} \Big\{ e^{-\epsilon(\t\rho_\tau (z_2,z_1)+\t\rho_\tau(z_1,w))}K_{\tau,1}(z_2,z_1)K_{\tau,1}(z_1,w)\\
					& \quad\qquad\qquad\times \sigma_\tau(z_1)^{-2+\sum_{i_1<i\leq i_2} d(i)}\sigma_\tau(w)^{-2+\sum_{0<i\leq i_1} d(i)}\Big\}dm(z_1) \\
					& \lesssim\ \sigma_\tau(w)^{-4+\sum_{0<i\leq i_2} d(i)}\int_{\mathbb{C}} e^{-\epsilon(\t\rho_\tau (z_2,z_1)+\t\rho_\tau(z_1,w))}K_{\tau,1}(z_2,z_1)K_{\tau,1}(z_1,w)dm(z_1)\\
					& \lesssim\ \sigma_\tau(w)^{-2+\sum_{0<i\leq i_2} d(i)}K_{\tau,2}(z_2,w)e^{-\epsilon \t\rho_\tau (z_2,w)}.
				\end{align*}
					Repeating this argument for $\mathcal{I}_3(z_3,w)$ yields
					$$\mathcal{I}_3(z_3,w)\lesssim \sigma_\tau(w)^{-2+\sum_{0<i\leq i_3} d(i)}K_{\tau,0}(z_3,w)e^{-\epsilon \t\rho_\tau (z_3,w)}.$$
					We now apply the same argument inductively to see that
					$$\mathcal{I}_\theta(z,w)\lesssim \sigma_\tau(w)^{-2+\sum_{0<i\leq i_\theta} d(i)}K_{\tau,0}(z,w)e^{-\epsilon \t\rho_\tau (z,w)},$$
					so that by Lemma \ref{lem:kernels-potderivs} we have
					$$|[\mathbb{H}_\tau](z,w)|\lesssim \sigma_\tau(w)^{-2+\sum_{0<i\leq I} d(i)} e^{-\epsilon\t\rho_\tau(z,w)}.$$
					In other words, as long as $\kappa\geq \max(m,|\alpha|,|\beta|)$ we have
					\begin{align*}
						|[\mathbb{H}_\tau](z,w)|& \lesssim   \sigma_\tau(w)^{-2+B+2C-D-2M-N}e^{-\epsilon\t\rho_\tau(z,w)}\\
						& = \sigma_\tau(w)^{-2+L-2M-N}e^{-\epsilon\t\rho_\tau(z,w)}\quad\mbox{for}\ \tau\geq\Theta.\end{align*}
					This shows that $\mathbb{H}_{[\Theta,{+\infty})}\in {\rm Op}^0_{[\Theta,{+\infty})}(L-2M-N).$
					
					In the case where we work with $P^{\mb{\sigma},2}$ (and where $0<\tau\leq \Theta$), our application of Lemma  \ref{lem:kernels-potderivs} necessitates that when $d(i)=-k<0$ we replace $-k$ with $-\min(2,k)$, yielding
					$\mathbb{H}_{(0,\Theta]}\in {\rm Op}^0_{(0,\Theta]} (L-2M).$
					
					This completes the proof of Lemma \ref{lem:kernels-kernelests}.

					\end{proof}

				\subsection{Derivatives in $\tau$}
				\label{sec:kernels-der}
				
				For a one-parameter family of operators $\mathbb{F}_{(0,+\infty)}$ and fixed $\tau\in (0,+\infty)$, define
				$$\Delta_h (\mathbb{F}_\tau) = h^{-1}(\mathbb{F}_{\tau+h}-\mathbb{F}_{\tau})$$
				for all $0<|h|< \tau $.
				
				\begin{lemma}\label{lem:chr-der-derivatives} Let $P_{z}$ and $P_{\bar{z}}$ denote multiplication by $P_{z}$ and $P_{\bar{z}}$, respectively. As operators, we have the following formulas:
					\begin{itemize}
						\item[(a)] $\Delta_h(\mathbb{G}_\tau) = -\mathbb{G}_\tau (2\pi P_{\bar{z}}) \mathbb{R}_{\tau+h}-\mathbb{R}^\ast_\tau (2\pi P_{z}) \mathbb{G}_{\tau+h}$,
						\item[(b)] $\Delta_h(\mathbb{R}_\tau) = \mathbb{S}_\tau (2\pi P_{z}) \mathbb{G}_{\tau+h} -\mathbb{R}_\tau (2\pi P_{\bar{z}})\mathbb{R}_{\tau+h}$,
						\item[(c)] $\Delta_h(\mathbb{R}^\ast_\tau) = \mathbb{G}_\tau (2\pi P_{\bar{z}})\mathbb{S}_{\tau+h} -\mathbb{R}^\ast_\tau (2\pi P_{z})\mathbb{R}^\ast_{\tau+h}$,
						\item[(d)] $\Delta_h(\mathbb{S}_\tau) = -\mathbb{S}_\tau (2\pi P_{z})\mathbb{R}^\ast_{\tau+h} - \mathbb{R}_\tau (2\pi P_{\bar{z}})\mathbb{S}_{\tau+h}$.
					\end{itemize}
				\end{lemma}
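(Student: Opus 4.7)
The plan is to prove (a) first by a resolvent-type identity, and then deduce (b), (c), (d) by combining (a) with the factorizations $\mathbb{R}_\tau = D_\tau \mathbb{G}_\tau$, $\mathbb{R}^\ast_\tau = \mathbb{G}_\tau \bar{D}_\tau$, and $\mathbb{S}_\tau = I - D_\tau \mathbb{R}^\ast_\tau$, together with the algebraic relations of Proposition \ref{prop:kernels-alg-derivs}. The fundamental observation is that $D_{\tau+h} - D_\tau = 2\pi h P_z$ and $\bar{D}_{\tau+h} - \bar{D}_\tau = 2\pi h P_{\bar z}$ are simply multiplication operators, so all differences can be written as compositions of the basic bounded operators $\mathbb{G}_\tau, \mathbb{R}_\tau, \mathbb{R}^\ast_\tau, \mathbb{S}_\tau$ with these multiplications.

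For (a), I would start from the second-resolvent identity $\mathbb{G}_\tau - \mathbb{G}_{\tau+h} = \mathbb{G}_\tau (\bar{D}_{\tau+h}D_{\tau+h} - \bar{D}_\tau D_\tau)\mathbb{G}_{\tau+h}$ and use the asymmetric telescoping
\[
\bar{D}_{\tau+h}D_{\tau+h} - \bar{D}_\tau D_\tau = (\bar{D}_{\tau+h} - \bar{D}_\tau)D_{\tau+h} + \bar{D}_\tau(D_{\tau+h} - D_\tau) = 2\pi h P_{\bar z}D_{\tau+h} + 2\pi h \bar{D}_\tau P_z,
\]
which avoids generating any $h^2$ cross-terms. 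Applying $D_{\tau+h}\mathbb{G}_{\tau+h} = \mathbb{R}_{\tau+h}$ on the right and $\mathbb{G}_\tau \bar{D}_\tau = \mathbb{R}^\ast_\tau$ on the left (Proposition \ref{prop:kernels-alg-derivs}(a)--(b)), and dividing by $h$, immediately yields (a).

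For (b), I would write $\mathbb{R}_{\tau+h} - \mathbb{R}_\tau = (D_{\tau+h} - D_\tau)\mathbb{G}_{\tau+h} + D_\tau(\mathbb{G}_{\tau+h} - \mathbb{G}_\tau)$, substitute the identity from (a), push $D_\tau$ inside, and use $D_\tau \mathbb{G}_\tau = \mathbb{R}_\tau$ and $D_\tau \mathbb{R}^\ast_\tau = I - \mathbb{S}_\tau$ (Proposition \ref{prop:kernels-alg-derivs}(c)); the $P_z \mathbb{G}_{\tau+h}$ terms cancel and leave exactly the claimed expression. Identity (c) follows by the mirror-image argument applied to $\mathbb{R}^\ast_{\tau+h} - \mathbb{R}^\ast_\tau = (\mathbb{G}_{\tau+h} - \mathbb{G}_\tau)\bar{D}_{\tau+h} + \mathbb{G}_\tau(\bar{D}_{\tau+h} - \bar{D}_\tau)$, using $\mathbb{R}_{\tau+h}\bar{D}_{\tau+h} = I - \mathbb{S}_{\tau+h}$. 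Finally, (d) comes from $\mathbb{S}_{\tau+h} - \mathbb{S}_\tau = -(D_{\tau+h} - D_\tau)\mathbb{R}^\ast_{\tau+h} - D_\tau(\mathbb{R}^\ast_{\tau+h} - \mathbb{R}^\ast_\tau)$, substituting (c), and again invoking $D_\tau \mathbb{R}^\ast_\tau = I - \mathbb{S}_\tau$ to cancel the bare $P_z \mathbb{R}^\ast_{\tau+h}$ terms.

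The only delicate point is making sense of all the compositions on suitable dense subspaces, since $D_\tau$ and $\bar{D}_\tau$ are unbounded. However, the products that appear ($D_\tau \mathbb{G}_\tau$, $\mathbb{G}_\tau \bar{D}_\tau$, $D_\tau \mathbb{R}^\ast_\tau$, $\mathbb{R}_\tau \bar{D}_\tau$) are precisely the bounded operators $\mathbb{R}_\tau, \mathbb{R}^\ast_\tau, I - \mathbb{S}_\tau, I - \mathbb{S}_\tau$ supplied by Proposition \ref{prop:kernels-alg-derivs}, so the identities may be verified by testing against Schwartz functions in the domain of $\bar D_\tau D_\tau$ and then extending by boundedness (or density) to all of $L^2(\mathbb{C})$, using the uniform bounds from Lemma \ref{lem:unifL2bound}. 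I do not expect any serious obstacle here — the entire lemma is essentially an algebraic exercise whose only subtlety is choosing the telescoping that aligns the multiplication operators $P_z, P_{\bar z}$ on the correct side to be absorbed into $\mathbb{R}_{\tau+h}, \mathbb{R}^\ast_{\tau+h}, \mathbb{S}_{\tau+h}$ via Proposition \ref{prop:kernels-alg-derivs}.
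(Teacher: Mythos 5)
Your proposal is correct and follows essentially the same route as the paper: the identity behind (a) is the same telescoping of $\bar{D}_{\tau+h}D_{\tau+h}-\bar{D}_\tau D_\tau$ into $2\pi h P_{\bar z}D_{\tau+h}+2\pi h\,\bar{D}_\tau P_z$ sandwiched between $\mathbb{G}_\tau$ and $\mathbb{G}_{\tau+h}$, and your derivation of (b)--(d) from (a) using Proposition \ref{prop:kernels-alg-derivs} is exactly the paper's. The only difference is presentational: the paper makes the formal resolvent manipulation in (a) licit by regularizing the kernel of $\mathbb{G}_\tau$ (mollification plus cutoff, then letting $\epsilon\to 0$ and $\delta\to 0$), whereas you defer the same domain bookkeeping for the unbounded factors to a dense-subspace/limiting remark.
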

				\begin{proof}
					We first compute (a).
					Choose a smooth cutoff function $\chi(t)$ with $\chi(t)=1$ for $t\leq 1$ and $\chi(t)=0$ for $t\geq 2$.
					Also, let $\eta\in C_c^\infty(\mathbb{C}^2)$ satisfy $\int \eta = 1$, and write $\eta_t(z)=t^{-4}\eta(t^{-1}z,t^{-1}w)$.
					
					For $\epsilon>0$ define the regularized kernel
					$$[\mathbb{G}_\tau]^\epsilon(z,w,\tau):= \eta_\epsilon\ast[\chi(\epsilon|\bullet_1-\bullet_2|)[\mathbb{G}_\tau](\bullet_1,\bullet_2)](z,w),$$
					and let $\mathbb{G}_\tau^\epsilon$ be the operator given by integration against $[\mathbb{G}_\tau]^\epsilon$.
					We also define
					$$\mathbb{R}_\tau^\epsilon:= D_\tau\mathbb{G}_\tau^\epsilon,\quad \mathbb{R}_\tau^{\ast,\epsilon}:=\mathbb{G}_\tau^\epsilon \bar{D}_\tau,\quad \mathbb{S}_\tau^\epsilon:= \mathbb{I}-D_\tau \mathbb{R}_\tau^{\ast,\epsilon}.$$
					One can easily show that, as $\epsilon\rightarrow 0$, these distributions converge to their respective non-regularized operators.
					
					Notice that
					\begin{align*}
						\Delta_h & (\mathbb{G}_\tau^\epsilon \bar{D}_\tau D_\tau)\mathbb{G}_{\tau+h}^\delta \\
						& =  \Delta_h(\mathbb{G}_\tau^\epsilon)\bar{D}_{\tau+h} D_{\tau+h}\mathbb{G}_{\tau+h}^\delta + \mathbb{G}_\tau^\epsilon (2\pi P_{\bar{z}}) D_{\tau+h}\mathbb{G}_{\tau+h}^\delta + \mathbb{G}_\tau^\epsilon \bar{D}_\tau (2\pi P_{z})\mathbb{G}_{\tau+h}^\delta.
					\end{align*}
					Sending first $\epsilon\rightarrow 0$, and then $\delta\rightarrow 0$, we obtain (a).
					
					Now that (a) is established, we can use it to prove the other formulas.
					To this end, note that
					\begin{align*}
						\Delta_h(\mathbb{R}_\tau)& = (2\pi P_z)\mathbb{G}_{\tau + h} + D_\tau \Delta_h (\mathbb{G}_\tau) \\
						& = (2\pi P_z)\mathbb{G}_{\tau+h} -D_\tau\mathbb{G}_\tau (2\pi P_{\bar{z}})\mathbb{R}_{\tau+h}-D_\tau \mathbb{R}^\ast_\tau (2\pi P_z)\mathbb{G}_{\tau+h} \\
						& =  \mathbb{S}_\tau (2\pi P_z)\mathbb{G}_{\tau+h}-\mathbb{R}_\tau (2\pi P_{\bar{z}})\mathbb{R}_{\tau+h},
					\end{align*}
					proving (b).
					The proofs of (c) and (d) are similar to that of (b).
				\end{proof}
				
				As an immediate corollary of Lemma \ref{lem:chr-der-derivatives}, Proposition \ref{prop:continuity}, the definition of $\mathbb{O}_\tau (L,M,N)$, and the product rule, we have
				
				\begin{corollary}\label{cor:chr-der} For $\mathbb{H}_\tau\in \{\mathbb{G}_\tau,\mathbb{R}_\tau,\mathbb{R}^\ast_\tau,\mathbb{S}_\tau\}$ we have $$\lim_{h\to 0}\Delta_h (\mathbb{H}_\tau)=\begin{cases} -\mathbb{G}_\tau (2\pi P_{\bar{z}}) \mathbb{R}_{\tau}-\mathbb{R}^\ast_\tau (2\pi P_{z}) \mathbb{G}_{\tau} & \mbox{if}\ \mathbb{H}_\tau = \mathbb{G}_\tau,\\
				\mathbb{S}_\tau (2\pi P_{z}) \mathbb{G}_{\tau} -\mathbb{R}_\tau (2\pi P_{\bar{z}})\mathbb{R}_{\tau} & \mbox{if}\ \mathbb{H}_\tau = \mathbb{R}_\tau,\\
				\mathbb{G}_\tau (2\pi P_{\bar{z}})\mathbb{S}_{\tau} -\mathbb{R}^\ast_\tau (2\pi P_{z})\mathbb{R}^\ast_{\tau} & \mbox{if}\ \mathbb{H}_\tau =\mathbb{R}^\ast_\tau,\\
				-\mathbb{S}_\tau (2\pi P_{z})\mathbb{R}^\ast_{\tau} - \mathbb{R}_\tau (2\pi P_{\bar{z}})\mathbb{S}_{\tau}& \mbox{if}\ \mathbb{H}_\tau=\mathbb{S}_\tau. \end{cases}$$
				More generally, if\, $\mathbb{H}_\tau = \mathbb{O}_\tau (L,M,N)$, then $[\mathbb{H}_\tau](z,w)$ is  differentiable in $\tau>0$ and $\partial_\tau [\mathbb{H}_\tau](z,w)=[ \partial_\tau\mathbb{H}_\tau](z,w)$, where $\partial_\tau \mathbb{H}_\tau = \lim\limits_{h\to 0} \Delta_h(\mathbb{H}_\tau)$. Moreover, $\tau^n\partial^n_\tau\mathbb{H}_\tau= \mathbb{O}_\tau (L,M,N)$ for $n\geq 1$.
				\end{corollary}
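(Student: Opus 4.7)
My plan is to derive the four explicit limit formulas directly from Lemma \ref{lem:chr-der-derivatives}, and then bootstrap to the general statement about $\mathbb{O}_\tau(L,M,N)$ via the Leibniz rule together with a short counting argument.

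For the four basic operators, Lemma \ref{lem:chr-der-derivatives} already expresses $\Delta_h(\mathbb{H}_\tau)$ as a two-term sum in which only the rightmost factor depends on $h$, every other factor being evaluated at $\tau$. Sending $h \to 0$, Proposition \ref{prop:continuity} provides the strong $L^2$-convergence $\mathbb{H}_{\tau+h} \to \mathbb{H}_\tau$, which yields the operator-level identity on the dense class of functions where the unbounded multiplication by $2\pi P_z$ or $2\pi P_{\bar z}$ can be safely composed. To promote this to the pointwise kernel identity $\partial_\tau [\mathbb{H}_\tau](z,w) = [\partial_\tau \mathbb{H}_\tau](z,w)$ claimed for $\mathbb{H}_\tau \in \mathbb{O}_\tau(L,M,N)$, I would write the Schwartz kernel of each composition as an absolutely convergent integral, in the spirit of the proof of Lemma \ref{lem:kernels-kernelests}, and apply dominated convergence. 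The polynomial growth $|\nabla P(\eta)| \lesssim |\eta|$ from Proposition \ref{prop:potentials}(b)(ii) is controlled by the exponential factor $e^{-\epsilon \t\rho_\tau(\eta,w)}$ of Theorem \ref{thm:chr-rec-Christest} through Proposition \ref{prop:kernels-rhotilde}(iv), producing a uniform-in-$h$ integrable majorant.

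For the general case, I would apply the product rule to $\mathbb{H}_\tau = \prod_i \mathbb{H}_{\tau,i}$: derivatives of factors in $\{\mathbb{G}_\tau,\mathbb{R}_\tau,\mathbb{R}^\ast_\tau,\mathbb{S}_\tau\}$ come from the first part, while multiplication factors satisfy $\partial_\tau(\tau\nabla^k P) = \nabla^k P = \tau^{-1}(\tau\nabla^k P)$. Multiplying the resulting Leibniz expansion by $\tau$ absorbs every extraneous $\tau^{-1}$ into an allowable factor $\tau\nabla P$, and a brief bookkeeping check shows that the parameters $(L,M,N)$ are preserved. For instance, replacing $\mathbb{G}_\tau$ (contributing $2$ to $L$ via $C=1$) by either $\mathbb{R}^\ast_\tau(\tau\nabla P)\mathbb{G}_\tau$ or $\mathbb{G}_\tau(\tau\nabla P)\mathbb{R}_\tau$ leaves $B+2C-D = 1+2-1 = 2$; replacing $\mathbb{R}_\tau$ (contributing $1$) by $\mathbb{S}_\tau(\tau\nabla P)\mathbb{G}_\tau$ or $\mathbb{R}_\tau(\tau\nabla P)\mathbb{R}_\tau$ leaves $L=1$; and analogous checks dispatch $\mathbb{R}^\ast_\tau$ and $\mathbb{S}_\tau$. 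Since each new factor either preserves or augments the existing $\mathbb{S}_\tau$ factor(s), the constraint $A \geq 1$ is maintained. The higher-order claim $\tau^n \partial_\tau^n \mathbb{H}_\tau \in \mathbb{O}_\tau(L,M,N)$ then follows by iterating $\tau\partial_\tau$ and invoking the standard identity $\tau^n \partial_\tau^n = \sum_{k=0}^n s(n,k)(\tau\partial_\tau)^k$, with $s(n,k)$ the Stirling numbers of the first kind.

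The main obstacle I anticipate is the analytic step of pushing the limit $h \to 0$ inside the integrals defining each composition. Because multiplication by $P_z$ or $P_{\bar z}$ fails to be bounded on $L^2(\mathbb{C})$, one cannot conclude pointwise kernel differentiability from operator-norm continuity alone; the limit must be carried out at the kernel level, and it is the uniform polynomial-versus-exponential tradeoff encoded in Proposition \ref{prop:kernels-rhotilde}(iv) that ultimately makes dominated convergence applicable in this setting.
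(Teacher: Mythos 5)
Your proposal is correct and follows essentially the same route as the paper, which derives the corollary directly from Lemma \ref{lem:chr-der-derivatives}, the strong continuity in Proposition \ref{prop:continuity}, the product rule, and the bookkeeping built into the definition of $\mathbb{O}_\tau(L,M,N)$ (the paper treats this as immediate and records no further detail). Your added justification of the kernel-level passage to the limit by dominated convergence, and the Stirling-number reduction of $\tau^n\partial_\tau^n$ to iterates of $\tau\partial_\tau$, simply flesh out steps the paper leaves implicit, and your parameter checks ($B+2C-D$ and $A\geq 1$ preserved) are accurate.
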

				\medskip

				\subsection{Proof of Theorem \ref{thm:chr-der-szegoderivatives}}
				\label{sec:kernels-comp}

				By Lemma \ref{lem:kernels-alg-structure} and Corollary \ref{cor:chr-der}, we have
				\begin{align*}
				\tau^M& \partial_\tau^M (W_\tau^\alpha \mathbb{R}_\tau^K \mathbb{S}_\tau (\mathbb{R}_\tau^\ast)^{K'} W_\tau^\beta) \\
				& = \sum_{\ell} \sum_{\ell'} \sum_{i=0}^{|\alpha|-\ell} \sum_{i'=0}^{|\beta|-\ell'} \mathbb{O}_\tau(K+K'-|\alpha|-|\beta|+2i+2i'+\ell+\ell',i+i',\ell+\ell'),\end{align*}
				where the outer two sums are over $0\leq \ell\leq \max(|\alpha|-2,0)$ and $0\leq \ell'\leq \max(|\beta|-2,0)$.
				
				If $P=P^{\mb{\sigma},2}$, then Lemma \ref{lem:kernels-kernelests} implies that
				$$\tau^M\partial_\tau^M (W_\tau^\alpha \mathbb{R}_\tau^K \mathbb{S}_\tau (\mathbb{R}_\tau^\ast)^{K'} W_\tau^\beta) = \sum_{\ell} \sum_{\ell'} \sum_{i=0}^{|\alpha|-\ell} \sum_{i'=0}^{|\beta|-\ell'} {\rm Op}^0_{(0,\Theta]} (K+K'-|\alpha|-|\beta|+\ell+\ell'),$$
				which, via Proposition \ref{prop:kernels-inclusion}, can be simplified to
				\begin{align*}
					\tau^M\partial_\tau^M & (W_\tau^\alpha \mathbb{R}_\tau^K \mathbb{S}_\tau (\mathbb{R}_\tau^\ast)^{K'} W_\tau^\beta) \\
					& =  {\rm Op}^0_{(0,\Theta]} (K+K'-|\alpha|-|\beta|+\max(|\alpha|-2,0) + \max(|\beta|-2,0))\\
					& =  {\rm Op}^0_{(0,\Theta]}(K+K'-\min(|\alpha|,2)-\min(|\beta|,2)).
				\end{align*}
				On the other hand, if $P=P^{\mb{\sigma},\max(m,|\alpha|,|\beta|)}$ then Proposition \ref{prop:kernels-inclusion} yields
				\begin{align*}
					\tau^M\partial_\tau^M &(W_\tau^\alpha \mathbb{R}_\tau^K \mathbb{S}_\tau (\mathbb{R}_\tau^\ast)^{K'} W_\tau^\beta) \\
					& =  \sum_{\ell} \sum_{\ell'} \sum_{i=0}^{|\alpha|-\ell} \sum_{i'=0}^{|\beta|-\ell'} {\rm Op}^0_{[\Theta,{+\infty})} (K+K'-|\alpha|-|\beta|+\ell+\ell') \\
					& =  {\rm Op}^0_{[\Theta,{+\infty})}(K+K'-|\alpha|-|\beta|).
				\end{align*}
				This completes the proof of Theorem \ref{thm:chr-der-szegoderivatives}.

\section{Proof of Theorem \ref{thm:szego-decomp}}\label{sec:main}%\label{sec:szego-decomp}

Before proving our main theorems, we need several elementary estimation tools.

\begin{proposition}\label{prop:szego-functionswithgeometricdecay}
	Let $f:(0,{+\infty})\rightarrow [0,{+\infty})$ be a positive, decreasing function and fix $\epsilon>0$.
	\begin{itemize}
		\item[(i)] If $f(2\tau)\leq 2^{-1-\epsilon}f(\tau),$ then there exists $C=C(\epsilon)$ so that $$\int_a^{+\infty} f(\tau)d\tau\leq Caf(a),\quad 0<a<{+\infty}.$$
		\item[(ii)] If $f(2\tau)\geq 2^{-1+\epsilon} f(\tau)$, then there exists $C=C(\epsilon)$ so that 
		$$\int_0^a f(\tau)d\tau \leq Caf(a),\quad 0<a<{+\infty}.$$
		\end{itemize}
\end{proposition}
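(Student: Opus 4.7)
The plan is to prove both parts by a dyadic decomposition of the integration range, using the hypothesis on $f$ to produce a convergent geometric sum. Because $f$ is positive and decreasing, crude upper bounds on each dyadic piece by its left endpoint will suffice; no fine analysis is needed.

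For part (i), I would write
\[ \int_a^{+\infty} f(\tau)\,d\tau = \sum_{k=0}^{+\infty} \int_{2^k a}^{2^{k+1}a} f(\tau)\,d\tau \leq \sum_{k=0}^{+\infty} 2^k a\, f(2^k a). \]
Iterating the assumed inequality $f(2\tau)\leq 2^{-1-\epsilon}f(\tau)$ gives $f(2^k a)\leq 2^{-k(1+\epsilon)}f(a)$, so the sum is bounded by $a f(a)\sum_{k\geq 0} 2^{-k\epsilon}$. This geometric series converges and contributes the constant $C(\epsilon)=(1-2^{-\epsilon})^{-1}$.

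For part (ii), I would similarly decompose
\[ \int_0^a f(\tau)\,d\tau = \sum_{k=1}^{+\infty} \int_{2^{-k}a}^{2^{-k+1}a} f(\tau)\,d\tau \leq \sum_{k=1}^{+\infty} 2^{-k+1}a\, f(2^{-k}a). \]
This time I iterate the reverse inequality $f(2\tau)\geq 2^{-1+\epsilon}f(\tau)$ in the form $f(2^{-k}a)\leq 2^{1-\epsilon} f(2^{-k+1}a)$, which on $k$-fold iteration yields $f(2^{-k}a)\leq 2^{k(1-\epsilon)}f(a)$. Substituting produces $2a f(a)\sum_{k\geq 1} 2^{-k\epsilon}$, again a convergent geometric series with sum depending only on $\epsilon$.

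There is no real obstacle here; the only thing to be careful about is getting the direction of the iteration right in part (ii), since the hypothesis bounds $f(2\tau)$ from below in terms of $f(\tau)$, and what is needed is a bound on $f(2^{-k}a)$ from above in terms of $f(a)$. Once the inequality is rearranged as above, both parts follow from the same two-line dyadic estimate.
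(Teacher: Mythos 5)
Your proof is correct and uses essentially the same argument as the paper: a dyadic decomposition of the integration range, iteration of the doubling/halving hypothesis across dyadic scales, and the monotonicity of $f$ to control each dyadic piece (the paper phrases this via a change of variables onto $[a,2a]$ rather than bounding each piece by its left endpoint value times its length, but the two computations are interchangeable).
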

\begin{proof}
	In case (i) we have
	\begin{align*}
	\int_a^{+\infty} f(\tau)d\tau  & =   \displaystyle\sum_{k=0}^{+\infty} \int_{2^k a}^{2^{k+1}a} f(\tau)d\tau  \\
	& =  \displaystyle\sum_{k=0}^{+\infty} 2^{k}\int_a^{2a} f(2^k \tau)d\tau  \leq  \displaystyle\sum_{k=0}^{+\infty} 2^{-\epsilon k}\int_a^{2a} f(\tau)d\tau  \leq  Caf(a),\end{align*}
	where the last estimate uses the fact that $f$ is decreasing.
	
	For (ii), we have
	$$\int_0^a f(\tau)d\tau = \sum_{k=0}^{+\infty} 2^{-k-1} \int_{a}^{2a} f(2^{-k-1}\tau)d\tau \leq \sum_{k=0}^{+\infty} 2^{-\epsilon(k+1)} \int_a^{2a} f(\tau)d\tau \leq Caf(a).$$
\end{proof}

\begin{proposition}\label{prop:main-elem-concave}
	Fix $Q,\epsilon,\nu>0,$ $N\geq 0$, and $0\leq a<{+\infty}$. Suppose that $f:[0,{+\infty})\rightarrow [0,{+\infty})$ is either
	\begin{itemize}
		\item[(a)] increasing with $f(2\tau)\leq 2^{Q} f(\tau)$, or
		\item[(b)] decreasing with $f(2\tau)\geq 2^{-1+Q}f(\tau).$
	\end{itemize}
	Then $$\int_a^{+\infty} f(\tau)t^Ne^{-\epsilon \tau^\nu}d\tau \lesssim f(1),$$
	where the constants depend on $Q,\ \epsilon,\ \nu$, and $N$, but are independent of $f$ and $a$.
\end{proposition}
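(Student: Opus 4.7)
The plan is to convert each hypothesis into a pointwise polynomial majorant $f(\tau)\lesssim (1+\tau^r)f(1)$ for some exponent $r$ depending only on $Q$, after which the factor $e^{-\epsilon\tau^\nu}$ turns the integral into a convergent gamma-type integral whose value depends only on $Q,\epsilon,\nu,N$.

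In case (a), I would iterate the doubling bound $f(2\tau)\leq 2^Q f(\tau)$ starting from $\tau=1$: for $\tau\in[2^k,2^{k+1}]$ with $k\geq 0$, monotonicity gives $f(\tau)\leq f(2^{k+1})\leq 2^{(k+1)Q}f(1)\lesssim \tau^Q f(1)$, while $f(\tau)\leq f(1)$ for $0\leq \tau\leq 1$, also by monotonicity. Thus $f(\tau)\leq C(Q)(1+\tau^Q)f(1)$ for all $\tau\geq 0$. In case (b), rewrite the hypothesis as $f(\tau)\leq 2^{1-Q}f(2\tau)$. For $0<\tau\leq 1$ pick the unique integer $k\geq 0$ with $2^k\tau\in[1,2]$, so that $k\approx -\log_2\tau$, and iterate to obtain
$$f(\tau)\leq 2^{k(1-Q)}f(2^k\tau)\leq 2^{k(1-Q)}f(1)\lesssim \tau^{Q-1}f(1),$$
where the second inequality uses that $f$ is decreasing and $2^k\tau\geq 1$. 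For $\tau\geq 1$ the decreasing property gives $f(\tau)\leq f(1)$ directly. Hence $f(\tau)\leq C(Q)(1+\tau^{Q-1})f(1)$ for all $\tau\geq 0$.

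Substituting these majorants, in either case
$$\int_a^{+\infty}f(\tau)\tau^N e^{-\epsilon\tau^\nu}d\tau \leq \int_0^{+\infty}f(\tau)\tau^Ne^{-\epsilon\tau^\nu}d\tau \leq C(Q)f(1)\int_0^{+\infty}(1+\tau^r)\tau^Ne^{-\epsilon\tau^\nu}d\tau,$$
with $r=Q$ in case (a) and $r=Q-1$ in case (b). The remaining one-variable integral $\int_0^{+\infty}\tau^{s}e^{-\epsilon\tau^\nu}d\tau$ converges whenever $s>-1$, and the relevant exponents $N$ and $N+r$ satisfy this: $N\geq 0>-1$ trivially, $N+Q>-1$ in case (a) since $Q>0$, and $N+Q-1>-1$ in case (b) since $N\geq 0$ and $Q>0$. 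A change of variables $u=\epsilon\tau^\nu$ expresses the integral as a linear combination of $\Gamma$-values depending only on $Q,\epsilon,\nu,N$, completing the bound. There is no real obstacle here; the only mildly delicate point is the potential singularity at $\tau=0$ in case (b) when $Q<1$, but this is harmless because $N+Q>0$.
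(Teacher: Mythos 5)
Your argument is correct, and it reaches the estimate by a somewhat different route than the paper. You convert each hypothesis into a pointwise polynomial majorant --- iterating the doubling inequality dyadically between $\tau$ and $1$ to get $f(\tau)\lesssim (1+\tau^{Q})f(1)$ in case (a) and $f(\tau)\lesssim(1+\tau^{Q-1})f(1)$ in case (b) --- and then reduce the whole statement to the convergence of $\int_0^{+\infty}\tau^{s}e^{-\epsilon\tau^\nu}d\tau$ for $s>-1$, which holds here since $N\geq 0$ and $N+Q>0$. The paper instead splits the integral at $\tau=1$, bounds the piece over $[0,1]$ by $f(1)$ directly in case (a) (monotonicity) or by invoking part (ii) of Proposition \ref{prop:szego-functionswithgeometricdecay} in case (b), and treats $[1,+\infty)$ by a dyadic block decomposition in which the doubling condition is applied block by block against the factor $e^{-\epsilon 2^{k\nu}}$. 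The underlying mechanism --- iterating the doubling relation across dyadic scales to compare $f$ with $f(1)$ --- is the same in both arguments; the difference is packaging. Your version is self-contained (it does not need the auxiliary proposition to control the behavior near $0$ in case (b)) and isolates the quantitative content in one clean majorant plus a Gamma-type integral, while the paper's version avoids any pointwise bound and reuses a lemma it needs elsewhere anyway. You also handle the delicate points correctly: the constant absorbed in case (b) (of size $2^{|1-Q|}$) depends only on $Q$, and the possible singularity $\tau^{N+Q-1}$ at the origin is integrable precisely because $Q>0$.
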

\begin{proof}
	Because $f(\tau)\tau^N e^{-\epsilon \tau^\nu}\geq 0$, we need only prove the case where $a=0$.
	
	Throughout the proof $C$ denotes an arbitrary positive constant depending only on $Q,\ \epsilon,\ \nu$, and $N$.
	
	If (a) holds, then
	\begin{align*}
		\int_0^{+\infty} f(\tau)\tau^N e^{-\epsilon \tau^\nu}d\tau & \leq \int_0^1 f(\tau)d\tau + \sum_{k=0}^{+\infty} 2^{k(N+1)} \int_1^2 f(2^k \tau)\tau^N e^{-\epsilon 2^{k\nu}\tau^\nu}d\tau \\
		& \leq f(1) + \sum_{k=0}^{+\infty} 2^{k(N+1)} 2^N e^{-\epsilon 2^{k\nu}}f(2^{k+1}) \\
		& \leq f(1)\Big( 1 + \sum_{k=0}^{+\infty} 2^{k(N+1)+N+Q(k+1)}e^{-\epsilon 2^{k\nu}}\Big)\\
		& \leq C f(1).
	\end{align*}
	On the other hand, if (b) holds then
	$$\int_0^{+\infty} f(\tau)\tau^N e^{-\epsilon \tau^\nu}d\tau\leq \int_0^1 f(\tau)d\tau + f(1)\int_1^{+\infty} \tau^N e^{-\epsilon \tau^\nu}d\tau \leq Cf(1)$$
	by part (ii) of Proposition \ref{prop:szego-functionswithgeometricdecay}.
\end{proof}
\medskip

\begin{proposition}\label{prop:szego-oscillationsandboundedderivatives}
	Let $f,g:[0,+\infty)\to \mathbb{R}$ with $g$ non-negative and assume that there are constants $Q,M>0$ and $N\geq 0$ with
	\begin{itemize}
		\item[(a)] $\tau^k |\partial_\tau^k f(\tau)|\leq M\tau^N g(\tau)$ for $0\leq k\leq Q+2$,
	\end{itemize}
	 and either
	 \begin{itemize}
		\item[$\textit{(b)}_1$] $g$ increasing with $g(2\tau)\leq 2^Q g(\tau)$
	\end{itemize}
	or
	\begin{itemize}
		\item[$\textit{(b)}_2$] $g$ decreasing with $g(2\tau)\geq 2^{-1+Q}g(\tau)$.
	\end{itemize}	
	If $\lambda=x+iy$ with $x>0$ and $y\in\mathbb{R}$, then there exists $C>0$ (depending only on $Q$ and $M$) such that 
	\begin{equation}\label{eq:oscillationsandboundedderivativesest}
		\Big| \displaystyle\int_0^{+\infty} e^{-\lambda \tau} f(\tau)d\tau\Big| \leq C |\lambda|^{-1-N}g(|\lambda|^{-1}).
	\end{equation}
\end{proposition}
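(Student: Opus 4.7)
The plan is to exploit the scale $a := |\lambda|^{-1}$ by splitting at $a$ and then using integration by parts on the tail. First, I will write $I = \int_0^{+\infty} e^{-\lambda\tau} f(\tau)\, d\tau = I_1 + I_2$ with $I_1 = \int_0^a$ and $I_2 = \int_a^{+\infty}$. For $I_1$, the trivial bound $|e^{-\lambda\tau}| = e^{-x\tau} \leq 1$ together with hypothesis (a) at $k=0$ gives $|I_1| \leq M \int_0^a \tau^N g(\tau)\, d\tau$; in case $(b)_1$ the monotonicity $g(\tau) \leq g(a)$ on $[0,a]$ immediately yields $|I_1| \leq M g(a) a^{N+1}/(N+1)$, and in case $(b)_2$ a dyadic decomposition of $[0,a]$ combined with the iterated bound $g(2^{-k}a) \leq 2^{k(1-Q)} g(a)$ sums geometrically (using $N+Q > 0$) to the same estimate $|I_1| \lesssim a^{N+1} g(a) = |\lambda|^{-1-N} g(|\lambda|^{-1})$.

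Next I will integrate by parts $Q+2$ times on $I_2$, using $e^{-\lambda\tau} = -\lambda^{-1} \partial_\tau e^{-\lambda\tau}$. The boundary contributions at $\tau = +\infty$ vanish thanks to the $e^{-x\tau}$ factor and the at-most polynomial growth of $g$. The boundary term at $\tau = a$ coming from the $j$-th integration by parts ($0 \leq j \leq Q+1$) has the form $(-1)^{j+1} \lambda^{-(j+1)} e^{-\lambda a} f^{(j)}(a)$, and its modulus is bounded directly by $M |\lambda|^{-(j+1)} a^{N-j} g(a) = M |\lambda|^{-(N+1)} g(|\lambda|^{-1})$ via hypothesis (a) and the identity $a = |\lambda|^{-1}$; summing over $j$ produces the desired form with an acceptable constant.

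The final remainder is $R = (-1)^{Q+2} \lambda^{-(Q+2)} \int_a^{+\infty} e^{-\lambda\tau} f^{(Q+2)}(\tau)\, d\tau$, which I will bound using $|f^{(Q+2)}(\tau)| \leq M \tau^{N-Q-2} g(\tau)$ together with the doubling assumption on $g$. In case $(b)_1$ the estimate $g(\tau) \leq 2^Q (\tau/a)^Q g(a)$ for $\tau \geq a$ reduces the problem to controlling $a^2 g(a) \int_a^{+\infty} e^{-x\tau} \tau^{N-2}\, d\tau$; I will handle this through a dyadic decomposition $[a, +\infty) = \bigcup_k [2^k a, 2^{k+1} a]$, combining the polynomial weight with the decay $e^{-x \cdot 2^k a}$ that kicks in once $x 2^k a \gtrsim 1$ and summing the resulting geometric series via Proposition \ref{prop:szego-functionswithgeometricdecay}. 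Case $(b)_2$ proceeds symmetrically with the roles of increasing and decreasing $g$ reversed. The main obstacle is controlling this remainder uniformly as $x \to 0^+$, where the exponential decay degenerates; this is precisely the step where the quantitative doubling hypothesis on $g$ and the exact number $Q+2$ of available derivatives of $f$ must be balanced against the growth $\tau^N$ in the integrand so that the dyadic geometric sum converges.
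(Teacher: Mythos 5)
Your overall route is the same as the paper's (split at the scale $a=|\lambda|^{-1}$, bound the near part crudely using hypothesis (a) with $k=0$ plus the doubling of $g$, and integrate by parts on the tail, controlling the boundary terms at $\tau=a$ exactly as you do), and the $I_1$ and boundary-term estimates are fine. The genuine gap is in the remainder: you fix the number of integrations by parts at $Q+2$, and with that choice the estimate you are left to prove is simply false for $N\geq 1$. After using $|f^{(Q+2)}(\tau)|\leq M\tau^{N-Q-2}g(\tau)$ and $g(\tau)\lesssim (\tau/a)^{Q}g(a)$ in case $(b)_1$, you need
\[
a^{2}g(a)\int_a^{+\infty}e^{-x\tau}\tau^{N-2}\,d\tau\ \lesssim\ a^{N+1}g(a),
\quad\text{i.e.}\quad
\int_a^{+\infty}e^{-x\tau}\tau^{N-2}\,d\tau\ \lesssim\ a^{N-1},
\]
and this fails uniformly in $\lambda$: for $\lambda$ nearly purely imaginary ($x\to 0^+$ with $|\lambda|$ fixed) the left side behaves like $\log\frac{1}{xa}$ when $N=1$ and like $x^{1-N}$ when $N>1$. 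No dyadic decomposition can rescue an absolute-value estimate that is false; the factor $e^{-x\tau}$ gives no decay uniform in $\lambda$, and the remaining oscillation can only be exploited by further integration by parts. The same issue appears in case $(b)_2$ once $N\geq Q+1$. So the "obstacle" you flag at the end is not a technical balancing act that works out — with $Q+2$ integrations by parts it does not close.

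The paper resolves this precisely by letting the number of integrations by parts grow with $N$: after rescaling ($\tau\mapsto a\tau$, $c=\lambda/|\lambda|$) it integrates by parts $J$ times with $J>Q+N$, so the tail is $a^{1+N}\int_1^{+\infty}\tau^{N-J-1}g(a\tau)\,d\tau$, which converges absolutely with no use of $e^{-x\tau}$: in case $(b)_1$ one applies part (i) of Proposition \ref{prop:szego-functionswithgeometricdecay} to $\tau\mapsto\tau^{N-J-1}g(a\tau)$ (with $\epsilon=J-N-Q$), and in case $(b)_2$ one simply uses that $g$ is decreasing and $J>N$. Note this means the argument genuinely consumes derivative bounds up to order roughly $Q+N+2$ rather than $Q+2$ (in the paper's applications the bounds of Theorem \ref{thm:chr-der-szegoderivatives} hold for every order of $\partial_\tau$, so this costs nothing); in particular, within your framework the fix is to integrate by parts $J$ times with $J>Q+N+1$, bound the boundary terms at $\tau=a$ exactly as you already did for each $j\leq J-1$, and then the tail $\int_a^{+\infty}\tau^{N-J}(\tau/a)^{Q}\,d\tau$ (resp. $\int_a^{+\infty}\tau^{N-J}\,d\tau$) converges and gives $a^{N+1}g(a)$ directly.
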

\begin{proof}
	Let $c=|\lambda|^{-1}\lambda$ and $a=|\lambda|^{-1}$. Then 
	\begin{align*}
		\displaystyle\int_0^{+\infty} e^{-\lambda \tau} f(\tau)d\tau & = a\displaystyle\int_0^{+\infty} e^{-c\tau}f(a\tau)d\tau \\
		& = a\displaystyle\int_0^1 e^{-c\tau}f(a\tau)d\tau + a\displaystyle\int_1^{+\infty} e^{-c\tau}f(a\tau)d\tau\\
		& =: I_1 + I_2.
	\end{align*}
	Note that by either a simple size estimate or part (ii) or Proposition \ref{prop:szego-functionswithgeometricdecay}, $$|I_1| \leq Ma^{1+N}\int_0^1 \tau^Ng(a\tau)d\tau \leq Ma^{1+N}\int_0^1 g(a\tau)d\tau \leq Ma^{1+N}g(a)$$ if either $(b)_1$ or $(b)_2$ holds. 
	
	For $I_2$, we integrate by parts $J>Q+N$ times, using the fact that $\Re(c)>0$ to compute the boundary terms at infinity, to obtain
	\begin{equation*}
		I_2 = \displaystyle\sum_{k=0}^{J} \frac{a^{k+1}}{c^k}e^{-c}f^{(k)}(a) + \displaystyle\frac{a^{J+2}}{c^{J+1}}\displaystyle\int_1^{+\infty} e^{-c\tau}f^{(J+1)}(a\tau)d\tau,
	\end{equation*}
	so we obtain
	\begin{align*}
		|I_2| & \lesssim \displaystyle\sum_{k=0}^{J} a^{k+1}a^{N-k} g(a) + a^{J+2}\displaystyle\int_1^{+\infty} |f^{(J+1)}(a\tau)|d\tau \\
		& \lesssim a^{1+N}g(a) + a^{J+2}\displaystyle\int_1^{+\infty} (a\tau)^{N-J-1}g(a\tau)d\tau \\
		& \lesssim a^{1+N}g(a) + a^{1+N}\int_1^{+\infty} \tau^{N-J-1}g(a\tau)d\tau.
		\end{align*}
If $(b)_1$ holds, then we can apply part (i) of Proposition \ref{prop:szego-functionswithgeometricdecay} (with $\epsilon=J-N-Q$) to see that
	\begin{equation*}
		|I_2|  \lesssim a^{1+N}g(a).
	\end{equation*}
	On the other hand, if $(b)_2$ holds then 
	$$|I_2|\lesssim a^{1+N} g(z) + a^{1+N}g(a)\int_1^{+\infty} \tau^{N-J-1}d\tau\lesssim a^{1+N}g(a)$$
	because $g$ is decreasing. This completes the proof.
\end{proof}
\medskip

We are now  ready to begin the proof of Theorem \ref{thm:szego-decomp}.

\begin{proof}[Proof of Theorem \ref{thm:szego-decomp}]
	
	By Remark \ref{rem:metric-invar} we may assume that $P=P^{\mb{0},2}$.
	Throughout the proof we fix $\mb{z},\mb{w}\in\b\Omega$, multi-indices $|\alpha|,|\beta|$, and $N,K,K'\geq 0$, and we let $\chi$ be as in Section \ref{sec:fn}. We also write $t=\Re(z_2)$ and $s=\Re(w_2)$. By Corollary \ref{cor:metric-mutilde}, we may choose $\sigma^\ast_\tau(w)=\mu^\ast(\mb{w},\tau^{-1})$, decreasing in $\tau$, so that $\sigma^\ast_\tau(w)\approx \sigma_\tau(w)$ (uniformly in $w$ and $\tau$), and $\sigma^\ast_{2\tau}(w)\geq 2^{-\frac{1}{2}}\sigma^\ast_\tau(w).$ 
	
	To simplify notation we write $\mathbb{H}_\tau=W_\tau^\alpha \mathbb{R}^K_\tau \mathbb{S}_\tau (\mathbb{R}^{\ast}_\tau)^{K'}W_\tau^\beta$. We also use the estimate $|B_d(\mb{z},\delta)|\approx \delta^2 \Lambda(\mb{z},\delta)$ without further mention.
	
	By (\ref{eq:fn-fkkkernel}) and (\ref{eq:fn-nkkkernel}), we have
	\begin{equation}
		T^N Z_{\mb{z}}^\alpha (Z^\beta_{\mb{w}})^\ast[\mathbb{F}_{K,K'}](\mb{z},\mb{w})=C\int_0^{{+\infty}} e^{2\pi i\tau(t-s)}\chi(\tau)\tau^N[\mathbb{H}_\tau](z,w)d\tau,
	\end{equation}
	and
	\begin{equation}
		T^N Z_{\mb{z}}^{\alpha} (Z^\beta_{\mb{w}})^{\ast}[\mathbb{N}_{K,K'}](\mb{z},\mb{w})=C\int_0^{{+\infty}} e^{2\pi i\tau(t-s)}(1-\chi(\tau))\tau^N[\mathbb{H}_\tau](z,w)d\tau,
	\end{equation}
	where $C=(2\pi i)^N$.

	If $\Phi^2,\Phi^\kappa:\mathbb{C}^2\to\mathbb{C}^2$ are the biholomorphisms constructed in Section \ref{sec:normalization-biholomorphic} associated (respectively) to $P=P^{\mb{w},2}$ and $P=P^{\mb{w},\kappa}$ for $\kappa=\max(m,|\alpha|,|\beta|)$, then let
	$$\tilde{\mathbb{H}}_{\tau,2}=\tilde{W}_{\tau,2}^\alpha \tilde{\mathbb{R}}^K_{\tau,2} \tilde{\mathbb{S}}_{\tau,2} (\tilde{\mathbb{R}}^{\ast}_{\tau,2})^{K'}\tilde{W}_{\tau,2}^\beta$$
	and 
	$$\tilde{\mathbb{H}}_{\tau,\kappa}=\tilde{W}_{\tau,\kappa}^\alpha \tilde{\mathbb{R}}^K_{\tau,\kappa} \tilde{\mathbb{S}}_{\tau,\kappa} (\tilde{\mathbb{R}}^{\ast}_{\tau,\kappa})^{K'}\tilde{W}_{\tau,\kappa}^\beta$$
	denote the operators $\mathbb{H}_\tau$ corresponding to $P^{\mb{w},2}$ and $P^{\mb{w},\kappa}$, respectively.

	By Lemma \ref{lem:fn-substitution}, we have
	\begin{align}
		T^N Z_{\mb{z}}^\alpha (Z^\beta_{\mb{w}})^\ast[\mathbb{F}_{K,K'}]&(\mb{z},\mb{w})\nonumber \\
		&= C\int_0^{{+\infty}} e^{2\pi i\tau(t-s-T_2(z,w))}\chi(\tau)\tau^N[\tilde{\mathbb{H}}_{\tau,2}](z-w,0)d\tau\label{eq:main-f}
	\end{align}
	and
	\begin{align}
		T^N Z_{\mb{z}}^\alpha (Z^\beta_{\mb{w}})^\ast[\mathbb{N}_{K,K'}]&(\mb{z},\mb{w})\nonumber \\
		&= C\int_0^{{+\infty}} e^{2\pi i\tau(t-s-T_\kappa(z,w))}(1-\chi(\tau))\tau^N[\tilde{\mathbb{H}}_{\tau,\kappa}](z-w,0)d\tau\label{eq:main-n}
	\end{align}
	
    Theorem \ref{thm:chr-der-szegoderivatives} implies that, writing $\tilde{\mb{0}}=\Phi(\mb{w})$,
	$$\tau^M \partial_\tau^M \tilde{\mathbb{H}}_{\tau,\kappa}\in {\rm Op}^{\t 0}_{[1,{+\infty})}(K+K'-|\alpha|-|\beta|)$$
	and 
	$$\tau^M \partial_\tau^M \tilde{\mathbb{H}}_{\tau,2}\in {\rm Op}^{\t 0}_{(0,2]}(K+K'-\min(|\alpha|,2)-\min(|\beta|,2)),$$
	so that, because $\t\rho_\tau(z-w,0)\approx \t\rho_\tau(z,w)$ and $\tilde{\sigma}_\tau(\t 0)=\sigma_\tau(w)$, for a small constant $\epsilon>0$ we have
	\begin{equation}\label{eq:main-fdecay}
	\begin{aligned}
	|\partial_\tau^M & [\tilde{\mathbb{H}}_{\tau,2}](z-w,0)|\\
	& \lesssim \tau^{-M}\sigma_\tau(w)^{-2+K+K'-\min(|\alpha|,2)-\min(|\beta|,2)} e^{-\epsilon\t\rho_\tau(z,w)},\quad 0<\tau\leq 2\end{aligned}\end{equation}
	and
	\begin{equation}\label{eq:main-ndecay}
	\begin{aligned}|\partial_\tau^M & [\tilde{\mathbb{H}}_{\tau,\kappa}](z-w,0)|\\
	& \lesssim \tau^{-M}\sigma_\tau(w)^{-2+K+K'-|\alpha|-|\beta|} e^{-\epsilon\t\rho_\tau(z,w)},\quad 1\leq \tau<{+\infty}.\end{aligned}\end{equation}
	
	\textbf{Estimates for $\mathbb{N}_{K,K'}$}
	
	We first focus on the estimates for $\mathbb{N}_{K,K'}$. To begin, we fix $0<c_1\ll c_2\ll 1$ (to be chosen later).  There are four cases.
	\bigskip
	
	\noindent\textbf{Case $(\mathbb{N})_1$:} $d(\mb{z},\mb{w})\geq c_2$ and $|z-w|\geq c_1$.
	\medskip
	
	Note that equation (\ref{eq:geom-metric}) implies that 
	\begin{align}
		d(\mb{z},\mb{w}) & \approx  |z-w|+\mu(\mb{w},|t-s-T_2(z,w)|)\nonumber \\
		& =  |z-w| + \mu\Big(\mb{w},\Big|t-s-T_\kappa (z,w) -2\Im\Big(\sum_{k=3}^\kappa \frac{1}{k!} \frac{\partial^k P}{\partial z^k}(w)(z-w)^k\Big)\Big|\Big)\nonumber \\
		& \lesssim  |z-w|+\mu(\mb{w},|t-s-T_\kappa (z,w)|)\nonumber \\
		&  \qquad +\mu\Big(\mb{w},\Big|2\Im\Big(\sum_{k=3}^\kappa \frac{1}{k!} \frac{\partial^k P}{\partial z^k}(w)(z-w)^k\Big)\Big|\Big)\nonumber \\
		& \lesssim |z-w|+\mu(\mb{w},|t-s-T_\kappa (z,w)|)+ |z-w|^{\kappa/2}\nonumber \\
		& \approx  |z-w|^{\kappa/2}+\mu(\mb{w},|t-s-T_\kappa (z,w)|),\label{eq:main-case1n}
	\end{align}
	where in the fourth line we used the fact that $|\nabla^k P|\lesssim 1$ for $k\geq 2$. The estimate $\mu(\mb{w},\delta)\lesssim \delta^{\frac{1}{2}}$ for $\delta\gtrsim 1$ follows from equation (\ref{eq:metrics-lambdamu}) and Lemma \ref{lem:geom-uftisaq}.
		
	The proof of (\ref{eq:szego-near-growth}) in this case requires us to show that the right-hand-side of (\ref{eq:main-n}) is controlled by $\Lambda(\mb{w},d(\mb{z},\mb{w}))^{-M}$ for large $M$, and by (\ref{eq:main-case1n}) it suffices to show that we can bound (\ref{eq:main-n}) by large negative powers of $|t-s-T_\kappa(z,w)|$ and $\Lambda(\mb{w},|z-w|)\approx |z-w|^2$.
	
	For small enough $\epsilon,\nu>0$, equations (\ref{eq:main-n}) and (\ref{eq:main-ndecay}) and Proposition \ref{prop:chr-rec-specforms} give
	$$|[T^N Z^\alpha \mathbb{N}_{K,K'}Z^\beta](\mb{z},\mb{w})|\lesssim \int_1^{+\infty} \tau^N \sigma_\tau(w)^{-2+K+K'-|\alpha|-|\beta|} e^{-\epsilon (\tau\Lambda(w,|z-w|))^\nu}d\tau,$$
	which we estimate by noting that, because $\Lambda(\mb{w},c_1)\gtrsim 1$ uniformly in $\mb{w}$,
	\begin{align*}
		 \int_1^{+\infty} \tau^N \sigma_\tau &(w)^{-2+K+K'-|\alpha|-|\beta|} e^{-\epsilon (\tau\Lambda(w,|z-w|))^\nu}d\tau \\
		& \leq  e^{-\frac{\epsilon}{2}\Lambda(\mb{w},|z-w|)^\nu} \int_1^{+\infty} \tau^N \sigma_\tau(w)^{-2+K+K'-|\alpha|-|\beta|}e^{-\frac{\epsilon}{2}\Lambda(\mb{w},c_1)^\nu \tau^\nu}d\tau\\
		& \approx  e^{-\frac{\epsilon}{2}\Lambda(\mb{w},|z-w|)^\nu} \int_1^{+\infty} \tau^N \sigma^\ast_\tau(w)^{-2+K+K'-|\alpha|-|\beta|}e^{-\frac{\epsilon}{2}\Lambda(\mb{w},c_1)^\nu \tau^\nu}d\tau\\
		& \lesssim  e^{-\frac{\epsilon}{2}\Lambda(\mb{w},|z-w|)^\nu},
		\end{align*}
	where in the last line we were able to apply Proposition \ref{prop:main-elem-concave} because $-\frac{1}{2}(-2+K+K'-|\alpha|-|\beta|)>-1$. This gives the fast decay in $\Lambda(\mb{w},|z-w|)$.
	
	On the other hand, by integrating by parts $M$ times\footnote{Recall that we have suppressed the term $e^{-2\pi \tau\epsilon}$ in the integral, and that our estimates are independent of $\epsilon>0$.} and using the support of $(1-\chi(\tau))$ to compute the boundary terms at $\tau=1$, we have
	\begin{align*}
		 \Big|\int_0^{{+\infty}} & e^{2\pi i\tau(t-s-T_\kappa(z,w))} (1-\chi(\tau))\tau^N[\tilde{\mathbb{H}}_{\tau,\kappa}](z-w,0)d\tau\Big|\\
		& =  |2\pi i (t-s-T_\kappa (z,w))|^{-M}\\
		& \qquad\qquad\times\Big|\int_1^{+\infty} e^{2\pi i \tau (t-s-T_\kappa (z,w))}\partial^M_\tau ((1-\chi(\tau))\tau^N [\tilde{\mathbb{H}}_{\tau,\kappa}](z-w,0))d\tau\Big|\\
		& \lesssim  |t-s-T_\kappa (z,w)|^{-M}\\
		& \qquad\qquad\times\int_1^{+\infty} \tau^{N-M}\sigma_\tau(w)^{-2+K+K'-|\alpha|-|\beta|}e^{-\epsilon(\tau \Lambda(\mb{w},|z-w|))^\nu}d\tau\\
		& \leq  |t-s-T_\kappa (z,w)|^{-M}\int_1^{+\infty} \tau^{N-M}\sigma_\tau(w)^{-2+K+K'-|\alpha|-|\beta|}e^{-\epsilon\Lambda(\mb{w},c_1)^\nu\tau ^\nu}d\tau\\
		& \lesssim  |t-s-T_\kappa (z,w)|^{-M}.
		\end{align*}
		This completes the proof of Case $(\mathbb{N})_1$.
		\bigskip
		
		\noindent\textbf{Case $(\mathbb{N})_2$:} $d(\mb{z},\mb{w})\geq c_2$ and $|z-w|\leq c_1$.
		\medskip
		
		In this case we have $|z-w|^{\kappa/2}\lesssim |z-w|\ll d(\mb{z},\mb{w})$, and therefore we may compute as in (\ref{eq:main-case1n}) to show that
		$$d(\mb{z},\mb{w})\approx \mu(\mb{w},|t-s-T_\kappa(z,w)|)$$
		provided that $c_1$ is chosen sufficiently small relative to $c_2$.
		
		Integrating by parts as in Case $(\mathbb{N})_1$, we have
		\begin{align*}
			 \Big|\int_0^{{+\infty}} & e^{2\pi i\tau(t-s-T_\kappa(z,w))}(1-\chi(\tau))\tau^N[\tilde{\mathbb{H}}_{\tau,\kappa}](z-w,0)d\tau\Big|\\
			& =  |2\pi i (t-s-T_\kappa (z,w))|^{-M}\\
			& \qquad\qquad\times\Big|\int_1^{+\infty} e^{2\pi i \tau (t-s-T_\kappa (z,w))}\partial^M_\tau ((1-\chi(\tau))\tau^N [\tilde{\mathbb{H}}_{\tau,\kappa}](z-w,0))d\tau\Big|\\
			& \lesssim  |t-s-T_\kappa (z,w)|^{-M}\\
			& \qquad\qquad\times\int_1^{+\infty} \tau^{N-M}\sigma_\tau(w)^{-2+K+K'-|\alpha|-|\beta|}e^{-\epsilon(\tau \Lambda(\mb{w},|z-w|))^\nu}d\tau\\
			& \leq  |t-s-T_\kappa (z,w)|^{-M}\int_1^{+\infty} \tau^{N-M}\sigma_\tau(w)^{-2+K+K'-|\alpha|-|\beta|}d\tau\\
			& \lesssim  t-s-T_\kappa (z,w)|^{-M} \int_1^{+\infty} \tau^{N-M+\frac{1}{2}|-2+K+K'-|\alpha|-|\beta||}d\tau\\
			& \lesssim  |t-s-T_\kappa (z,w)|^{-M}\approx \Lambda(\mb{w},d(\mb{w},\mb{z}))^{-M}
		\end{align*}
		provided that $M$ is sufficiently large. This completes the proof of Case $(\mathbb{N})_2$.
		\bigskip
		
		\noindent\textbf{Case $(\mathbb{N})_3$:} $d(\mb{z},\mb{w})\leq c_2$ and $|z-w|\geq c_1 d(\mb{z},\mb{w})$.
		\medskip
		
		Note that $d(\mb{z},\mb{w})\approx |z-w|$ in this case.
		By applying (\ref{eq:main-ndecay}) and making the substitution $\tau\mapsto \tau\Lambda(\mb{w},|z-w|)^{-1}$ we obtain
		\begin{align*}
			& \Big|\int_0^{{+\infty}} e^{2\pi i\tau(t-s-T_\kappa(z,w))}(1-\chi(\tau))\tau^N[\tilde{\mathbb{H}}_{\tau,\kappa}](z-w,0)d\tau\Big|\\
			& \lesssim  \int_1^{+\infty} \tau^N\sigma_\tau(w)^{-2+K+K'-|\alpha|-|\beta|}e^{-\epsilon(\tau\Lambda(\mb{w},|z-w|))^\nu}d\tau \\
			& =  \Lambda(\mb{w},|z-w|)^{-1-N}\int_{\Lambda(\mb{w},|z-w|)}^{+\infty} \tau^N \sigma_{\tau\Lambda(\mb{w},|z-w|)^{-1}}(w)^{-2+K+K'-|\alpha|-|\beta|}e^{-\epsilon \tau^\nu}d\tau\\
			& \approx  \Lambda(\mb{w},|z-w|)^{-1-N}\int_{\Lambda(\mb{w},|z-w|)}^{+\infty} \tau^N \sigma^\ast_{\tau\Lambda(\mb{w},|z-w|)^{-1}}(w)^{-2+K+K'-|\alpha|-|\beta|}e^{-\epsilon \tau^\nu}d\tau\\
			& \lesssim  \Lambda(\mb{w},|z-w|)^{-1-N}\sigma^\ast_{\Lambda(\mb{w},|z-w|)^{-1}}(w)^{-2+K+K'-|\alpha|-|\beta|}\\
			& \approx  \Lambda(\mb{w},|z-w|)^{-1-N}|z-w|^{-2+K+K'-|\alpha|-|\beta|}\\
			& \approx  \Lambda(\mb{w},d(\mb{z},\mb{w}))^{-1-N}d(\mb{z},\mb{w})^{-2+K+K'-|\alpha|-|\beta|},
			\end{align*}
			where in the fourth line we were able to apply Proposition \ref{prop:main-elem-concave} because $-\frac{1}{2}(-2+K+K'-|\alpha|-|\beta|)>-1$. This completes the proof of Case $(\mathbb{N})_3$.
			\bigskip
			
	\noindent\textbf{Case $(\mathbb{N})_4$:} $d(\mb{z},\mb{w})\leq c_2$ and $|z-w|\leq c_1 d(\mb{z},\mb{w})$.
	\medskip
	
	By part (b) of Lemma \ref{lem:metric-approx} and the assumption that $|z-w|\ll d(\mb{z},\mb{w})$, we have
	$$d(\mb{z},\mb{w})\approx \mu(\mb{w},|t-s-T_\kappa(z,w)|).$$		
	Write $g(\tau)=\sigma^\ast_\tau(w)^{-2+K+K'-|\alpha|-|\beta|}$ and $f(\tau)=(1-\chi(\tau))\tau^N[\tilde{\mathbb{H}}_{\tau,\kappa}](z-w,0)$, and note that $f(\tau)$ and $g(\tau)$ satisfy the hypotheses of Proposition \ref{prop:szego-oscillationsandboundedderivatives}, where in the case $-2+K+K'-|\alpha|-|\beta|>0$ we again use Corollary \ref{cor:metric-mutilde} and the fact that $-\frac{1}{2}(-2+K+K'-|\alpha|-|\beta|)>-1$. We therefore have
	\begin{align*}
		 \Big|\int_0^{{+\infty}} & e^{2\pi i\tau(t-s-T_\kappa(z,w))}(1-\chi(\tau))\tau^N[\tilde{\mathbb{H}}_{\tau,\kappa}](z-w,0)d\tau\Big|\\
		& \lesssim  |t-s-T_\kappa(z,w)|^{-1-N}g(|t-s-T_\kappa(z,w)|^{-1})\\
		& \approx  |t-s-T_\kappa(z,w)|^{-1-N}\mu(\mb{w},|t-s-T_\kappa(z,w)|)^{-2+K+K'-|\alpha|-|\beta|}\\
		& \approx  \Lambda(\mb{w},d(\mb{z},\mb{w}))^{-1-N} d(\mb{w},\mb{w})^{-2+K+K'-|\alpha|-|\beta|},
	\end{align*}
	which finishes the proof of (\ref{eq:szego-near-growth}).
\bigskip	

\textbf{Estimates for $\mathbb{F}_{K,K'}$}
\medskip

We now establish the estimates (\ref{eq:szego-far-growth}) for $\mathbb{F}_{K,K'}$. To begin, fix $0<c_1\ll c_2\ll 1$ (to be chosen later).  Throughout, we use the fact that $\sigma_\tau(w)\approx \tau^{-\frac{1}{2}}$ if $0<\tau\leq 2$. Then there are three cases.
\bigskip

	\noindent\textbf{Case $(\mathbb{F})_1$:} $d(\mb{z},\mb{w})\leq c_2$.
	\medskip
	
	We apply the estimate (\ref{eq:main-fdecay}) to the right-hand-side of (\ref{eq:main-f}), thereby obtaining
	\begin{align*}
		 \Big|\int_0^{{+\infty}} & e^{2\pi i\tau(t-s-T_2(z,w))}\chi(\tau)\tau^N[\tilde{\mathbb{H}}_{\tau,2}](z-w,0)d\tau\Big| \\
		& \lesssim  \int_0^2 \tau^N \sigma_\tau(w)^{-2+K+K'-\min(|\alpha|,2)-\min(|\beta|,2)}e^{-\epsilon \t\rho_\tau(z,w)}d\tau \\
		& \lesssim  \int_0^2 \tau^{N+1-\frac{1}{2}(K+K'-\min(|\alpha|,2)-\min(|\beta|,2))}d\tau\lesssim  1
		\end{align*}
	provided that $K+K'<4+2N+\min(|\alpha|,2)+\min(|\beta|,2).$
	\bigskip
	
	\noindent\textbf{Case $(\mathbb{F})_2$:} $d(\mb{z},\mb{w})\geq c_2$ and $|z-w|\geq c_1 d(\mb{z},\mb{w})$.
	\medskip
	
	We estimate as in Case $(\mathbb{F})_1$, but now use Proposition \ref{prop:main-elem-concave} and $d(\mb{z},\mb{w})\approx |z-w|$:
	\begin{align*}
		 \Big|\int_0^{{+\infty}} & e^{2\pi i\tau(t-s-T_2(z,w))}\chi(\tau)\tau^N[\tilde{\mathbb{H}}_{\tau,2}](z-w,0)d\tau\Big| \\
		& \lesssim  \int_0^2 \tau^{N+1-\frac{1}{2}(K+K'-\min(|\alpha|,2)-\min(|\beta|,2))}e^{-\epsilon (\tau\Lambda(\mb{w},|z-w|))^\nu}d\tau \\
		& \lesssim  \Lambda(\mb{w},|z-w|)^{-N-2+\frac{1}{2}(K+K'-\min(|\alpha|,2)-\min(|\beta|,2))}\\
		&  \qquad\times\int_0^{\Lambda(\mb{w},|z-w|)} \tau^{N+1-\frac{1}{2}(K+K'-\min(|\alpha|,2)-\min(|\beta|,2))}e^{-\epsilon\tau^\nu}d\tau \\
		& \lesssim  \Lambda(\mb{w},|z-w|)^{-N-2+\frac{1}{2}(K+K'-\min(|\alpha|,2)-\min(|\beta|,2))}\\
		& \approx  \Lambda(\mb{w},|z-w|)^{-1-N} |z-w|^{-2+K+K'-\min(|\alpha|,2)-\min(|\beta|,2)} \\
		& \approx  \Lambda(\mb{w},d(\mb{z},\mb{w}))^{-1-N} d(\mb{z},\mb{w})^{-2+K+K'-\min(|\alpha|,2)-\min(|\beta|,2)},
	\end{align*}
	 where in the third line we needed $K+K'<\min(|\alpha|,2)+\min(|\beta|,2)+4$, and in penultimate line we used that $\Lambda(\mb{w},\delta)\approx \delta^2$ for $\delta\gtrsim 1$.
	\bigskip
	
	\noindent\textbf{Case $(\mathbb{F})_3$:} $d(\mb{z},\mb{w})\geq c_2$ and $|z-w|\leq c_1d(\mb{z},\mb{w})$.
	\medskip

	Note that if $f(\tau)=\chi(\tau)\tau^N [\tilde{\mathbb{H}}_{\tau,2}](z-w,0)$ and $$g(\tau)=\sigma^\ast_\tau(w)^{-2+K+K'-\min(|\alpha|,2)-\min(|\beta|,2)},$$ then (as in Case $(\mathbb{N})_4$) $f$ and $g$ satisfy the hypotheses of Proposition \ref{prop:szego-oscillationsandboundedderivatives} (here we again use the fact that $K+K'<\min(|\alpha|,2)+\min(|\beta|,2)+4$). 
	Moreover, by taking $c_1$ sufficiently small relative to $c_2$, applying Lemma \ref{lem:metric-approx}, and arguing as in Case $(\mathbb{N})_1$ we can guarantee that $$d(\mb{z},\mb{w})\approx \mu(\mb{w},|t-s-T_\kappa(z,w)|)\approx \mu(\mb{w},|t-s-T_2(z,w)|).$$ 
	Together these observations yield
	\begin{align*}
		 \Big|&\int_0^{{+\infty}}  e^{2\pi i\tau(t-s-T_2(z,w))}\chi(\tau)\tau^N[\tilde{\mathbb{H}}_{\tau,2}](z-w,0)d\tau\Big| \\
		& \lesssim  |t-s-T_2(z,w)|^{-1-N}g(|t-s-T_2(z,w)|^{-1}) \\
		& \approx  |t-s-T_2(z,w)|^{-1-N}\mu(\mb{w},|t-s-T_2(z,w)|)^{-2+K+K'-\min(|\alpha|,2)-\min(|\beta|,2)}\\
		& \approx  \Lambda(\mb{w},d(\mb{z},\mb{w}))^{-1-N} d(\mb{z},\mb{w})^{-2+K+K'-\min(|\alpha|,2)-\min(|\beta|,2)},
	\end{align*}
	which completes the proof of (\ref{eq:szego-far-growth}).	
\end{proof}

\section{Proof of Theorem \ref{thm:szego-mapping}}
\label{sec:mapping}

We now turn to the proof of Theorem \ref{thm:szego-mapping}(a).
We first show that $W^\alpha \mathbb{F}$ is bounded on $L^p({\rm b}\Omega)$ for $|\alpha|\geq 1$.
Note first that, by Theorem \ref{thm:szego-decomp},
$$[W^\alpha\mathbb{F}](\mb{z},\bullet),[W^\alpha\mathbb{F}](\bullet,\mb{w})\in L^1({\rm b}\Omega)\cap L^{\infty}({\rm b}\Omega)$$
as long as $|\alpha|\geq 1$.
Therefore, Minkowski's inequality for integrals implies that
\begin{equation}\label{eq:NL-F-1ormore}
	\|W^\alpha \mathbb{F}[f]\|_{L^p({\rm b}\Omega)}\leq C_\alpha \|f\|_{L^p({\rm b}\Omega)},\quad |\alpha|\geq 1.
\end{equation}
The proof that $\mathbb{F}$ is bounded on $L^p({\rm b}\Omega)$ is the same as that which shows that $W^\alpha \mathbb{N}$ is bounded on $NL^p_k({\rm b}\Omega)$ for $|\alpha|\geq 0$, so we now focus only on this latter case.
Choosing $K=0$ and $K'=|\alpha|$, Theorem \ref{thm:szego-decomp} implies that there exists $\gamma'$ with $|\gamma'|=|\alpha|$ such that $W^\alpha \mathbb{N}=W^\alpha\mathbb{N}_{0,|\alpha|}W^{\gamma'}$, where $W^\alpha\mathbb{N}_{0,|\alpha|}$ satisfies all of the same estimates as does $\mathbb{N}$.
Thus, it suffices to show that the estimates (\ref{eq:szego-near-growth}) imply that $\mathbb{N}=\mathbb{N}_{0,0}$ is bounded on $L^p({\rm b}\Omega)$ for $1<p<+\infty$.

We will apply the general $T(1)$-theorem of \cite{DavidJourneSemmes1985}.
First we show that $\mathbb{N}$ is restrictedly bounded, in the sense that
\begin{equation}\label{eq:N-restrictedlybounded}
	\|\mathbb{N}[\phi^{\mb{\zeta}}_j]\|_{L^2({\rm b}\Omega)}\lesssim |B_d(\mb{\zeta},2^{-j})|^{\frac{1}{2}}
\end{equation}
for any function $\phi^{\mb{\zeta}}_j$ such that 
\begin{itemize}
	\item[(a)] ${\rm supp}\; \phi^{\mb{\zeta}}_j \subset B_d(\mb{\zeta},2^{-j}),$
	\item[(b)] $\|(2^{-j}W)^\alpha \phi^{\mb{\zeta}}_j\|_\infty \lesssim 1$ for any $0\leq|\alpha|\leq 2$.
\end{itemize}
Such a function $\phi^{\mb{\zeta}}_j$ is called a \emph{normalized bump function} adapted to $B_d(\mb{\zeta},2^{-j})$.
The existence of such functions for arbitrary $\mb{\zeta}$ and $2^{-j}$ is guaranteed by Lemma \ref{lem:metric-smooth}.

To do this, fix $C>1$ and writing $dm_z:=dm(z,\Re(z_2))$ throughout this section for brevity, we have
\begin{align*}
	\|\mathbb{N}[\phi^{\mb{\zeta}}_j]\|_{L^2({\rm b}\Omega)}^2 & = \int_{d(\mb{z},\mb{\zeta})\geq C2^{-j}} |\mathbb{N}[\phi^{\mb{\zeta}}_j](\mb{z})|^2dm_z \\
	&  \quad + \int_{d(\mb{z},\mb{\zeta})\leq C2^{-j}} |\mathbb{N}[\phi^{\mb{\zeta}}_j](\mb{z})|^2dm_z \\
	& =: I_1+I_2.
\end{align*}

For $I_1$, we note that $d(\mb{z},\mb{w})\approx d(\mb{z},\mb{\zeta})$ when $d(\mb{w},\mb{\zeta})\leq 2^{-j}< C2^{-j}\leq d(\mb{z},\mb{\zeta})$ to obtain
\begin{align*}
	|I_1| & \lesssim  \int_{d(\mb{z},\mb{\zeta})\geq C2^{-j}} \Big( \int_{B_d(\mb{\zeta},2^{-j})} \frac{1}{|B_d(\mb{z},d(\mb{z},\mb{w}))|}dm_w\Big)^2 dm_z \\
	& \lesssim  \int_{d(\mb{z},\mb{\zeta})\geq C2^{-j}} \frac{|B_d(\mb{\zeta},2^{-j})|^2}{|B_d(\mb{z},d(\mb{z},\mb{\zeta}))|^2} dm_z \\
	& \lesssim  |B_d(\mb{\zeta},2^{-j})|^2 \displaystyle\sum_{k=1}^{+\infty} \int_{d(\mb{z},\mb{\zeta})\leq C 2^{k-j}}\frac{1}{|B_d(\mb{z},d(\mb{z},\mb{\zeta}))|^2}dm_z \\
	& \approx  |B_d(\mb{\zeta},2^{-j})|^2 \displaystyle\sum_{k=1}^{+\infty} \frac{1}{|B_d(\mb{\zeta},2^{k-j})|} \\
	& \lesssim |B_d(\mb{\zeta},2^{-j})|^2 \displaystyle\sum_{k=1}^{+\infty} \frac{2^{-k4}}{|B_d(\mb{\zeta},2^{-j})|} \\
	& \lesssim  |B_d(\mb{\zeta},2^{-j})|,
\end{align*}
as desired.

For $I_2$, write $\mathbb{N}_{0,0}=\mathbb{N}_{0,1}Z$ and observe that
\begin{align*}
	 \\
	|I_2| & \lesssim  \int_{d(\mb{z},\mb{\zeta})\leq C2^{-j}} \Big( \int_{B_d(\mb{\zeta},2^{-j})}\frac{d(\mb{z},\mb{w})2^j}{|B_d(\mb{z},d(\mb{z},\mb{w}))|}dm_w\Big)^2dm_z \\
	& \approx  2^{2j}\int_{d(\mb{z},\mb{\zeta})\leq C2^{-j}}\Big(\sum_{k=-\infty}^0 \int_{d(\mb{z},\mb{w})\approx C 2^{k-j}}\frac{2^{k-j}}{|B_d(\mb{z},2^{k-j})|}dm_w\Big)^2 dm_z \\
	& \lesssim  2^{2j} \int_{d(\mb{z},\mb{\zeta})\leq C2^{-j}}\Big(\sum_{k=-\infty}^0 2^{k-j}\Big)^2 dm_z \\
	& \approx  |B_d(\mb{\zeta},2^{-j})|,
\end{align*} 
proving (\ref{eq:N-restrictedlybounded}).
As $\mathbb{N}^\ast$ is assumed to satisfy the same estimates as $\mathbb{N}$, $\mathbb{N}^\ast$ is also restrictedly bounded.

It remains to show that $\mathbb{N}[1],\mathbb{N}^\ast[1]\in BMO({\rm b}\Omega).$ As above, it suffices to show the result for $\mathbb{N}[1]$.
Fix $\mb{z}_0\in {\rm b}\Omega$, $\delta>0$, and let $a(\mb{z})$ be an $H^1({\rm b}\Omega)$ atom associated to $B_d(\mb{z}_0,\delta)$. 
That is,
\begin{enumerate}
	\item[(a)] $\int a=0$,
	\item[(b)] ${\rm supp}\; a\subset B_d(\mb{z}_0,\delta)$,
	\item[(c)] $|a|\leq |B_d(\mb{z}_0,\delta)|^{-1}$.
\end{enumerate}

Let $\chi$ be smooth with $\chi(t)\equiv 1$ for $t\leq 1$ and $\chi(t)\equiv 0$ for $\chi(t)\geq 2$, and write $\eta^{\mb{w}}_j(\mb{z})=\chi(d^\ast(\mb{z},\mb{w})2^{-j})$.

The arguments in Chapter 7 of \cite{Stein1993} imply that $\mathbb{N}(\eta_j^{\mb{0}})$ is uniformly in $BMO({\rm b}\Omega)$ for all $j$.
Therefore, to prove that $\mathbb{N}[1]\in BMO({\rm b}\Omega)$ it suffices to show that there exists $D<+\infty$ (independent of $a$) so that
$$\lim_{j\rightarrow +\infty}|\lan \mathbb{N}^\ast(a),\eta^{\mb{0}}_j\ran|\leq D.$$
To start, choose $k$ such that $C\delta\leq 2^{k}\leq 2C\delta$, (for some fixed large $C$) and write
\begin{align*}
	\lan \mathbb{N}^\ast(a),\eta_j^{\mb{0}}\ran & = \int_{{\rm b}\Omega}(1-\eta_{k}^{\mb{z}_0})\mathbb{N}^\ast[a]\eta_j^{\mb{0}}dm + \int_{{\rm b}\Omega}\eta_{k}^{\mb{z}_0}\mathbb{N}^\ast[a]\eta_j^{\mb{0}}dm \\
	& =: I_1 + I_2.
\end{align*}

For $I_1$, we use the fact that $\int a=0$ to write
\begin{align}
|I_1| =  \Big| \int_{\b\Omega} \Big\{(1-&\eta_k^{\mb{z}_0}(\mb{z}))\eta_j^{\mb{0}}(\mb{z}) \nonumber\\
	 & \times\int_{B_d(\mb{z}_0,\delta)}\Big[[\mathbb{N}^\ast](\mb{z},\mb{w})-[\mathbb{N}^\ast](\mb{z},\mb{z}_0)\Big]a(\mb{w})dm_w\Big\}dm_z\Big|.\label{eq:cancel-atomcancel}
\end{align}

Now let $\gamma:[0,1]\to {\rm b}\Omega$ be a piecewise smooth path with $\gamma(r)\in B_d(\mb{z}_0,\delta)$ for all $r$, with $\gamma(0)=\mb{z}_0$, $\gamma(1)=\mb{w}$, and $$\gamma'(r)=\alpha(r)\delta X(\gamma(r))+\beta(r)\delta Y(\gamma(r))\ a.e.,$$
where $\alpha,\beta:[0,1]\to[0,1]$ are piecewise constant and $\||\alpha|^2+|\beta|^2\|_\infty\leq 1.$
Then
\begin{align*}
	[\mathbb{N}^\ast](\mb{z},\mb{w})-[\mathbb{N}^\ast](\mb{z},\mb{z}_0) & = \int_0^1 \frac{d}{dr}[\mathbb{N}^\ast (\mb{z},\gamma(r))]dr \\
	& = \int_0^1 \Big\{\alpha(r)\delta X_{\mb{w}}[\mathbb{N}^\ast](\mb{z},\gamma(r))+\beta(r)\delta Y_{\mb{w}}[\mathbb{N}^\ast](\mb{z},\gamma(r))\Big\}dr,
\end{align*}
so that the derivative estimates in (\ref{eq:szego-near-growth}) imply that
\begin{align}
	|[\mathbb{N}^\ast](\mb{z},\mb{w})-[\mathbb{N}^\ast](\mb{z},\mb{z}_0)|& \lesssim \delta\int_0^1 \frac{d(\mb{z},\gamma(r))^{-1}}{|B_d(\mb{z},d(\mb{z},\gamma(r)))|}dr.\label{eq:cancel-cancel}
\end{align}
Because $(1-\eta_k^{\mb{z}_0}(\mb{z}))$ is supported where $d^\ast(\mb{z},\mb{z}_0)\geq 2^k\geq C\delta$, and $\gamma(r)\in B_d(\mb{z}_0,\delta)$, we have $$d(\mb{z},\gamma(r))\approx d(\mb{z},\mb{z}_0).$$
Combining this observation with (\ref{eq:cancel-atomcancel}) and (\ref{eq:cancel-cancel}) yields
\begin{align*}
|I_1|	& \lesssim  \delta\int_{\b\Omega} (1-\eta_k^{\mb{z}_0}(\mb{z}))\eta_j^{\mb{0}}(\mb{z})\frac{d(\mb{z},\mb{z}_0)^{-1}}{|B_d(\mb{z},d(\mb{z},\mb{z}_0))|}dm_z \\
	& \lesssim  \delta \displaystyle\sum_{M=1}^{+\infty} \int_{d(\mb{z},\mb{z}_0)\approx 2^M\delta}\frac{d(\mb{z},\mb{z}_0)^{-1}}{|B_d(\mb{z},d(\mb{z},\mb{z}_0))|} dm_z \\
	& \lesssim  \delta \displaystyle\sum_{M=1}^{+\infty} 2^{-M}\delta^{-1} \\
	& \lesssim  1.
\end{align*}

For $I_2$, write $\mathbb{N}=\mathbb{N}_{0,1}Z$ and take $j$ so large that $\eta_j^{\mb{0}}\equiv 1$ on the support of $\eta_k^{\mb{z}_0}$.
We then integrate by parts to obtain
\begin{align*}
	|I_2|& = \Big|\int_{\b\Omega} Z(\eta_k^{\mb{z}_0}(\mb{z})) \mathbb{N}_{1,0}[a](\mb{z})dm_z\Big|\\
	& \lesssim  \delta^{-1}\int_{B_d(\mb{z}_0,C\delta)}\int_{B_d(\mb{z}_0,C\delta)}\frac{d(\mb{z},\mb{w})}{|B_d(\mb{z},d(\mb{z},\mb{w}))|\cdot |B_d(\mb{z}_0,\delta)|}dm_w dm_z \\
	& \lesssim  1.
\end{align*}
Hence $\mathbb{N}[1]$ (and $\mathbb{N}^\ast[1]$) are in $BMO({\rm b}\Omega)$, so that $\mathbb{N}:L^p({\rm b}\Omega)\rightarrow L^p({\rm b}\Omega)$, as desired.
\bigskip

We now begin our proof of part (b) of Theorem \ref{thm:szego-mapping}.
By the interpolation arguments in \cite{NagelRosaySteinWainger1989}, to prove that $\mathbb{S}:\Gamma_\alpha(E)\rightarrow\Gamma_\alpha({\rm b}\Omega)$ for all $0<\alpha<{+\infty}$, it suffices to prove the result for non-integer $\alpha$. 
Also, we may assume that $\delta_0\geq 1$.
By the same observations as in part (a), it suffices to prove the result for operators which satisfy the same estimates as $\mathbb{N}$, and to restrict our attention to $0<\alpha<1$.

Choose a bump function $\eta$ supported in $B_d(\mb{z}_0,5\delta_0)$, with $\eta\equiv 1$ on $B_d(\mb{z}_0,4\delta_0)$ and $|W^\beta \eta|\lesssim \delta_0^{-|\beta|},$ $|\beta|\leq 2$.
Let $0<\alpha<1$ and $f\in \Gamma_\alpha(E)$, and let $\mathbb{T}$ denote an operator satisfying the same estimates as $\mathbb{N}$.
For $\mb{z}\in {\rm b}\Omega$, define 
$$F(\mb{z}):=\mathbb{T}[f-f(\mb{z})\eta](\mb{z})+f(\mb{z})\mathbb{T}[\eta](\mb{z}).$$
We will show that $\|F\|_{\Gamma_\alpha}\leq C(1+\delta_0^\alpha)\|f\|_{\Gamma_\alpha}.$

First suppose that $d(\mb{z},\mb{z}_0)\leq 2\delta_0$. 
Then
\begin{align*}
	|f(\mb{z})\mathbb{T}[\eta](\mb{z})| & = |f(\mb{z})||\mathbb{T}_{0,1}[Z\eta](\mb{z})| \\
	& \lesssim  \|f\|_\infty \delta_0^{-1} \int_{d(\mb{w},\mb{z}_0)\leq 5\delta_0} \frac{d(\mb{z},\mb{w})}{|B_d(\mb{w},d(\mb{z},\mb{w}))|}dm_w \\
	& \lesssim  \|f\|_\infty \delta_0^{-1} \int_{d(\mb{w},\mb{z})\leq 7\delta_0} \frac{d(\mb{z},\mb{w})}{|B_d(\mb{w},d(\mb{z},\mb{w}))|}dm_w \\
	& \lesssim  \|f\|_{\Gamma_\alpha}.
\end{align*}
Moreover,
\begin{align*}
	|\mathbb{T}[f-f(\mb{z})\eta](\mb{z})| & \lesssim  \int_{d(\mb{w},\mb{z})\leq 7\delta_0} \frac{|f(\mb{w})-f(\mb{z})|}{|B_d(\mb{z},d(\mb{z},\mb{w}))|}dm_w \\
	& \lesssim  \|f\|_{\Gamma_\alpha}\int_{d(\mb{w},\mb{z})\leq 7\delta_0} \frac{d(\mb{z},\mb{w})^\alpha}{|B_d(\mb{z},d(\mb{z},\mb{w}))|} dm_w \\
	& \lesssim  \delta_0^\alpha \|f\|_{\Gamma_\alpha}.
\end{align*}
Therefore $|F(\mb{z})|\lesssim (1+\delta_0^\alpha)\|f\|_{\Gamma_\alpha}$ when $d(\mb{z},\mb{z}_0)\leq 2\delta_0$. 
On the other hand, if $\mb{z}\notin B_d(\mb{z}_0,2\delta_0)$, then $f(\mb{z})\equiv 0$ and the estimate of $\|F\|_\infty$ reduces to 
\begin{align*}
	|F(\mb{z})| & =  |\mathbb{T}[f](\mb{z})|\\
	& \lesssim  \|f\|_{\infty} \int_{d(\mb{w},\mb{z}_0)\leq \delta_0} |B_d(\mb{z},d(\mb{z},\mb{w}))|^{-1} dm_w \\
	& \approx  \|f\|_{\infty} \frac{|B_d(\mb{z}_0,\delta_0)|}{|B_d(\mb{z}_0,d(\mb{z},\mb{z}_0))|} \\
	& \leq  \|f\|_{\infty},
\end{align*}
which proves that $\|F\|_{\infty}\lesssim (1+\delta_0^\alpha)\|f\|_{\Gamma_\alpha}.$

Now, fix $\mb{z},\mb{\zeta}$ and write $d(\mb{z},\mb{\zeta})=\delta$.
Let $\phi$ be a bump function supported in $B_d(\mb{z},3\delta)$ with $\phi\equiv 1$ on $B_d(\mb{z},2\delta)$ and $|W^\beta \phi|\lesssim \delta^{-|\beta|}$ for $|\beta|\leq 2$.

If $\delta\geq 1$, then 
$$|F(\mb{z})-F(\mb{\zeta})|\leq 2\|F\|_{\infty}\lesssim (1+\delta_0^\alpha)\|f\|_{\Gamma_\alpha}\leq (1+\delta_0^\alpha)\delta^\alpha\|f\|_{\Gamma_\alpha}.$$
If $\delta\leq 1$, then we have
\begin{align*}
	F(\mb{z})-F(\mb{\zeta})& =  (f(\mb{z})-f(\mb{\zeta}))\mathbb{T}[\eta](\mb{z}) + f(\mb{\zeta})[\mathbb{T}\mathbb{R}^\ast[Z\eta](\mb{z})-\mathbb{T}\mathbb{R}^\ast[Z\eta](\mb{\zeta})] \\
	&  \quad + (f(\mb{z})-f(\mb{\zeta}))\mathbb{T}[\eta(1-\phi)](\mb{z}) \\
	&  \quad + \int_{\b\Omega} [[\mathbb{T}](\mb{z},\mb{w})-[\mathbb{T}](\mb{\zeta},\mb{w})] \\
	&  \qquad\qquad\qquad\qquad\times(f(\mb{w})-f(\mb{\zeta}))\eta(\mb{w})(1-\phi(\mb{w}))dm_w \\
	&  \quad + \mathbb{T}[(f-f(\mb{z}))\phi](\mb{z})-\mathbb{T}[(f-f(\mb{\zeta}))\phi](\mb{\zeta}) \\
	& =: I_1+I_2+I_3+I_4+I_5+I_6.
\end{align*}

Estimating as above (and using $\mathbb{T}=\mathbb{T}_{0,1}\circ Z$ in $I_3$), we have
$$|I_1|+|I_3|+|I_4|+|I_5|+|I_6|\lesssim \delta^\alpha \|f\|_{\Gamma_\alpha},\qquad |I_2|\lesssim \delta\|f\|_{\infty}\leq \delta^\alpha \|f\|_{\Gamma_\alpha}.$$
This concludes the proof of Theorem \ref{thm:szego-mapping}.

\section{Proof of Corollary \ref{cor:szego-growth}}
\label{sec:szego-growth}

In this section, we prove Corollary \ref{cor:szego-growth}.
Because the estimates follow immediately from Theorem \ref{thm:szego-decomp}, it suffices to prove the sharpness claim.
We will do this by inspecting a single example.

Consider the tube domain $\Omega=\{\mb{z}\in \mathbb{C}^2\ :\ \Im(z_2)>b(\Re(z))\}$, where $b:\mathbb{R}\to [0,{+\infty})$ is a convex function with
\begin{itemize}
	\item $b(0)=b'(0)=0$, 
	\item $b''(x) = e^{x-n}$ in a neighborhood of $x=n$, for all $n\in \mathbb{Z}$, and
	\item $b''(x)\approx 1,$ uniformly in $x\in \mathbb{R}$.
\end{itemize}

Fix $\mb{z},\mb{w}\in \b\Omega$, and recall from Section \ref{sec:examples} that we have
\begin{equation}\label{szegoformula2}
	[\mathbb{S}^\epsilon](\mb{z},\mb{w}) = \frac{1}{4\pi^2} \displaystyle\int_0^{+\infty} \displaystyle\int_\mathbb{R} \displaystyle\frac{e^{i\tau(z_2+i\epsilon-\bar{w}_2)+\eta(z+\bar{w})}}{\displaystyle\int_\mathbb{R} e^{2[\eta \theta - \tau b(\theta)]}d\theta}d\eta d\tau.
\end{equation}
Using the formulas $\bar{Z}[\mathbb{S}^\epsilon]\equiv 0$, $$[\bar{Z},Z]=\frac{i}{2} b''(\Re(z))(\partial_{z_2}+\partial_{\bar{z}_2}),$$
and
$$[\bar{Z},b^{(j)}(\Re(z))(\partial_{z_2}+\partial_{\bar{z}_2})]= \frac{1}{2}b^{(j+1)}(\Re(z))(\partial_{z_2}+\partial_{\bar{z}_2}),$$
for $k\geq 1$ we have
\begin{align*}
	\bar{Z}^k Z[\mathbb{S}^\epsilon](\mb{z},\mb{w}) & = \frac{i}{2}\bar{Z}^{k-1}\Big[b''(\Re(z))(\partial_{z_2}+\partial_{\bar{z}_2})[\mathbb{S}^\epsilon](\mb{z},\mb{w})\Big] \\
	& =  \frac{i}{2^{k}} b^{(k+1)}(\Re(z))(\partial_{z_2}+\partial_{\bar{z}_2})[\mathbb{S}^\epsilon](\mb{z},\mb{w}) \\
	& =  \frac{i}{2^{k+2}\pi^2} b^{(k+1)}(\Re(z)) \displaystyle\int_0^{+\infty} \displaystyle\int_\mathbb{R} \displaystyle\frac{i\tau e^{i\tau(z_2+i\epsilon-\bar{w}_2)+\eta(z+\bar{w})}}{\displaystyle\int_\mathbb{R} e^{2[\eta \theta - \tau b(\theta)]}d\theta}d\eta d\tau,
\end{align*}
so that
\begin{equation}\label{eq:szego-derivs}
	-2^{k+2}\pi^2\bar{Z}^k Z[\mathbb{S}^\epsilon](\mb{z},\mb{w})  =  b^{(k+1)}(\Re(z)) \displaystyle\int_0^{+\infty} \displaystyle\int_\mathbb{R} \displaystyle\frac{\tau e^{i\tau(z_2+i\epsilon-\bar{w}_2)+\eta(z+\bar{w})}}{\displaystyle\int_\mathbb{R} e^{2[\eta \theta - \tau b(\theta)]}d\theta}d\eta d\tau.
	\end{equation}	
For $n\in \mathbb{Z}$ we let $\mb{z}_n=(n+i0,0+ib(n))$, so that
\begin{equation}\label{eq:szego-simplified}
	-4\pi^2\bar{Z}^k Z[\mathbb{S}^\epsilon](\mb{z}_n,\mb{z}_{-n}) = \int_0^{+\infty} \tau e^{-\tau(b(n)+b(-n)+\epsilon)}\int_\mathbb{R} \Big( \int_\mathbb{R} e^{2[\eta \theta - \tau b(\theta)]}d\theta\Big)^{-1}d\eta d\tau.
\end{equation}

We begin our analysis of (\ref{eq:szego-simplified}) with a basic fact about convex functions.
\begin{proposition} Let $\phi:\mathbb{R}\to \mathbb{R}$ be convex, and suppose that $\phi(\theta)$ achieves its minimum value at $\theta_0$. If $L=\{ \theta\ :\ \phi(\theta)\leq \phi(\theta_0)+1\}$, then
	\begin{equation}\label{eq:convexexp}
		\int_\mathbb{R} e^{-\phi(\theta)} d\theta \approx |L|e^{-\phi(\theta_0)}.
	\end{equation}
\end{proposition}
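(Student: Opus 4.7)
The plan is to prove the two directions separately, with the lower bound essentially immediate and the upper bound reducing to a standard convexity argument that controls the growth of sublevel sets.

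For the lower bound, I would simply restrict the integral to $L$. Since $\phi(\theta) \leq \phi(\theta_0) + 1$ for $\theta \in L$, we get
\begin{equation*}
\int_\mathbb{R} e^{-\phi(\theta)}\,d\theta \geq \int_L e^{-\phi(\theta)}\,d\theta \geq e^{-1} e^{-\phi(\theta_0)} |L|,
\end{equation*}
which gives one direction with an explicit constant.

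For the upper bound, I would introduce the sublevel sets $L_k = \{\theta : \phi(\theta) \leq \phi(\theta_0) + k\}$ for $k=1,2,\ldots$, so that $L_1 = L$. The key geometric claim is that $|L_k| \leq k|L|$. To see this, translate so that $\theta_0 = 0$ and $\phi(\theta_0) = 0$. Let $a_k = \sup\{\theta \geq 0 : \phi(\theta) \leq k\}$ and $b_k = -\inf\{\theta \leq 0 : \phi(\theta) \leq k\}$, so $|L_k| = a_k + b_k$. Because $\phi$ is convex with $\phi(0) = 0$, the function $t \mapsto \phi(t)/t$ is non-decreasing on $(0,+\infty)$, so $k/a_k = \phi(a_k)/a_k \geq \phi(a_1)/a_1 = 1/a_1$, giving $a_k \leq k a_1$. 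The same argument on $(-\infty,0)$ gives $b_k \leq k b_1$, hence $|L_k| \leq k|L|$.

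Armed with this, I would decompose
\begin{equation*}
\int_\mathbb{R} e^{-\phi(\theta)}\,d\theta = \int_{L_1} e^{-\phi(\theta)}\,d\theta + \sum_{k=1}^{\infty}\int_{L_{k+1}\setminus L_k} e^{-\phi(\theta)}\,d\theta.
\end{equation*}
On $L_{k+1}\setminus L_k$ the integrand satisfies $e^{-\phi(\theta)} \leq e^{-\phi(\theta_0)-k}$, while on $L_1$ we have $e^{-\phi(\theta)} \leq e^{-\phi(\theta_0)}$. Combined with $|L_{k+1}| \leq (k+1)|L|$, this yields
\begin{equation*}
\int_\mathbb{R} e^{-\phi(\theta)}\,d\theta \leq e^{-\phi(\theta_0)}|L|\left(1 + \sum_{k=1}^\infty (k+1)e^{-k}\right),
\end{equation*}
and the series converges to a universal constant. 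The only step that requires real thought is the bound $|L_k| \leq k|L|$, which is the heart of the argument but is standard once one remembers the monotonicity of $\phi(t)/t$ for convex $\phi$ vanishing at the origin.
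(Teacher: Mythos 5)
Your proof is correct. The lower bound is the same as the paper's (restrict to $L$ and use $\phi\leq\phi(\theta_0)+1$ there). For the upper bound both arguments ultimately rest on the same convexity fact, namely that after normalizing so $\theta_0=0$ and $\phi(\theta_0)=0$ the difference quotient $\phi(\theta)/\theta$ is non-decreasing on $(0,+\infty)$, which forces at least linear growth of $\phi$ beyond the endpoints of $L$; but the executions differ. The paper writes $L=[\theta_0-\theta_-,\theta_0+\theta_+]$, uses the monotone difference quotient to get $\phi(\theta+\theta_0)-\phi(\theta_0)\geq \theta/\theta_+$ on the right tail (and the analogue on the left), and integrates the resulting exponential bound to conclude that the two tails contribute at most $\theta_++\theta_-=|L|$. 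You instead discretize: you prove the sublevel-set growth bound $|L_k|\leq k|L|$ (your identity $\phi(a_k)=k$ uses $a_k<+\infty$, which is guaranteed once $|L|<+\infty$; when $|L|=+\infty$ your lower bound already makes both sides infinite, which is the "triviality" the paper sets aside at the outset) and then sum over the shells $L_{k+1}\setminus L_k$, where $e^{-\phi}\leq e^{-\phi(\theta_0)-k}$, obtaining a convergent series $\sum(k+1)e^{-k}$. The paper's continuous version is slightly tighter in that step, while your discrete version isolates the reusable fact $|L_k|\leq k|L|$ about sublevel sets of convex functions; either way the constants are universal, as required.
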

\begin{proof}
	To avoid trivialities, assume that $|L|<{+\infty}$.
	
	Making the change of variable $\theta\mapsto \theta+\theta_0$, the left-hand-side of equation (\ref{eq:convexexp}) becomes
	\begin{equation}\label{eq:convexexpinter}
		\displaystyle\int_\mathbb{R} e^{-\phi(\theta)}d\theta = e^{-\phi(\theta_0)}\int_\mathbb{R} e^{-(\phi(\theta+\theta_0)-\phi(\theta_0))}d\theta.
	\end{equation}
	
	Write $L=[\theta_0-\theta_-,\theta_0+\theta_+]$ and 
	split the integral on the right-hand-side of (\ref{eq:convexexpinter}) above as $$\int_\mathbb{R} = \int_{[-\theta_-,\theta_+]} +\int_{\theta\geq \theta_+}+ \int_{\theta\leq -\theta_-}=I_1+I_2+I_3.$$ 
	A simple size estimate on $I_1$ yields $I_1\approx |L|$.
	
	We turn now to $I_2$ and $I_3$. Note that because  $\phi(\theta+\theta_0)$ is convex, the function $\theta^{-1}(\phi(\theta+\theta_0)-\phi(\theta_0))$ is increasing on $\theta>0$, and therefore for $\theta\geq \theta_+$ we have
	$$\frac{\phi(\theta+\theta_0)-\phi(\theta_0)}{\theta} \geq \frac{\phi(\theta_+ +\theta_0)-\phi(\theta_0)}{\theta_+} = \frac{1}{\theta_+},$$
	or rather
	$$\phi(\theta+\theta_0)-\phi(\theta_0)\geq \frac{1}{\theta_+}\theta,\quad \theta\geq \theta_+.$$
	Similarly, $\phi(\theta+\theta_0)-\phi(\theta_0)\geq -\frac{1}{\theta_-}\theta$ for $\theta\leq -\theta_-$. Combining these yields
	\begin{align*}
		I_2+I_3 & \leq \int_{\theta\geq \theta_+} e^{-\theta_+^{-1}\theta }d\theta + \int_{\theta\leq -\theta_-} e^{\theta_-^{-1}\theta }d\theta \\
		& =  \theta_+ \int_{x\geq 1} e^{-x}dx + \theta_- \int_{x\leq -1} e^{x}dx \\
		& \leq  \theta_+ + \theta_- \\
		& =  |L|,
	\end{align*}
	which is the desired result.
\end{proof}

Now, let $\phi_{\tau,\eta}(\theta)=2[\tau b(\theta)-\eta \theta]$, and define $\theta_0(\tau,\eta)$ and $L(\tau,\eta)$ for $\phi_{\tau,\eta}$ as in the previous proposition.
We then have

\begin{lemma}\label{lem:convexinteest} $|L(\tau,\eta)|\approx \tau^{-\frac{1}{2}}$ and $-\phi_{\tau,\eta}(\theta_0(\tau,\eta))\approx \frac{\eta^2}{\tau}$.
\end{lemma}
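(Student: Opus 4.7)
The plan is to exploit the uniform bound $b''(x) \approx 1$ to pin down the location and value of the minimum of $\phi_{\tau,\eta}$ and then use a second-order Taylor expansion to measure the size of the sub-level set $L(\tau,\eta)$.

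First I would locate the minimizer. Since $\phi_{\tau,\eta}'(\theta) = 2[\tau b'(\theta) - \eta]$, the critical point $\theta_0 = \theta_0(\tau,\eta)$ satisfies $b'(\theta_0) = \eta/\tau$. The assumption $b''(x)\approx 1$ together with $b'(0)=0$ gives $b'(\theta) \approx \theta$ uniformly on $\mathbb{R}$, so $b'$ is a bi-Lipschitz bijection of $\mathbb{R}$ onto itself with $(b')^{-1}(y)\approx y$. Hence $\theta_0 \approx \eta/\tau$, and since $b(0)=0$ a second application of $b''\approx 1$ via $b(\theta)=\int_0^\theta b'(s)\,ds$ yields $b(\theta_0)\approx \theta_0^2 \approx \eta^2/\tau^2$.

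Next I would compute the minimum value. Plugging in,
\[
\phi_{\tau,\eta}(\theta_0) = 2[\tau b(\theta_0) - \eta\theta_0] = 2\tau b(\theta_0) - 2\theta_0 \cdot \tau b'(\theta_0).
\]
Using $b'(\theta_0)\approx\theta_0$ and $b(\theta_0)\approx \tfrac12\theta_0^2$ (which follows from integrating $b'(s)\approx s$ from $0$ to $\theta_0$, noting both sides have the same sign), the two terms combine to give a quantity comparable to $-\tau\theta_0^2 \approx -\eta^2/\tau$. A little care is required to verify this is truly $\approx$ and not just $\lesssim$ — the point is that $2\tau b(\theta_0) - 2\theta_0\tau b'(\theta_0) = -2\tau\int_0^{\theta_0}(b'(\theta_0)-b'(s))\,ds$, and the integrand is a nonnegative quantity comparable to $\theta_0-s$ by the uniform bound on $b''$, so the whole expression is $\approx -\tau\theta_0^2 \approx -\eta^2/\tau$.

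For the second bound, I would Taylor expand $\phi_{\tau,\eta}$ around $\theta_0$: since $\phi_{\tau,\eta}'(\theta_0)=0$ and $\phi_{\tau,\eta}''(s) = 2\tau b''(s) \approx 2\tau$ uniformly in $s$, integrating twice gives
\[
\phi_{\tau,\eta}(\theta) - \phi_{\tau,\eta}(\theta_0) \approx \tau(\theta-\theta_0)^2.
\]
Therefore $L(\tau,\eta)=\{\theta : \phi_{\tau,\eta}(\theta)\leq \phi_{\tau,\eta}(\theta_0)+1\}$ is comparable to the interval $\{\theta : |\theta-\theta_0|\leq \tau^{-1/2}\}$, so $|L(\tau,\eta)|\approx \tau^{-1/2}$.

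I do not anticipate a real obstacle here: once one has $b'(\theta)\approx\theta$ and $b(\theta)\approx\theta^2$ from the two-sided bound on $b''$, both conclusions fall out of elementary calculus. The only mild care-point is verifying that the $\approx$ (not just $\lesssim$) in $-\phi_{\tau,\eta}(\theta_0)\approx \eta^2/\tau$ actually holds, which is handled by the integral representation above.
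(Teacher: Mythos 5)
Your proposal is correct and follows essentially the same route as the paper: the sublevel-set bound comes from the uniform comparability $\phi_{\tau,\eta}''\approx 2\tau$ (your Taylor expansion is the paper's double integral of $\phi_{\tau,\eta}''$), and your integral representation $\phi_{\tau,\eta}(\theta_0)=-2\tau\int_0^{\theta_0}\big(b'(\theta_0)-b'(s)\big)\,ds$ is, after integration by parts, exactly the paper's identity $\theta_0 b'(\theta_0)-b(\theta_0)=\int_0^{\theta_0}s\,b''(s)\,ds$, combined in both cases with $(b')^{-1}(\tau^{-1}\eta)\approx\tau^{-1}\eta$.
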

\begin{proof}
	Writing $L=[\theta_0-\theta_-,\theta_0+\theta_+]$, we have
	$$1=\int_{\theta_0}^{\theta_0+\theta_+} \int_{\theta_0}^\theta \phi_{\tau,\eta}''(r)drd\theta \approx \tau \int_{\theta_0}^{\theta_0+\theta_+} \int_{\theta_0}^\theta drd\theta=\frac{1}{2}\tau \theta^2_{+},$$
	which shows that $\theta_+\approx \tau^{-\frac{1}{2}}$. Similarly, $\theta_-\approx \tau^{-\frac{1}{2}}$, showing that $|L|\approx \tau^{-\frac{1}{2}}$ as claimed.
	
	For the second inequality, note first that $\theta_0(\tau,\eta)=(b')^{-1} (\tau^{-1}\eta)$, so that
	\begin{align*}
		\phi_{\tau,\eta}(\theta_0(\tau,\eta)) & = -2\tau[\tau^{-1}\eta (b')^{-1}(\tau^{-1}\eta)-b((b')^{-1}(\tau^{-1}\eta))] \\
		& =  -2\tau \int_0^{(b')^{-1}(\tau^{-1}\eta)} sb''(s)ds \\
		& \approx  -\tau \int_0^{(b')^{-1}(\tau^{-1}\eta)} sds \\
		& = -\tau \Big((b')^{-1}(\tau^{-1}\eta)\Big)^2.
	\end{align*}
	Because $b''\approx 1$, $b'(\theta)\approx \theta$, and therefore $(b')^{-1}(\theta)\approx \theta$. 
	This proves the second claim.
\end{proof}

By the results in Section \ref{sec:geom}, we have
$$d(\mb{z}_n,\mb{z}_{-n})\approx n,\quad \Lambda(\mb{z}_n,\delta)\approx \delta^2.$$
This allows us to estimate (\ref{eq:szego-simplified}) as
\begin{align*}
	-2^{k+2}\pi^2 & \bar{Z}^k Z[\mathbb{S}^\epsilon](\mb{z}_{n},\mb{z}_{-n}) \\
	& =  \int_0^{+\infty} \tau e^{-\tau(b(n)+b(-n)+\epsilon)}\int_\mathbb{R} \Big( \int_\mathbb{R} e^{2[\eta \theta - \tau b(\theta)]}d\theta\Big)^{-1}d\eta d\tau \\
	& \approx  \int_0^{+\infty} \tau e^{-\tau(b(n)+b(-n)+\epsilon)}\int_\mathbb{R} |L(\tau,\eta)|^{-1}e^{\phi_{\tau,\eta}(\theta_0(\tau,\eta))}d\eta d\tau \\
	& \geq  \int_0^{+\infty} \tau^{\frac{3}{2}} e^{-\tau(b(n)+b(-n)+\epsilon)}\int_\mathbb{R} e^{-c\tau^{-1}\eta^2}d\eta d\tau \\
	& \approx  \int_0^{+\infty} \tau^{2} e^{-\tau(b(n)+b(-n)+\epsilon)}d\tau \\
	& \approx  |b(n)+b(-n)+\epsilon|^{-3} \\
	& \approx  n^{-6} \\
	& \approx  \frac{d(\mb{z}_{n},\mb{z}_{-n})^{-2}}{|B_d(\mb{z}_{n},d(\mb{z}_{n},\mb{z}_{-n}))|},
\end{align*}
uniformly for $k\geq 1$, $n\in\mathbb{Z}$, and $\epsilon>0$ small. 
This concludes the proof of the sharpness claim in Corollary \ref{cor:szego-growth}, and therefore the full proof of Corollary \ref{cor:szego-growth}.
\bigskip

\section{Proof of Corollary \ref{cor:szego-cancellation}}

\label{sec:szego-cancellation}

The proof of Corollary \ref{cor:szego-cancellation} requires us to integrate by parts in the integral 
$$\displaystyle\int_{{\rm b}\Omega} [T^N Z^\alpha\mathbb{S}](\mb{z},\mb{w}) \phi(\mb{w})dm(w,\Re(w_2)).$$
To do this, we first adapt a result of Nagel and Stein to decompose the Szeg\H{o} kernel into two parts: one which is a nice function and one which is a high derivative in $\Re(z_2)$ of a nice function. 
More precisely we have the following result, proved in exactly the same way as Lemma 7.21 of \cite{NagelStein2006}.

\begin{lemma}\label{lem:decompofszego}
	Let $\mb{0}=(0,0+i0),\ \mb{w}=(w,s+iP_1(w))$. Fix $\delta>0$ and $N\geq 0,$ and let $\alpha,\beta$ be multi-indices with $|\alpha|,|\beta|\leq 2$. For $M>2+N+\frac{1}{2}(|\alpha|+|\beta|)$, there is a constant $C=C(|\alpha|,|\beta|,M,N)$ and functions $F^{(M)}(\mb{0},\mb{w})$, $G^{(M)}(\mb{0},\mb{w})$ such that
	\[ [T^N Z_{\mb{z}}^\alpha (Z_{\mb{w}}^\beta)^\ast \mathbb{S}](\mb{0},\mb{w})=F^{(M)}(\mb{0},\mb{w})+(\delta T)^M G^{(M)}(\mb{0},\mb{w}),\]
	where
	\begin{align*}
		|F^{(M)}(\mb{0},\mb{w})|&\leq C [d(\mb{0},\mb{w})+\mu(\mb{0},\delta)]^{-2-|\alpha|-|\beta|}[\Lambda(\mb{0},d(\mb{0},\mb{w}))+\delta]^{-1-N},\\
		|G^{(M)}(\mb{0},\mb{w})|&\leq C [d(\mb{0},\mb{w})+\mu(\mb{0},\delta)]^{-2-|\alpha|-|\beta|}[\Lambda(\mb{0},d(\mb{0},\mb{w}))+\delta]^{-1-N}.
	\end{align*}
\end{lemma}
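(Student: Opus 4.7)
Start from the Fourier representation obtained by specializing (\ref{eq:fn-fkkkernel})–(\ref{eq:fn-nkkkernel}) to $K=K'=0$ and summing:
\[
T^N Z_{\mb{z}}^\alpha (Z_{\mb{w}}^\beta)^\ast[\mathbb{S}](\mb{z},\mb{w}) = (2\pi i)^N \int_0^{+\infty} e^{2\pi i\tau(\Re(z_2)-\Re(\bar{w}_2))}\tau^N\,[W_\tau^\alpha \mathbb{S}_\tau W_\tau^\beta](z,w)\,d\tau.
\]
Using the same cutoff $\chi$ as in Section \ref{sec:fn} but rescaled by $\delta$, define
\[
F^{(M)}(\mb{0},\mb{w}) := (2\pi i)^N \int_0^{+\infty} e^{-2\pi i\tau s}\chi(\delta\tau)\tau^N[W_\tau^\alpha \mathbb{S}_\tau W_\tau^\beta](0,w)\,d\tau
\]
and
\[
G^{(M)}(\mb{0},\mb{w}) := \frac{(2\pi i)^N}{(2\pi i\delta)^M} \int_0^{+\infty} e^{-2\pi i\tau s}(1-\chi(\delta\tau))\tau^{N-M}[W_\tau^\alpha \mathbb{S}_\tau W_\tau^\beta](0,w)\,d\tau.
\]
Since applying $(\delta T)^M$ to $G^{(M)}$ brings a factor $(2\pi i\delta\tau)^M$ into the integrand, the identity $T^N Z^\alpha (Z^\beta)^\ast[\mathbb{S}] = F^{(M)} + (\delta T)^M G^{(M)}$ holds by construction. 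The cutoff $\chi(\delta\tau)$ is $1$ for $\tau\leq\delta^{-1}$ and $0$ for $\tau\geq 2\delta^{-1}$, so $F^{(M)}$ is the low-frequency truncation (whose singularity at $\mb{w}=\mb{0}$ is regularized at the scale $\tau\sim\delta^{-1}$) and $G^{(M)}$ is the high-frequency piece, tamed by the factor $\tau^{-M}$.

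The estimates for $|F^{(M)}|$ and $|G^{(M)}|$ reduce to the same integral computations already performed in Section \ref{sec:main}, applied to the pointwise kernel bound
\[
|[W_\tau^\alpha \mathbb{S}_\tau W_\tau^\beta](0,w)| \lesssim \sigma_\tau(w)^{-2-|\alpha|-|\beta|}\,e^{-\epsilon\tilde\rho_\tau(0,w)}
\]
supplied by Theorem \ref{thm:chr-der-szegoderivatives} (with $|\alpha|,|\beta|\leq 2$, so $\min(|\alpha|,2)=|\alpha|$ and likewise for $\beta$, after applying the appropriate biholomorphism of Section \ref{sec:normalization-biholomorphic}). One then runs the four-case analysis from the proof of Theorem \ref{thm:szego-decomp}, organized by the relative sizes of $d(\mb{0},\mb{w})$, $\mu(\mb{0},\delta)$, and $|w|$. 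When $d(\mb{0},\mb{w})\gtrsim\mu(\mb{0},\delta)$ the cutoff is irrelevant and one recovers the bounds of Corollary \ref{cor:szego-growth} with the usual arguments (Proposition \ref{prop:main-elem-concave} for size estimates on exponentially decaying integrands, Proposition \ref{prop:szego-oscillationsandboundedderivatives} for oscillatory integration by parts in $s$). When $d(\mb{0},\mb{w})\lesssim \mu(\mb{0},\delta)$ the upper limit $\tau\approx\delta^{-1}$ in $F^{(M)}$ (respectively the lower limit in $G^{(M)}$) replaces the divergent integrand of Corollary \ref{cor:szego-growth} by the value at $\tau=\delta^{-1}$, and this is exactly what is encoded by the additive regularizations $\mu(\mb{0},\delta)$ in $d$ and $\delta$ in $\Lambda$ appearing in the stated bounds.

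The condition $M > 2 + N + \tfrac{1}{2}(|\alpha|+|\beta|)$ will be forced by the convergence of $G^{(M)}$: since $\sigma_\tau(w)^{-2-|\alpha|-|\beta|}\approx \tau^{1+(|\alpha|+|\beta|)/2}$ for $\tau\gg 1$, the integrand in $G^{(M)}$ behaves like $\tau^{1+N-M+(|\alpha|+|\beta|)/2}$ at infinity, which is absolutely integrable precisely under this assumption. The main obstacle is the bookkeeping in the intermediate regime $d(\mb{0},\mb{w})\approx\mu(\mb{0},\delta)$, where the cutoff $\chi(\delta\tau)$ and the oscillatory phase $e^{-2\pi i\tau s}$ must be handled simultaneously; here one appeals to Proposition \ref{prop:szego-oscillationsandboundedderivatives} applied with $f(\tau)=\chi(\delta\tau)\tau^N[W_\tau^\alpha\mathbb{S}_\tau W_\tau^\beta](0,w)$ (and its analog for $G^{(M)}$), using that the derivatives of $\chi(\delta\tau)$ are bounded by $\delta^k\leq\tau^{-k}$ on its support so that the hypothesis $\tau^k|\partial_\tau^k f|\lesssim\tau^N g(\tau)$ still holds with $g(\tau)=\sigma_\tau^{\ast}(w)^{-2-|\alpha|-|\beta|}$ from Corollary \ref{cor:metric-mutilde}. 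Once this verification is in hand, the desired bounds drop out of the same case-by-case computation as in Section \ref{sec:main}.
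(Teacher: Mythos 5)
Your proposal is essentially correct, but it takes a genuinely different route from the paper. The paper offers no self-contained argument for this lemma: it asserts that the proof is ``exactly the same'' as Lemma 7.21 of \cite{NagelStein2006}, and the remark immediately following the statement shows that the intended proof runs through the relation $[\mathbb{B}](\mb{z},\mb{w})=2i\,\partial_{\bar{w}_2}[\mathbb{S}](\mb{z},\mb{w})$ of Appendix \ref{sec:sz}; that is, one works with the kernel extended into $\Omega$, expands in the height parameter at scale $\delta$, and trades the resulting derivatives for $T$-derivatives and Bergman-type kernels, so that $F^{(M)}$ is (essentially) the kernel regularized by moving distance $\delta$ into the domain and $G^{(M)}$ an integral remainder. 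You instead stay entirely on the Fourier side and split the $\tau$-integral with the cutoff $\chi(\delta\tau)$ at $\tau\approx\delta^{-1}$, which is the sharp-cutoff analogue of the factor $e^{-2\pi\delta\tau}$ that the interior shift would insert; the identity then holds by construction, and the bounds are re-derived from Theorem \ref{thm:chr-der-szegoderivatives}, Lemma \ref{lem:fn-substitution}, Corollary \ref{cor:metric-mutilde}, and Propositions \ref{prop:szego-functionswithgeometricdecay}--\ref{prop:szego-oscillationsandboundedderivatives} by rerunning the case analysis of Section \ref{sec:main}. What your route buys is that it never touches the Bergman kernel or Appendix \ref{sec:sz}, is self-contained within the paper's machinery, and makes the threshold $M>2+N+\tfrac12(|\alpha|+|\beta|)$ transparent: it is exactly what makes $\tau^{N-M}\sigma_\tau(w)^{-2-|\alpha|-|\beta|}$ summable over dyadic blocks at infinity, so the tail integral in $G^{(M)}$ is evaluated at $\tau=\delta^{-1}$ and produces the regularizations $\mu(\mb{0},\delta)$ and $\delta$ in the stated bounds. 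What it costs is bookkeeping you partly gloss over: (i) the single kernel bound you quote is valid in the regimes $\tau\lesssim 1$ and $\tau\gtrsim 1$ only after two different normalizations ($P^{\mb{w},2}$ versus $P^{\mb{w},m}$, with the distinct phases $T_2$ and $T_m$), so when $\delta<1$ each of $F^{(M)}$ and $G^{(M)}$ must additionally be split at $\tau\approx 1$ as in the proof of Theorem \ref{thm:szego-decomp} --- harmless here precisely because $|\alpha|,|\beta|\leq 2$ makes $\min(|\alpha|,2)=|\alpha|$, $\min(|\beta|,2)=|\beta|$; (ii) in the oscillatory cases for $G^{(M)}$ you must absorb the prefactor via $\delta^{-M}\tau^{-M}\leq 1$ on the support of $1-\chi(\delta\tau)$ to verify hypothesis (a) of Proposition \ref{prop:szego-oscillationsandboundedderivatives} with the same $N$; and (iii) the normalizing constant should be $(-2\pi i\delta)^{-M}$ or $(2\pi i\delta)^{-M}$ according to whether $T$ acts in $\mb{w}$ or $\mb{z}$, a harmless sign. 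With these points made explicit, your argument closes and stands as a legitimate alternative to the paper's appeal to Nagel--Stein.
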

\medskip

\begin{remark}{\rm It is in the proof of this lemma that we need to use the formula $[\mathbb{B}](\mb{z},\mb{w})=2i\frac{\partial}{\partial \bar{w}_2}[\mathbb{S}](\mb{z},\mb{w})$, which is proved in Appendix \ref{sec:sz}.
		Here $\mathbb{B}:L^2(\Omega)\to L^2(\Omega)\cap \mathcal{O}(\Omega)$ is the Bergman projection, which is the orthogonal projection of $L^2(\Omega)$ on the (closed) subspace of square-integrable holomorphic functions on $\Omega$. 
	}\end{remark}

	\begin{proof}[Proof of Corollary \ref{cor:szego-cancellation}]
		By making a biholomorphic change of variables as in Section \ref{sec:normalization-biholomorphic}, we may assume that $\mb{z}=\mb{0}$ and that $P(w)=P^{\mb{z},2}(w)$.
		
		Apply Lemma \ref{lem:decompofszego} with $\delta=\Lambda(\mb{0},\delta_0)$ and large enough $M$ to get
		\begin{align*}
			\displaystyle \sup_{\mb{\zeta}\in B_d(\mb{0},\delta_0)}&  |(T^N Z^\alpha \mathbb{S})[\phi](\mb{\zeta})| \\
			=&  \displaystyle\sup_{\mb{\zeta}\in B_d(\mb{0},\delta_0)} \left| \displaystyle\int_{{\rm b}\Omega} [T^N Z^\alpha \mathbb{S}](\mb{\zeta},\mb{w})\phi(\mb{w})dm(w,\Re(w_2))\right| \\
			=&  \displaystyle\sup_{\mb{\zeta}\in B_d(\mb{0},\delta_0)} \left| \displaystyle\int_{{\rm b}\Omega} F^{(M)}(\mb{\zeta},\mb{w})\phi(\mb{w})\right. \\
			&  \quad\qquad\qquad\qquad\left. + (-1)^MG^{(M)}(\mb{\zeta},\mb{w})(\Lambda(\mb{0},\delta_0)T)^M\phi(\mb{w})dm(w,\Re(w_2))\right| \\
			\lesssim &  \delta_0^{-2-|\alpha|}\Lambda(\mb{0},\delta_0)^{-1-N} |B_d(\mb{0},\delta_0)|\cdot \|\phi\|_\infty \\
			&  \qquad\qquad  + \delta_0^{-2-|\alpha|}\Lambda(\mb{0},\delta_0)^{-1-N} |B_d(\mb{0},\delta_0)|\|(\Lambda(\mb{0},\delta_0)T)^M \phi\|_\infty \\
			\lesssim &  \delta_0^{-|\alpha|}\Lambda(\mb{0},\delta_0)^{-N}(\|\phi\|_\infty + \|(\Lambda(\mb{0},\delta_0)T)^M \phi\|_\infty),
		\end{align*}
		as desired.
	\end{proof}

\begin{appendix}
	
	\section{The Szeg\H{o} Kernel}
	\label{sec:sz}
	
	Let $\Omega = \lb \mb{z}\in\mathbb{C}^2\ :\ \Im(z_2)>P(z)\rb$ be a pseudoconvex model domain, where $P:\mathbb{C}\rightarrow \mathbb{R}$ is smooth, subharmonic, and non-harmonic.
	
	In this section we give a precise definition of the Bergman and Szeg\H{o} kernels $[\mathbb{B}]$ and $[\mathbb{S}]$ for $\Omega$, respectively, and supply a proof of the following proposition which has been widely used (going back at least to \cite{NagelRosaySteinWainger1988}).
	
	\begin{proposition}\label{prop:szegobergman}
		Let $\Omega=\lb \mb{z}\in\mathbb{C}^2\ :\ \Im(z_2)>P(z)\rb$, where $P:\mathbb{C}\rightarrow \mathbb{R}$ is smooth and subharmonic.
		If we equip $\Omega$ and ${\rm b}\Omega$ with Lebesgue measure (as in the introduction), then we have
		$$[\mathbb{B}](\mb{z},\mb{w})=2i\frac{\partial}{\partial\bar{w}_2}[\mathbb{S}](\mb{z},\mb{w}),\quad \mbox{for}\ \ \mb{z}\in \Omega,\ \mb{w}\in{\rm b}\Omega.$$
	\end{proposition}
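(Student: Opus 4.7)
The plan is to prove the identity through a partial Fourier transform in $\Re(z_2)$, which simultaneously diagonalizes both projections and reduces the claim to an elementary computation in $\tau$.

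First I will establish a Fourier representation of $[\mathbb{S}]$. By the $\Re(z_2)$-translation invariance of $\mathbb{S}$ and Proposition \ref{prop:link}, for $\mb{z} \in \Omega$ with $\epsilon_z := \Im(z_2) - P(z) > 0$ and $\mb{w} \in \mathrm{b}\Omega$, the Fourier inversion formula gives
\[
[\mathbb{S}](\mb{z},\mb{w}) = \int_0^{+\infty} e^{2\pi i \tau(z_2 - \bar{w}_2)}\, K_\tau(z,w)\, d\tau,
\]
where $K_\tau(z,w) := e^{2\pi\tau P(z)}[\mathbb{S}_\tau](z,w)\, e^{2\pi\tau P(w)}$. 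I will observe that $K_\tau$ is the reproducing kernel of the weighted Bergman space $\mathcal{A}^2_\tau := \mathcal{O}(\mathbb{C}) \cap L^2(\mathbb{C}; e^{-4\pi\tau P}dm)$, as one sees from the unitary correspondence $f \leftrightarrow e^{2\pi\tau P} f$ between $\ker \bar{D}_\tau \subset L^2(\mathbb{C})$ and $\mathcal{A}^2_\tau$.

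Next I will derive the analogous representation of $[\mathbb{B}]$. For $G \in L^2(\Omega) \cap \mathcal{O}(\Omega)$, Paley--Wiener applied fiberwise to $G(z,\cdot)$ on the half-plane $\{\Im z_2 > P(z)\}$ yields $G(z,z_2) = \int_0^{+\infty} e^{2\pi i\tau z_2} g(z,\tau)\, d\tau$, and Plancherel gives
\[
\|G\|_{L^2(\Omega)}^2 = \int_0^{+\infty} \int_\mathbb{C} |g(z,\tau)|^2\, \frac{e^{-4\pi\tau P(z)}}{4\pi\tau}\, dm(z)\, d\tau.
\]
This exhibits $L^2(\Omega) \cap \mathcal{O}(\Omega)$ as a direct integral of the spaces $\mathcal{A}^2_\tau$ with respect to the measure $d\tau/(4\pi\tau)$. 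Since $\mathbb{B}$ commutes with $\Re(z_2)$-translations, it acts fiberwise by the reproducing kernel of $\mathcal{A}^2_\tau$ against this measure, which is $4\pi\tau \cdot K_\tau$. Inverting the Fourier transform then produces
\[
[\mathbb{B}](\mb{z},\mb{w}) = \int_0^{+\infty} 4\pi\tau\, e^{2\pi i \tau(z_2 - \bar{w}_2)}\, K_\tau(z,w)\, d\tau.
\]

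Finally, differentiating the Szeg\H{o} representation under the integral yields
\[
2i\,\frac{\partial}{\partial \bar{w}_2}[\mathbb{S}](\mb{z},\mb{w}) = \int_0^{+\infty} 2i\cdot(-2\pi i\tau)\, e^{2\pi i\tau(z_2 - \bar{w}_2)}\, K_\tau(z,w)\, d\tau = [\mathbb{B}](\mb{z},\mb{w}),
\]
which is the claimed identity. The main obstacle is justifying absolute convergence and differentiation under the integral. Combining the pointwise bound $|[\mathbb{S}_\tau](z,w)| \lesssim \sigma_\tau(w)^{-2}$ from Section \ref{sec:kernels} with $|e^{2\pi i\tau(z_2-\bar{w}_2)}| = e^{-2\pi\tau(\Im(z_2) + P(w))}$ and the definition of $K_\tau$ yields $|e^{2\pi i\tau(z_2-\bar{w}_2)} K_\tau(z,w)| \lesssim e^{-2\pi\tau\epsilon_z}\, \sigma_\tau(w)^{-2}$; since $\epsilon_z > 0$ this gives exponential decay as $\tau \to +\infty$, while $\sigma_\tau(w) \approx \tau^{-1/2}$ for $\tau \lesssim 1$ yields an integrable bound of order $\tau$ as $\tau \to 0^+$, which survives multiplication by the extra factor $\tau$ arising from the $\bar{w}_2$-derivative.
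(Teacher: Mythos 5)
Your route---diagonalizing both projections with the partial Fourier transform in $\Re(z_2)$, identifying the fiber kernel with the weighted Bergman kernel $K_\tau$ of $\mathcal{A}^2_\tau=\mathcal{O}(\mathbb{C})\cap L^2(\mathbb{C};e^{-4\pi\tau P}dm)$, and differentiating the resulting $\tau$-integral---is genuinely different from the paper's. The paper argues softly: for $F\in L^2(\Omega)\cap\mathcal{H}^2(\Omega)$ it converts the boundary reproducing identity into a solid integral by Stokes' theorem in the $w_2$ half-plane, obtaining $F(\mb{z})=\int_\Omega 2i\,\partial_{\bar w_2}[\mathbb{S}](\mb{z},\cdot)\,F\,dm$, and then uses density of $L^2(\Omega)\cap\mathcal{H}^2(\Omega)$ in $L^2(\Omega)\cap\mathcal{O}(\Omega)$ (Proposition \ref{prop:kernels-clarify}(i)) together with the fact that $2i\,\partial_{\bar w_2}[\mathbb{S}](\mb{z},\cdot)$ annihilates the orthogonal complement of $L^2(\Omega)\cap\mathcal{O}(\Omega)$, so the identity follows from uniqueness of the Bergman projection, with no kernel estimates at all. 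Your approach yields, as a byproduct, the explicit Fourier formula for $[\mathbb{B}]$; but it pays for this in quantitative input and in one structural step, and that is where the gaps lie.

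Two concrete issues. First, the Proposition assumes only that $P$ is smooth and subharmonic, and the paper's proof works in exactly that generality; your convergence and differentiation-under-the-integral arguments invoke Christ's bound $|[\mathbb{S}_\tau](z,w)|\lesssim\sigma_\tau(w)^{-2}$ and the asymptotics $\sigma_\tau(w)\approx\tau^{-1/2}$ for $\tau\lesssim 1$, and even the identification $[\hat{\mathbb{S}}](z,w,\tau)=[\mathbb{S}_\tau](z,w)$ (Proposition \ref{prop:link}) is proved in the paper only in the UFT setting. These require doubling of $\tau\Delta P\,dm$ and (H1)--(H2); for a general smooth subharmonic $P$ (e.g.\ with $\Delta P$ degenerating at infinity) $\sigma_\tau(w)$ need not even be finite, so the dominated-convergence justification fails and your argument does not prove the statement as written, only its UFT instance (which is, admittedly, the case used in Lemma \ref{lem:decompofszego}). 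Second, the sentence ``since $\mathbb{B}$ commutes with $\Re(z_2)$-translations, it acts fiberwise by the reproducing kernel of $\mathcal{A}^2_\tau$'' conceals the real work: the Fourier fiber of $L^2(\Omega)$ at frequency $\tau$ is $L^2(\mathbb{C}\times(0,+\infty),dm\,ds)$ in the variables $(z,s)$, into which the holomorphic fiber embeds as $\{e^{-2\pi\tau(P(z)+s)}g\ :\ g\in\mathcal{A}^2_\tau\}$; the fiber projection therefore also involves a projection in the $s$-variable onto the ray spanned by $e^{-2\pi\tau s}$, and it is precisely the normalization $\int_0^{+\infty}e^{-4\pi\tau s}ds=(4\pi\tau)^{-1}$ that produces the factor $4\pi\tau$ on which the whole identity hinges. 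You need to carry out this direct-integral computation (or cite the known Fourier formula for the Bergman kernel of model domains) rather than assert it; with that done, and restricted to the UFT setting, your argument goes through.
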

	
	Although the proof of Proposition \ref{prop:szegobergman} is not difficult, the technical issues surrounding it have, to the author's knowledge, not been treated in the literature.
	The goal of this appendix is to give a complete account of these non-trivial issues.
	
	Let $\mathcal{O}(\Omega)$ denote the space of holomorphic functions in $\Omega$.  
	For a function $F$ on $\Omega$ and $\epsilon>0$, define $F_\epsilon : \mathbb{C}\times\mathbb{R}\rightarrow \mathbb{C}$ by
	$$F_\epsilon (z,t)=F(z,t+i(P(z)+\epsilon)).$$
	We then define the Hardy space
	$$\mathcal{H}^2(\Omega)=\lb F\in\mathcal{O}(\Omega)\ |\ \displaystyle\sup_{\epsilon>0} \displaystyle\int_{\mathbb{C}\times\mathbb{R}} |F_\epsilon (z,t)|^2dm = \|F\|^2_{\mathcal{H}^2(\Omega)}<{+\infty}\rb.$$
	Here and below, we use $dm$ to denote both Lebesgue measure on $\Omega$ and Lebesgue measure on $\mathbb{C}\times\mathbb{R}$.
	
	In an appropriate sense, the space $\mathcal{H}^2(\Omega)$ consists of holomorphic functions with boundary values in $L^2({\rm b}\Omega)$.
	Halfpap, Nagel, and Wainger give the elementary properties of such spaces in \cite{HalfpapNagelWainger2010}.
	In particular, they prove the following result, which is valid for the domains above.
	
	\begin{proposition}[\cite{HalfpapNagelWainger2010}, Proposition 2.4]\label{prop:kernels-hnw24}
		Let $F\in \mathcal{H}^2(\Omega)$. Then there exists $F^b\in L^2(\mathbb{C}\times\mathbb{R})$ such that
		\begin{itemize}
			\item[(a)] $F^b(z,t)=\displaystyle\lim_{\epsilon\rightarrow 0^+} F_\epsilon (z,t)$ for a.e. $(z,t)\in\mathbb{C}\times\mathbb{R}$;
			\item[(b)] $\displaystyle\lim_{\epsilon\rightarrow 0^+} \|F_\epsilon - F^b\|_{L^2(\mathbb{C}\times\mathbb{R})}=0$, and $\|F^b\|_{L^2(\mathbb{C}\times\mathbb{R})}=\|F\|_{\mathcal{H}^2(\Omega)}$;
			\item[(c)] $F^b$ satisfies $\bar{Z} F^b=(\partial_{\bar{z}}-i P_{\bar{z}}(z)\partial_t)F^b=0$ in the sense of distributions;
			\item[(d)] For any compact set $K\Subset \Omega$, there exists $C(K)$ such that
			$$\sup_{z\in K} |F(z)|\leq C(K)\|F\|_{\mathcal{H}^2(\Omega)}.$$
		\end{itemize}
	\end{proposition}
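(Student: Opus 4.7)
\textbf{Proof proposal for Proposition \ref{prop:kernels-hnw24}.} The plan is to reduce everything to a one-parameter Fourier analysis in the $t=\Re(z_2)$ variable, exploiting the fact that the $t$-translations preserve $\Omega$ and act as unitaries on each slice.

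First I would prove (d). Because $|F|^2$ is plurisubharmonic, for any compact $K \Subset \Omega$ one can choose $r>0$ small enough that $B(\mb{p},r)\subset \Omega$ for all $\mb{p}\in K$, obtaining $|F(\mb{p})|^2 \leq C r^{-4}\int_{B(\mb{p},r)} |F|^2 dm$. Absorbing the $t$-fibers of $B(\mb{p},r)$ into a finite union of graphs $\{(z,t+i(P(z)+\epsilon))\}$ bounds this integral by $C(K)\sup_{\epsilon>0}\|F_\epsilon\|_{L^2(\mathbb{C}\times\mathbb{R})}^2$.

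Next, the key computation: since $F\in\mathcal{O}(\Omega)$, we have $\partial_{\bar z_2}F=0$, so writing $z_2=t+iy$ gives $\partial_y F = i\partial_t F$. Differentiating $F_\epsilon(z,t)=F(z,t+i(P(z)+\epsilon))$ in $\epsilon$ yields $\partial_\epsilon F_\epsilon = i \partial_t F_\epsilon$. Taking the partial Fourier transform $\hat{F}_\epsilon(z,\tau)=\int e^{-2\pi i\tau t}F_\epsilon(z,t)dt$, this becomes $\partial_\epsilon \hat F_\epsilon(z,\tau) = -2\pi\tau \hat F_\epsilon(z,\tau)$, so for any fixed $\epsilon_0>0$,
\[
\hat F_\epsilon(z,\tau) = e^{-2\pi\tau(\epsilon-\epsilon_0)}\hat F_{\epsilon_0}(z,\tau).
\]
By Plancherel, $\|F_\epsilon\|_{L^2}^2=\int|\hat F_\epsilon|^2 dm(z)d\tau$. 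If $\hat F_{\epsilon_0}(z,\tau)$ had any $L^2$ mass on $\{\tau<0\}$, the factor $e^{-2\pi\tau(\epsilon-\epsilon_0)}$ would force $\|F_\epsilon\|_{L^2}\to\infty$ as $\epsilon\to+\infty$, contradicting the hypothesis. Hence $\hat F_{\epsilon_0}$ is supported in $\tau\geq 0$, and setting $g(z,\tau)=e^{2\pi\tau\epsilon_0}\hat F_{\epsilon_0}(z,\tau)\mathbf{1}_{\tau\geq 0}$ we obtain $\hat F_\epsilon(z,\tau)=e^{-2\pi\tau\epsilon}g(z,\tau)$. Monotone convergence gives $g\in L^2$ with $\|g\|_{L^2}^2=\sup_\epsilon\|F_\epsilon\|_{L^2}^2=\|F\|_{\mathcal{H}^2}^2$. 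Define $F^b$ as the inverse Fourier transform of $g$; dominated convergence applied to $|e^{-2\pi\tau\epsilon}-1|^2|g|^2$ yields $\|F_\epsilon-F^b\|_{L^2}\to 0$, proving (b).

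For (c), each $F_\epsilon$ inherits $\bar Z_{\b\Omega}F_\epsilon=0$ from $\bar\partial F=0$ (direct chain rule), and this annihilation passes to the $L^2$ limit $F^b$ in the sense of distributions. For (a), combine (b) with the interior estimate (d) applied to differences $F_{\epsilon}-F_{\epsilon'}$: a diagonal/subsequence argument produces a.e.\ convergence along a subsequence, and a standard Fatou-type argument using the monotonicity of $\epsilon\mapsto|\hat F_\epsilon(z,\tau)|$ upgrades this to a.e.\ convergence along the full limit $\epsilon\to 0^+$. The main obstacle I anticipate is the rigorous justification of the Fourier-transform identity $\partial_\epsilon F_\epsilon=i\partial_t F_\epsilon$ at the level of tempered distributions (since a priori $F_\epsilon\in L^2$ but not Schwartz), which requires a careful approximation or truncation argument combined with Plancherel; once that is in place, the rest of the proof is largely mechanical.
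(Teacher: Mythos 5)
The paper does not prove this proposition; it is quoted from \cite{HalfpapNagelWainger2010}, so your sketch can only be compared with the standard Paley--Wiener-type argument used there, which is indeed the route you take: the relation $\hat F_\epsilon(z,\tau)=e^{-2\pi\tau(\epsilon-\epsilon_0)}\hat F_{\epsilon_0}(z,\tau)$, the vanishing of the spectrum on $\{\tau<0\}$ forced by boundedness of $\|F_\epsilon\|_{L^2}$ as $\epsilon\to+\infty$, the definition $F^b=\mathcal{F}^{-1}[g]$, and monotone/dominated convergence for (b). Parts (b), (c), (d) are essentially correct, up to two repairs: in (d) the ball $B(\mathbf{p},r)$ must be foliated by the graphs $\{(z,t+iP(z)+i\epsilon)\}$ with $\epsilon$ ranging over an interval $[a,b]\subset(0,+\infty)$, the volume integral being estimated by $\int_a^b\|F_\epsilon\|^2_{L^2(\mathbb{C}\times\mathbb{R})}\,d\epsilon\le (b-a)\|F\|^2_{\mathcal{H}^2(\Omega)}$ via Fubini --- a ``finite union of graphs'' is a null set and bounds nothing; and the identity $\partial_\epsilon \hat F_\epsilon=-2\pi\tau\hat F_\epsilon$ (equivalently, that $e^{2\pi\tau\epsilon}\hat F_\epsilon$ is independent of $\epsilon$) must be justified rigorously, which, as you acknowledge, is where the real technical work of \cite{HalfpapNagelWainger2010} lies.

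The genuine gap is part (a). $L^2$ convergence yields a.e.\ convergence only along a subsequence, and neither device you invoke upgrades it: interior estimates of type (d) applied to $F_\epsilon-F_{\epsilon'}$ are useless here because the evaluation points $(z,t+iP(z)+i\epsilon)$ leave every compact subset of $\Omega$ as $\epsilon\to0^+$, so $C(K)$ is not under control; and monotonicity of $\epsilon\mapsto|\hat F_\epsilon(z,\tau)|=e^{-2\pi\tau\epsilon}|g(z,\tau)|$ concerns the modulus of the Fourier transform in $\tau$ and says nothing about the pointwise behavior of $F_\epsilon(z,t)$ in $t$; one cannot even write $F_\epsilon(z,t)-F^b(z,t)$ as an absolutely convergent integral against $(e^{-2\pi\tau\epsilon}-1)g(z,\tau)$ without knowing $g(z,\cdot)\in L^1$, which is not available. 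The missing idea is the slicewise reduction to the classical Fatou theorem: for a.e.\ $z$ one has $g(z,\cdot)\in L^2(0,+\infty)$, so $t\mapsto F^b(z,t)$ is the boundary function of an $H^2$ function of the half-plane $\{\Im w>P(z)\}$; since its spectrum lies in $[0,+\infty)$, the multiplier $e^{-2\pi\epsilon\tau}\mathbf{1}_{\tau\ge 0}$ coincides on it with the Poisson multiplier $e^{-2\pi\epsilon|\tau|}$, hence $F_\epsilon(z,\cdot)$ is the Poisson integral of $F^b(z,\cdot)$, and a.e.\ convergence of the full family $\epsilon\to0^+$ follows from the approximate-identity/Hardy--Littlewood maximal function argument, then a.e.\ in $(z,t)$ by Fubini. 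Without this (or an equivalent nontangential maximal estimate), (a) is not established.
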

	
	To study functions on the boundary of $\Omega$, we use the fact that $\Omega$ is translation invariant in $\Re(z_2)$.
	To this end, define the partial Fourier transform
	$$\mathcal{F}[f](z,\tau):= \int_{\mathbb{R}} e^{-2\pi i \tau t}f(z,t)dt.$$
	For our purposes, we need the following alteration of Proposition 2.5 of \cite{HalfpapNagelWainger2010}.
	
	\begin{proposition}\label{prop:kernels-hnw25alt}
		Let $P:\mathbb{C}\rightarrow\mathbb{R}$ be smooth, subharmonic, and non-harmonic.
		Also assume that $f\in L^2(\mathbb{C}\times\mathbb{R})$.
		Then the following hold:
		\begin{itemize}
			\item[(a)] The function $f$ satisfies
			\begin{equation}\label{hnw25alt-pde} \partial_{\bar{z}}f(z,t)-iP_{\bar{z}}(z)\partial_{t}f(z,t)=0\end{equation}
			on $\mathbb{C}\times\mathbb{R}$ as a tempered distribution if and only if the partial Fourier transform (in $t$) $\mathcal{F}[f]=\hat{f}(z,\tau)$ satisfies
			\begin{equation}\label{hnw25alt-cond}\partial_{\bar{z}}\Big( e^{2\pi \tau P(z)}\hat{f}(z,\tau)\Big)=0\end{equation}
			on $\mathbb{C}\times\mathbb{R}$ as a tempered distribution.
			\item[(b)] If $f$ satisfies the PDE (\ref{hnw25alt-pde}), then $\hat{f}(z,\tau)=0$ almost everywhere when $\tau<0$. In particular, if we set $h_s(z,\tau)=e^{-2\pi \tau s}\hat{f}(z,\tau)$, then $h_s\in L^2(\mathbb{R}^3)$ for $s\geq 0$.
			\item[(c)] If $f$ satisfies the PDE (\ref{hnw25alt-pde}), and if
			$$F(\mb{z})=F(z,t+iP(z)+is)=\mathcal{F}^{-1}[h_s](z,t),$$
			then $F\in\mathcal{H}^2(\Omega)$ and $F^b=f$.
		\end{itemize}
	\end{proposition}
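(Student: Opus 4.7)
The approach is to prove (a), (b), (c) in sequence, reducing each to a concrete Fourier-analytic statement about $\hat{f}$ and a complex-analytic statement about the auxiliary function $g_\tau(z) := e^{2\pi\tau P(z)}\hat{f}(z,\tau)$. Throughout, the identity $\partial_{\bar{z}}(e^{2\pi\tau P(z)}\hat{f}) = e^{2\pi\tau P}(\partial_{\bar{z}}\hat{f} + 2\pi\tau P_{\bar{z}}\hat{f})$ is the algebraic engine that ties the PDE to a weighted Cauchy-Riemann condition.

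For (a), I would apply the partial Fourier transform in $t$ to the PDE, using $\mathcal{F}[\partial_t f] = 2\pi i\tau\hat{f}$ as tempered distributions. This converts $\partial_{\bar{z}}f - iP_{\bar{z}}(z)\partial_t f = 0$ into
\[
\partial_{\bar{z}}\hat{f}(z,\tau) + 2\pi\tau P_{\bar{z}}(z)\hat{f}(z,\tau) = 0
\]
on $\mathbb{C}\times\mathbb{R}$. Combining with the algebraic identity above yields the claim. The multiplication by $e^{2\pi\tau P(z)}$ is smooth in both variables but not tempered in $\tau$; this is handled by testing against Schwartz functions whose $\tau$-projection is compactly supported, and then using density/cutoff in $\tau$ since the identity is purely local in $\tau$.

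For (b), Plancherel places $\hat{f}\in L^2(\mathbb{C}\times\mathbb{R})$, so $\hat{f}(\cdot,\tau)\in L^2(\mathbb{C})$ for almost every $\tau$. By (a) and Weyl's lemma, for such $\tau$ the distribution $g_\tau$ agrees a.e. with an entire function on $\mathbb{C}$. For $\tau<0$ the weight $e^{-4\pi\tau P(z)}$ is bounded below since $P$ is subharmonic and (in the setting of interest) bounded below on $\mathbb{C}$, giving
\[
\int_\mathbb{C}|g_\tau(z)|^2\,dm(z) \lesssim \int_\mathbb{C}|g_\tau(z)|^2 e^{-4\pi\tau P(z)}\,dm(z) = \|\hat{f}(\cdot,\tau)\|_{L^2(\mathbb{C})}^2 < +\infty.
\]
The sub-mean-value inequality $|g_\tau(z_0)|^2 \leq (\pi R^2)^{-1}\|g_\tau\|_{L^2(\mathbb{C})}^2$ and $R\to+\infty$ then force $g_\tau\equiv 0$. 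Hence $\hat{f}(\cdot,\tau)=0$ for a.e. $\tau<0$, so $h_s(z,\tau)=e^{-2\pi\tau s}\hat{f}(z,\tau)$ is supported in $\tau\geq 0$ and dominated by $|\hat{f}|$ for $s\geq 0$, placing $h_s$ in $L^2(\mathbb{R}^3)$.

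For (c), I would set
\[
F(z,z_2) := \int_0^{+\infty} g_\tau(z)\,e^{2\pi i\tau z_2}\,d\tau
\]
on $\Omega$. Factoring $e^{2\pi i\tau z_2} = e^{2\pi i\tau\Re(z_2)}e^{-2\pi\tau\Im(z_2)}$ and using $|g_\tau(z)|=e^{2\pi\tau P(z)}|\hat{f}(z,\tau)|$ gives pointwise integrand bound $e^{-2\pi\tau(\Im(z_2)-P(z))}|\hat{f}(z,\tau)|$, exponentially decaying on $\Omega$; Cauchy-Schwarz in $\tau$ then yields absolute convergence uniformly on compact subsets of $\Omega$. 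Differentiating under the integral proves holomorphicity, since $e^{2\pi i\tau z_2}$ is holomorphic in $z_2$ and $g_\tau$ is holomorphic in $z$ by (a). Unwinding gives $F(z,t+iP(z)+is) = \mathcal{F}^{-1}[h_s](z,t)$, and Plancherel delivers $\|F_\epsilon\|_{L^2}^2 = \|h_\epsilon\|_{L^2}^2 \leq \|f\|_{L^2}^2$, so $F\in\mathcal{H}^2(\Omega)$. Finally, $\|h_\epsilon - \hat{f}\|_{L^2}\to 0$ as $\epsilon\to 0^+$ by dominated convergence, so $F_\epsilon\to f$ in $L^2$, and Proposition \ref{prop:kernels-hnw24}(b) identifies $F^b = f$.

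The main technical obstacle is part (b): passing from the weighted bound on $\hat{f}$ to an unweighted $L^2$ bound on the entire function $g_\tau$ when $\tau<0$. If $P$ is not globally bounded below (only locally, from subharmonicity), one must replace the crude weight inequality by a Riesz representation of $P$ and use $\|\Delta P\|_\infty <+\infty$ to obtain a logarithmic lower bound $P(z) \geq -C\log(2+|z|)-C$, combined with a Phragm\'en-Lindel\"of growth bound on $g_\tau$ to conclude vanishing. The distributional manipulation in (a) and the dominated-convergence step in (c) are then routine.
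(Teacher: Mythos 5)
Your parts (a) and (c) are fine and follow the same Paley--Wiener route as the reference the paper leans on (the paper's own ``proof'' is only a citation: (a) and (c) to Proposition 2.5 of \cite{HalfpapNagelWainger2010}, (b) to Lemma 5.2.1 of \cite{NagelStein2006}). The genuine gap is in (b). Your main argument needs $P$ bounded below on $\mathbb{C}$, which is not among the hypotheses: the proposition assumes only that $P$ is smooth, subharmonic and non-harmonic, and subharmonicity gives no global lower bound (e.g.\ $P(z)=\Re(z)^2-\Im(z)$ is smooth, subharmonic, non-harmonic and tends to $-\infty$ linearly along the imaginary axis); nor is a lower bound for $P=P^{\mb{\sigma},\kappa}$ established anywhere in the paper. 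Your proposed repair is also not viable as stated: $\|\Delta P\|_{\infty}<+\infty$ is not assumed here, and even when it holds it does \emph{not} yield $P(z)\geq -C\log(2+|z|)-C$, because one may add an arbitrary harmonic function to $P$ without changing $\Delta P$ (the example above has $\Delta P\equiv 2$), so the Riesz representation controls only the potential part and the Phragm\'en--Lindel\"of step never gets started.

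The correct argument is both simpler and needs no lower bound at all. For a.e.\ $\tau<0$ the function $g_\tau(z)=e^{2\pi\tau P(z)}\hat{f}(z,\tau)$ is entire (as you argued), and $\int_{\mathbb{C}}|g_\tau(z)|^2e^{-4\pi\tau P(z)}\,dm(z)=\|\hat{f}(\cdot,\tau)\|_{L^2(\mathbb{C})}^2<+\infty$. Since $\tau<0$, the exponent $-4\pi\tau P=4\pi|\tau|P$ is subharmonic, so $u:=\log|g_\tau|+2\pi|\tau|P$ is subharmonic on $\mathbb{C}$ with $\int_{\mathbb{C}}e^{2u}\,dm<+\infty$. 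The sub-mean-value property and Jensen's inequality give, for every $z_0$ and $R>0$, $u(z_0)\leq (\pi R^2)^{-1}\int_{D(z_0,R)}u\,dm\leq \tfrac12\log\bigl((\pi R^2)^{-1}\int_{\mathbb{C}}e^{2u}\,dm\bigr)\to-\infty$ as $R\to+\infty$; hence $u\equiv-\infty$, and since $P$ is finite this forces $g_\tau\equiv 0$, i.e.\ $\hat{f}(\cdot,\tau)=0$ for a.e.\ $\tau<0$. (Note the sign is decisive: for $\tau>0$ the weight exponent is superharmonic and the argument rightly fails.) With this replacement for (b), the rest of your proposal goes through; the remaining details in (c) (pointwise control of $g_\tau$ on compacta via the mean-value property before applying Cauchy--Schwarz in $\tau$) are routine and match the cited treatment.
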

	
	\begin{proof}
		The proof of (a) and (c) is very similar to that of Proposition 2.5 of \cite{HalfpapNagelWainger2010}.
		For (b), we refer to Lemma 5.2.1 of \cite{NagelStein2006}.
	\end{proof}

	We see therefore that the space of boundary values $$B({\rm b}\Omega)=\lb f\in L^2({\rm b}\Omega)\ :\ f(z,t+iP(z))=F^b(z,t)\ \mbox{for some}\ F\in \mathcal{H}^2(\Omega)\rb$$ is exactly the space of functions $f\in L^2(\b\Omega)$ that satisfy $\bar{Z}f=0$ as distributions.
	From Proposition \ref{prop:kernels-hnw25alt}, $B(\b\Omega)$ is closed in $L^2(\b\Omega)$.
	The orthogonal projection $\mathbb{S}:L^2({\rm b}\Omega)\rightarrow B({\rm b}\Omega)$ is called the Szeg\H{o} projection, and is given by the formula
	$$\mathbb{S}[f](\mb{z})=\displaystyle\int_{{\rm b}\Omega} [\mathbb{S}](\mb{z},\mb{w})f(\mb{w})dm(w,\Re(w_2)),$$ 
	where $[\mathbb{S}](\mb{z},\mb{w})$ is the Szeg\H{o} kernel. 
	
	By writing $[\mathbb{S}](\mb{z},\mb{w})=\sum \phi_j(\mb{z})\overline{\phi_j(\mb{w})}$ for some orthonormal basis $\{\phi_j\}$ of $\mathcal{H}^2(\Omega)$, we see that $[\mathbb{S}](\mb{z},\mb{w})$ is actually defined on $(\bar{\Omega}\times\bar{\Omega})\backslash\Sigma$, where $\Sigma\subset {\rm b}\Omega\times{\rm b}\Omega$.
	Theorem \ref{thm:szego-decomp} shows that $\Sigma\subseteq \{ (\mb{z},\mb{z})\ |\ \mb{z}\in {\rm b}\Omega\}$ for UFT domains.  
	Indeed, for $f\in L^2({\rm b}\Omega)$, we may write
	$$\mathbb{S}[f](\mb{z})=\lim_{\epsilon\rightarrow 0}\displaystyle\int_{{\rm b}\Omega} [\mathbb{S}](\mb{z},(w,w_2+i\epsilon))f(\mb{w})dm(w,\Re(w_2)),$$
	where the convergence is in $L^2({\rm b}\Omega)$.
	
	For functions $F\in L^2(\Omega)\cap \mathcal{O}(\Omega)$, we can also write
	$$F(\mb{z})=\int_{\Omega} [\mathbb{B}](\mb{z},\mb{w})F(\mb{w})dm(\mb{w}),$$
	where $[\mathbb{B}](\mb{z},\mb{w})$ is the Bergman kernel.
	One fruitful approach to studying the Szeg\H{o} kernel is to express it in terms of the Bergman kernel via Proposition \ref{prop:szegobergman}.
	This is slightly complicated by part \textit{(iii)} of the following proposition.
	
	\begin{proposition}\label{prop:kernels-clarify} If $P$ is as above, then the following statements hold.
		\begin{itemize}
			\item[(i)] $L^2(\Omega)\cap \mathcal{H}^2(\Omega)$ is dense in $L^2(\Omega)\cap \mathcal{O}(\Omega)$, and for $F\in L^2(\Omega)\cap \mathcal{O}(\Omega)$ and $\epsilon_0>0$ we have \begin{equation}\label{eq:h2embed} \|F_{\epsilon_0}\|_{\mathcal{H}^2(\Omega)}\leq \epsilon_0^{-\frac{1}{2}}\|F\|_{L^2(\Omega)}.\end{equation}
			\item[(ii)] If $F\in\mathcal{H}^2(\Omega)$ and $\epsilon_0>0$, then $\displaystyle\|(\partial_{z_{2}}F)_{\epsilon_0}\|_{L^2(\Omega)}\leq \frac{\epsilon_0^{-\frac{1}{2}}}{2e^{\frac{1}{2}}}\|F\|_{\mathcal{H}^2(\Omega)}$.
			\item[(iii)] If $P(z)=b(\Re(z))$, then there are functions $F\in\mathcal{H}^2(\Omega)$ with $F\notin L^2(\Omega)$.
		\end{itemize}
	\end{proposition}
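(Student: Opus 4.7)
For part (i), the plan is a fiber-wise subharmonicity argument. For each fixed $z \in \mathbb{C}$ the slice $G_z(w_2) := F(z,w_2)$ is holomorphic on $\{\Im w_2 > P(z)\}$, and I would invoke the classical Hardy-space monotonicity---which follows for each $z$ from convexity of $y \mapsto \int_\mathbb{R}|G_z(t+iy)|^2\,dt$ (a consequence of subharmonicity of $|G_z|^2$) combined with fiber-wise $L^2$-integrability coming from $F\in L^2(\Omega)$---to conclude that $\epsilon \mapsto \int_\mathbb{R} |F(z,t+i(P(z)+\epsilon))|^2\,dt$ is non-increasing on $(0,\infty)$. Averaging this over $\epsilon \in (0,\epsilon_0)$ and integrating in $z$ yields
\[
\epsilon_0 \int_{\mathbb{C}\times\mathbb{R}} |F(z,t+i(P(z)+\epsilon_0))|^2\,dm \le \int_\Omega |F|^2\,dm,
\]
and the same monotonicity identifies the left-hand side with $\epsilon_0\|F_{\epsilon_0}\|_{\mathcal H^2(\Omega)}^2$, proving (\ref{eq:h2embed}). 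For density, the translate $F_{\epsilon_0}(\mb z) := F(z,z_2+i\epsilon_0)$ is holomorphic on $\Omega$, lies in $\mathcal H^2(\Omega)$ by (\ref{eq:h2embed}), lies in $L^2(\Omega)$ with $\|F_{\epsilon_0}\|_{L^2(\Omega)} \le \|F\|_{L^2(\Omega)}$ by translation invariance, and converges to $F$ in $L^2(\Omega)$ as $\epsilon_0 \to 0^+$ by $L^2$-continuity of translation.

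For part (ii), I would reduce to a one-variable Paley--Wiener computation. For fixed $z$, write $G_z \in H^2(\{\Im w_2 > P(z)\})$ as $G_z(w_2) = \int_0^\infty \hat g_z(\tau) e^{2\pi i \tau(w_2 - iP(z))}\,d\tau$ with $\|G_z\|_{H^2}^2 = \|\hat g_z\|_{L^2(0,\infty)}^2$ and $\int_\mathbb{C} \|G_z\|_{H^2}^2\,dm(z) = \|F\|_{\mathcal H^2(\Omega)}^2$. Since $\partial_{w_2}G_z$ admits the same representation with an extra factor $2\pi i\tau$, Plancherel in $t$ followed by integration over $y \in (P(z)+\epsilon_0, \infty)$ gives
\[
\int_{P(z)+\epsilon_0}^\infty\!\!\int_\mathbb{R} |\partial_{w_2}G_z(t+iy)|^2\,dt\,dy = \pi\int_0^\infty \tau\,e^{-4\pi\tau\epsilon_0}|\hat g_z(\tau)|^2\,d\tau.
\]
The elementary optimization $\sup_{\tau>0}(\tau e^{-4\pi\tau\epsilon_0}) = (4\pi e\epsilon_0)^{-1}$ then bounds the right-hand side by $(4e\epsilon_0)^{-1}\|G_z\|_{H^2}^2$, and integrating in $z$ gives (ii).

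Part (iii) is the most delicate. Natural attempts such as products $F(\mb z) = h(z)g(z_2)$ fail, because $\mathcal H^2(\Omega)$ demands simultaneous $L^2$-decay of $|F|^2$ in three tangential real directions while the entire slice $z \mapsto F(z,z_2)$ cannot decay in both $\Re z$ and $\Im z$. Instead I would exploit the two tube directions ($\Im z$ and $\Re z_2$) by writing $F$ via partial Fourier data,
\[
F(\mb z) = \int_0^\infty\!\!\int_\mathbb{R} c(\tau,\eta)\,e^{i\tau z_2 + \eta z}\,d\eta\,d\tau.
\]
Plancherel in $\Re z_2$ and $\Im z$, followed by integration in $\Re z$ and then over the fiber $\Im z_2 > b(\Re z)$, would produce the twin identities
\[
\|F\|_{\mathcal H^2(\Omega)}^2 = \int|c|^2 A(\tau,\eta)\,d\eta\,d\tau, \qquad \|F\|_{L^2(\Omega)}^2 = \int|c|^2 \frac{A(\tau,\eta)}{2\tau}\,d\eta\,d\tau,
\]
with $A(\tau,\eta) := \int_\mathbb{R} e^{2[\eta\theta - \tau b(\theta)]}\,d\theta$ the denominator from Nagel's formula (\ref{szegoformula}). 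Choosing $|c|^2 A$ to concentrate near $\tau = 0$ with a $1/\tau$-type singularity---for instance, when $b(\theta)=\theta^2$ one may take $c(\tau,\eta) = A(\tau,\eta)^{-1/2}\chi_{[0,1]\times[-1,1]}(\tau,\eta)$, giving $\|F\|_{\mathcal H^2}^2 = 2$ but $\|F\|_{L^2(\Omega)}^2 = \int_0^1\int_{-1}^1 (2\tau)^{-1}\,d\eta\,d\tau = \infty$---would deliver the counterexample. The main obstacle is technical: verifying that the formal integral defining $F$ converges and represents a genuine holomorphic function on $\Omega$, and that the two Plancherel identities above are rigorous rather than formal, both of which I expect to handle by appealing to the partial Fourier framework of Proposition \ref{prop:kernels-hnw25alt}.
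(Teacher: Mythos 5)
Your outline reaches the same three conclusions by essentially the paper's skeleton in parts (ii) and (iii), but part (i) is organized differently, and the difference is worth spelling out. You slice in $z$ and appeal to classical one-variable Hardy/Bergman theory on each half-plane fiber to get monotonicity of $\epsilon\mapsto\int_{\mathbb{R}}|F(z,t+i(P(z)+\epsilon))|^2dt$, then average in $\epsilon$ and integrate in $z$. The paper instead works with the global quantity $H(\epsilon)=\int_{\mathbb{C}\times\mathbb{R}}|F_\epsilon|^2dm$: it truncates in $(z,t)$, notes that the truncated integrals are subharmonic in the \emph{complex} parameter $\epsilon$ (since $\Delta_\epsilon$ produces $4\int|\partial_{z_2}F|^2\geq 0$, with no integration by parts), gets local uniform bounds from the sub-mean value property together with $H\in L^1(0,+\infty)$, and concludes that $H$ agrees a.e.\ with a convex, hence non-increasing, function of $\Re\epsilon$; the bound $H(\epsilon_0)\leq\epsilon_0^{-1}\|F\|_{L^2(\Omega)}^2$ is then the same averaging you use. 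The paper's device exists precisely to avoid the step you pass over quickly: deducing convexity of $y\mapsto\int_{\mathbb{R}}|G_z(t+iy)|^2dt$ from subharmonicity of $|G_z|^2$ requires integrating $\partial_t^2|G_z|^2$ in $t$ with vanishing boundary terms, which is not automatic for a slice that is a priori only in the Bergman class of the half-plane. Your route can be made airtight, but the clean way is the Paley--Wiener description of the half-plane Bergman space on each fiber (from which monotonicity of the horizontal $L^2$-means is immediate), combined with Fubini to know that a.e.\ slice lies in that space; as written, this is the one real soft spot in your part (i). Your part (ii) is the paper's computation (there run through the three-variable partial Fourier transform of Proposition \ref{prop:kernels-hnw25alt}) done fiber-wise, with the identical optimization $\sup_{\tau>0}\tau e^{-4\pi\tau\epsilon_0}=(4\pi e\epsilon_0)^{-1}$ and the same constant.

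For (iii) you use the same Halfpap--Nagel--Wainger representation and the same twin identities as the paper, and your explicit datum $c=A^{-1/2}\chi_{[0,1]\times[-1,1]}$ is in fact more careful on the admissibility side than the paper's own suggestion: for quadratic-type $b$ one has $A(\tau,\eta)\approx\tau^{-1/2}e^{c\eta^2/\tau}$, so a datum with $|g|\gtrsim1$ on the whole rectangle would not even give $F\in\mathcal{H}^2(\Omega)$, whereas yours gives $\|F\|_{\mathcal{H}^2}^2=2$ and $\|F\|_{L^2(\Omega)}^2=\infty$ as claimed. One caveat: the proposition asserts the phenomenon for an arbitrary convex $b$ in the class, and when $b'$ is bounded (e.g.\ $b(\theta)=\sqrt{1+\theta^2}$) one has $A(\tau,\eta)=+\infty$ unless $\eta/\tau$ lies in the interior of the range of $b'$; your fixed rectangle then degenerates to the cone $|\eta|\lesssim\tau$, on which the naive choice yields a \emph{finite} $L^2(\Omega)$ norm. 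The fix is to keep the support in that cone but concentrate the mass logarithmically, e.g.\ arrange the $\tau$-marginal of $|c|^2A$ to be comparable to $\tau^{-1}(\log(e/\tau))^{-2}$, which keeps $\int|c|^2A<\infty$ while forcing $\int|c|^2A\,\tau^{-1}=\infty$. With that adjustment, and with the convergence/holomorphy of the defining integral handled through Proposition \ref{prop:kernels-hnw25alt} as you propose, your argument is complete.
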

	\begin{proof}
		\pagestyle{plain}
		For (i), we first note that for $F\in L^2(\Omega)$, $F_\epsilon\rightarrow F$ in $L^2(\Omega)$ as $\epsilon\to 0$.
		To complete the proof, we will show that $F_{\epsilon_0}\in \mathcal{H}^2(\Omega)$ for any $\epsilon_0>0$, and establish equation (\ref{eq:h2embed}).
		
		Because $F\in L^2(\Omega)$, $$H(\epsilon):=\int_{\mathbb{C}\times\mathbb{R}} |F_\epsilon(z,t+iP(z))|^2dm\in L^1(0,{+\infty}),$$ and therefore $H(\epsilon)<+\infty$ for almost every $\epsilon>0$.
		For $\epsilon\in \mathbb{C}_+=\{ x+iy\in \mathbb{C}\ :\ x>0\}$ and $n\in \mathbb{N}$, define
		$$H_n(\epsilon)=\int_{|z|+|t|\leq n}|F_\epsilon(z,t+iP(z))|^2dm(z,t).$$
		We note that, because $F$ is holomorphic, 
		$$\Delta H_n(\epsilon)=4\frac{\partial^2 H_n}{\partial \epsilon \partial \bar{\epsilon}}(\epsilon) = 4\int_{|z|+|t|\leq n} \Big|\frac{\partial F}{\partial z_2}(z,t+iP(z)+i\epsilon)\Big|^2 dm(z,t),$$
		and therefore $H_n(\epsilon)$ is subharmonic.
		
		Moreover, we claim that $H_n(\epsilon)$ is locally pointwise bounded (uniformly in $n$).
		Indeed, for $\epsilon\in \mathbb{C}_+$, let $\lambda=\Re(\epsilon)$ and write, for $|\eta|\leq \frac{\lambda}{10}$ and $R\leq \frac{\lambda}{2}$,
		$$H_n(\epsilon+\eta) \leq \frac{1}{2\pi R}\int_{|\zeta|=R} H_n(\epsilon+\eta+\zeta)d\sigma(\zeta),$$
		so that
		\begin{align*}
			H_n(\epsilon+\eta) & \lesssim  \frac{1}{\lambda} \int_{\frac{\lambda}{10}}^{\frac{\lambda}{5}} \frac{1}{R}\int_{|\zeta|=R} H_n(\epsilon+\eta+\zeta)d\sigma(\zeta) dR \\
			& \approx  \frac{1}{\lambda^2} \int_{\frac{\lambda}{10}}^{\frac{\lambda}{5}} \int_{|\zeta|=R} H_n(\epsilon+\eta+\zeta)d\sigma(\zeta) dR \\
			& \lesssim  \frac{1}{\lambda^2} \int_{|\epsilon-\hat{\epsilon}|_\infty \leq \frac{4\lambda}{5}} H_n(\hat{\epsilon})dm(\hat{\epsilon}) \\
			& \leq  \frac{1}{\lambda^2} \int_{|\epsilon-\hat{\epsilon}|_\infty \leq \frac{4\lambda}{5}} H(\hat{\epsilon})dm(\hat{\epsilon}) \\
			& \lesssim  \frac{1}{\lambda} \int_{|\lambda-x| \leq \frac{4\lambda}{5}} H(x)dx \\
			& \leq  \frac{1}{\lambda} \|F\|^2_{L^2(\Omega)},
		\end{align*}
		proving the claim.
		
		Because $H_n(\epsilon)$ is increasing (in $n$) and locally bounded, Remark 4.4.43 of \cite{BerensteinGay1991} implies that the pointwise limit $H(\epsilon)$ is a.e. equal to the subharmonic function
		$$H^{\ast}(\epsilon)=\limsup_{\eta\rightarrow \epsilon} H(\eta).$$
		
		Because $H(x+iy)$ does not depend on $y$, neither does $H^{\ast}(x+iy)$, so that $\partial^2_x H^{\ast}(x)\geq 0$ (in the distributional sense) on $0<x<{+\infty}$.
		After one notes that $H^{\ast}(x)$ is locally bounded, a mollification argument (together with the fact that the pointwise limit of convex functions is convex) implies that $H^{\ast}(x)$ is actually a convex function.
		
		Therefore, the non-negative function $H(\epsilon)$ (for $\epsilon>0$) is dominated by, and a.e. equal to, the convex function $H^{\ast}(\epsilon)$.
		In particular, this shows that $H(\epsilon)<{+\infty}$ for all $\epsilon>0$, and that $H(\epsilon)$ is dominated by and equal a.e. to a non-increasing convex function.
		Thus, for $\epsilon_0>0$, $F_{\epsilon_0}\in \mathcal{H}^2(\Omega)$ as desired.
		
		Moreover, by Proposition \ref{prop:kernels-hnw24},
		\begin{align*}
			\|F_{\epsilon_0}\|^2_{\mathcal{H}^2(\Omega)}& = \int_{\mathbb{C}\times\mathbb{R}} |F(z,t+iP(z)+i\epsilon_0)|^2dm(z,t) \\
			& =  H(\epsilon_0) \\
			& \leq  \epsilon_0^{-1} \|F\|^2_{L^2(\Omega)},
		\end{align*}
		where in the last line we used the elementary fact that if $H:(0,+\infty)\rightarrow [0,+\infty)$ is a non-negative, non-increasing function with $\int_0^{+\infty} H(s)ds=A$, then $H(\epsilon_0)\leq A\epsilon_0^{-1}$ for any $\epsilon_0>0$.
		This completes the proof of (i).
		
		For (ii), using the characterization in Proposition \ref{prop:kernels-hnw25alt} we can write
		\begin{align*}
			\|(\partial_{z_2}F)_\epsilon\|^2_{L^2(\Omega)} & =  \int_0^{+\infty} \int_{\mathbb{R}} \int_{\mathbb{C}} |\mathcal{F}^{-1}[2\pi i \bullet h_{s+\epsilon}](z,t)|^2 dm(z)dtds \\
			& =  \int_0^{+\infty} \int_{\mathbb{C}} \int_0^{+\infty} 4\pi^2 \tau^2 e^{-4\pi \tau (s+\epsilon)}|\hat{f}(z,\tau)|^2dsdm(z)d\tau\\
			& =  \int_0^{+\infty} \int_{\mathbb{C}} \pi \tau e^{-4\pi\tau\epsilon} |\hat{f}(z,\tau)|^2 dm(z)d\tau \\
			& \leq \frac{1}{4 e\epsilon}\|\hat{f}\|^2_{L^2(\mathbb{C}\times\mathbb{R})} \\
			& =  \frac{1}{4 e \epsilon}\|f\|^2_{L^2(\mathbb{C}\times\mathbb{R})} = \frac{\epsilon^{-1}}{4e} \|F\|^2_{\mathcal{H}^2(\Omega)}.
		\end{align*}

		For (iii), the proof of Proposition 2.5 of \cite{HalfpapNagelWainger2010} yields
		\begin{equation}\label{eq:szego-plancherel}
		\int_{\mathbb{R}^3} |F(x+iy,t+ib(x)+is)|^2dm(x,y,t) = \int_{\mathbb{R}^2}\int_0^{+\infty} e^{-4\pi \tau s} |\tilde{f}(x,\eta,\tau)|^2d\tau dxd\eta,\end{equation}
		where $\displaystyle\tilde{f}(x,\eta,\tau)=\int_{\mathbb{R}^2} e^{-2\pi i (y\eta + t\tau)}f(x+iy,t)dydt.$
		Moreover, part (a) of Proposition 2.5 of \cite{HalfpapNagelWainger2010} gives a 1-1 correspondence between such functions $f$ and functions $g(\eta,\tau)$ such that
		$$\tilde{f}(x,\eta,\tau)=e^{2\pi (x\eta - \tau b(x))}g(\eta,\tau)\in L^2(\mathbb{C}\times (0,+\infty)).$$
		
		Integrating (\ref{eq:szego-plancherel}) in $s$ over $[0,{+\infty})$ and applying Fubini-Tonelli gives
		\begin{align*}
			\|F\|^2_{L^2(\Omega)}&= \int_{\mathbb{R}^2}\int_0^{+\infty} \frac{|\hat{f}(x,\eta,\tau)|^2}{4\pi \tau} d\tau dxd\eta \\
			& =   \int_{\mathbb{R}^2}\int_0^{+\infty} \frac{|g(\eta,\tau)|^2e^{4\pi(\eta x - \tau b(x))}}{4\pi \tau} d\tau dxd\eta \\
			& \geq  \int_{|x|,|\eta|\leq 1}\int_0^1 \frac{|g(\eta,\tau)|^2e^{4\pi(\eta x - \tau b(x))}}{4\pi \tau} d\tau dxd\eta \\
			& \approx  \int_{|\eta|\leq 1}\int_0^1 \frac{|g(\eta,\tau)|^2}{4\pi \tau} d\tau d\eta.
		\end{align*}
		We see that $\|F\|_{L^2(\Omega)}=+\infty$, for example, if $|g(\eta,\tau)|\gtrsim 1$ for $|\eta|\leq 1$, $0<\tau<1$. 
	\end{proof}
	
	\begin{remark}{\rm The phenomenon described in Proposition \ref{prop:kernels-clarify} seems to be a byproduct of the unboundedness of $\Omega$. For a simple example in $\mathbb{C}^1$, let $f(z)=(z+i)^{-1}$ and $\Omega=\{z\ :\ \Im(z)>0\}$. Then $\Omega$ is pseudoconvex and the Lebesgue measure on ${\rm b}\Omega=\{z\ :\ \Im(z)=0\}$ coincides with the standard arclength measure, so we have
			$$\iint_\Omega |f(z)|^2 dm(z) > \int_{\frac{\pi}{4}}^{\frac{3\pi}{4}} \int_{\sqrt{2}}^{+\infty} \frac{1}{r^2}rdrd\theta = +\infty,$$
			while
			$$\int_{\b\Omega} |f(x+iy)|^2dm(x) = \frac{\pi}{y+1}\leq \pi$$
			for all $y\geq 0$.
		}\end{remark}
	
	We are now ready to prove Proposition \ref{prop:szegobergman}.
	
	\begin{proof}[Proof of Proposition \ref{prop:szegobergman}]
		Let $F(\mb{z})\in L^2(\Omega)\cap \mathcal{H}^2(\Omega)$.
		Then
		\begin{align*}
			F(\mb{z}) & = \int_{{\rm b}\Omega} [\mathbb{S}](\mb{z},\mb{w})F^b(w,\Re(w_2)) dm(w,\Re(w_2)) \\
			& =  \lim_{\epsilon\rightarrow 0} \int_{{\rm b}\Omega} [\mathbb{S}](\mb{z},(w,w_2+i\epsilon))F^b(w,\Re(w_2)) dm(w,\Re(w_2))\\
			& =  \lim_{\epsilon\rightarrow 0} \int_{\mathbb{C}}\Bigg[\int_{\Im(w_2)=P(w)} [\mathbb{S}](\mb{z},(w,w_2+i\epsilon))F(\mb{w})dw_2\Bigg] dm(w) \\
			& =  \lim_{\epsilon\rightarrow 0} \int_{\mathbb{C}}\Bigg[\int_{\Im(w_2)>P(w)} \frac{\partial}{\partial\bar{w}_2}\big([\mathbb{S}](\mb{z},(w,w_2+i\epsilon))F(\mb{w})\big)d\bar{w}_2\wedge dw_2\Bigg] dm(w) \\
			& =  \lim_{\epsilon\rightarrow 0} \int_{\Omega} 2i\frac{\partial [\mathbb{S}]}{\partial\bar{w}_2}(\mb{z},(w,w_2+i\epsilon))F(\mb{w}) dm(\mb{w}).
		\end{align*} 
		Moreover, part (i) of Proposition \ref{prop:kernels-clarify} implies that
		\begin{equation}\label{eqend} \int_{\Omega} 2i\frac{\partial [\mathbb{S}]}{\partial\bar{w}_2}(\mb{z},(w,w_2+i\epsilon))G(\mb{w})dm(\mb{w})=0,\quad \forall \epsilon>0\end{equation}
		for any $G\perp L^2(\Omega)\cap \mathcal{O}(\Omega)$.
		
		Hence
		$$2i\frac{\partial [\mathbb{S}]}{\partial \bar{w}_2}(\mb{z},\mb{w}) = [\mathbb{B}](\mb{z},\mb{w}),$$ 
		as desired.
	\end{proof}

	\section{Proofs of Lemma \ref{lem:unifL2bound} and Propositions \ref{prop:continuity} and \ref{prop:link}}
	\label{sec:link}
	
	We first prove Lemma \ref{lem:unifL2bound}.

	\begin{proof}[Proof of \ref{lem:unifL2bound}] 
		Lemmas \ref{lem:metric-approxsmall} and \ref{lem:geom-uftisaq} imply that $$\sigma_\tau(w)=\mu(\mb{w},\tau^{-1}) \lesssim \begin{cases} \tau^{-\frac{1}{2}} & \mbox{if}\ \tau\leq 1,\\ C_1^{-1} \tau^{-\frac{1}{m}} & \mbox{if}\ \tau\geq 1,\end{cases}$$ where $C_1$ is the constant from (H1). This shows that $A(\tau)<+\infty$ for $0<\tau<+\infty$. The fact that $A(\tau)$ is non-increasing follows immediately from its definition as the supremum of a family of non-increasing functions.
		
		The operator bounds for $\mathbb{G}_\tau,\ \mathbb{R}_\tau$, and $\mathbb{R}_\tau^\ast$ follow immediately from the Schur test, coupled with the scaling arguments used in the proof of Lemma \ref{lem:kernels-basicest}. The bound for $\mathbb{S}_\tau$ is due to the fact that $\mathbb{S}_\tau$ is an orthogonal projection.
	\end{proof}
	
	The proof of Proposition \ref{prop:link} requires a preparatory lemma.
	
	\begin{lemma}\label{lem:smoothapprox} For $f\in L^2(\mathbb{C}\times\mathbb{R})$ with $\bar{Z}f=0$ in the sense of tempered distributions, there exist Schwartz functions $\{f_n\}\subset \mathscr{S}(\mathbb{C}\times\mathbb{R})$ such that
		$\hat{f}_n\in C_c^{\infty} (\mathbb{C}\times \mathbb{R})$, $f_n\to f$ in $L^2$, and $A(\tau)\bar{D}_\tau \hat{f}_n(z,\tau)\to 0$ in $L^2(\mathbb{C}\times\mathbb{R})$, where $A(\tau)=\sup\limits_{z\in \mathbb{C}} \sigma_\tau(z)$.
	\end{lemma}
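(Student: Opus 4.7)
The plan is to approximate $\hat f = \mathcal{F}[f]$ by functions in $C_c^\infty(\mathbb{C}\times\mathbb{R})$ via a three-step regularization---truncate in $\tau$, truncate in $z$, then mollify---while carefully tracking the behavior of the weighted $\bar{\partial}$-operator $\bar D_\tau$ at each stage. Since Proposition \ref{prop:kernels-hnw25alt}(b) gives $\hat f\equiv 0$ for $\tau<0$, I will place the $\tau$-cutoff on a compact subset of $(0,\infty)$, which is essential because $A(\tau)$ may blow up as $\tau\to 0^+$ even though it is finite on every compact subinterval of $(0,\infty)$ by Lemma \ref{lem:unifL2bound}.

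For the first two steps, fix smooth cutoffs $\chi_N(\tau)\in C_c^\infty((0,\infty))$ equal to $1$ on $[1/N,N]$ and supported in $[1/(2N),2N]$, and $\phi_R(z)=\phi(z/R)$ with $\phi\in C_c^\infty(\mathbb{C})$ equal to $1$ near the origin. Set $h_{N,R}(z,\tau):=\phi_R(z)\chi_N(\tau)\hat f(z,\tau)$; since $\chi_N$ depends only on $\tau$ it commutes with $\bar D_\tau$, so the distributional identity $\bar D_\tau\hat f=0$ collapses to
\[
\bar D_\tau h_{N,R}(z,\tau)=R^{-1}\chi_N(\tau)(\partial_{\bar z}\phi)(z/R)\hat f(z,\tau).
\]
Because $A$ is non-increasing and bounded by $A(1/(2N))$ on $\mathrm{supp}\,\chi_N$, this gives $\|A(\tau)\bar D_\tau h_{N,R}\|_{L^2}\leq A(1/(2N))R^{-1}\|\partial_{\bar z}\phi\|_\infty\|\hat f\|_{L^2}\to 0$ as $R\to\infty$ with $N$ fixed, while $h_{N,R}\to\hat f$ in $L^2$ as $N,R\to\infty$ by dominated convergence.

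For the mollification step, let $\rho_\epsilon$ be a standard smooth mollifier on $\mathbb{C}\times\mathbb{R}$ and set $\hat f_{N,R,\epsilon}:=\rho_\epsilon * h_{N,R}\in C_c^\infty(\mathbb{C}\times\mathbb{R})$. The key identity is
\[
\bar D_\tau\hat f_{N,R,\epsilon}=\rho_\epsilon*(\bar D_\tau h_{N,R})+2\pi[\tau P_{\bar z}(z),\rho_\epsilon *]h_{N,R},
\]
and since $\tau P_{\bar z}(z)$ is smooth on a compact neighborhood of $\mathrm{supp}\,h_{N,R}$, the commutator admits the pointwise bound $C(N,R)\epsilon\cdot(\rho_\epsilon * |h_{N,R}|)$ and hence tends to $0$ in $L^2$ as $\epsilon\to 0$ (with $N,R$ fixed). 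For $\epsilon$ sufficiently small $A(\tau)$ remains uniformly bounded on the support, so $A(\tau)\bar D_\tau\hat f_{N,R,\epsilon}$ can be made arbitrarily small in $L^2$. A diagonal argument then produces sequences $N_n\to\infty$, $R_n\to\infty$, $\epsilon_n\to 0$ so that $\hat f_n:=\hat f_{N_n,R_n,\epsilon_n}$ satisfies both $\hat f_n\to\hat f$ and $A(\tau)\bar D_\tau\hat f_n\to 0$ in $L^2$; setting $f_n:=\mathcal{F}^{-1}[\hat f_n]$ yields the required Schwartz approximants by Plancherel.

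The main subtlety is the commutator bound in the mollification step, which is delicate only because it must interact cleanly with the weight $A(\tau)$; however, once attention has been restricted to a compact subset of $\mathbb{C}\times(0,\infty)$ on which $\tau P_{\bar z}(z)$ is smooth and $A(\tau)$ is bounded, this reduces to a standard mollifier commutator estimate and the argument should go through without difficulty.
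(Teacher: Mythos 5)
Your proposal is correct and follows essentially the same route as the paper's proof: truncate $\hat f$ to a compact set in $(z,\tau)$ with $\tau$ bounded away from $0$ (so that $A(\tau)$ stays bounded), mollify, and control $A(\tau)\bar D_\tau$ of the result by the cutoff-derivative term plus the commutator of the weight $2\pi\tau P_{\bar z}(z)$ with the convolution, which is small on compacts, finishing with a diagonal choice of parameters. The only differences are cosmetic (isotropic versus anisotropic mollifier, and using the $R^{-1}$ factor rather than the tail $L^2$-norm of $\hat f$ for the cutoff term).
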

	\begin{proof}
		By Proposition \ref{prop:kernels-hnw25alt}, $\hat{f}(z,\tau)=0$ a.e. on $\{(z,\tau)\ :\ \tau\leq 0\}$. Fix a non-negative $\psi\in C_c^\infty (\mathbb{C}\times \mathbb{R})$ with $\int \psi = 1$, ${\rm supp}\,\psi\subset\{|z|+|\tau|\leq 1\}$, and let $$\psi_\epsilon(z,\tau) = \epsilon^{-3}\psi\Big(\epsilon^{-2}z,\epsilon^{-1}\tau\Big).$$
		
		Choose a smooth, non-negative, non-increasing function $$\chi(t)= \begin{cases} 1\ &\mbox{if}\ t\leq 1,\\ 0\ &\mbox{if}\ t\geq 2,\end{cases}$$
		and with $|\chi'(t)|\leq 2$.
		For $M,N\geq 1$, define
		$$\chi_{M,N}(z,\tau)=\begin{cases} \chi(N^{-1}|z|)\chi(M^{-1}\tau)\chi(M^{-1}\tau^{-1}), & \mbox{if}\ \tau>0,\\ 0, & \mbox{otherwise}.\end{cases}$$
		and note that
		\begin{equation}\label{link:eq-suppchi}
			{\rm supp}\, \chi_{M,N}(z,\tau)\subset\{(z,\tau)\in\mathbb{C}\times\mathbb{R}\ :\ |z|\leq 2N,\ \frac{1}{2M}\leq \tau\leq 2M\}
			\end{equation}
		and
		\begin{equation}\label{link:eq-chi1}
			\chi_{M,N}(z,\tau)\equiv 1\quad \mbox{for}\ |z|\leq N,\ \frac{1}{M}\leq \tau\leq M.
			\end{equation}
		
		Define $\hat{f}_{\epsilon,M,N}(z,\tau)=\psi_\epsilon\ast (\chi_{M,N} \hat{f})(z,\tau).$
		Because $\chi_{M,N} \hat{f}\to \hat{f}$ in $L^2$ as $M,N\to {+\infty}$, and since $\psi_\epsilon\ast g\to  g$ for any $g\in L^2(\mathbb{C}\times \mathbb{R})$ by standard approximate identity results, we may make $\hat{f}_{\epsilon,M,N}\to \hat{f}$ (and therefore $f_{\epsilon,M,N}\to f$) in $L^2(\mathbb{C}\times\mathbb{R})$ by first making $M$ and $N$ large, and then taking $\epsilon$ small (depending on $M$ and $N$).
		
		Now, using the fact that $\partial_{\bar{z}}\hat{f}(z,\tau)=-2\pi \tau P_{\bar{z}}(z)\hat{f}(z,\tau)$ as distributions,
		\begin{align*}
			 \bar{D}_\tau \hat{f}_{\epsilon,M,N}& (z,\tau)\\
		    & =  \bar{D}_\tau (\psi_\epsilon \ast (\chi_{M,N}\hat{f})(z,\tau))\\
			& =  \int_{\mathbb{C}\times\mathbb{R}} [(\partial_{\bar{z}}\chi_{M,N})(z-\xi,\tau-s)\hat{f}(z-\xi,\tau-s)\\
			& \qquad\qquad +\chi_{M,N}(z-\xi,\tau-s)(\partial_{\bar{z}}\hat{f})(z-\xi,\tau-s)\\
			&  \qquad\qquad + \chi_{M,N}(z-\xi,\tau-s)\hat{f}(z-\xi,\tau-s)2\pi \tau P_{\bar{z}}(z)]\psi_\epsilon (\xi,s)dm(\xi,s)\\
			& =  \int_{\mathbb{C}\times\mathbb{R}} (\partial_{\bar{\xi}}\chi_{M,N})(\xi,s)\hat{f}(\xi,s)\psi_\epsilon(z-\xi,\tau-s)dm(\xi,s) \\
			& \quad +\int_{\mathbb{C}\times\mathbb{R}}[-\chi_{M,N}(z-\xi,\tau-s)2\pi(\tau-s)P_{\bar{z}}(z-\xi)\hat{f}(z-\xi,\tau-s)\\
			&  \qquad\qquad + \chi_{M,N}(z-\xi,\tau-s)\hat{f}(z-\xi,\tau-s)2\pi \tau P_{\bar{z}}(z)]\psi_\epsilon (\xi,s)dm(\xi,s)\\
			& =  \int_{\mathbb{C}\times\mathbb{R}} (\partial_{\bar{\xi}}\chi_{M,N})(\xi,s)\hat{f}(\xi,s)\psi_\epsilon(z-\xi,\tau-s)dm(\xi,s) \\
			&  \quad +\int_{\mathbb{C}\times \mathbb{R}} \hat{f}(\xi,s) \chi_{M,N}(\xi,s)2\pi (\tau P_{\bar{z}}(z)-sP_{\bar{z}}(\xi))\psi_\epsilon(z-\xi,\tau-s)dm(\xi,s)\\
			& =  I(z,\tau)+II(z,\tau).
		\end{align*}
		By our assumptions on $\chi_{M,N}$, Young's inequality, and the fact that $\sigma_\tau(z)\lesssim \tau^{-\frac{1}{2}}+\tau^{-\frac{1}{m}}$, for fixed $M$ we have 
		$$\|A(\bullet)I\|_{L^2(\mathbb{C}\times\mathbb{R})}\leq (M^{\frac{1}{m}}+M^{\frac{1}{2}})\|\hat{f}\|_{L^2(\{N\leq |z|\leq 2N\}\times\mathbb{R})} \to 0\ \mbox{as}\ N\to {+\infty}.$$
		Defining $$E_{\epsilon,M,N}= \sup_{|z|\leq 2N,\ (2M)^{-1}\leq \tau\leq 2M}\ \sup_{\max(\sqrt{|z-\xi|},|\tau-s|)\leq \epsilon} |2\pi \tau P_{\bar{z}}(z)-2\pi s P_{\bar{z}}(\xi)|,$$
		we see that for any large $M,N$ we may choose $\epsilon$ sufficiently small so that $E_{\epsilon,M,N}$ is as small as we'd like.
		In other words, for fixed $M,N$ large we have
		$$\|A(\bullet)II\|_2\leq (M^{\frac{1}{m}}+M^{\frac{1}{2}})\|\hat{f}\|_2 E_{\epsilon,M,N} \to 0\ \mbox{as}\ \epsilon\to 0.$$
		Therefore, choosing first $M=M(n)$ large, and then $N=N(n)$ large (depending on $M$), and then $\epsilon=\epsilon(n)$ small (depending on $M$ and $N$), we obtain a sequence of Schwartz functions $f_n=f_{\epsilon(n),M(n),N(n)}$ with $\hat{f}_n\in C_c^\infty(\mathbb{C}\times\mathbb{R})$ which, by Plancherel, satisfy $f_n\to f$ in $L^2$, $\|A(\bullet)\bar{D}_\bullet \hat{f}_n\|_2\to 0$ as $n\to {+\infty}.$
	\end{proof}
	
	We now prove Proposition \ref{prop:continuity}.
	
	\begin{proof}[Proof of Proposition \ref{prop:continuity}.]
		By Lemma \ref{lem:geom-uftisaq}, Lemma \ref{lem:metric-approxsmall}, and (H1), for fixed $\tau>0$ there exists constants $c,C$ with $$0<c\leq \sigma_{\tau+h}(z)\leq C<+\infty,\quad\mbox{uniformly in}\ w\in\mathbb{C},\ |h|<\frac{\tau}{2}.$$
		In other words we have
		\begin{equation}\label{eq:link-l2bound} \|\mathbb{H}_{\tau+h}f\|_{L^2(\mathbb{C})}\lesssim \|f\|_{L^2(\mathbb{C})}\quad\mbox{uniformly in}\ f\in L^2(\mathbb{C}),\ |h|<\frac{\tau}{2}.\end{equation}
		
		Because the operators $\mathbb{H}_{\tau+h}$ are uniformly bounded as operators on $L^2(\mathbb{C})$, the statement of the proposition follows once we show that $\mathbb{H}_{\tau+h}f\to \mathbb{H}_\tau f$ in $L^2(\mathbb{C})$ for all $f\in \mathscr{S}(\mathbb{C})$.
		
		To this end, we first claim that if $f$ is such that for each $N\geq 1$ there is a constant $C_N$ satisfying $|f(z)|\leq C_N (1+|z|)^{-N}$, then for every $M\geq 1$ there is a constant $A_M$ with \begin{equation}\label{eq:link-fastdecay}|\mathbb{H}_{\tau+h}[f](z)|\leq A_M (1+|z|)^{-M},\quad |h|<\frac{\tau}{2}.\end{equation}
		To see this, note that the estimates in Theorem \ref{thm:chr-rec-Christest} and Lemma \ref{lem:chr-rec-metest} provide small numbers $\epsilon,\delta>0$ with
		$$|[\mathbb{H}_{\tau+h}](z,w)|\lesssim \Big(1+\frac{1}{|z-w|}\Big) e^{-\epsilon |z-w|^\delta},\quad z,w\in\mathbb{C},\ |h|<\frac{\tau}{2}.$$
		In other words, for each $M\geq 1$ we have
		$$|[\mathbb{H}_{\tau+h}](z,w)|\lesssim \frac{1}{|z-w|(1+|z-w|)^M}.$$
		Because $|z|^{-1}\in L^1_{loc}(\mathbb{C})$ and is bounded for $|z|\geq 1$, the inequality $1+|z|\leq (1+|z-w|)(1+|w|)$ yields (\ref{eq:link-fastdecay}).
		
		Writing $\phi_h(z)=\mathbb{H}_{\tau+h}f(z)-\mathbb{H}_{\tau}f(z)$ for $f\in \mathscr{S}(\mathbb{C})$, we apply (\ref{eq:link-fastdecay}), Proposition \ref{prop:kernels-alg-derivs}, and the fact that $D_{\tau+h}=D_\tau + 2\pi h P_z,\ \bar{D}_{\tau+h}=\bar{D}_\tau + 2\pi h P_{\bar{z}}$ to see that for all $M\geq 1$ there exists a constant $A_M$ with
		\begin{equation}\label{eq:link-decaycond} |\phi_h(z)|+|\bar{D}_\tau \phi_h (z)|+|D_\tau \phi_h(z)|\leq A_M (1+|z|)^{-M},\qquad z\in\mathbb{C},\ |h|<\frac{\tau}{2}.\end{equation}
		
		We now prove the proposition for $\mathbb{G}_\tau$.
		Letting $\langle \bullet,\bullet\rangle$ denote the standard inner product on $L^2(\mathbb{C})$, and fixing $f\in \mathscr{S}(\mathbb{C})$ and $\phi_h(z)=\mathbb{G}_{\tau+h}f(z)-\mathbb{G}_{\tau}f(z)$, we have
		\begin{align}
		\lan \mathbb{G}_{\tau}f,\phi_h\ran-\lan \mathbb{G}_{\tau+h}f,\phi_h\ran
		& =  \lan f,\mathbb{G}_{\tau}\phi_h\ran - \lan \mathbb{G}_{\tau+h}f,\phi_h\ran \nonumber\\
		& =  \lan \bar{D}_{\tau+h} D_{\tau+h}\mathbb{G}_{\tau+h}f,\mathbb{G}_{\tau}\phi_h\ran - \lan \mathbb{G}_{\tau+h}f,\phi_h\ran \nonumber \\
		& =  \lan \mathbb{G}_{\tau+h}f, (\bar{D}_{\tau+h} D_{\tau+h}-\bar{D}_\tau D_\tau)\mathbb{G}_\tau \phi_h\ran.\label{eq:link-rewrite}
		\end{align}
		Because $$\bar{D}_{\tau+h} D_{\tau+h}-\bar{D}_\tau D_\tau=2\pi h P_{\bar{z}} D_\tau + 4\pi^2 h^2 |P_z|^2 + 2\pi h P_{z,\bar{z}}+2\pi h P_z \bar{D}_\tau,$$
		Proposition \ref{prop:kernels-alg-derivs}-(a) gives 
		\begin{align}
		(\bar{D}_{\tau+h} D_{\tau+h}-\bar{D}_\tau D_\tau)\mathbb{G}_\tau \phi_h & =  2\pi h P_{\bar{z}}\mathbb{R}_\tau \phi_h + (4 \pi^2 h^2 |P_z|^2+2\pi h P_{z,\bar{z}})\mathbb{G}_\tau \phi_h \nonumber\\
		& \quad \qquad + 2\pi h P_z \mathbb{R}^\ast_\tau \phi_h - 2\pi h P_z\mathbb{R}^\ast_\tau (4\pi \tau P_{z,\bar{z}})\mathbb{G}_\tau\phi_h.\label{eq:link-adjoint}
		\end{align}	
		
		Because $P=P^{\mb{\sigma},\kappa}$ for some $\kappa$, there exists $N\geq 1$ so that $$|\nabla P(z)|+|\Delta P(z)|\lesssim (1+|z|)^N,$$
		and therefore (\ref{eq:link-fastdecay}) (applied to $\phi_h$) implies that $(\bar{D}_{\tau+h} D_{\tau+h}-\bar{D}_\tau D_\tau)\mathbb{G}_\tau \phi_h\in L^2(\mathbb{C})$ for $|h|<\frac{\tau}{2}$. 
		Then (\ref{eq:link-rewrite}), the Schwarz inequality, (\ref{eq:link-l2bound}), (\ref{eq:link-adjoint}), and the Dominated Convergence Theorem yield, for $|h|<\frac{\tau}{2}$,
		\begin{align*}
			\|\mathbb{G}_\tau f - \mathbb{G}_{\tau+h}f\|_2 & =  |	\lan \mathbb{G}_{\tau}f,\phi_h\ran-\lan \mathbb{G}_{\tau+h}f,\phi_h\ran| \\
			& \leq   \|\mathbb{G}_{\tau+h}f\|_{2}\|(\bar{D}_{\tau+h} D_{\tau+h}-\bar{D}_\tau D_\tau)\mathbb{G}_\tau \phi_h\|_{2} \\
			& \lesssim  \|f\|_2 \|(\bar{D}_{\tau+h} D_{\tau+h}-\bar{D}_\tau D_\tau)\mathbb{G}_\tau \phi_h\|_{2} \\
			& \to 0
		\end{align*}
		as $h\to 0$.
		This shows that $\mathbb{G}_{\tau+h}f\to \mathbb{G}_\tau f$ in $L^2(\mathbb{C})$ as $h\to 0$.
		
		A similar argument shows that if $\phi_h=\mathbb{R}_{\tau+h}f - \mathbb{R}_\tau f$, then
		$$\|\mathbb{R}_{\tau+h}f-\mathbb{R}_\tau f\|_2 = |\lan \mathbb{R}_{\tau+h}f ,\phi_h\ran-\lan \mathbb{R}_\tau f,\phi_h\ran | = |\lan \mathbb{R}_{\tau + h} f,2\pi h P_z \mathbb{R}^\ast_\tau \phi_h\ran|\to 0$$
		as $h\to 0,$ and therefore $\mathbb{R}_{\tau+h}f\to \mathbb{R}_\tau$ as $h\to 0$ in $L^2(\mathbb{C})$.
		
		For $\mathbb{R}^\ast_\tau$ and $\phi_h=\mathbb{R}^\ast_{\tau+h}f-\mathbb{R}^\ast_\tau f$, the analogous statement follows by noting that 
		$$\lan \mathbb{R}^\ast_{\tau+h}f,\phi_h\ran - \lan \mathbb{R}^\ast_\tau f,\phi_h\ran = \lan f,\mathbb{R}_{\tau+h}\phi_h\ran - \lan f,\mathbb{R}_\tau \phi_h\ran \to 0\ \mbox{as}\ h\to 0,$$
		while that for $\mathbb{S}_\tau$ and $\phi_h=\mathbb{S}_\tau f - \mathbb{S}_{\tau+h}f$ follows from
		\begin{align*}
		\lan \mathbb{S}_{\tau+h} f, \phi_h\ran & - \lan \mathbb{S}_\tau f,\phi_h\ran	\\
		& =  \lan - D_{\tau + h} \mathbb{R}^\ast_{\tau+h}f, \phi_h\ran - \lan -D_\tau \mathbb{R}^\ast_\tau f, \phi_h\ran  \\
			& = \lan \mathbb{R}_\tau^\ast f, \bar{D}_\tau \phi_h\ran - \lan \mathbb{R}^\ast_{\tau+h} f,\bar{D}_{\tau+h}\phi_h\ran  \\
			& = [\lan \mathbb{R}_\tau^\ast f, \bar{D}_\tau \phi_h\ran - \lan \mathbb{R}^\ast_{\tau+h} f,\bar{D}_{\tau}\phi_h\ran] - \lan \mathbb{R}_{\tau+h}^\ast f , 2\pi h P_{\bar{z}}\phi_h\ran \\
			& \to  0
		\end{align*}
		as $h\to 0$. This completes the proof.
	\end{proof}
	
	We move on to the proof of Proposition \ref{prop:link}. 
	In order to even make sense out of the expression $$\int_{0}^{+\infty} e^{2\pi i \tau t}\int_{\mathbb{C}} [\mathbb{S}_\tau](z,w)\hat{f}(w,\tau)dm(w)d\tau$$
	appearing in the proof, we need to have the following elementary result.
	
	\begin{lemma}\label{lem:link-cont} Let $g(z,\tau)\in \mathscr{S}(\mathbb{C}\times \mathbb{R})$, and define $$S(z,\tau)=\mathbb{S}_\tau[g(\bullet,\tau)](z),\quad z\in\mathbb{C},\ \tau\in (0,+\infty).$$
		Then $S(z,\tau)\in C^0(\mathbb{C}\times (0,+\infty))\cap L^2(\mathbb{C}\times (0,+\infty))$.
		\end{lemma}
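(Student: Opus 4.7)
The $L^2$ part is immediate from Lemma \ref{lem:unifL2bound}, which supplies $\|\mathbb{S}_\tau\|_{L^2\to L^2}\le 1$ uniformly in $\tau$; Fubini then gives
\begin{align*}
\|S\|_{L^2(\mathbb{C}\times(0,+\infty))}^2=\int_0^{+\infty}\|\mathbb{S}_\tau g(\bullet,\tau)\|_{L^2(\mathbb{C})}^2\,d\tau\leq \|g\|_{L^2(\mathbb{C}\times\mathbb{R})}^2<+\infty.
\end{align*}

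My plan for continuity is to work through the kernel representation
\begin{align*}
S(z,\tau)=\int_\mathbb{C}[\mathbb{S}_\tau](z,w)\,g(w,\tau)\,dm(w),\qquad(z,\tau)\in\mathbb{C}\times(0,+\infty),
\end{align*}
and apply the dominated convergence theorem at each $(z_0,\tau_0)$. The pointwise identity is valid because $\mathbb{S}_\tau g(\bullet,\tau)$ lies in $\mathrm{range}(\mathbb{S}_\tau)=\ker\bar D_\tau\cap L^2(\mathbb{C})$, whose elements are of the form $e^{-2\pi\tau P}h$ with $h$ entire; point evaluation is then bounded and its Riesz representer is precisely $[\mathbb{S}_\tau](z,\bullet)$. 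Absolute convergence follows from the kernel bound of Theorem \ref{thm:chr-rec-Christest} together with the Schwartz decay of $g$. Fix a sequence $(z_n,\tau_n)\to(z_0,\tau_0)$ inside $B(z_0,1)\times[\tau_0/2,2\tau_0]$; a common dominating function is constructed from
\begin{align*}
|[\mathbb{S}_\tau](z,w)g(w,\tau)|\leq C\sigma_\tau(w)^{-2}e^{-\epsilon\tilde\rho_\tau(z,w)}|g(w,\tau)|,
\end{align*}
the uniform lower bound $\sigma_\tau(w)\gtrsim \tau^{-1/2}$ (which comes from $\Lambda(\mb w,\delta)\lesssim\|h\|_\infty\delta^2$ and Lemma \ref{lem:geom-uftisaq}), part (b) of Proposition \ref{prop:chr-rec-specforms}, and the Schwartz decay of $g$; together these produce an $L^1(\mathbb{C})$ majorant uniform over the compact neighborhood.

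For pointwise convergence of the integrand at each $w\neq z_0$, continuity of $g$ handles $g(w,\tau_n)\to g(w,\tau_0)$, reducing matters to $[\mathbb{S}_{\tau_n}](z_n,w)\to[\mathbb{S}_{\tau_0}](z_0,w)$. I split this as
\begin{align*}
\bigl([\mathbb{S}_{\tau_n}](z_0,w)-[\mathbb{S}_{\tau_0}](z_0,w)\bigr)+\bigl([\mathbb{S}_{\tau_n}](z_n,w)-[\mathbb{S}_{\tau_n}](z_0,w)\bigr).
\end{align*}
The first difference tends to $0$ by Corollary \ref{cor:chr-der} applied with $\mathbb{H}_\tau=\mathbb{S}_\tau\in\mathbb{O}_\tau(0,0,0)$, which provides pointwise $\tau$-differentiability of the kernel at every $(z_0,w)$. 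The second is controlled by combining the identity $D_\tau\mathbb{S}_\tau=\mathbb{R}_\tau(4\pi\tau P_{z,\bar z})\mathbb{S}_\tau$ of Proposition \ref{prop:kernels-alg-derivs}(d) with the Christ bounds of Theorem \ref{thm:chr-rec-Christest}, yielding a locally uniform (in $\tau\in[\tau_0/2,2\tau_0]$ and $z\in B(z_0,|z_0-w|/2)$) bound on $|\nabla_z[\mathbb{S}_\tau](z,w)|$; the resulting Lipschitz estimate drives the second difference to zero as $z_n\to z_0$.

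The main obstacle is precisely this joint continuity of $[\mathbb{S}_\tau](z,w)$ in $(z,\tau)$: the $z$-smoothness (from membership in $\ker\bar D_\tau$ up to conjugation) and separate pointwise $\tau$-continuity are each clean, but coupling them requires a uniform-in-$\tau$ gradient bound away from the diagonal. This reduces via Proposition \ref{prop:kernels-alg-derivs}(d) to the Christ estimates already in Theorem \ref{thm:chr-rec-Christest}. Since $\{w=z_0\}$ has measure zero in $\mathbb{C}$, dominated convergence then delivers $S(z_n,\tau_n)\to S(z_0,\tau_0)$, completing the proof.
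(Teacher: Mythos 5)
Your argument is correct in substance, but it follows a genuinely different route from the paper. The paper's proof is softer: it observes that $\bar D_\tau S(\cdot,\tau)=0$ makes $\tilde S_\tau(z)=e^{2\pi\tau P(z)}S(z,\tau)$ entire in $z$, extends the strong $L^2$-continuity argument of Proposition \ref{prop:continuity} to get $S(\cdot,\tau+h)\to S(\cdot,\tau)$ in $L^2(\mathbb{C})$ (accounting for the varying datum $g(\cdot,\tau+h)$), and then uses the Cauchy integral formula on an annulus plus the Schwarz inequality to upgrade $L^2_{loc}$ convergence of the entire functions $\tilde S_{\tau+h}$ to locally uniform convergence, which yields joint continuity without ever touching pointwise kernel bounds. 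You instead work at the kernel level and run dominated convergence, which is legitimate but imports heavier machinery: Christ's estimates (Theorem \ref{thm:chr-rec-Christest}), the reproducing-kernel identity for $\mathbb{S}_\tau$, the pointwise $\tau$-differentiability of $[\mathbb{S}_\tau](z,w)$ from Corollary \ref{cor:chr-der} (no circularity, since that corollary rests on Lemma \ref{lem:chr-der-derivatives} and Proposition \ref{prop:continuity}, neither of which uses this lemma), and a locally uniform $z$-gradient bound. Two small points deserve more care in your write-up: first, to turn the operator identity $D_\tau\mathbb{S}_\tau=\mathbb{R}_\tau(4\pi\tau P_{z,\bar z})\mathbb{S}_\tau$ into a pointwise bound on $\nabla_z[\mathbb{S}_\tau](z,w)$ you also need $\bar D_{\tau,z}[\mathbb{S}_\tau](\cdot,w)=0$, i.e.\ the holomorphy of $e^{2\pi\tau P(z)}[\mathbb{S}_\tau](z,w)$ in $z$ (exactly the observation the paper's proof is built on), together with a composition estimate in the spirit of Lemma \ref{lem:kernels-basicest} to bound the kernel of $\mathbb{R}_\tau(4\pi\tau P_{z,\bar z})\mathbb{S}_\tau$ uniformly for $\tau$ in a compact interval; second, the restriction to $z\in B(z_0,|z_0-w|/2)$ is unnecessary, since the Szeg\H{o} kernel bound $\sigma_\tau(w)^{-2}e^{-\epsilon\t\rho_\tau}$ has no diagonal singularity. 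What your approach buys is a direct, joint-continuity argument that never needs to upgrade $L^2$ convergence to uniform convergence; what the paper's buys is independence from the pointwise kernel theory of Section \ref{sec:kernels}, at the cost of the (glossed) extension of Proposition \ref{prop:continuity} to $\tau$-dependent data.
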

		\begin{proof}
			Note that $S(z,\tau)\in L^2(\mathbb{C})$ for each fixed $\tau$, and that because $\bar{D}_\tau S(z,\tau)=0$ as distributions, we have
			$$\partial_{\bar{z}} \Big(e^{2\pi \tau P(z)}S(z,\tau)\Big)\equiv 0,$$
			and therefore the function $z\mapsto \tilde{S}_\tau(z)=e^{2\pi \tau P(z)}S(z,\tau)$ is entire for each fixed $\tau$.
			In particular, for fixed $\tau$ the function $z\mapsto S(z,\tau)$ agrees almost everywhere with a continuous function, and we can therefore take $S(z,\tau)$ to be continuous in $z$.
			We claim that for $\tau$ fixed, $S(z,\tau+h)\to S(z,\tau)$ as $h\to 0$, uniformly on compact subsets of $\mathbb{C}$, which immediately implies that $S(z,\tau)$ is continuous on $\mathbb{C}\times (0,+\infty)$.
			
			We prove the claim by first noting that the arguments in the proof of Proposition \ref{prop:continuity} can be extended to show that, for $\tau$ fixed, $$S(z,\tau+h)\to S(z,\tau)\ \mbox{in}\ L^2(\mathbb{C})\ \mbox{as}\ h\to 0.$$
			The continuity of $P(z)$ then implies that for every compact set $K\Subset \mathbb{C}$ we have $$\tilde{S}_{\tau+h}(z)\to \tilde{S}_\tau (z)\ \mbox{in}\ L^2(K)\ \mbox{as}\ h\to 0.$$
			Define $K(N)=\{ z\in \mathbb{C}\ :\ |z|\leq N\}$. We apply the Cauchy integral formula and the Schwarz inequality to see that for $z\in K(N)$, 
			\begin{align*}
				|\tilde{S}_{\tau+h}(z)-\tilde{S}_{\tau}(z)| & = \frac{1}{2\pi} \Big| \iint_{1\leq |\eta|\leq 2} \frac{\tilde{S}_{\tau+h}(z+\eta)-\tilde{S}_{\tau}(z+\eta)}{|\eta|}dm(\eta)\Big| \\
				& \leq  \frac{1}{2\pi} \iint_{1\leq |\eta|\leq 2} |\tilde{S}_{\tau+h}(z+\eta)-\tilde{S}_{\tau}(z+\eta)|dm(\eta) \\
				& \leq  \frac{\sqrt{3\pi}}{2\pi}\|\tilde{S}_{\tau+h}-\tilde{S}_\tau\|_{L^2(K(N+2))} \\
				& \to   0 
			\end{align*}
			as $h\to 0$. This completes the proof that $S(z,\tau)\in C^0(\mathbb{C}\times (0,+\infty))$. The claim that $S(z,\tau)\in L^2(\mathbb{C}\times (0,+\infty))$ is then an immediate consequence of Lemma \ref{lem:unifL2bound} and Fubini-Tonelli.
		\end{proof}
		
	We finally prove Proposition \ref{prop:link}. Define $$B(\mathbb{C}\times\mathbb{R})=\{f\in L^2(\mathbb{C}\times\mathbb{R})\ :\ \bar{Z}f=0\ \mbox{as tempered distributions}\}.$$	
		
	\begin{proof}[Proof of Proposition \ref{prop:link}.]
		
		For $f\in \mathscr{S}(\mathbb{C}\times\mathbb{R})$, define $$\mathbb{T}[f](z,t)=\int_0^{+\infty} e^{2\pi i \tau t} \int_\mathbb{C} [\mathbb{S}_\tau](z,w)\hat{f}(w,\tau)dm(w)d\tau.$$ 
		This expression is well-defined by Lemma \ref{lem:link-cont}, and by Lemma \ref{lem:unifL2bound} we have \newline $\|\mathbb{T}[f]\|_2\leq \|f\|_2$ and $\mathbb{T}[f]\in B(\mathbb{C}\times\mathbb{R})$, and therefore $\mathbb{T}$ extends to a bounded operator from $L^2(\mathbb{C}\times\mathbb{R})$ into $B(\mathbb{C}\times\mathbb{R})$ with norm $\leq 1$.
		
		We claim that $\mathbb{T}=\mathbb{S}$.
		To prove this, it suffices to show that if we write $f=f_{\|}+f_{\perp}$, with $f_{\|}\in B(\mathbb{C}\times\mathbb{R})$ and $f_{\perp}\perp B(\mathbb{C}\times\mathbb{R})$, then $\mathbb{T}[f]=f_{\|}.$
		
		Choosing $f_n\to f_{\|}$ as in Lemma \ref{lem:smoothapprox}, and writing $\mathbb{S}_\tau = I-\mathbb{R}_\tau\bar{D}_\tau$, we have
		\begin{align*}
			\mathbb{T}[f_{\|}] & = \lim_{n\to {+\infty}} \mathbb{T}[f_n] \\
			& = \lim_{n\to {+\infty}} f_n - \Big[\mathbb{R}_\bullet[\bar{D}_\bullet \hat{f}_n]\Big]^{\vee}.
		\end{align*}
		Now, $f_n\to f_{\|}$ in $L^2(\mathbb{C}\times\mathbb{R})$, while Lemma \ref{lem:unifL2bound} and Plancherel give us
		$$\|\Big[\mathbb{R}_\bullet[\bar{D}_\bullet \hat{f}_n]\Big]^{\vee}\|_2= \|\mathbb{R}_\bullet[\bar{D}_\bullet \hat{f}_n]\|_2 \leq C\|A(\bullet)\bar{D}_\bullet \hat{f}_n\|_2 \to 0,$$
		which shows that $\mathbb{T}[f_{\|}]=f_{\|}$.
		
		On the other hand one sees that $\mathbb{T}$ is self adjoint, so that for every $h\in L^2(\mathbb{C}\times\mathbb{R})$ we have
		$$\lan \mathbb{T}[f_\perp],h\ran = \lan f_\perp,\mathbb{T}[h]\ran = 0,$$ and therefore $\mathbb{T}[f_\perp]=0$.
		
		This completes the proof.
	\end{proof}

\end{appendix}

\bibliographystyle{siam}

%\bibliography{../../MonographsandPapers/ajppapers}
%\bigskip

\noindent Aaron Peterson\\
Department of Mathematics\\
Northwestern University\\
2033 Sheridan Road\\
Evanston, IL  60208-2730, USA\\
e-mail: aaron@math.northwestern.edu

% ------------------------------------------------------------------------
\end{document}